\documentclass[reqno,11pt]{amsart}
\usepackage{amsfonts}
\usepackage{graphicx}
\usepackage{amsfonts,amsmath, amssymb}
\usepackage{hyperref}
\usepackage{marginnote}
\usepackage{color}
\usepackage{dsfont}
\usepackage{xcolor}
\usepackage{mathtools}
\usepackage{enumitem}
\usepackage{leftindex}

\usepackage[
backend=biber,
style=trad-abbrv,doi=false,isbn=false,url=false,eprint=false
]{biblatex}

\numberwithin{equation}{section}

\newcommand{\R}{\mathbb{R}}

\newcommand{\midbar}[1]{
  \mkern7mu
  \overline{\mkern-7mu #1\mkern-1.5mu}
  \mkern1.5mu
}

\newtheorem{theorem}{Theorem}[section]
\newtheorem{corollary}[theorem]{Corollary}
\newtheorem{lemma}[theorem]{Lemma}

\newtheorem{remark}[theorem]{Remark}
\theoremstyle{definition}
\newtheorem{definition}[theorem]{Definition}
\newtheorem{step}{Step}
\newtheorem{case}{Case}

\def\v{\varepsilon}

\def\d{{\rm d}}

\def\dd{{\, \rm d}}
\def\M{{\mathcal{M}}}

\def\supp{{\rm supp}}
\newcommand{\longrightharpoonup}{\rm - \!\!\! \rightharpoonup}
\usepackage{tikz}

\begin{document}

\title[Nonlinear Stability and Instability of rotating Riesz star solutions]{Nonlinear stability and instability of rotating Riesz star solutions of the compressible Euler-Riesz equations}

\author[S.R. Charles]{Samuel R. Charles}
\address{S.R. Charles:\, Department of Mathematics, National University of Singapore, Singapore 119076, Singapore}
\email{samuel.charles@nus.edu.sg}

\begin{abstract}
    The compressible Euler–Riesz equations arise in the modelling of a wide range of physical phenomena, including stellar dynamics, plasma physics, and mathematical biology. We study rotating steady states of the attractive compressible Euler–Riesz equations, which we call rotating Riesz stars, and establish existence and nonlinear stability or instability results in both the mass-subcritical and mass-supercritical regimes. In the mass-subcritical regime, we prove the existence of rotating Riesz stars under suitable subhomogeneity assumptions on the angular momentum profile. We then establish their nonlinear stability through a concentration compactness argument adapted to the axisymmetric setting. Rotation creates new compactness difficulties, most notably the possibility that minimising sequences are tight along rings whose radii diverge to infinity. In the mass-supercritical regime, we prove existence in the polytropic setting under suitable superhomogeneity assumptions on the angular momentum profile, thereby extending the theory beyond the small angular velocity regime. The proof requires a careful analysis of mass-preserving scalings, which are more delicate than in the non-rotating case. Finally, by analysing the concavity of the free-energy along these scalings, we establish the instability of the resulting mass-supercritical rotating Riesz stars. Our results show that rotation can have either a stabilising or a destabilising effect, depending on the singularity of the Riesz interaction.
\end{abstract}

\subjclass[2020]{35Q35, 35Q31, 35B35, 35A15, 35R09, 35L65, 76N10, 35B38, 35B33}
\keywords{Compressible Euler-Riesz equations; rotating Riesz stars; nonlinear stability; nonlinear instability; Riesz potentials; non-local interaction; concentration compactness; variational methods.}
\date{\today}

\maketitle

\tableofcontents

\thispagestyle{empty}

\section{Introduction}

For a compressible fluid, the attractive three-dimensional (3-D) compressible Euler–Riesz equations form a system of four partial differential equations given by
\begin{align}\label{0.0}
	\begin{cases}
            \displaystyle
		\partial_t \rho+\nabla \cdot \boldsymbol{\M}=0, \\[1mm]
            \displaystyle
		\partial_t \boldsymbol{\M}+ \nabla \cdot \Big(\frac{\boldsymbol{\M}\otimes\boldsymbol{\M}}{\rho}\Big)
        +\nabla p + \rho \nabla \mathcal{W}_\alpha = \boldsymbol{0},
	\end{cases}
 \end{align}
 where, for each point in space-time $(t,\boldsymbol{x}) \in [0,\infty) \times \mathbb{R}^3$, the function $\rho: [0,\infty) \times \mathbb{R}^3 \to [0,\infty)$ denotes the fluid density, $\boldsymbol{\mathcal{M}}: [0,\infty) \times \mathbb{R}^3 \to \mathbb{R}^3$ the momentum, $p: [0,\infty) \times \mathbb{R}^3 \to \mathbb{R}$ the pressure, and $\mathcal{W}_\alpha : [0,\infty) \times \mathbb{R}^3 \to \mathbb{R}$ the Riesz potential for $\alpha \in (0,3)$. The Riesz potential describes a non-local interaction between fluid particles and is defined through the convolution $\mathcal{W}_\alpha = \Phi_\alpha * \rho$, where $\Phi_\alpha : \mathbb{R}^3 \to \mathbb{R}$ denotes the Riesz kernel associated with $\alpha \in (0,3)$, given by
 \[
 \Phi_\alpha(\boldsymbol{x}) = -\frac{|\boldsymbol{x}|^{-\alpha}}{\alpha}.
 \]
 When $\alpha \in (0,3)$, the kernel $\Phi_\alpha$ is integrable near the origin and can therefore be analysed using mean-field and potential-theoretic methods. In contrast, for $\alpha \geq 3$, the kernel $\Phi_\alpha$ fails to be integrable at the origin, preventing the direct application of such techniques. In this regime, the kernel is referred to as hypersingular and behaves more like a short-range interaction; see \cite{Hardin_2018}. Accordingly, we restrict our attention to the range $\alpha \in (0,3)$, which includes the important Newtonian case $\alpha = 1$, for which the Riesz kernel coincides, up to a multiplicative constant, with the fundamental solution of the Poisson equation. Moreover, in the Manev or super-Manev regime $\alpha \geq 2$, the gradient $\nabla \mathcal{W}_\alpha$ cannot be transferred onto the Riesz kernel in a distributional sense. To properly define $\nabla \mathcal{W}_\alpha$ in this setting, we introduce
\[
 \nabla \mathcal{W}_\alpha(\boldsymbol{x}) :=
 \begin{cases}
    \displaystyle
     (\nabla \Phi_\alpha * \rho) (\boldsymbol{x}) & \text{for } \alpha \in (0,2), \\[1mm]
     \displaystyle
     \int_{\mathbb{R}^3} \nabla \Phi_\alpha(\boldsymbol{x} - \boldsymbol{y}) (\rho(\boldsymbol{y}) - \rho(\boldsymbol{x})) \dd \boldsymbol{y} \quad & \text{for } \alpha \in [2,3).
 \end{cases}
 \]
Thus, for $\alpha \in [2,3)$, if $\rho \in C^{0,\beta}(\mathbb{R}^3)$ for some $\beta \in (\alpha-2,1)$, the difference $\rho(\boldsymbol{y})-\rho(\boldsymbol{x})$ compensates for the singularity of $\nabla \Phi_\alpha$, ensuring that $\nabla \mathcal{W}_\alpha$ is well-defined.

\smallskip

 In the super-Newtonian regime, namely for $\alpha \in (1,3)$, the potential $\mathcal{W}_\alpha$ is known to be proportional to the inverse fractional Laplacian operator:
\[
    (-\Delta)^\frac{3-\alpha}{2} \mathcal{W}_\alpha = - c_{\alpha} \rho,
\]
in the sense of distributions, where
\[
c_{\alpha} = \frac{2^{3-\alpha} \pi^\frac{3}{2} \Gamma\left (\frac{3-\alpha}{2} \right )}{\alpha \Gamma\left (\frac{\alpha}{2} \right )},
\]
see \cite{Gelfand_1958} for the details of the calculation. The fractional Laplacian is a non-local operator that may be defined either through Fourier multipliers or via a singular integral representation in physical space. Serfaty \cite{Serfaty_2020} first showed that the pressure-less Euler equations of the form \eqref{0.0}, in the repulsive regime with Newtonian or super-Newtonian Riesz potentials, arise as mean-field limits of interacting particle systems. This result was later extended by Nguyen–Rosenzweig–Serfaty \cite{Nguyen_2022} to the full range $\alpha \in (0,3)$ through the introduction of a modulated energy method; see also Serfaty \cite{Serfaty:2024ojp}.

\smallskip

Riesz gases arise in a broad variety of physical contexts, including plasma physics, stellar and galactic dynamics, solid-state physics, ferrofluids, elasticity theory, and random matrix theory; see \cite{Campa_2014}. Riesz interactions have also been studied in the contexts of jellium and the uniform electron gas; see \cite{Lewin_2017,Cotar_2019}. For a recent survey, including numerous open problems, we refer to \cite{Lewin_2022}. Further applications occur in mathematical biology, particularly in models of collective behaviour and swarming; see \cite{Carrillo_2021,Carrillo_2017}.

\smallskip

When $\alpha = 1$, the interaction potential reduces to the Newtonian potential. The associated Newtonian gas serves as a fundamental toy model for classical matter in the absence of quantum effects. For example, Gamow’s liquid drop model provides a simplified description of atomic nuclei, electrons, and atoms; see \cite{Serfaty:2024ojp}. Purely attractive Newtonian interactions arise naturally in gravitational systems, astrophysical Vlasov–Poisson models, Keller–Segel chemotaxis, aggregation equations, and bosonic mean-field models such as boson-star and Hartree equations; see, for example, \cite{Rein2007,BinneyTremaine2008,KellerSegel1970,BlanchetDolbeaultPerthame2006,BertozziLaurent2007,BertozziLaurentLeger2012,LiebYau1987,ElgartSchlein2007,FrohlichJonssonLenzmann2007}. When $\alpha = 2$, the interaction is known as the Manev potential. This potential arises as a relativistic correction to the Newtonian potential and has been used to explain phenomena such as the precession of Mercury; see \cite{Illner}.

\smallskip

In this paper, we assume that the fluid is barotropic, meaning that the pressure depends only on the density, namely $p = p(\rho)$. A commonly used equation of state in the study of physical systems is that of a power-law for a polytropic fluid, for which the pressure is given by
\[
 p = p(\rho) = a_0 \rho^\gamma,
 \]
where $\gamma \geq 1$ denotes the adiabatic exponent. The case $\gamma = 1$ is referred to as the isothermal regime. Moreover, for $\gamma > 1$, a scaling argument allows us to normalise the constant to $a_0 = \frac{(\gamma-1)^2}{4 \gamma} > 0$. The exponent $\gamma = \frac{3+\alpha}{3}$ is known as the mass-critical exponent, arising from the balance between the internal and potential energies. Another important exponent is $\gamma = \frac{6}{6-\alpha}$, referred to as the energy-critical exponent, which is associated with the scaling of the potential energy. In the classical Newtonian case $\alpha = 1$, the range $\frac{6}{5} < \gamma < \frac{4}{3}$ is known as the mass-supercritical regime, while $\gamma > \frac{4}{3}$ corresponds to the mass-subcritical regime. In the context of the 3-D CEREs, these regimes generalise to $\frac{6}{6-\alpha} < \gamma < \frac{3+\alpha}{3}$ for the mass-supercritical regime, and $\gamma > \frac{3+\alpha}{3}$ for the mass-subcritical regime. A power-law pressure provides a good model for gases in the low-density regime. However, at high densities, the pressure may exhibit substantially different behaviour. For example, in the case of a white dwarf star with $\alpha = 1$, the pressure is given by
\begin{equation}\label{general pressure}
    p(\rho) = A \int^{B\rho^\frac{1}{3}}_0 \frac{\sigma^4}{\sqrt{D + \sigma^2}} \dd \sigma,
\end{equation}
for $\rho > 0$, where $A, B, D > 0$ are constants. The pressure law \eqref{general pressure} exhibits the following asymptotic behaviour in the low-density and high-density regimes:
\[
    \begin{cases}
        p(\rho) \sim \frac{A B^5 }{5\sqrt{D}}\rho^\frac{5}{3}, & \text{ as } \rho \to 0, \\
        p(\rho) \sim \frac{A B^4 }{4}\rho^\frac{4}{3}, & \text{ as } \rho \to \infty.
    \end{cases}
\]
Accordingly, it is natural to consider a more general pressure law $p : [0,\infty) \to [0,\infty)$ with $p \in C^1([0,\infty))$, satisfying the condition that
\begin{equation}\label{Press}
    p'(\rho) > 0,
\end{equation}
for $\rho > 0$.

\smallskip

Throughout the paper, we consider the Cauchy problem for \eqref{0.0} with the initial data 
$\rho_0 : \mathbb{R}^3 \to [0,\infty)$ and $\boldsymbol{\M}_0 : \mathbb{R}^3 \to \mathbb{R}^3$ 
that satisfy
 \begin{equation}\label{0.1}
     (\rho,\boldsymbol{\M})|_{t=0}
     = (\rho_0,\boldsymbol{\M}_0)(\boldsymbol{x}) \longrightarrow (0,\boldsymbol{0}),
     \qquad \mbox{ as $|\boldsymbol{x}| \to \infty$}.
 \end{equation}
 Since global solutions of the CEREs system may include vacuum regions
 $$\{ (t,\boldsymbol{x}) \, | \, \rho(t,\boldsymbol{x}) = 0 \},$$ where the fluid velocity is not well-defined, we instead formulate the problem in terms of the momentum variable $\boldsymbol{\M}(t,\boldsymbol{x})$. Unlike the velocity field $\boldsymbol{u}(t,\boldsymbol{x})$, the momentum remains globally well-defined and therefore provides a more robust framework for analysis. Under these assumptions, the energy associated with the CEREs is given by
\begin{equation*}
     E(\rho,\boldsymbol{\M})(t): 
     =\int_{\mathbb{R}^3}\Big(\rho(t, \boldsymbol{x}) e(\rho(t, \boldsymbol{x})) 
     + \frac{1}{2} \bigg|\frac{\boldsymbol{\M}}{\sqrt{\rho}}\bigg|^2(t, \boldsymbol{x}) 
     + \frac{1}{2}\rho(\boldsymbol{x})(\Phi_\alpha \ast \rho)(t, \boldsymbol{x})\Big)
     \dd\boldsymbol{x},
\end{equation*}
where
\[
    e(\rho) = \int^\rho_0 \frac{p(\tau)}{\tau^2} \dd \tau,
\]
is the internal energy density. Here, the first term represents the internal energy of the fluid, the second corresponds to the kinetic energy, and the final term describes the potential energy of the system.

\smallskip

Most well-posedness results for the $n$-D CEREs have so far been obtained only in the classical setting of the compressible Euler–Poisson equations (CEPEs). In the Newtonian case, the interaction potential is determined through the Poisson equation and therefore enjoys a local structure. By contrast, for general Riesz potentials this locality property is lost, and the resulting nonlocality introduces substantial analytical difficulties when extending the theory from the CEPEs to the CEREs. Moreover, the nonlocal interaction gives rise to new phenomena that do not appear in the classical CEPEs framework, necessitating the development of new analytical techniques. Only recently have preliminary results for the Euler–Riesz equations begun to emerge. Choi–Jeong \cite{Choi_2022a} established local well-posedness away from vacuum for both attractive and repulsive interactions in the super-Newtonian regime $\alpha \in (n-2,n)$, and further showed that classical solutions may break down in finite time under suitable conditions. Danchin–Ducomet \cite{Danchin_2022} proved the existence of global classical solutions for $\alpha \in [n-2,n-1]$ under dispersive spectral assumptions on the initial velocity field. Choi–Jung \cite{Choi_2022b} considered the pressureless damped system and proved the existence of unique global classical solutions for sufficiently small initial data in suitable Sobolev spaces in the attractive super-Newtonian regime $\alpha \in (n-2,n)$. This was subsequently extended by Choi–Jung–Lee \cite{Choi_2024} to non-damped polytropic flows in both the attractive and repulsive settings within the same super-Newtonian range. Whereas the above works focus primarily on the Newtonian and super-Newtonian regimes, Carrillo–Charles–Chen–Yuan \cite{Carrillo2025} proved the existence of global finite-energy weak solutions to the polytropic CEREs for $\alpha \in (-1,n-1)$. Here, the case $\alpha = 0$ corresponds to the logarithmic kernel $\Phi_0(\boldsymbol{x}) = \log |\boldsymbol{x}|$, which coincides with the Newtonian potential in two dimensions. These results were later extended in \cite{Carrillo_2026} to a class of general pressure laws exhibiting asymptotic power-law behaviour both near vacuum and at infinity. Finally, for the Manev potential $\alpha = n-1$, we refer to \cite{Bobylev,Cercignani,Illner} for related work on the Vlasov–Manev system, which shares many structural and dynamical similarities with the CEREs.

\smallskip

A fundamental problem in the study of the CEREs is to understand the behaviour of solutions near special classes of states, in particular steady states, namely solutions that are independent of time. In the attractive 3-D CEPEs setting, such steady states may be interpreted physically as equilibrium stellar configurations. It is therefore of considerable interest to investigate their nonlinear stability. More precisely, one asks whether a configuration that initially starts sufficiently close to a steady state remains close for all later times, or possibly converges to equilibrium. If this occurs, the steady state is said to be nonlinearly stable; conversely, it is called nonlinearly unstable if there exist solutions starting arbitrarily close to the steady state that eventually depart significantly from it. This question was first addressed by Rein \cite{Rein_2003}, who proved that minimisers of the free-energy functional
\begin{equation*}
\varrho
\mapsto \int_{\mathbb{R}^3} \Big(\varrho(\boldsymbol{x}) e(\varrho(\boldsymbol{x}))
+\frac{1}{2}\varrho(\boldsymbol{x})(\Phi_1\ast \varrho)(\boldsymbol{x})\Big)\,
\dd\boldsymbol{x},
\end{equation*}
are stationary states, steady states with vanishing velocity field, of the attractive 3-D CEPEs and have compact support. Moreover, under suitable assumptions on a general pressure law, corresponding to $\gamma > \frac{4}{3}$ in the polytropic case, these minimisers were shown to be nonlinearly stable. Such stationary states may be interpreted physically as non-rotating stars. The nonlinear stability of non-rotating stars has been studied extensively. Lin--Zeng \cite{Lin_2022} showed that a turning point principle can be applied to the linear stability of non-rotating stars. Based on a linear stability criterion, Lin--Wang--Zhu \cite{Lin_2023} obtained an unconditional stability result for the attractive CEPEs with a general pressure law. On the instability side, Deng--Liu--Yang--Yao \cite{Deng_2002} proved the instability of non-rotating stars for the three-dimensional polytropic CEPEs with $\gamma = \frac{4}{3}$, by showing that solutions with positive initial energy exhibit growth of their support; since stationary states have compact support, this implies instability. Jang \cite{Jang2008,Jang2014} proved nonlinear instability of Lane--Emden stars for $\gamma \in [\frac{6}{5},\frac{4}{3})$. More recently, Cheng--Cheng--Lin \cite{Cheng_2025} established nonlinear instability for non-rotating polytropic stars with $\gamma \in (\frac{6}{5},\frac{4}{3}]$, by exploiting the concavity of the free-energy with respect to the mass-preserving scaling parameter and using this to prove growth of the support of solutions. It is worth noting that, by considering the associated free-energy functional, stationary states of the CEREs can also be viewed as stationary states of aggregation--diffusion equations with a Riesz potential. In this context, results such as those of Carrillo--Hittmeir--Volzone--Yao \cite{Carrillo_2019}, who proved radial symmetry of stationary states for a broad class of Keller--Segel-type aggregation--diffusion equations, are also relevant to the CEREs; see also \cite{CCY19} for further properties of such equations. This observation was exploited in the works of Carrillo--Charles--Chen--Yuan \cite{Carrillo2025,Carrillo_2026}, where the authors established the existence of stationary states for the attractive CEREs with general pressure laws. Furthermore, they proved that these stationary states are nonlinearly stable, obtained quantitative estimates on their finite-time stability, and showed that the stability result is inherently local in nature.

\smallskip

Another class of steady states of considerable interest is that of rotating star solutions to the 3-D CEPEs, for which the velocity field is axisymmetric about the $z$-axis. The existence of such rotating stars was first established by Auchmuty--Beals \cite{Auchmuty_1971}, who constructed them as minimisers of a free-energy functional over a class of axisymmetric density distributions and showed that these minimisers satisfy the corresponding steady state equations. Subsequent works further developed the associated variational framework and investigated structural properties of rotating stars under various assumptions on the equation of state and admissible classes; see Caffarelli--Friedman \cite{Caffarelli_1980}, Friedman--Turkington \cite{Friedman_1980,Friedman_1981}, Auchmuty \cite{Auchmuty_1991}, Li \cite{Li_1991}, Chanillo--Yan \cite{Chanillo_1994}, Luo--Smoller \cite{Luo_2004}, McCann \cite{McCann_2006}, and Wu \cite{Wu_2016,Wu_2015}. Lions \cite{Lions_1981} proved the existence of rotating star solutions under a prescribed axisymmetric potential using variational methods. More recently, Strauss--Wu \cite{Strauss_2019,Strauss_2017} established existence results for prescribed angular velocity and for a broader class of admissible pressure laws exhibiting weaker decay near vacuum and slower growth at high densities. These results were subsequently extended to the non-constant entropy setting by Jang--Strauss--Wu \cite{Jang_2023}. Alternative constructions based on the Implicit Function Theorem have also been developed; see Lichtenstein \cite{Lichtenstein1933}, Heilig \cite{Heilig_1994}, and Jang--Makino \cite{Jang_2017,Jang_2019}. While the works above focus primarily on existence and structural properties, Luo--Smoller \cite{Luo_2008,Luo_2009} later proved nonlinear stability of rotating star solutions using a concentration compactness argument. In addition, Lin--Wang \cite{Lin_2023b} established linear spectral stability of rotating star solutions. For a broader overview of results and open problems concerning steady states of the CEPEs, we refer the reader to Luo \cite{Luo_2019}.

\smallskip

This paper is devoted to the study of rotating Riesz star solutions of the attractive CEREs, which generalise rotating star solutions of the attractive CEPEs. In \S\ref{stabilityyy} and \S\ref{stability:rota}, we establish both the existence and nonlinear stability of such solutions in the mass-subcritical regime. Our approach is based on a generalisation of Lions’ concentration compactness principle. By considering minimisers of the free-energy functional in axisymmetric coordinates, we prove tightness of the mass along minimising sequences, up to vertical translations. We further show that this tightness occurs around the origin, which allows us to pass to a strong limit and thereby establish the existence of rotating Riesz star solutions. To address stability, we introduce two distance functionals measuring the deviation between general solutions and rotating Riesz star configurations. Using a contradiction argument, we then prove nonlinear stability of the corresponding rotating Riesz star solutions. In \S\ref{existence:rotaingsuper}, we turn to the polytropic mass-supercritical regime and prove the existence of rotating Riesz star solutions. To the best of the author’s knowledge, such a result has not previously been established beyond the small angular velocity regime, even in the more extensively studied CEPEs setting. Our approach adapts the variational framework developed in \cite{Cheng_2025} to the rotational setting by constructing an energy minimisation problem whose minimisers correspond to rotating Riesz star solutions. The rotational term, however, introduces substantial additional difficulties, since it is not compatible with radially decreasing rearrangement techniques. Consequently, unlike in \cite[Lemma 4.6.]{Carrillo_2026}, a different argument is required to establish compactness of the potential term. Finally, in \S\ref{growing}, following the strategy developed in \cite{Carrillo_2026}, we perform a careful analysis of the concavity properties of the associated free-energy functional under mass-preserving scalings. This allows us to prove growth of the support for solutions starting arbitrarily close to a rotating Riesz star solution, thereby establishing nonlinear instability.

\smallskip

In the mass-subcritical regime, one of the principal difficulties arises in analysing minimisers of the associated free-energy functional $\mathcal{F}$ over the admissible class of axisymmetric densities $\mathcal{A}_M$. Unlike the stationary setting, translations cannot be applied freely to minimising sequences, since they generally destroy axisymmetry and therefore move the sequence outside $\mathcal{A}_M$. To address this issue, we develop a refined version of the concentration compactness framework of \cite{Lions_1984}, adapted specifically to the axisymmetric setting, in order to classify the possible behaviours of minimising sequences. We show that either the sequence undergoes a splitting phenomenon into two axisymmetric profiles whose supports drift apart, leading to a contradiction, or the mass is tight along rings in $\mathbb{R}^3$, up to vertical translations. An additional difficulty occurs when the radii of these rings become unbounded, since in this case concentration compactness arguments alone are insufficient to guarantee convergence of the potential energy. We prove that, unless the mass is tight around the origin, equivalently, unless the radii of the rings remain uniformly bounded, the potential energy vanishes in the limit. This would force the infimum of the energy to be non-negative, contradicting the underlying variational structure of the problem.

\smallskip

In the mass-supercritical regime, several difficulties arise that are absent in the stationary setting. In particular, unlike in \cite{Carrillo_2026}, it is no longer the case that every density admits a unique mass-preserving critical scaling placing the rescaled density into the natural admissible set; see Lemma \ref{smallalpha}. Indeed, such a critical scaling may fail to exist altogether, creating substantial obstacles, particularly in the analysis of growth properties of solutions. These difficulties are especially pronounced when $\alpha \in (0,2)$, whereas for $\alpha \in [2,3)$ the existence and instability theory for rotating Riesz star solutions becomes comparatively more tractable. In particular, the case $\alpha = 2$ appears to be the natural regime for the variational problem due to the scaling behaviour of the rotational kinetic energy. Nevertheless, in both regimes, the critical mass-preserving scaling associated with a given density is no longer available in explicit form, in contrast to the stationary setting, introducing an additional layer of analytical difficulty.

\smallskip

To overcome these difficulties, we exploit a structural feature of the mass-preserving scaling which allows the contribution of the rotational kinetic energy to be effectively decoupled when analysing the concavity properties of the free-energy functional along such scalings; see Lemma \ref{concave}. In the regime $\alpha \in (0,2)$, this leads to the introduction of a distinguished quantity $\kappa(\varrho)$, which separates the two possible mass-preserving critical scalings of a given density $\varrho$ into the natural admissible set. A key point is that, although the critical scalings themselves are only characterised implicitly, the quantity $\kappa(\varrho)$ admits an explicit representation. This allows us to circumvent the difficulties arising from the absence of an explicit formula for the critical scalings. By performing a careful analysis of the free-energy along each branch of critical scalings, we are able to identify the branch relevant for both the existence of minimisers and the instability mechanism. This, in turn, enables us to restrict the variational problem to a suitable subclass of the natural admissible set, which serves as the effective admissible class throughout the analysis.

\smallskip

In the regime $\alpha \in (0,2)$, further difficulties arise both in showing that minimisers of the free-energy correspond to rotating Riesz star solutions and in establishing growth of solutions arbitrarily close to such states. The underlying issue is the non-uniqueness of the mass-preserving critical scaling into the natural admissible set. This non-uniqueness can lead to delicate behaviour, for example when the two admissible critical scalings become arbitrarily close, or when a solution of the CEREs transitions from one scaling branch to the other. To overcome these difficulties, we impose a suitable superhomogeneity condition on the square of the angular momentum. Under this assumption, we prove that branch switching cannot occur and that the regime in which the two critical scalings approach one another does not produce pathological behaviour. This ensures that the variational framework remains well-posed and allows the analysis to be completed successfully.

\section{Preliminaries and results}

The CEREs form a strongly hyperbolic system and remain poorly understood in many respects. In particular, classical solutions are generally expected to develop singularities in finite time. As a result, the natural framework for studying global-in-time existence is that of weak solutions, and more specifically finite-energy weak solutions, which we define below.

\begin{definition}\label{definition}
A measurable vector-valued function $(\rho,\boldsymbol{\mathcal{M}})(t, \boldsymbol{x})$ is a finite-energy weak solution of the Cauchy problem \eqref{0.0} and \eqref{0.1} with pressure $p$ if
$(\rho,\boldsymbol{\mathcal{M}})$ satisfies the following conditions{\rm :}
\begin{enumerate}
\item[(a)] $\rho(t,\boldsymbol{x}) \geq 0$ {\it a.e.}, $(\boldsymbol{\mathcal{M}}, \frac{\boldsymbol{\mathcal{M}}}{\sqrt{\rho}})(t,\boldsymbol{x}) = \boldsymbol{0}$ {\it a.e.} on the vacuum states \\ $\{ (t,\boldsymbol{x}) \, | \, \rho(t,\boldsymbol{x}) = 0 \}$ and for any $T>0$,
\begin{align*}
			\rho\in L^{\infty}(0,T; L^1(\mathbb{R}^3)), \quad \rho e(\rho)\in L^{\infty}(0,T; L^1(\mathbb{R}^3)), \quad \frac{\boldsymbol{\mathcal{M}}}{\sqrt{\rho}}\in L^{\infty}(0,T; L^2(\mathbb{R}^3)).
    \end{align*}
    Moreover, for $\alpha \in (0,3)$ and any $T>0$,
    \[
         \Phi_\alpha * \rho\in L^\infty(0,T;L^{\frac{6}{\alpha}}(\mathbb{R}^3) ),\quad
			\nabla\Phi_\alpha * \rho\in L^\infty(0,T;L^{\frac{6}{\alpha + 2}}(\mathbb{R}^3)),
    \]
    with, for $\alpha \in [2,3)$,
    \[
        \rho \in L^\infty(0,T;C^{0,\beta}(\mathbb{R}^3)),
    \]
    for some $\beta \in (\alpha - 2,1)$.
\item[(b)] For {\it a.e.} $t>0$, the total energy is finite{\rm :}
\begin{align*}
\begin{cases}
\displaystyle
\int_{\mathbb{R}^3} \Big( \frac{1}{2} \Big| \frac{\boldsymbol{\mathcal{M}}}{\sqrt{\rho}}\Big|^2
+ \rho e(\rho) - \frac{1}{2} \rho (\Phi_\alpha * \rho) \Big)(t,\boldsymbol{x})
\dd \boldsymbol{x} \leq C(E_0,M), \\[4mm]
\displaystyle
\int_{\mathbb{R}^3} \Big( \frac{1}{2}\Big|\frac{\boldsymbol{\mathcal{M}}}{\sqrt{\rho}}\Big|^2
+ \rho e(\rho) + \frac{1}{2} \rho (\Phi_\alpha * \rho) \Big)(t,\boldsymbol{x})
\dd \boldsymbol{x} \leq E_0.
\end{cases}
\end{align*}
\item[(c)] For any $\zeta \in C^1_0(\mathbb{R}_+ \times \mathbb{R}^3)$,
\begin{equation*}
\int_{\mathbb{R}_+} \int_{\mathbb{R}^3} \big(\rho \zeta_t
+ \boldsymbol{\mathcal{M}} \cdot \nabla \zeta\big) \dd \boldsymbol{x} \dd t
+ \int_{\mathbb{R}^3} \rho_0 \zeta(0,\boldsymbol{x}) \dd \boldsymbol{x} = 0.
\end{equation*}
\item[(d)] For any $\boldsymbol{\psi} \in (C^1_0(\mathbb{R}_+ \times \mathbb{R}^3))^3$,
    \begin{align*}
        & \int_{\mathbb{R}_+} \int_{\mathbb{R}^3}
        \Big(\boldsymbol{\mathcal{M}} \cdot \boldsymbol{\psi}_t
        + \frac{\boldsymbol{\mathcal{M}}}{\sqrt{\rho}} \cdot \Big( \frac{\boldsymbol{\mathcal{M}}}{\sqrt{\rho}} \cdot \nabla \Big)
        \boldsymbol{\psi} + p(\rho)\nabla \cdot \boldsymbol{\psi} \Big)(t,\boldsymbol{x})
        \dd \boldsymbol{x} \dd t
        + \int_{\mathbb{R}^3} \boldsymbol{\mathcal{M}}_0(\boldsymbol{x}) \cdot \boldsymbol{\psi}(0,\boldsymbol{x}) \dd \boldsymbol{x} \\
        & = \int_{\mathbb{R}_+} \int_{\mathbb{R}^3} (\rho \nabla \mathcal{W}_\alpha \cdot \boldsymbol{\psi})(t,\boldsymbol{x}) \dd \boldsymbol{x} \dd t.
    \end{align*}
\end{enumerate}
\end{definition}

This paper focuses on the existence and stability of rotating Riesz star solutions, which constitute the analogue of rotating star solutions for the attractive CEPEs. By a rotating Riesz star solution, we mean a steady state solution of the attractive CEREs exhibiting rotation about a fixed axis and, in particular, possessing axisymmetry. We say that a solution $(\rho,\boldsymbol{\mathcal{M}})$ of \eqref{0.0} is axisymmetric if, with respect to the basis
    \[
        \boldsymbol{e}_r = \Big(\frac{x_1}{r},\frac{x_2}{r},0\Big), \qquad \boldsymbol{e}_\theta = \Big(-\frac{x_2}{r},\frac{x_1}{r},0\Big), \qquad \boldsymbol{e}_3 = (0,0,1),
    \]
     the density $\rho$ and momentum $\boldsymbol{\M}$ satisfy
     \begin{equation}\label{axiboy}
        \begin{cases}
            \rho(t,\boldsymbol{x}) = \rho(t,r,z), \\
            \boldsymbol{\M}(t,\boldsymbol{x}) = \mathcal{M}_r(t,r,z) \boldsymbol{e}_r + \mathcal{M}_\theta(t,r,z) \boldsymbol{e}_\theta + \mathcal{M}_3(t,r,z) \boldsymbol{e}_3,
        \end{cases} 
     \end{equation}
     where $r = r(\boldsymbol{x}) = \sqrt{x_1^2 + x_2^2}$ and $z = z(\boldsymbol{x}) = x_3$. Suppose the angular momentum of the solution is given by $J(m_\rho(r))$ with
\[
    m_{\rho}(r) \coloneq \int_{\sqrt{x_1^2 + x_2^2} \leq r} \rho(\boldsymbol{x}) \dd \boldsymbol{x} = 2\pi \int^r_0  \int^\infty_{-\infty} \rho(\eta,z) \eta \dd z \dd \eta,
\]
then, the rotational momentum is given by
\[
    \mathcal{M}_\theta(t,r) = \frac{\rho(t,r,z)J(m_{\rho(t)}(r))}{r}.
\]
Thus, more specifically, we say $(\bar{\rho}, \midbar{\boldsymbol{\M}})$ is a rotating Riesz star solution if the velocity field satisfies
$$\bar{\boldsymbol{u}}(\boldsymbol{x}) \coloneq \bigg (\frac{\midbar{\boldsymbol{\mathcal{M}}}}{\bar{\rho}} \bigg )(\boldsymbol{x}) = \frac{J(m_{\bar\rho}(r))}{r}\boldsymbol{e}_\theta.$$
Plugging this into \eqref{0.0}, we obtain
\begin{equation}\label{rotatingriesz2}
    \begin{cases}
    \displaystyle
        \partial_r p(\bar{\rho}) = \bar{\rho} \frac{L(m_{\bar{\rho}}(r))}{r^3} - \bar{\rho} \partial_r (\Phi_\alpha * \bar{\rho}), \\
        \displaystyle
        \partial_z p(\bar{\rho}) = - \bar{\rho} \partial_z (\Phi_\alpha * \bar{\rho}),
    \end{cases}
\end{equation}
where $L=J^2$. Thus, dividing \eqref{rotatingriesz2} by $\bar{\rho}$ and integrating with respect to $r$ and $z$ respectively, we conclude that there exists $\mu > 0$ such that, on the set where $\bar{\rho} > 0$,
\[
(\bar{\rho}e(\bar{\rho}))_{\bar{\rho}} = - \int^\infty_{r(\boldsymbol{x})} L(m_{\bar{\rho}}(s))s^{-3} \dd s - \Phi_\alpha\ast\bar{\rho} - \mu.
\]
Thus, it follows that
\[
(\bar{\rho}e(\bar{\rho}))_{\bar{\rho}} = \bigg (- \int^\infty_{r(\boldsymbol{x})} L(m_{\bar{\rho}}(s))s^{-3} \dd s - \Phi_\alpha\ast\bar{\rho} - \mu \bigg )_+.
\]
We shall consider $L \in C^1([0,M])$ such that
\begin{equation}\label{Lnice}
    \begin{cases}
        L(0)=0, \\
        L(m) \geq 0 \text{ for all } m \in [0,M],
    \end{cases}
\end{equation}
and have the following definition for rotating Riesz star solutions.
\begin{definition}\label{rotatingriesz}
$\bar{\rho} \in L_+^1(\mathbb{R}^3) \cap L^\infty(\mathbb{R}^3)$ is 
called a rotating Riesz star solution of \eqref{0.0}  
if $\bar{\rho}$ is axisymmetric, there exists an associated angular momentum squared $L$ and a constant $\mu > 0$ such that
\begin{align}\label{stationaxi}
\begin{cases}
\displaystyle
(\bar{\rho}e(\bar{\rho}))_{\bar{\rho}}+ \int^\infty_{r(\boldsymbol{x})} L(m_{\bar{\rho}}(s))s^{-3} \dd s +\Phi_\alpha\ast\bar{\rho}= - \mu,&  \boldsymbol{x}\in \Gamma,\\[2mm]
\displaystyle
\int^\infty_{r(\boldsymbol{x})} L(m_{\bar{\rho}}(s))s^{-3} \dd s + \Phi_\alpha\ast\bar{\rho}( \boldsymbol{x})\geq - \mu,&  \boldsymbol{x}\in \R^3 \setminus \Gamma,
\end{cases}
\end{align}
where
\[
        \Gamma \coloneq \{ \boldsymbol{x} \in \mathbb{R}^3 \, | \, \bar{\rho}(\boldsymbol{x}) > 0 \}.
\]
Then, the associated velocity field $\bar{\boldsymbol{u}}$, is given by
$$\bar{\boldsymbol{u}}(\boldsymbol{x}) = \bigg(\frac{\midbar{\boldsymbol{\mathcal{M}}}}{\bar{\rho}} \bigg )(\boldsymbol{x}) = \frac{J(m_{\bar\rho}(r))}{r}\boldsymbol{e}_\theta.$$
\end{definition}
In the mass-subcritical regime, to establish the existence and stability of rotating Riesz star solutions for general pressure laws, we adopt a variational framework similar to that developed in \cite{Carrillo_2026}. However, unlike the setting considered in \cite{Carrillo_2026}, the corresponding free-energy functional now includes an additional kinetic energy contribution arising from the velocity field $\bar{\boldsymbol{u}}$. Accordingly, we introduce the free-energy functional $\mathcal{G}$:
\[
\mathcal{G}(\varrho):=\int_{\mathbb{R}^3} \varrho(\boldsymbol{x})\Big( e(\varrho(\boldsymbol{x})) +\frac{1}{2} \frac{L(m_{\varrho}(r(\boldsymbol{x})))}{r(\boldsymbol{x})^{2}} +\frac{1}{2}(\Phi_\alpha\ast \varrho)(\boldsymbol{x})\Big)\,\dd\boldsymbol{x}.
\]
This is paired with the set of admissible functions $X_M${\rm:}
\[
X_M := \bigg \{ \varrho \in L^1_+(\mathbb{R}^3) \cap L_{\rm axi}^\frac{3 + \alpha}{3}(\mathbb{R}^3) \, \bigg | \, \int_{\mathbb{R}^3} \varrho \, \dd \boldsymbol{x} = M, \, \int_{\mathbb{R}^3} \frac{\varrho(\boldsymbol{x})L(m_{\varrho}(r(\boldsymbol{x})))}{r(\boldsymbol{x})^{2}} \dd\boldsymbol{x} < \infty \bigg \},
\]
where 
$$L_{\rm axi}^p(\mathbb{R}^3) \coloneq \{ \varrho \in L^p(\mathbb{R}^3) \, | \, \varrho \text{ is axisymmetric}\}.$$
We adapt the original concentration compactness framework of \cite{Lions_1984} to incorporate the rotational kinetic energy term. The axisymmetry constraint on the density introduces additional complications. For example, in the dichotomy scenario for minimising sequences, one must show that the two separating mass distributions are themselves axisymmetric. Moreover, due to the structure of the rotational kinetic term, it is no longer evident that radially decreasing rearrangements of minimisers of $\mathcal{G}$ over $X_M$ remain minimisers.

\smallskip

To apply a concentration compactness argument analogous to that of \cite{Lions_1984}, it is necessary that the free-energy functional $\mathcal{G}$ be bounded from below on $X_M$. In addition, we require the corresponding variational infimum $$g_M \coloneq \inf_{\varrho \in X_M} \mathcal{G}(\varrho),$$ to be strictly negative. In order to ensure that both of these properties hold, we impose the following assumptions:
\begin{enumerate}
    \item[(a)] \begin{equation}\label{higherpress}
    M < \bigg ( \frac{6 \liminf_{\rho \to \infty} p(\rho) \rho^{-\frac{3+\alpha}{3}}}{C_*} \bigg )^\frac{3}{3-\alpha},
\end{equation}
where $C_* > 0$ denotes the optimal constant in the Hardy--Littlewood--Sobolev (HLS) inequality stated in Lemma \ref{HLSineq}. This condition ensures that the internal energy dominates the potential energy at high densities, thereby guaranteeing that $g_M > -\infty$.
    \item[(b)] \begin{equation}\label{lower}
    M > \bigg ( \frac{6 \limsup_{\rho \to 0} p(\rho) \rho^{-\frac{3 +\alpha}{3}}}{ C_*} \bigg )^\frac{3}{3-\alpha}.
    \end{equation}
This condition ensures that the potential energy dominates the internal energy at low densities, thereby guaranteeing that $g_M < 0$.
\end{enumerate}

\begin{remark}
Note that condition \eqref{higherpress} is analogous to the Chandrasekhar mass limit. In the polytropic case with $\alpha = 1$ and $\gamma = \frac{4}{3}$, the corresponding critical mass for stationary gaseous stars is given by $M_{\rm ch}:= \big( \frac{6 a_0}{C_*} \big )^\frac{3}{2}$; see {\rm \cite{Chandrabook}}.
\end{remark} 
Furthermore, to exclude the possibility of dichotomy in minimising sequences, in the sense of Lemma \ref{comvandich}, we impose a subhomogeneity condition of order $\frac{5-\alpha}{3}$ on the square of the angular momentum, namely,
\begin{equation}\label{L}
    L(am) \geq a^\frac{5 - \alpha}{3}L(m),
\end{equation}
for all $0< a < 1$ and $0 \leq m \leq M$.
Given a global finite-energy weak solution $(\rho,\boldsymbol{\mathcal{M}})$ of the CEREs in the sense of Definition \ref{definition}, with total mass $M>0$, the momentum field admits the decomposition
\[
    \boldsymbol{\M}(t,\boldsymbol{x}) = \mathcal{M}_r(t,\boldsymbol{x}) \boldsymbol{e}_r + \mathcal{M}_\theta(t,\boldsymbol{x}) \boldsymbol{e}_\theta + \mathcal{M}_3(t,\boldsymbol{x}) \boldsymbol{e}_3.
\]
Thus, given a rotating Riesz star solution $\bar{\rho} \in X_M$ with associated angular momentum squared $L$, we consider the corresponding relative energy in order to identify the natural notion of distance relevant to the nonlinear stability analysis. In doing so, we obtain
\begin{equation}\label{Energydiffaxi}
\begin{split}
    E(\rho,\boldsymbol{\M}\, | \, \bar{\rho}, \bar {\boldsymbol{\mathcal{M}}})(t) & := \mathcal{G}(\rho(t)) - \mathcal{G}(\bar{\rho})
    + \frac{1}{2}\int_{\R^3} \bigg ( \bigg|\frac{\boldsymbol{\M}^r}{\sqrt{\rho}}\bigg|^2 + \bigg|\frac{\boldsymbol{\M}^\theta}{\sqrt{\rho}}\bigg|^2 + \bigg|\frac{\boldsymbol{\M}^3}{\sqrt{\rho}}\bigg|^2 \bigg)(t)\,\dd \boldsymbol{x} \\
    & \qquad \qquad \qquad \qquad \qquad \qquad \qquad \qquad \qquad - \frac{1}{2}\int_{\R^3} \frac{\rho(t) L(m_{\rho(t)}(r))}{r^2} \dd \boldsymbol{x}.
\end{split}
\end{equation}
In particular, in order to exploit the variational framework, we require the rotational velocity field to have the same structure as that associated with the rotating Riesz star solution $(\bar{\rho},\midbar{\boldsymbol{\mathcal{M}}})$. Thus, we require that
\begin{equation}\label{Lform}
    u^\theta(t,\boldsymbol{x}) = \bigg(\frac{\mathcal{M}^\theta}{\rho} \bigg )(t,\boldsymbol{x}) = \frac{J(m_{\rho(t)}(r))}{r},
\end{equation}
with $J^2= L$. To ensure that this property holds, we impose the following assumptions on the initial data:
\begin{enumerate}
    \item[(I$_1$)] The mass of the solution is the same as the stationary state
    $$
        \int_{\mathbb{R}^3} \rho_0 \dd \boldsymbol{x} = \int_{\mathbb{R}^3} \bar{\rho} \dd \boldsymbol{x} = M.
    $$
    \item[(I$_2$)] The initial angular momentum $r u_0^\theta(\boldsymbol{x}) = r u_0^\theta(r,z)$ depends only on the mass in the cylinder $\{\boldsymbol{y} \in \mathbb{R}^3 \, | \, r(\boldsymbol{y}) \leq r(\boldsymbol{x})\}$, that is, there exists $J$ such that
    $$
        u^\theta_0(\boldsymbol{x}) = \bigg(\frac{\mathcal{M}_0^\theta}{\rho_0} \bigg )(\boldsymbol{x}) = \frac{J(m_{\rho_0}(r))}{r}.
    $$
    \item[(I$_3$)] The angular momentum squared is $L$,
    $$
        J^2(m) = L(m), \qquad 0\leq m \leq M.
    $$
\end{enumerate}
As in \cite{Luo_2008}, we have the following assumptions on the weak solutions:
     \begin{enumerate}
         \item[(A$_1$)] For $D_t = \{\boldsymbol{x} \in \mathbb{R}^3 \, | \, \rho(t,\boldsymbol{x}) > 0 \}$, there exists a measurable set $G_t \subset D_t$ such that $|D_t \setminus G_t| = 0$ and a unique backwards particle path $\xi(\cdot,\cdot;t):[0,t] \times G_t \to \mathbb{R}^3$ such that
         \[
            \partial_\tau \xi(\tau,\boldsymbol{x};t) = \boldsymbol{u}(\tau, \xi(\tau,\boldsymbol{x};t)), \qquad \xi(t,\boldsymbol{x};t) = \boldsymbol{x},
         \]
         where $\boldsymbol{u} = \frac{\boldsymbol{\mathcal{M}}}{\rho}$. We define $\xi_{-t}(\boldsymbol{x}) \coloneq \xi(0,\boldsymbol{x};t)$.
         \item[(A$_2$)] Conservation of the angular momentum along the particle path \\ $j (t,\boldsymbol{x}) = j_0(\xi_{-t}(\boldsymbol{x}))$ for all $\boldsymbol{x} \in G_t$.
         \item[(A$_3$)] Conservation of the mass along the particle path $m_{\rho(t)} (r(\boldsymbol{x})) = m_{\rho_0} (r(\xi_{-t}(\boldsymbol{x})))$ for all $\boldsymbol{x} \in G_t$.
    \end{enumerate}
    Thus, as in \cite{Luo_2008}, assumptions (I$_1$)--(I$_3$) together with (A$_1$)--(A$_3$) imply that \eqref{Lform} holds. Condition (A$_1$) is a natural assumption, while conditions (A$_2$) and (A$_3$) were shown in \cite{Tassoul_1978} to hold for sufficiently regular solutions. Under these assumptions, the identity \eqref{Energydiffaxi} reduces to
    \[
        E(\rho,\boldsymbol{\M}\, | \, \bar{\rho}, \bar {\boldsymbol{\mathcal{M}}})(t) = \mathcal{G}(\rho(t)) - \mathcal{G}(\bar{\rho})
    + \frac{1}{2}\int_{\R^3} \bigg ( \bigg|\frac{\boldsymbol{\M}^r}{\sqrt{\rho}}\bigg|^2 + \bigg|\frac{\boldsymbol{\M}^3}{\sqrt{\rho}}\bigg|^2 \bigg)(t)\,\dd \boldsymbol{x}.
    \]
    Considering the difference of the free-energy functionals, we obtain
    \begin{align}\label{differenceaxi}
    \begin{split}
        \mathcal{G}(\rho(t)) - \mathcal{G}(\bar{\rho}) & = d_1(\rho(t),\bar{\rho}) + d_2(\rho(t),\bar{\rho}) + \frac{1}{2} \int_{\mathbb{R}^3} (\rho(t) - \bar{\rho})\, \Phi_\alpha* (\rho(t) - \bar{\rho}) \dd \boldsymbol{x},
    \end{split}
\end{align}
where the relative free-energy functionals $d_1(\varrho,\bar{\rho})$ and $d_2(\varrho,\bar{\rho})$, for $\varrho,\bar{\rho} \in X_M$, are defined by
\[
    \begin{cases}
        \displaystyle
        d_1(\varrho,\bar{\rho}) \coloneq \int_{\mathbb{R}^3}\bigg((\varrho e(\varrho)-\bar{\rho}e(\bar{\rho}))+ (\varrho-\bar{\rho}) \bigg(\Phi_\alpha\ast\bar{\rho}+\int_{r(\boldsymbol{x})}^\infty \frac{L(m_{\bar{\rho}}(s))}{s^3} \dd s\bigg) \bigg )\dd \boldsymbol{x}, \vspace{0.2cm} \\
        \displaystyle
        d_2(\varrho,\bar{\rho}) \coloneq \frac{1}{2} \int_{\mathbb{R}^3} \frac{\varrho L(m_{\varrho}(r(\boldsymbol{x}))) - \bar{\rho}L(m_{\bar{\rho}}(r(\boldsymbol{x})))}{r(\boldsymbol{x})^{2}} \dd \boldsymbol{x} - \int_{\mathbb{R}^3} (\varrho - \bar{\rho}) \int_{r(\boldsymbol{x})}^\infty \frac{L(m_{\bar{\rho}}(s))}{s^3} \dd s \dd \boldsymbol{x}.
    \end{cases}
\]
In the case where $\bar{\rho}$ is a rotating Riesz star solution of \eqref{0.0}, it follows from \eqref{stationaxi} that there exists $\mu > 0$ such that
\begin{align*}
    d_1(\varrho,\bar{\rho}) & = \int_{\mathbb{R}^3}\bigg((\varrho e(\varrho)-\bar{\rho}e(\bar{\rho}))+ (\varrho-\bar{\rho}) \bigg(\Phi_\alpha\ast\bar{\rho}+\int_{r(\boldsymbol{x})}^\infty \frac{L(m_{\bar{\rho}}(s))}{s^3} \dd s + \mu \bigg) \bigg )\dd \boldsymbol{x} \\
    & \geq \int_{\mathbb{R}^3}\big((\varrho e(\varrho)-\bar{\rho}e(\bar{\rho}))+(\varrho-\bar{\rho})(\bar{\rho}e(\bar{\rho}))_{\bar{\rho}}\big)\,\dd \boldsymbol{x}.
\end{align*}
By \eqref{Press}, the mapping $\varrho \mapsto \varrho e(\varrho)$ is convex, and hence $d_1(\varrho,\bar{\rho}) \geq 0$. On the other hand, to ensure non-negativity of $d_2(\varrho,\bar{\rho})$, we must impose the additional technical assumption
\smallskip
    \begin{enumerate}
         \item[(A$_4$)] $\displaystyle \lim_{r \to 0^+} L(m_\rho(r) + m_{\bar{\rho}}(r)) m_{\rho - \bar{\rho}}(r) r^{-2} = 0.$
     \end{enumerate}

     \begin{lemma}[\text{\cite[Lemma 3.8]{Luo_2008}}]\label{d2}
         Suppose $L$ satisfies \eqref{Lnice} and
         \begin{equation}\label{Lstable}
             L'(m) \geq 0,
         \end{equation}
         for all $0\leq m \leq M$, then for any $\varrho \in X_M$ satisfying
         \[
            \displaystyle \lim_{r \to 0^+} L(m_\varrho(r) + m_{\bar{\rho}}(r)) m_{\varrho - \bar{\rho}}(r) r^{-2} = 0,
         \]
         we have that
         $$d_2(\varrho,\bar{\rho}) \geq 0.$$
     \end{lemma}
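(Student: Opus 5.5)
Everything reduces to a one-dimensional problem in the cylindrical mass variable. Write $m(r)=m_\varrho(r)$ and $\bar m(r)=m_{\bar\rho}(r)$. Both are nondecreasing, absolutely continuous in $r$, vanish at $r=0$, tend to $M$ as $r\to\infty$, and satisfy $\d m = 2\pi r\int\varrho(r,z)\,\d z\,\d r$.

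\textbf{Step 1: rewrite the two terms of $d_2$.} Integrating first over $z$ and $\theta$ gives
\[
\int_{\R^3}\frac{\varrho L(m_\varrho(r))}{r^2}\,\d\boldsymbol{x}=\int_0^\infty \frac{L(m(r))}{r^2}\,\d m(r).
\]
For the second term, set $\Psi(r)=\int_r^\infty L(\bar m(s))s^{-3}\,\d s$. Then
\[
\int_{\R^3}(\varrho-\bar\rho)\Psi\,\d\boldsymbol{x}=\int_0^\infty \Psi(r)\,\d(m-\bar m)(r).
\]
Integrating by parts, the boundary term at $\infty$ vanishes because $m-\bar m\to 0$ and $\Psi$ is bounded there. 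This yields
\[
\int_0^\infty (m-\bar m)(r)\,\frac{L(\bar m(r))}{r^3}\,\d r + \big[\text{boundary term at } 0\big].
\]

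\textbf{Step 2: integrate the first term by parts.} Let $\mathcal L(m)=\int_0^m L$. Then $\frac{L(m)}{r^2}\,\d m=r^{-2}\,\d\mathcal L(m(r))$. Integrating by parts,
\[
\int_0^\infty\frac{L(m)}{r^2}\,\d m = 2\int_0^\infty\frac{\mathcal L(m(r))}{r^3}\,\d r + \text{boundary terms}.
\]
The boundary term at $\infty$ vanishes because $\mathcal L(M)/r^2\to0$.

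\textbf{Step 3: combine into a pointwise inequality.} Up to boundary terms, which are controlled below, this gives
\[
d_2(\varrho,\bar\rho)=\int_0^\infty \frac{1}{r^3}\Big(\mathcal L(m)-\mathcal L(\bar m)-L(\bar m)(m-\bar m)\Big)\,\d r.
\]
The hypothesis \eqref{Lstable} says $L$ is nondecreasing, so $\mathcal L$ is convex on $[0,M]$. The integrand is therefore pointwise nonnegative (it is the Bregman divergence of $\mathcal L$), and $d_2\geq0$ follows. Finiteness of each piece comes from $\varrho,\bar\rho\in X_M$. Near $r=0$ the integrand is bounded by $\sup|L|\,|m-\bar m|^2 r^{-3}$-type quantities, and these are controlled using the finite rotational energy.

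\textbf{Main obstacle: the boundary terms at $r=0$.} These are $r^{-2}\big(\mathcal L(m)-\mathcal L(\bar m)\big)$ from Step 2 and $\Psi(r)(m-\bar m)(r)$ from Step 1, and both must vanish as $r\to0^+$.

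For the first, the mean value theorem and monotonicity of $L$ give
\[
|\mathcal L(m)-\mathcal L(\bar m)|\le L(m+\bar m)\,|m-\bar m|,
\]
so it tends to $0$ by the stated hypothesis (A$_4$) form.

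For the second, monotonicity again gives $\Psi(r)\le \int_r^\infty L(\bar m(s))s^{-3}\,\d s$, and I would bound this near $0$ by $\tfrac12 L(\bar m(r))r^{-2}$ plus a bounded remainder. This uses the fact that $L(\bar m(s))$ is small for small $s$ and increasing. More carefully, I would first integrate by parts on $[\epsilon,\infty)$, keep the $\epsilon$-boundary terms explicitly, and only then use the hypothesis $L(m+\bar m)(m-\bar m)r^{-2}\to0$ to send $\epsilon\to0$. The remaining piece $(m-\bar m)(\epsilon)\int_\epsilon^{\infty}\ldots$ is handled by splitting the integral at a fixed $\delta$ and using monotonicity of $L\circ\bar m$.
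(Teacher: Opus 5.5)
Your reduction to the cylindrical mass variable is right, and so is the identification of the bulk integrand as $r^{-3}\int_{\bar m}^{m}\bigl(L(s)-L(\bar m)\bigr)\,\d s\ge0$. This is the standard convexity argument for this lemma. Your control of the boundary term $r^{-2}\bigl(\mathcal L(m)-\mathcal L(\bar m)\bigr)$ via (A$_4$) is also fine.

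The gap is the other boundary term, $(m-\bar m)(\epsilon)\,\Psi(\epsilon)$ with $\Psi(r)=\int_r^\infty L(\bar m(s))s^{-3}\,\d s$. It arises when you integrate by parts on $[\epsilon,\infty)$, and it must tend to $0$.

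Your proposed bound $\Psi(r)\le\frac12 L(\bar m(r))r^{-2}+O(1)$ is backwards. Since $s\mapsto L(\bar m(s))$ is nondecreasing, monotonicity gives $\Psi(r)\ge\frac12 L(\bar m(r))r^{-2}$. The reverse inequality fails for general nondecreasing profiles: if $L(\bar m(s))\sim s^2$ near $0$, then $\Psi(\epsilon)\sim\log(1/\epsilon)$ while $L(\bar m(\epsilon))\epsilon^{-2}$ stays bounded.

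The fallback of splitting at a fixed $\delta$ fails for the same reason. Monotonicity only gives $|m-\bar m|(\epsilon)\int_\epsilon^\delta L(\bar m(s))s^{-3}\,\d s\le\frac12 L(\bar m(\delta))\,|m-\bar m|(\epsilon)\,\epsilon^{-2}$. But (A$_4$) controls $|m-\bar m|(\epsilon)\epsilon^{-2}$ only after multiplication by the vanishing factor $L(m(\epsilon)+\bar m(\epsilon))$, not by the fixed constant $L(\bar m(\delta))$. So (A$_4$) is not what kills this term, and as written your argument does not show that it vanishes.

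The missing ingredient is that $\bar\rho$ is a rotating Riesz star, not an arbitrary element of $X_M$.
\begin{itemize}
\item On $\Gamma$, \eqref{stationaxi} together with $(\bar\rho e(\bar\rho))_{\bar\rho}\ge0$ and $\mu>0$ gives $\Psi(r(\boldsymbol x))\le-\Phi_\alpha*\bar\rho(\boldsymbol x)\le\|\Phi_\alpha*\bar\rho\|_{L^\infty}<\infty$, using $\bar\rho\in L^1\cap L^\infty$.
\item Since $\Psi$ is nonincreasing, either $\Gamma$ accumulates at the axis and $\Psi(0^+)<\infty$, or $\bar m\equiv0$ near $r=0$ and again $\Psi(0^+)<\infty$. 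This is exactly \eqref{boundedkin} from Step 4 of the proof of Theorem \ref{lmulaxi}.
\item Then $|(m-\bar m)(\epsilon)|\,\Psi(\epsilon)\le\bigl(m(\epsilon)+\bar m(\epsilon)\bigr)\Psi(0^+)\to0$, and your proof closes.
\end{itemize}

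Two other routes also work. Whenever $(\varrho-\bar\rho)\Psi\in L^1$, so that $d_2$ is a genuine Lebesgue integral, monotonicity of $\Psi$ gives $|(m-\bar m)(\epsilon)|\,\Psi(\epsilon)\le\int_{\{r(\boldsymbol x)<\epsilon\}}|\varrho-\bar\rho|\,\Psi\,\d\boldsymbol x\to0$. Alternatively, Tonelli gives $\int_{\R^3}\varrho\Psi\,\d\boldsymbol x=\int_0^\infty m(s)L(\bar m(s))s^{-3}\,\d s$ and $\int_{\R^3}\varrho L(m(r))r^{-2}\,\d\boldsymbol x=2\int_0^\infty\mathcal L(m(s))s^{-3}\,\d s$, which avoids boundary terms altogether.

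Finally, you do not need finiteness of the Bregman integral near $r=0$. It is nonnegative, so passing to the limit $\epsilon\to0$ in the truncated identity is enough.
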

     The term on the right-hand side of \eqref{differenceaxi} can be estimated using Hölder’s inequality together with the Hardy--Littlewood--Sobolev inequality from Lemma \ref{simplehls}, yielding
     \[
    \Big|\int_{\mathbb{R}^3} (\rho(t) - \bar{\rho}) \,\Phi_\alpha* (\rho(t) - \bar{\rho}) \dd \boldsymbol{x}\Big| \leq C \| \rho(t) - \bar{\rho} \|_{L^\frac{6}{6 - \alpha}}^2.
    \]
    This leads us to the following stability theorem.

\begin{theorem}[Nonlinear Stability of Rotating Riesz Star Solutions]\label{nsa}
Suppose $\bar{\rho}$ is a unique minimiser (up to translation in the $z$-axis) of the functional $\mathcal{G}$ in $X_M$. Suppose $\alpha \in (0,2)$ with $p \in C^1([0,\infty))$ satisfying \eqref{Press}, \eqref{higherpress}--\eqref{lower} and $L$ satisfies \eqref{Lnice}, \eqref{L} and \eqref{Lstable}. Let $(\rho,\boldsymbol{\M})$ be a global finite-energy weak solution of the CEREs in the sense of {\rm Definition {\ref{definition}}} satisfying {\rm(I}$_1)$--{\rm(I}$_3)$ and {\rm(A}$_1)$--{\rm(A}$_4)$ with the square of the angular momentum given by $L(m_\rho(r(\boldsymbol{x})))$. Then, for any $\v>0,$ there exists $\delta>0,$ such that if
\begin{align*}
d_1(\rho_0,\bar{\rho}) + d_2(\rho_0,\bar{\rho}) + \| \rho_0 - \bar{\rho} \|_{L^\frac{6}{6 - \alpha}}^2+\frac{1}{2}\int_{\R^3} \bigg ( \bigg|\frac{\boldsymbol{\M}^r_0}{\sqrt{\rho_0}}\bigg|^2 + \bigg|\frac{\boldsymbol{\M}^3_0}{\sqrt{\rho_0}}\bigg|^2 \bigg)\,\dd \boldsymbol{x}<\delta,
\end{align*}
there exists a translation $\boldsymbol{y}\in \{\boldsymbol{0}\} \times \mathbb{R},$ such that
\[
d_1(\rho(t),T^{\boldsymbol{y}}\bar{\rho}) + d_2(\rho(t),T^{\boldsymbol{y}}\bar{\rho}) + \| \rho(t) - T^{\boldsymbol{y}}\bar{\rho} \|_{L^{\frac{6}{6 - \alpha}}}^2+\frac{1}{2}\int_{\R^3} \bigg ( \bigg|\frac{\boldsymbol{\M}^r}{\sqrt{\rho}}\bigg|^2 + \bigg|\frac{\boldsymbol{\M}^3}{\sqrt{\rho}}\bigg|^2 \bigg)(t)\,\dd\boldsymbol{x}<\v,
\]
for {\it a.e.} $t>0$.
\end{theorem}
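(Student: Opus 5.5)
We argue by contradiction, following the scheme of Rein and Luo--Smoller adapted to the axisymmetric Riesz setting. The key input, established in \S\ref{stabilityyy}, is a compactness statement for minimising sequences. If $(\varrho_n)\subset X_M$ satisfies $\mathcal{G}(\varrho_n)\to g_M$, then there exist $\boldsymbol{y}_n\in\{\boldsymbol{0}\}\times\R$ such that a subsequence of $T^{-\boldsymbol{y}_n}\varrho_n$ converges to a minimiser. The convergence is weakly in $L^{\frac{3+\alpha}{3}}$, and strongly in the sense that $\int(\varrho_n-\bar\rho)\Phi_\alpha*(\varrho_n-\bar\rho)\to0$ and $\|\varrho_n-\bar\rho\|_{L^{6/(6-\alpha)}}\to0$. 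This uses the axisymmetric concentration compactness argument: dichotomy is excluded by \eqref{L} together with strict subadditivity of $g_M$, and escape of rings to infinity is excluded because it forces the potential energy to vanish, contradicting $g_M<0$ (from \eqref{lower}). Uniqueness of $\bar\rho$ up to $z$-translation then identifies the limit as $T^{\boldsymbol{y}}\bar\rho$.

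\textbf{Step 1: energy conservation and relative energy.} By (I$_1$)--(I$_3$) and (A$_1$)--(A$_3$), \eqref{Lform} holds for all $t$. Consequently the energy inequality (b) of Definition \ref{definition} gives, for a.e.\ $t$,
\[
E(\rho,\boldsymbol{\M}\,|\,\bar\rho,\midbar{\boldsymbol{\M}})(t)\le E(\rho,\boldsymbol{\M}\,|\,\bar\rho,\midbar{\boldsymbol{\M}})(0).
\]
Using \eqref{differenceaxi} and the HLS bound on the cross term, the right-hand side is at most $C\delta$. In particular, $\mathcal{G}(\rho(t))\le g_M+C\delta$, so any family $\rho(t_n)$ with initial data $\delta_n\to0$ is a minimising sequence for $\mathcal{G}$ on $X_M$. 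Mass is conserved and $\rho(t)\in X_M$, which needs a bound on the rotational term; this bound follows from finiteness of the energy.

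\textbf{Step 2: contradiction.} Suppose the claim fails for some $\v>0$. Then there are initial data with $\delta_n\to0$, solutions $(\rho_n,\boldsymbol{\M}_n)$, and times $t_n$ such that for every $\boldsymbol{y}\in\{\boldsymbol{0}\}\times\R$ the quantity
\[
D_n(\boldsymbol{y}):=d_1+d_2+\|\rho_n(t_n)-T^{\boldsymbol{y}}\bar\rho\|^2_{L^{6/(6-\alpha)}}+\tfrac12\int\big(|\boldsymbol{\M}^r_n|^2+|\boldsymbol{\M}^3_n|^2\big)/\rho_n
\]
is at least $\v$. By Step 1, $\varrho_n:=\rho_n(t_n)$ is minimising. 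Applying the compactness result gives $\boldsymbol{y}_n$ with $\|\varrho_n-T^{\boldsymbol{y}_n}\bar\rho\|_{L^{6/(6-\alpha)}}\to0$ and vanishing potential cross term, after translating both functions. Relative energy is translation invariant in $z$, so rewriting \eqref{differenceaxi} around $T^{\boldsymbol{y}_n}\bar\rho$ gives
\[
d_1+d_2+\tfrac12\int\frac{|\boldsymbol{\M}^r_n|^2+|\boldsymbol{\M}^3_n|^2}{\rho_n}\le C\delta_n-\tfrac12\int(\varrho_n-T^{\boldsymbol{y}_n}\bar\rho)\Phi_\alpha*(\varrho_n-T^{\boldsymbol{y}_n}\bar\rho)\to0.
\]
Each term on the left is nonnegative: $d_1\ge0$ by convexity via \eqref{Press}, and $d_2\ge0$ by Lemma \ref{d2} using (A$_4$) and \eqref{Lstable}. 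Hence $D_n(\boldsymbol{y}_n)\to0$, which is a contradiction.

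\textbf{Main obstacle.} The hard part is the compactness lemma. Lions' framework must be run in axisymmetric variables, where only vertical translations are available. The dichotomy case must produce two axisymmetric pieces, and the nonlocal rotational term $L(m_\varrho)$ must split subadditively; for this I would use \eqref{L}, since truncating in $r$ or $z$ changes $m_\varrho$. The ``tight along rings of diverging radius'' scenario must also be ruled out. My approach would be to show that mass spread over a ring of radius $R_n\to\infty$ with bounded cross-section satisfies $\int\varrho\,\Phi_\alpha*\varrho\to0$. The reason is that the ring is nearly one-dimensional at scale $R_n$: its potential self-energy is bounded by $C(\text{local }L^{p}\text{ mass})\cdot R_n^{-\min(\alpha,1)}$-type decay plus a small error, using $\alpha<2$ so the one-dimensional averaged kernel is integrable. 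This would give $g_M\ge0$, contradicting \eqref{lower}. Upgrading weak convergence to convergence in $L^{6/(6-\alpha)}$ uses convergence of $\mathcal{G}$, together with lower semicontinuity of each piece and strict convexity of $\varrho e(\varrho)$.
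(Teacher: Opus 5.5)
Your proposal is correct and follows essentially the same route as the paper. You argue by contradiction, use the energy inequality to make $\rho_n(t_n)$ a minimising sequence for $\mathcal{G}$ on $X_M$, invoke Theorem~\ref{minimiseraxi} with uniqueness to get convergence to a vertical translate of $\bar\rho$ in $L^1\cap L^{\frac{3+\alpha}{3}}$ (hence in $L^{\frac{6}{6-\alpha}}$ by interpolation), and read off from \eqref{differenceaxi}, with $d_1,d_2\geq 0$, that every term of the distance tends to zero.

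Two minor corrections to your sketch of the compactness lemma, which the paper already supplies. First, splitting the rotational term in the dichotomy case uses only the monotonicity \eqref{Lstable}; \eqref{L} enters instead through the scaling comparison of Lemma~\ref{compareminb}, which yields strict subadditivity. Second, excluding escaping rings does not need $\alpha<2$, which is used instead to guarantee $g_M<0$.
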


\begin{remark}
    We adopt an alternative approach to that of {\rm\cite{Luo_2008}} in order to avoid difficulties associated with the estimate
    \[
         \bigg \| \int_{|\boldsymbol{x} - \boldsymbol{y}| \leq 1} |\boldsymbol{x}-\boldsymbol{y}|^{-1} \rho(\boldsymbol{y}) \dd y \bigg \|_{L^4(\mathbb{R}^3)} \leq \|\mathds{1}_{B_1(\boldsymbol{x})} \rho \|^b_{L^1(\mathbb{R}^3)} \|\rho\|^{1-b}_{L^\frac{4}{3}(\mathbb{R}^3)} + \|\mathds{1}_{B_1(\boldsymbol{x})} \rho \|^c_{L^1(\mathbb{R}^3)} \|\rho\|^{1-c}_{L^\frac{4}{3}(\mathbb{R}^3)},
    \]
    for some $b,c \in (0,1)$, since the right-hand side depends on the variable $\boldsymbol{x}$, so the displayed bound does not define a scalar $L^4$-estimate as written. In {\rm\cite{Luo_2008}}, this estimate is used in the argument excluding vanishing of minimising sequences and establishing tightness, up to the natural vertical translations. Instead, we prove that if a minimising sequence fails to be tight near the origin after vertical translations, then the corresponding potential energy necessarily vanishes in the limit, leading to a contradiction. We also emphasise that our analysis extends beyond the Newtonian setting to sub-Manev Riesz potentials. Moreover, our framework yields an existence result for minimisers in a substantially more general variational setting.
\end{remark}

\smallskip

In the polytropic mass-supercritical regime ($\frac{6}{6-\alpha} < \gamma < \frac{3 + \alpha}{3}$), the free-energy functional $\mathcal{G}$ is no longer bounded from below on $X_M$. Consequently, following the approach of \cite{Carrillo_2026}, we introduce an alternative free-energy functional of the form
\[
    S_\mu(\varrho) \coloneq \frac{1}{2} \int_{\mathbb{R}^3} \frac{\varrho L(m_{\varrho}(r))}{r^2} \dd \boldsymbol{x} + \frac{a_0}{\gamma - 1} \int_{\mathbb{R}^3} \varrho^\gamma \dd \boldsymbol{x} + \frac{1}{2} \int_{\mathbb{R}^3} \varrho \Phi_\alpha * \varrho \dd \boldsymbol{x} + \mu \int_{\mathbb{R}^3} \varrho \dd \boldsymbol{x},
\]
for $\mu > 0$. If, following \cite{Carrillo_2026}, we analyse the behaviour of $S_\mu$ under mass-preserving scalings,
\begin{equation}\label{rescaling}
    \lambda \mapsto \varrho_\lambda(\boldsymbol{x}) := \lambda^\frac{3}{2} \varrho(\lambda^\frac{1}{2} \boldsymbol{x}),
\end{equation}
we observe substantially different behaviour. Differentiating the rescaled functional $S_{\mu,\lambda}(\varrho) := S_{\mu}(\varrho_\lambda)$ with respect to the mass-preserving scaling parameter yields the functional
\[
    Q(\varrho) \coloneq \int_{\mathbb{R}^3} \frac{\varrho L(m_{\varrho}(r))}{r^2} \dd \boldsymbol{x} + 3 a_0 \int_{\mathbb{R}^3} \varrho^\gamma \dd \boldsymbol{x} + \frac{\alpha}{2} \int_{\mathbb{R}^3} \varrho \Phi_\alpha * \varrho \dd \boldsymbol{x}.
\]
While in the stationary setting ($L \equiv 0$), for any $\varrho \in L^1_+(\mathbb{R}^3) \cap L^\gamma(\mathbb{R}^3)$, there exists a unique scaling parameter $\lambda^*(\varrho)$ such that $Q(\varrho_{\lambda^*(\varrho)}) = 0$. However, due to the presence and scaling behaviour of the rotational kinetic energy, this property no longer holds in general in the rotating setting. More precisely, when $\alpha \in (2,3)$, one can still prove the existence of a unique scaling $\lambda^*(\varrho)$ satisfying $Q(\varrho_{\lambda^*(\varrho)}) = 0$. By contrast, for $\alpha \in (0,2]$, the situation becomes considerably more delicate. In this regime, it is no longer guaranteed that there exists any $\lambda>0$ such that $Q(\varrho_{\lambda}) = 0$. Moreover, when $\alpha \in (0,2)$, even if such a scaling exists, uniqueness may fail. In fact, there may exist up to two distinct critical scalings $0< \lambda^*_1(\varrho) \leq \lambda^*_2(\varrho)$ such that $Q(\varrho_{\lambda_i^*(\varrho)}) = 0$ for $i=1,2$. While, in the stationary setting, the scaling $\lambda^*(\varrho)$ represents the global maximum of $\lambda \mapsto S_{\mu,\lambda}(\varrho)$, in the rotating case with $\alpha \in (0,2)$, if the two critical scalings $\lambda_1^*(\varrho)$ and $\lambda_2^*(\varrho)$ exist and are distinct, then $\lambda_1^*(\varrho)$ corresponds to a local maximum of $\lambda \mapsto S_{\mu,\lambda}(\varrho)$, whereas $\lambda_2^*(\varrho)$ corresponds to a local minimum. If the two critical scalings coincide, that is, $\lambda^*_1(\varrho) = \lambda^*_2(\varrho)$, then the corresponding point is an inflection point. Consequently, minimising the functional $S_\mu$ over the class of densities satisfying $Q(\varrho)=0$ is insufficient for constructing rotating Riesz star solutions. Indeed, $S_\mu$ is not even bounded below on this set. This necessitates the introduction of a more refined admissible class. Since $\lambda_1^*(\varrho)$ corresponds to the local maximum branch, one expects, in analogy with the stationary framework of \cite{Carrillo_2026}, that minimising over densities satisfying $\lambda^*_1(\varrho) = 1$ should produce rotating Riesz star solutions. Furthermore, as we will show, this condition is equivalent to requiring that $Q(\varrho) = 0$ and $Q(\varrho_\lambda) > 0$, for all $0<\lambda<1$. Motivated by this observation, we introduce the admissible set
\[
    \mathcal{K} \coloneq \{ \varrho \in L^1_+(\mathbb{R}^3) \cap L_{\rm axi}^\gamma(\mathbb{R}^3) \, | \, Q(\varrho) = 0, \, Q(\varrho_\lambda) > 0 \text{ for all } 0 < \lambda < 1, \,  \varrho \not\equiv 0\}.
\]
Note that, in the case $L \equiv 0$ and without the axisymmetry constraint, the admissible set $\mathcal{K}$ reduces to the corresponding admissible class introduced in \cite{Carrillo_2026}. Observe also that the definition of $\mathcal{K}$ imposes no restriction on the mass of a density $\varrho$. Consequently, it is no longer appropriate to consider angular momentum functions $L$ of the form \eqref{Lnice}. Instead, we assume that $L \in C^1([0,\infty))$ satisfies the following conditions:
\begin{equation}\label{Lnice2}
    \begin{cases}
        L(0)=0, \\
        L(m) \geq 0 \text{ for all } m \in [0,\infty).
    \end{cases}
\end{equation}
Note that any function $L$ of the form \eqref{Lnice} can be extended to a function $\bar{L}$ satisfying \eqref{Lnice2}.

\smallskip

In the polytropic setting, if $L(m) = m^{\omega^*}$, where
\begin{equation}\label{omegastar}
    \omega_* \coloneq \frac{7-\alpha -(5-\alpha)\gamma}{3 + \alpha - 3\gamma},
\end{equation}
with ${\bar{\rho}}_1$ a rotating Riesz star solution in the sense of Definition \ref{rotatingriesz} corresponding to $\mu = 1$, then under the energy-critical scaling
\begin{equation}\label{energycritical}
    \leftindex^\xi{\varrho}(\boldsymbol{x}) := \xi^{3 - \alpha} \rho \big (\xi^{2-\gamma} \boldsymbol{x} \big ),
    \end{equation}
    for $\xi > 0$, we have that $\bar{\rho}_\mu = \leftindex^{\mu^\frac{1}{(\gamma-1)(3-\alpha)}}{\bar{\rho}}_1$ satisfies Definition \ref{rotatingriesz} for any $\mu > 0$. That is,
    \begin{equation*}
        \begin{split}
            \bar{\rho}_\mu(\boldsymbol{x}) & = \Big ( \frac{\gamma - 1}{a_0 \gamma} \Big )^\frac{1}{\gamma - 1} \bigg (- \mu \int^\infty_r L(m_{\bar{\rho}_1}(s)) s^{-3} \dd s - \mu \Phi_\alpha * \bar{\rho}_1 - \mu \bigg)_+^\frac{1}{\gamma - 1}\Big (\mu^\frac{2-\gamma}{(\gamma - 1)(3-\alpha)}\boldsymbol{x} \Big) \\
            & = \Big ( \frac{\gamma - 1}{a_0 \gamma} \Big )^\frac{1}{\gamma - 1} \bigg (- \int^\infty_r L(m_{\bar{\rho}_\mu}(s)) s^{-3} \dd s - \Phi_\alpha * \bar{\rho}_\mu - \mu \bigg )_+^\frac{1}{\gamma - 1}(\boldsymbol{x}).
        \end{split}
    \end{equation*}
    Thus, there exists a scaling between rotating Riesz star solutions, this highlights the criticality of $\omega_*$. However, in the mass-supercritical regime with $\alpha \in (0,2]$, in order to establish the existence of minimisers of $S_\mu$ over $\mathcal{K}$ for angular momentum functions $L$ satisfying \eqref{Lnice2}, we must impose a stronger superhomogeneity condition of order $\omega > \omega_*$ on the square of the angular momentum, namely,
\begin{equation}\label{Linstab}
L(am) \leq a^\omega L(m),
\end{equation}
for all $0<a<1$ and $m \geq 0$. Such an assumption is necessary not only to ensure that the admissible set $\mathcal{K}$ is non-empty, but also, under the stronger condition $\omega > \omega_* + \bar{\omega}$, to guarantee that minimisers of the variational problem indeed correspond to rotating Riesz star solutions. Here,
\[
    \bar{\omega} = \frac{(2-\alpha)((6-\alpha)\gamma - 6)(5-3\gamma)}{3\alpha (\gamma - 1)(3+\alpha -3\gamma)} \geq 0.
\]
We also provide evidence that the condition $\omega > \omega_* + \bar{\omega}$ is intrinsic to the problem rather than merely a technical artefact; see Remark \ref{nottechnical}.
\begin{remark}
        The additional term $\bar{\omega}$ reflects the stabilising effect introduced by rotation in the steady state configuration. In particular, as $\gamma$ approaches the mass-critical exponent $\frac{3+\alpha}{3}$, stronger superhomogeneity assumptions on $L$ are required. This stabilising effect is specific to the regime $\alpha \in (0,2)$, and becomes weaker as $\alpha$ approaches $2$. Indeed, the corresponding restriction on the superhomogeneity of $L$ relaxes in this limit, and in particular, $\bar{\omega} \to 0$ as $\alpha \to 2$. 
    \end{remark}
    
To establish the existence of minimisers of $S_\mu$ over $\mathcal{K}$, we additionally require weak lower semi-continuity of the rotational kinetic energy term
\[
    \varrho \mapsto \int_{\mathbb{R}^3} \frac{\varrho L(m_{\varrho}(r))}{r^2} \dd \boldsymbol{x}.
\]
To ensure this property, we impose the condition that
\begin{equation}\label{Lstable2}
    L'(m) \geq 0,
\end{equation}
for all $m \geq 0$. Thus, we have the following theorem.

\begin{theorem}\label{Existofrot}
    Suppose $\alpha\in(0,3)$, $\frac{6}{6-\alpha}<\gamma<\frac{3+\alpha}{3}$, $L$ satisfies \eqref{Lnice2}, \eqref{Lstable2} and, for $\alpha \in (0,2]$, \eqref{Linstab} for $\omega > \omega_* + \bar{\omega}$. If $\bar{\rho}$ is a minimiser of $S_\mu$ over $\mathcal{K}$, then $\bar{\rho} \in C(\mathbb{R}^3)$ and is a rotating Riesz star solution as given by Definition \ref{rotatingriesz} with compact support.
\end{theorem}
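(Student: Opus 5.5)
The plan is to derive the Euler--Lagrange equation for a minimiser $\bar\rho$ of $S_\mu$ over $\mathcal{K}$, identify the Lagrange multiplier attached to the constraint $Q=0$, and show it vanishes. This leaves the stationary relation \eqref{stationaxi}, from which continuity and compact support follow by standard potential estimates.

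\textbf{Step 1 (variations of the constraint).} The key point is that $\mathcal{K}$ is, locally, the level set $\{Q=0\}$, and that the open condition $Q(\varrho_\lambda)>0$ for $0<\lambda<1$ survives small perturbations. For this I use the structure of $\lambda\mapsto Q(\varrho_\lambda)$. Write $K(\varrho)=\int \varrho L(m_\varrho)r^{-2}$, $A=\int\varrho^\gamma$, $B=-\frac12\int\varrho\Phi_\alpha*\varrho$. Since $m_{\varrho_\lambda}(r)=m_\varrho(\lambda^{1/2}r)$, we get $K(\varrho_\lambda)=\lambda K(\varrho)$, $A(\varrho_\lambda)=\lambda^{\frac32(\gamma-1)}A$ and $B(\varrho_\lambda)=\lambda^{\alpha/2}B$. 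Hence
\[
Q(\varrho_\lambda)=\lambda K+3a_0\lambda^{\frac32(\gamma-1)}A-\alpha\lambda^{\alpha/2}B.
\]
For $\bar\rho$ with $\lambda_1^*(\bar\rho)=1$, I need $\frac{d}{d\lambda}Q(\bar\rho_\lambda)|_{\lambda=1}<0$, i.e.\ a transversal (non-degenerate) crossing. This is where the superhomogeneity \eqref{Linstab} with $\omega>\omega_*+\bar\omega$ enters. First, the Lemma on $\kappa(\varrho)$ (Lemma \ref{smallalpha}) shows that the degenerate case $\lambda_1^*=\lambda_2^*$ cannot occur for a minimiser. To see this, compare $S_\mu$ along the competitor obtained from the scaling \eqref{energycritical} combined with the mass dilation $\varrho\mapsto a\varrho$: superhomogeneity reduces the rotational energy faster than the other terms, which produces a strictly lower value of $S_\mu$ in $\mathcal{K}$ unless the crossing is transversal. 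For $\alpha\in(2,3)$ uniqueness of $\lambda^*$ gives transversality directly.

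\textbf{Step 2 (Euler--Lagrange equation).} With transversality established, perturb $\bar\rho+\varepsilon\eta$, where $\eta$ is axisymmetric and supported where $\bar\rho\ge\delta$, or $\eta\ge0$ elsewhere. Rescale by $\lambda_1^*(\bar\rho+\varepsilon\eta)$, which is $C^1$ in $\varepsilon$ by the implicit function theorem, to return to $\mathcal{K}$. The first variation of $K$ is
\[
\frac{L(m_{\bar\rho}(r))}{r^2}+\int_r^\infty \frac{L'(m_{\bar\rho}(s))\,2\pi s\int\bar\rho(s,z)\,dz}{s^2}\,ds=\int_r^\infty L(m_{\bar\rho}(s))s^{-3}\,ds\cdot 2,
\]
after integration by parts; the factor $\frac12$ in $S_\mu$ then produces exactly the term in \eqref{stationaxi}. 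Since $\frac{d}{d\lambda}S_\mu(\bar\rho_\lambda)|_{\lambda=1}=\frac12 Q(\bar\rho)=0$, the scaling correction contributes nothing to first order. This yields
\[
\frac{a_0\gamma}{\gamma-1}\bar\rho^{\gamma-1}+\int_r^\infty L(m_{\bar\rho}(s))s^{-3}\,ds+\Phi_\alpha*\bar\rho=-\mu
\]
on $\Gamma$, with the inequality $\ge-\mu$ off $\Gamma$ obtained from the one-sided variations $\eta\ge0$. So no multiplier term appears at all, because $\mathcal{K}$ is invariant under the scaling and $S_\mu$ is stationary along it. The transversality from Step 1 is needed only to make the projection onto $\mathcal{K}$ smooth.

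\textbf{Step 3 (regularity and support).} From $\bar\rho\in L^1\cap L^\gamma$ and HLS, $\Phi_\alpha*\bar\rho$ is bounded and continuous. I would bootstrap through the identity $\bar\rho=c(\cdot)_+^{1/(\gamma-1)}$ for $\alpha<2$, or use the Hölder assumption for $\alpha\ge2$. The rotational term is continuous for $r>0$, nonnegative, and decreasing in $r$; near the axis it only helps, since it decreases $\bar\rho$. Hence $\bar\rho\in C(\mathbb{R}^3)$. For compact support: since $\Phi_\alpha*\bar\rho\to0$ as $|\boldsymbol x|\to\infty$ and the rotational integral is nonnegative, the right-hand side $-\mu-\Phi_\alpha*\bar\rho-\int_r^\infty\cdots$ is negative for large $|\boldsymbol x|$, so $\bar\rho$ vanishes there. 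Positivity $\mu>0$ is given.

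I expect the main obstacle to be Step 1: ruling out the degenerate inflection case $\lambda_1^*=\lambda_2^*$ for minimisers, and ensuring that nearby perturbations stay on the $\lambda_1^*$ branch rather than switching to $\lambda_2^*$. This is precisely where the quantitative threshold $\omega>\omega_*+\bar\omega$ must be used, via explicit comparison of $\kappa(\varrho)$ along the constructed competitors.
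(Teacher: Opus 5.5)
\textbf{There is a genuine gap at Step 1.}

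Your Steps 2 and 3 essentially reproduce the paper's argument: Case 1 of Step 2, followed by the regularity and support argument of Theorem \ref{lmulaxi}. Your observation that the rescaling correction contributes $\lambda'(0)\,Q(\bar\rho)/2=0$ is exactly the mechanism the paper uses. The gap is Step 1, which you rightly flag as the crux but only assert.

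Lemma \ref{smallalpha} does not show that minimisers are non-degenerate. It only identifies the degenerate case as $\lambda_1^*(\bar\rho)=\kappa(\bar\rho)=\lambda_2^*(\bar\rho)=1$, and nothing in your proposal excludes it. The competitor you sketch also does not work as stated:
\begin{itemize}
\item For $\xi>1$, ${}^{\xi}\bar\rho$ still has $\kappa=1$, but $Q({}^{\xi}\bar\rho)<0$ by \eqref{Qineq}, so it lies outside $\mathcal{K}$.
\item To return to $\mathcal{K}$ you must rescale by $\lambda_1^*({}^{\xi}\bar\rho)<1$. Since $\lambda\mapsto S_{\mu,\lambda}({}^{\xi}\bar\rho)$ is decreasing on $[\lambda_1^*,\lambda_2^*]\ni 1$, this projection \emph{raises} $S_\mu$.
\item Adding a mass dilation $\varrho\mapsto a\varrho$ changes nothing. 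Within the two-parameter scaling group, the elements of $\mathcal{K}$ near a degenerate $\bar\rho$ are exactly $({}^{\xi}\bar\rho)_{\lambda_1^*({}^{\xi}\bar\rho)}$ with $\xi\ge 1$.
\end{itemize}
The remark that superhomogeneity lowers the rotational energy faster only controls $S_\mu({}^{\xi}\bar\rho)$ itself. That is \eqref{alsonotnice} with $Q(\bar\rho)=0$, which is the real place where $\omega>\omega_*+\bar\omega$ is used. You never weigh this gain against the projection cost.

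\textbf{How to close it.} Your claim can be rescued, and doing so would give a genuinely different route. By \eqref{secondderi}, $t\mapsto Q(({}^{\xi}\bar\rho)_t)/(2t)$ has a minimum at $t=\kappa=1$ with nonzero second derivative. Hence $1-\lambda_1^*({}^{\xi}\bar\rho)\lesssim|Q({}^{\xi}\bar\rho)|^{1/2}$, and the projection cost $\int_{\lambda_1^*}^1|Q(({}^{\xi}\bar\rho)_t)|\,(2t)^{-1}\,\mathrm{d}t$ is at most $(1-\lambda_1^*)\,|Q({}^{\xi}\bar\rho)|/2=O(|Q({}^{\xi}\bar\rho)|^{3/2})$. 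On the other side:
\begin{itemize}
\item the gain $S_\mu(\bar\rho)-S_\mu({}^{\xi}\bar\rho)$ is at least $c(\xi-1)$;
\item the gain is also at least half the drop in rotational energy minus $C(\xi-1)$;
\item $|Q({}^{\xi}\bar\rho)|$ is at most that drop plus $C(\xi-1)$.
\end{itemize}
A case split on whether the drop is $O(\xi-1)$ then shows that the competitor beats $\bar\rho$ for $\xi$ close to $1$.

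\textbf{What the paper does instead.} It never proves transversality. It splits according to the sign of $Q((\bar\rho_\tau)_{\kappa(\bar\rho_\tau)})$:
\begin{itemize}
\item If this is negative for all small $\tau$, it applies the implicit function theorem only for $\tau>0$, together with the mean value theorem. The possibly singular $\lambda'(\tau)$ is then multiplied by $Q=0$.
\item Otherwise it rescales by the smooth map $\tau\mapsto\kappa(\bar\rho_\tau)$. It then uses Lemma \ref{nonempty2} to get $S_\mu\ge\ell_\mu$ on $\{\kappa=1,\ 0\le Q\ \text{small}\}$, even outside $\mathcal{K}$.
\end{itemize}

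\textbf{Two smaller points.}
\begin{itemize}
\item Your perturbations must be supported in $\{r\ge\varepsilon\}$, so that the rotational term can be differentiated by dominated convergence.
\item There is no H\"older assumption on minimisers when $\alpha\ge 2$. The bootstrap via $\bar\rho\le C(-\Phi_\alpha*\bar\rho)^{1/(\gamma-1)}$ works for every $\alpha\in(0,3)$, precisely because $\gamma>\frac{6}{6-\alpha}$.
\end{itemize}
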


Furthermore, following the approach of \cite{Carrillo_2026}, we prove that these rotating Riesz star solutions are unstable under an additional assumption.

\section{Existence in the mass-subcritical regime}\label{stabilityyy}

In this section, we consider the existence of rotating Riesz star solutions of the CEREs \eqref{0.0} for a general pressure, with sufficiently nice asymptotic behaviour at the origin and infinity, which are inclusive of the mass-subcritical regime $\gamma > \frac{3+\alpha}{3}$ in the polytropic case. We recall the definition for the admissible set of functions
\begin{align*}
X_M = \bigg \{ \varrho \in L^1_+(\mathbb{R}^3) \cap L_{\rm axi}^\frac{3 + \alpha}{3}(\mathbb{R}^3) \, \Big | \, \int_{\mathbb{R}^3} \varrho \, \dd \boldsymbol{x} = M, \, \int_{\mathbb{R}^3} \frac{\varrho(\boldsymbol{x})L(m_{\varrho}(r(\boldsymbol{x})))}{r(\boldsymbol{x})^{2}} \dd\boldsymbol{x} < \infty \bigg \}.
\end{align*}
For $\varrho\in X_M,$ we recall the definition of the free-energy functional
\begin{align*}
\mathcal{G}(\varrho)=\int_{\mathbb{R}^3} \varrho(\boldsymbol{x})\Big( e(\varrho(\boldsymbol{x})) +\frac{1}{2} \frac{L(m_{\varrho}(r(\boldsymbol{x})))}{r(\boldsymbol{x})^{2}} +\frac{1}{2}(\Phi_\alpha\ast \varrho)(\boldsymbol{x})\Big)\,\dd\boldsymbol{x}.
\end{align*}

We develop a substantial adaptation of the concentration compactness framework of \cite{Lions_1984} in order to establish the existence of minimisers for the variational problem:
\begin{equation*}
g_M = \inf_{\varrho \in X_M} \mathcal{G}(\varrho).
\end{equation*}
We first prove some facts about minimisers of general free-energies. To do this, we consider a general non-local interaction kernel of the form 
\begin{equation}\label{potentialaxi}
        \Psi \in L^p_{\rm w}(\mathbb{R}^3) \, \text{{\rm (}weak $L^p$ space{\rm )}}, \text{ for } p \in (1,\infty), \text{ with } \Psi \geq 0 \text{ {\it a.e.}},
    \end{equation}
    such that there exists $\beta \in (0,3)$ satisfying
    \begin{equation}\label{Psiaxi}
        \Psi(t \xi) \geq t^{-\beta} \Psi(\xi),
    \end{equation}
    for all $t \geq 1$ and {\it a.e.} $\xi \in \mathbb{R}^3$. We take a function $\Pi$ satisfying
    \begin{equation}\label{convex}
        \begin{cases}
        \Pi:[0,\infty) \to [0,\infty) \text{ strictly convex,} \\
        \lim_{t \to 0^+} \Pi(t)t^{-1} = 0, \\
        \liminf_{t \to \infty} \Pi(t)t^{-q} > 0,
        \end{cases}
    \end{equation}
    where $q = \frac{p + 1}{p}$. 
    We consider a function $I$ such that
\begin{equation}\label{Iconditions}
    \begin{cases}
        I \in C^1([0,M]), \\
        I(0) = 0,\\
        \text{$I'(m) \geq 0$ for all $m \in [0,M]$}, \\
    \end{cases}
\end{equation}
we also require subhomogeneity of order $\frac{5-\beta}{3}$, that is
\begin{equation}\label{Iconditions2}
    I(am) \geq a^\frac{5 - \beta}{3}I(m),
\end{equation}
for all $0< a < 1$ and $0 \leq m \leq M$.
    We define the associated free-energy
\[
    \mathcal{F}(\varrho):=\int_{\mathbb{R}^3}\Big(\Pi(\varrho(\boldsymbol{x})) + \frac{\varrho(\boldsymbol{x}) I(m_{\varrho}(r(\boldsymbol{x})))}{r(\boldsymbol{x})^{2}} - \frac{1}{2}\varrho(\boldsymbol{x})(\Psi \ast \varrho)(\boldsymbol{x})\Big)\,\dd\boldsymbol{x},
\]
with the corresponding admissible set
\begin{equation*}
        \mathcal{A}_M := \bigg \{ \varrho \in L^1_+(\mathbb{R}^3) \cap L_{\rm axi}^q(\mathbb{R}^3) \, \Big | \, \int_{\mathbb{R}^3} \varrho \, \dd \boldsymbol{x} = M,\, \int_{\mathbb{R}^3} \frac{\varrho(\boldsymbol{x})I(m_{\varrho}(r(\boldsymbol{x})))}{r(\boldsymbol{x})^{2}} \dd\boldsymbol{x} < \infty \bigg \}.
    \end{equation*}
We define the minimum of $\mathcal{F}$ over $\mathcal{A}_M$ by
\begin{equation*}
    F_M := \inf_{\varrho \in \mathcal{A}_M} \mathcal{F}(\varrho).
\end{equation*}

To establish compactness of minimising sequences, we require assumptions on the mass of the distributions analogous to \eqref{higherpress} and \eqref{lower}. We begin by characterising how these mass conditions determine both the sign and finiteness of the minimised energy.

\begin{lemma}\label{negativeaxi}
Let $p \in (1,\infty)$, with $q=\frac{p+1}{p}$. Suppose that $\Psi$ satisfies \eqref{potentialaxi}, $\Pi$ satisfies \eqref{convex} and $I$ satisfies \eqref{Iconditions}.
\begin{enumerate}
    \item[{\rm(a)}] Suppose
    \begin{equation}\label{masscond}
    M < \bigg ( \frac{2 \liminf_{t \to \infty} \Pi(t) t^{-q}}{C_*^\Psi} \bigg )^\frac{1}{2-q},
    \end{equation}
    where $C_*^\Psi>0$ is the sharp constant from the weak Young's convolution inequality {\rm Lemma \ref{precised}}, then we have that $F_M > - \infty$.

    \smallskip

    \item[{\rm(b)}] Suppose that $p > \frac{3}{2}$, $\Psi$ satisfies \eqref{Psiaxi} for $\beta \leq  \frac{3}{p}$ and is axisymmetric with $r \mapsto \Psi(r,z)$ a radially decreasing function for {\it a.e.} $z$. If $\beta = \frac{3}{p}$, suppose 
    \begin{equation*}
    M > \bigg ( \frac{2 \limsup_{t \to 0} \Pi(t) t^{-q}}{C_*^\Psi} \bigg )^\frac{1}{2-q},
    \end{equation*}
    is satisfied, then we have that $F_M < 0$.
\end{enumerate}
\end{lemma}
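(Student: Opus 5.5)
For part (a), the plan is to drop the rotational term, which is non-negative because $I\geq 0$ on $[0,M]$ (a consequence of $I(0)=0$ and $I'\geq0$). It then suffices to bound $\int \Pi(\varrho) - \frac12\int\varrho\,\Psi*\varrho$ from below on $\mathcal{A}_M$. By the sharp weak Young inequality of Lemma \ref{precised},
\[
\int \varrho\,\Psi*\varrho \leq C_*^\Psi \|\Psi\|_{L^p_{\rm w}}\text{-normalised}\ \|\varrho\|_{L^q}^{2}\leq C_*^\Psi\, \|\varrho\|_{L^1}^{2-q}\|\varrho\|_{L^q}^q = C_*^\Psi M^{2-q}\int\varrho^q ,
\]
where the middle step uses interpolation, since $q=\frac{p+1}{p}$ gives $2/q' \cdot$ exponents matching; I take the lemma in whichever form yields $C_*^\Psi M^{2-q}\|\varrho\|_q^q$. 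Condition \eqref{masscond} then supplies $\epsilon>0$ and $T>0$ with $\Pi(t)\geq (\frac12 C_*^\Psi M^{2-q}+\epsilon)t^q$ for $t\geq T$.

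Split $\int\varrho^q$ into the regions $\{\varrho\geq T\}$ and $\{\varrho<T\}$. On the second region, $\int\varrho^q\leq T^{q-1}M$. This gives
\[
\mathcal F(\varrho)\geq \epsilon\int_{\{\varrho\ge T\}}\varrho^q - \tfrac12 C_*^\Psi M^{3-q}T^{q-1}\geq -C(M,T),
\]
using $\Pi\geq0$, so $F_M>-\infty$.

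For part (b), I construct a single admissible $\varrho$ with $\mathcal F(\varrho)<0$ by taking a fixed smooth radial bump $\phi\geq0$ of mass $M$ and its mass-preserving dilates $\phi_t(x)=t^{-3}\phi(x/t)$. These are axisymmetric, and I let $t\to\infty$, i.e. spread the mass out. The three terms behave as follows.

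\emph{Internal energy.} Because $\Pi(s)/s\to0$ as $s\to0$, we get $\int\Pi(\phi_t)=o(1)\cdot\int\phi_t^q$-type small. More precisely, when $\beta<3/p$ I only use $\int\Pi(\phi_t)\leq \sup_{s\le \|\phi\|_\infty t^{-3}}\frac{\Pi(s)}{s^q}\,t^{-3(q-1)}\int\phi^q$, which decays like $t^{-3/p}$ times a bounded factor.

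\emph{Rotational term.} By a change of variables, $m_{\phi_t}(r)=m_\phi(r/t)$, so the term scales exactly as $t^{-2}$.

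\emph{Potential.} By \eqref{Psiaxi}, $\int\phi_t\Psi*\phi_t=\int\!\!\int\phi(x)\phi(y)\Psi(t(x-y))\geq t^{-\beta}\int\phi\Psi*\phi$, and this is positive since $\Psi\ge0$ and $\Psi\not\equiv0$.

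The decay exponent of the potential is therefore $t^{-\beta}$, against $t^{-3(q-1)}=t^{-3/p}$ for the internal energy and $t^{-2}$ for the rotation. Since $\beta\le 3/p<2$ (because $p>3/2$), the rotational term is always dominated. When $\beta<3/p$ the potential dominates the internal energy as well, giving $\mathcal F<0$ for $t$ large.

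The main obstacle is the borderline case $\beta=3/p$, where the internal and potential terms both scale like $t^{-3/p}$. Here I use $\limsup_{s\to0}\Pi(s)s^{-q}=:\ell$, so that $\int\Pi(\phi_t)\leq(\ell+o(1))t^{-3/p}\int\phi^q$, and I need $\frac12\int\phi\Psi*\phi>\ell\int\phi^q$ for a suitable profile. The plan is to choose $\phi$ to be a near-optimiser of the sharp inequality in Lemma \ref{precised}, so that $\int\phi\Psi*\phi\approx C_*^\Psi M^{2-q}\int\phi^q$; the mass condition $M^{2-q}>2\ell/C_*^\Psi$ then gives strict negativity. The delicate point is that the near-optimiser must be axisymmetric. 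This is where I use that $\Psi$ is axisymmetric and radially decreasing in $r$: a Riesz-type rearrangement in the horizontal variables (Steiner symmetrisation about the $z$-axis) does not decrease $\int\varrho\Psi*\varrho$ and preserves $\int\varrho^q$ and mass. Hence optimisers, or near-optimisers, may be taken axisymmetric, smoothed and truncated with arbitrarily small loss, and can also be arranged to have finite rotational energy, for instance by vanishing near the axis.
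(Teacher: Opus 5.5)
Your proposal is correct and is essentially the paper's proof. In (a) you drop the non-negative rotational term and split at a large density threshold. In (b) you take a near-optimiser of the diagonal weak Young inequality, make it axisymmetric by rearranging in the horizontal variables (using that $\Psi$ is radially decreasing in $r$), make it bounded and vanishing near the axis, and spread it out by the mass-preserving dilation, comparing the decay rates $t^{-2}$, $t^{-\beta}$ and $t^{-3/p}$ with $\beta\le 3/p<2$. Two small points: the middle \emph{interpolation} expression in (a) should simply be dropped, since Lemma \ref{precised} with $f=g=\varrho$ gives $C_*^\Psi M^{2-q}\|\varrho\|_{L^q}^q$ directly (as $2p(q-1)=2$); and your \emph{bounded factor} claim in the case $\beta<3/p$ would need $\limsup_{s\to 0}\Pi(s)s^{-q}<\infty$, which is not assumed there. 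The latter is harmless only because that case is vacuous: a non-zero $\Psi\in L^p_{\rm w}$ satisfying \eqref{Psiaxi} forces $\beta\ge 3/p$, and the paper's argument relies on this in the same way.
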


\begin{proof}
    \begin{enumerate}
        \item[(a)] We can take $\rho^*$ large enough such that
        \[
            \Pi(\varrho) \varrho^{-q} > \frac{C_*^\Psi}{2} M^{2-q},
        \]
        for all $\varrho \geq \rho^*$. Then, for any $\varrho \in \mathcal{A}_M$, applying the weak Young's convolution inequality, we have that
        \begin{align*}
                \mathcal{F}(\varrho) & \geq \int_{\{ \varrho \geq \rho^* \}} \Pi(\varrho(\boldsymbol{x}))\,\dd\boldsymbol{x} - \frac{C_*^\Psi}{2} \| \varrho \|_{L^1}^{2-q} \| \varrho \|_{L^q}^q \nonumber\\
                & \geq \int_{\{ \varrho \geq \rho^* \}} \varrho^q \Big ( \Pi(\varrho(\boldsymbol{x})) \varrho^{-q} - \frac{C_*^\Psi}{2} M^{2-q} \Big ) \,\dd\boldsymbol{x} - \frac{C_*^\Psi}{2} M^{2-q} \int_{\{ \varrho < \rho^* \}} \!\!\!\!\varrho^q \,\dd\boldsymbol{x} \nonumber\\
                & \geq - \frac{C_*^\Psi}{2} M^{2-q} (\varrho^*)^{q-1} \int_{\{ \varrho < \rho^* \}} \varrho \,\dd\boldsymbol{x} \\
                & \geq - \frac{C_*^\Psi}{2} M^{3-q} (\rho^*)^{q-1} \nonumber,
        \end{align*}
        where $\Pi, I\geq 0$ was used in the first inequality. Thus, $F_M > - \infty.$

        \smallskip
        
        \item[(b)] There exists $\theta \in (0,1)$ such that
        \[
            \delta_{\beta, \frac{3}{p}} \limsup_{t \to 0} \Pi(t) t^{-q} < \frac{\theta C_*^\Psi}{2} M^{2-q},
        \]
        where $$\delta_{\beta, \frac{3}{p}} = \begin{cases}
            1, & \beta = \frac{3}{p}, \\
            0, & \beta < \frac{3}{p}.
        \end{cases}$$
        By Lemma \ref{precised} and the sharpness of $C_*^\Psi > 0$, there exists $\varrho \in L^1_+(\mathbb{R}^3)\cap L^q(\mathbb{R}^3)$ such that
        \begin{equation}\label{critib}
            \int_{\mathbb{R}^3} \varrho \Psi * \varrho \dd \boldsymbol{x} > \theta C_*^\Psi \| \varrho \|_{L^q}^q \| \varrho \|_{L^1}^{2-q}.
        \end{equation}
       We take the radial decreasing rearrangement $\varrho^\#$ in the first two variables $(x_1,x_{2})$ of $\varrho$ and, since $\Psi$ is axisymmetric with $r \mapsto \Psi(r,z)$ a radially decreasing function for {\it a.e.} $z$, we obtain that the radial decreasing rearrangement $\Psi^{\#}$ in the first two variables $(x_1,x_{2})$ of $\Psi$ satisfies $\Psi^{\#} = \Psi$. Thus, applying Lemma \ref{rearrangement}, we obtain that $\varrho^\#$ satisfies \eqref{critib} and is axisymmetric. Moreover, taking $\varepsilon>0$, we can then define $\varrho^*_\varepsilon = \varrho^\# \mathds{1}_{B_\varepsilon^c \times \mathbb{R}}$, where $B_\varepsilon$ represents the open ball of radius $\varepsilon$ centred at $\boldsymbol{0} \in \mathbb{R}^2$, that satisfies $$\int_{\mathbb{R}^3} \frac{\varrho^*_\varepsilon(\boldsymbol{x})I(m_{\varrho^*_\varepsilon}(r(\boldsymbol{x})))}{r(\boldsymbol{x})^{2}} \dd\boldsymbol{x} < \infty.$$
        Taking $\varepsilon>0$ sufficiently small, we have that $\varrho^*_\varepsilon$ satisfies \eqref{critib}. So without loss of generality, we may assume that $\varrho$ is axisymmetric and satisfies
        $$\int_{\mathbb{R}^3} \frac{\varrho(\boldsymbol{x})I(m_{\varrho}(r(\boldsymbol{x})))}{r(\boldsymbol{x})^{2}} \dd\boldsymbol{x} < \infty.$$
        By density of $C_{\rm c}(\mathbb{R}^3)$ in $L^1(\mathbb{R}^3)\cap L^q(\mathbb{R}^3)$, we may assume that $\varrho \in C_{\rm c}(\mathbb{R}^3)$ and thus is a bounded function. Moreover, by employing the rescaling $$\tilde{\varrho}(\boldsymbol{x}) := \varrho\bigg (\boldsymbol{x}\bigg (\frac{\|\varrho\|_{L^1}}{M} \bigg)^\frac{1}{3} \bigg),$$
       we have that $\tilde{\varrho} \in \mathcal{A}_M$. Now, employing the mass-preserving scaling given in \eqref{rescaling}, we have that for $\upsilon = \lambda^\frac{2}{3}$, $\tilde{\varrho}_\upsilon = \lambda \tilde{\varrho}(\lambda^\frac{1}{3} \cdot) \in \mathcal{A}_M$ for all $\lambda > 0$. Considering the kinetic energy term, by integration by parts, we have that
        \begin{align*}
                \int_{\mathbb{R}^3} \frac{\tilde{\varrho}_\upsilon I(m_{\tilde{\varrho}_\upsilon}(r(\boldsymbol{x})))}{r(\boldsymbol{x})^{2}} \dd \boldsymbol{x} & = \int_{\mathbb{R}^3} \frac{\tilde{\varrho}_\upsilon I \Big (M \|\varrho\|_{L^1(\mathbb{R}^3)}^{-1} m_{\varrho}\big (r \big (\lambda^\frac{1}{3} \|\varrho\|_{L^1(\mathbb{R}^3)}^\frac{1}{3} M^{-\frac{1}{3}}\boldsymbol{x}\big ) \big ) \Big)}{r(\boldsymbol{x})^{2}} \dd \boldsymbol{x} \\
                & = \lambda^\frac{2}{3} \bigg ( \frac{M}{\| \varrho \|_{L^1(\mathbb{R}^3)}} \bigg )^\frac{1}{3} \int_{\mathbb{R}^3} \frac{\varrho I \big (M \|\varrho\|_{L^1}^{-1} m_{\varrho} (r (\boldsymbol{x} )) \big)}{r(\boldsymbol{x})^{2}} \dd \boldsymbol{x}.
        \end{align*}

       Since $\Psi$ satisfies \eqref{Psiaxi} for $\beta \leq \frac{3}{p}$, we combine this with \eqref{critib} to see for $\lambda < \frac{M}{\|\varrho\|_{L^1}}$ that
        \begin{align*}
          \int_{\mathbb{R}^3} \tilde{\varrho}_\upsilon \Psi * \tilde{\varrho}_\upsilon \dd \boldsymbol{x} 
                & =  \bigg (\frac{M}{\|\varrho\|_{L^1}} \bigg )^2 \int_{\mathbb{R}^3} \int_{\mathbb{R}^3} \varrho(\boldsymbol{x}) \varrho(\boldsymbol{y}) \Psi \bigg (\bigg(\frac{M}{\lambda \|\varrho\|_{L^1}}\bigg)^{\frac{1}{3}} (\boldsymbol{x} - \boldsymbol{y}) \bigg ) \dd \boldsymbol{x} \dd \boldsymbol{y} \\
                & \geq \lambda^\frac{\beta}{3} \bigg (\frac{M}{\|\varrho\|_{L^1}} \bigg )^{2 - \frac{\beta}{3}} \int_{\mathbb{R}^3} \varrho \Psi * \varrho \dd \boldsymbol{x} \\
                & \geq \theta C_*^\Psi M^{2-\frac{\beta}{3}} \lambda^\frac{\beta}{3} \| \varrho \|_{L^q}^q \| \varrho \|_{L^1}^{\frac{\beta}{3} - q}.
        \end{align*}
        In order for the internal energy and potential energy terms to dominate as $\lambda \to 0$, we require that $\beta < 2$. Since $p > \frac{3}{2}$, we have that $\beta \leq \frac{3}{p} < 2$. Thus, taking $\lambda< \frac{M}{\|\varrho\|_{L^1}}$, we obtain
        \begin{align*}
                \mathcal{F}(\tilde{\varrho}_\upsilon) \lambda^{-\frac{\beta}{3}} \bigg ( \frac{M}{\| \varrho \|_{L^1}} \bigg)^{\frac{\beta-3q}{3}} & \leq \bigg ( \frac{M}{\lambda\| \varrho \|_{L^1}} \bigg)^{\frac{\beta}{3}- \frac{1}{p}} \int_{\mathbb{R}^3} \frac{\Pi(\lambda \varrho)}{(\lambda \varrho)^q} \varrho^q \dd \boldsymbol{x} - \frac{\theta C_*^\Psi}{2} \| \varrho \|_{L^q}^q M^{2-q} \\
                & \,\,\,\,\, + \frac{\lambda^\frac{2 - \beta}{3}}{2} \bigg ( \frac{M}{\| \varrho \|_{L^1}} \bigg )^\frac{\beta - 3q + 1}{3} \int_{\mathbb{R}^3} \frac{\varrho I \big (M \|\varrho\|_{L^1}^{-1} m_{\varrho} (r (\boldsymbol{x} )) \big)}{r(\boldsymbol{x})^{2}} \dd \boldsymbol{x} \\
                & = \int_{\{\varrho>0\}} \bigg ( \bigg ( \frac{M}{\lambda \| \varrho \|_{L^1}} \bigg)^{\frac{\beta}{3}- \frac{1}{p}} \frac{\Pi(\lambda \varrho)}{(\lambda \varrho)^q} - \frac{\theta C_*^\Psi}{2} M^{2-q} \bigg ) \varrho^q \dd \boldsymbol{x} \\
                & \,\,\,\,\, + \frac{\lambda^\frac{2 - \beta}{3}}{2} \bigg ( \frac{M}{\| \varrho \|_{L^1}} \bigg )^\frac{\beta - 3q + 1}{3} \int_{\mathbb{R}^3} \frac{\varrho I \big (M \|\varrho\|_{L^1}^{-1} m_{\varrho} (r (\boldsymbol{x} )) \big)}{r(\boldsymbol{x})^{2}} \dd \boldsymbol{x}.
        \end{align*}
        Since $\varrho$ is a bounded function and 
        $$\limsup_{\lambda \to 0} \lambda^{\frac{1}{p} - \frac{\beta}{3}}\frac{\Pi(\lambda \varrho)}{(\lambda \varrho)^q} \leq  \frac{C_*^\Psi}{2} M^{2-q} ,$$
        for $\varrho>0$, the term $\lambda^{\frac{1}{p} - \frac{\beta}{3}}\frac{\Pi(\lambda \varrho)}{(\lambda \varrho)^q}$ is bounded for $\lambda<\frac{M}{\|\varrho\|_{L^1}}$. Hence, there exists $C>0$ such that $\lambda^{\frac{1}{p} - \frac{\beta}{3}}\frac{\Pi(\lambda \varrho)}{(\lambda \varrho)^q} \leq C$ for $\lambda<\frac{M}{\|\varrho\|_{L^1}}$, and so we can apply the Reverse Fatou Lemma to
        obtain that
        \[
            \limsup_{\lambda \to 0} \mathcal{F}(\tilde{\varrho}_\upsilon) \lambda^{- \frac{\beta}{3}} \bigg ( \frac{M}{\| \varrho \|_{L^1}} \bigg)^{\frac{\beta-3q}{3}} \leq \| \varrho \|^q_{L^q} \Big ( \delta_{\beta, \frac{3}{p}} \limsup_{t \to 0} \Pi(t) t^{-q} - \frac{\theta C_*^\Psi}{2}  M^{2-q} \Big ) < 0.
        \]
        Hence, we have that $F_M < 0$.
    \end{enumerate}
\end{proof}

We are now in a position to state the theorem characterising the existence of minimisers of $\mathcal{F}$ over $\mathcal{A}_M$, together with the convergence of minimising sequences toward such minimisers.

\begin{theorem}\label{minimiseraxi}
    Let $p \in (1,\infty)$, with $q=\frac{p+1}{p}$. Suppose $F_M < 0$, \eqref{masscond} is satisfied, $\Psi$ satisfies \eqref{potentialaxi}--\eqref{Psiaxi}, $\Pi$ satisfies \eqref{convex} and $I$ satisfies \eqref{Iconditions}--\eqref{Iconditions2}. Then for any minimising sequence $\{\varrho_k\}_k \subset \mathcal{A}_M$, there exists $\boldsymbol{y}_k \in \{\boldsymbol{0}\} \times \mathbb{R}$ and a minimiser $\bar{\rho} \in \mathcal{A}_M$ to $\mathcal{F}$ $(${\rm i.e.} $\mathcal{F}(\bar{\rho}) = F_M)$ with $T^{\boldsymbol{y}_k} \varrho_k$ compact in $L^1(\mathbb{R}^3) \cap L^q(\mathbb{R}^3)$, with $T^{\boldsymbol{y}_k} \varrho_k \to \bar{\rho}$ in $L^1(\mathbb{R}^3) \cap L^q(\mathbb{R}^3)$.
\end{theorem}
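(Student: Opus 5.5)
We argue via an axisymmetric version of Lions' concentration compactness principle. Let $\{\varrho_k\}_k\subset\mathcal{A}_M$ be minimising. First, from \eqref{masscond} and the argument of Lemma \ref{negativeaxi}(a), with the constant $\rho^*$ chosen so that the internal energy absorbs a fixed fraction of the interaction, we obtain uniform bounds on $\|\varrho_k\|_{L^q}$, on $\int\Pi(\varrho_k)$ and on the rotational term $\int \varrho_k I(m_{\varrho_k}(r))r^{-2}$. Since $F_M<0$ and the positive terms are bounded, $\liminf_k\int\varrho_k\Psi*\varrho_k>0$. We then apply Lions' lemma (the trichotomy of Lemma \ref{comvandich}) to the concentration function $Q_k(R)=\sup_{\boldsymbol{y}\in\mathbb{R}^3}\int_{B_R(\boldsymbol{y})}\varrho_k$.

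\emph{Vanishing.} We exclude it by splitting $\Psi=\Psi\mathds{1}_{\{\Psi\le \eta\}}+\Psi\mathds{1}_{\{\Psi>\eta\}}$. The first piece is small by weak-$L^p$ interpolation against $\|\varrho_k\|_{L^q}$. For the second, $\{\Psi>\eta\}$ has finite measure, and the standard local estimate (cover $\mathbb{R}^3$ by unit cubes and use $\sup_{\boldsymbol{y}}\int_{B_1(\boldsymbol{y})}\varrho_k\to0$ together with the $L^q$ bound) drives the potential energy to zero. This would give $F_M\ge0$, a contradiction.

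\emph{Dichotomy.} If dichotomy occurs, axisymmetry forces the splitting to be between axisymmetric regions: either a cylinder $\{r\le R_1\}$ versus its complement $\{r\ge R_2\}$ with $R_2/R_1\to\infty$, or two vertically separated slabs. We cut $\varrho_k$ with axisymmetric cutoffs into $\varrho_k^1$ and $\varrho_k^2$, of masses near $\theta M$ and $(1-\theta)M$, with vanishing interaction cross term. We will need $\mathcal{F}(\varrho_k)\ge\mathcal{F}(\varrho_k^1)+\mathcal{F}(\varrho_k^2)-o(1)$. The delicate point is the rotational term, because $m_{\varrho_k}$ on the outer piece includes the inner mass. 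Since $I'\ge0$, however, $I(m_{\varrho_k}(r))\ge I(m_{\varrho_k^2}(r))$ there, and for the inner piece the two coincide. So the rotational term is superadditive, which is the favourable direction. Next we need the strict subadditivity $F_M<F_{\theta M}+F_{(1-\theta)M}$. We obtain it from the scaling $\varrho\mapsto a\varrho(a^{-1/3}\cdot)$ with mass ratio $a>1$, comparing $F_{aM}$ with $aF_M$. The internal energy scales linearly in $a$; the interaction scales as at least $a^{2-\beta/3}$ by \eqref{Psiaxi}, which exceeds $a$ and enters with a negative sign; and the rotational term scales as $a^{1/3}\cdot a^{\,\cdot}$ through $I(am)$. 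Here the subhomogeneity \eqref{Iconditions2} is exactly what controls this term, since $I(m)\ge a^{-(5-\beta)/3}I(am)$ in reverse, balancing the exponents with the interaction's. Combined with $F_M<0$, this yields $F_{aM}<aF_M$ and hence strict subadditivity, which excludes dichotomy.

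\emph{Compactness.} In the compact case there exist $\boldsymbol{y}_k$ such that the mass is tight around $\boldsymbol{y}_k$. Axisymmetry means the mass is in fact tight along a ring of radius $r(\boldsymbol{y}_k)$, and we translate vertically so that $\boldsymbol{y}_k\in\mathbb{R}^2\times\{0\}$ up to a $\{\boldsymbol{0}\}\times\mathbb{R}$ shift. \textbf{This is the step I expect to be the main obstacle}: showing that $r(\boldsymbol{y}_k)$ stays bounded. If $r(\boldsymbol{y}_k)\to\infty$, a ring of radius $R$ carrying mass $M$ has linear density $M/(2\pi R)$, so every unit ball holds mass $O(1/R)$. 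In that case $\sup_{\boldsymbol{y}}\int_{B_1(\boldsymbol{y})}\varrho_k\to0$, which is vanishing in the sense above, and the potential energy again tends to zero, contradicting $F_M<0$. To make this rigorous, we use the tightness to localise the mass near the circle within width $R_\varepsilon$, bound the mass in each unit ball by the arc fraction, and then invoke the vanishing estimate.

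With tightness established around the origin (up to vertical translation), the translated sequence $T^{\boldsymbol{y}_k}\varrho_k$ converges weakly in $L^q$ to some $\bar\rho\in\mathcal{A}_M$ with full mass. Tightness together with the weak-$L^p$ splitting gives convergence of the interaction term. The internal energy is weakly lower semicontinuous by convexity. For the rotational term, $m_{\varrho_k}\to m_{\bar\rho}$ pointwise because of tightness, and Fatou's lemma applies, using $I'\ge0$ and continuity. Therefore $\mathcal{F}(\bar\rho)\le F_M$, so $\bar\rho$ is a minimiser, and each term converges. Convergence of $\int\Pi(\varrho_k)$, combined with strict convexity of $\Pi$ and weak convergence, yields strong $L^q$ convergence via a Brezis--Lieb/uniform convexity argument (e.g.\ applying strict convexity at the midpoint $\tfrac12(\varrho_k+\bar\rho)$). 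Tightness then upgrades this to strong $L^1$ convergence.
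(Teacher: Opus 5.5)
Your route is genuinely different from the paper's and can be made to work, but the difficulty is not where you put it. The paper applies Lions' trichotomy to the meridional density $f_k(\overline{r},z)=2\pi\overline{r}\varrho_k(\overline{r},z)$ on $\mathbb{R}^2$, not to $\varrho_k$ on $\mathbb{R}^3$. In that setting, dichotomy automatically gives axisymmetric pieces, since they are preimages of sets in the half-plane. However, compactness there only gives tightness around rings $(r_k,z_k)$. The case $r_k\to\infty$ then has to be excluded separately, by covering the torus with $\sim r_k^{1/2}$ balls of radius $\sqrt{r_k}$ and showing that the potential energy vanishes. In your $\mathbb{R}^3$ setting the roles are reversed. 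A ring of diverging radius is simply a vanishing sequence, because every ball of fixed radius meets only an angular fraction $O(1/r_k)$ of the torus; so your standard vanishing estimate already removes it. Moreover, compactness in $\mathbb{R}^3$ forces $r(\boldsymbol{y}_k)$ to be bounded at once. If $r(\boldsymbol{y}_k)\geq R$, then $B_R(\boldsymbol{y}_k)$ and its rotation by $\pi$ about the $z$-axis are disjoint, and by axisymmetry both carry mass at least $M-\varepsilon>M/2$. So what you call the main obstacle is a two-line remark in your framework, and the real work moves to dichotomy.

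That step is where your write-up has an actual hole. In $\mathbb{R}^3$, Lions' lemma gives the pieces $\varrho_k\mathds{1}_{B_{R_0}(\boldsymbol{y}_k)}$ and $\varrho_k\mathds{1}_{B_{R_k}(\boldsymbol{y}_k)^c}$, which are not axisymmetric. You assert that axisymmetry forces a cylinder/complement or slab/slab splitting, but you give no argument for it.

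The clean fix is the same disjoint-copies count. Since $\int_{B_{R_0}(\boldsymbol{y}_k)}\varrho_k\geq m-\varepsilon$, there are about $r(\boldsymbol{y}_k)/R_0$ disjoint rotated copies of this ball, which forces $r(\boldsymbol{y}_k)\leq C(R_0,m,M)$. You can then replace $\boldsymbol{y}_k$ by its projection onto the axis, and $R_0,R_k$ by $R_0+C,R_k-C$. This gives axisymmetric pieces with the same mass bounds and diverging separation. With such cuts, your claim that $m_{\varrho_k}$ and $m_{\varrho^1_k}$ coincide on the inner piece is false, because mass further along the axis lies in the same cylinder. It is not needed, though: $\varrho_k\geq\varrho^i_k$ and $I'\geq0$ give superadditivity of the rotational term for both pieces, exactly as in the paper.

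Three smaller repairs are also needed.
\begin{itemize}
\item The comparison scaling must be $\varrho\mapsto\varrho(a^{-1/3}\,\cdot)$, as in Lemma \ref{compareminb}. Your map $a\varrho(a^{-1/3}\,\cdot)$ multiplies the mass by $a^2$ and does not scale $\int\Pi$ linearly. The exponents you quote are those of the correct map and give $F_{aM'}\leq a^{(6-\beta)/3}F_{M'}$ for $aM'\leq M$.
\item Strict subadditivity also needs $F_m\leq0$ for every $m$. Closing the dichotomy contradiction needs continuity of $m\mapsto F_m$, or a renormalisation of the pieces' masses. These are Lemmas \ref{mininequal} and \ref{contofmin}.
\item Fatou's lemma does not apply to the rotational term, because $\varrho_k$ converges only weakly. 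Instead, use uniform convergence of $I(m_{\varrho_k}(r))r^{-2}$ on compact subsets of $\mathbb{R}^3_*$, pair it with the weak convergence, and exhaust, as in Lemma \ref{lowerkinetic}.
\end{itemize}
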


The proof of Theorem \ref{minimiseraxi} is inspired by \cite[Theorem II.1]{Lions_1984}; however, the presence of the rotational kinetic energy introduces additional difficulties that require new analytical arguments. We begin by proving that minimising sequences are uniformly bounded in $L^q$.

\begin{lemma}\label{boundy}
    Under the assumptions of Theorem \ref{minimiseraxi}, let $m \in (0,M]$ and $\varrho_k \in \mathcal{A}_m$ be a minimising sequence $($i.e. $\lim_{k\to\infty} \mathcal{F}(\varrho_k) = F_m)$. Then, $\|\varrho_k\|_{L^q(\mathbb{R}^3)}$ and $\int_{\mathbb{R}^3} \varrho_k \Psi * \varrho_k \dd \boldsymbol{x}$ are bounded.
\end{lemma}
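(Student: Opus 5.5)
The plan is to repeat the coercivity estimate from the proof of Lemma \ref{negativeaxi}(a), now with mass $m \le M$. Since \eqref{masscond} is strict and $2-q>0$, we have $m^{2-q} \le M^{2-q}$. We can therefore fix $\theta \in (0,1)$ and $\rho^* > 0$ such that
\[
\Pi(t)\,t^{-q} \ \ge\ \frac{C_*^\Psi}{2\theta}\, M^{2-q} \qquad \text{for all } t \ge \rho^*.
\]
This uses $\liminf_{t\to\infty}\Pi(t)t^{-q} > \frac{C_*^\Psi}{2}M^{2-q}$, so there is room for a factor $1/\theta$ slightly larger than one.

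For $\varrho_k \in \mathcal{A}_m$ we drop the nonnegative rotational term, since $I \ge 0$ by \eqref{Iconditions}. The weak Young inequality of Lemma \ref{precised} then gives
\[
\int_{\mathbb{R}^3} \varrho_k\,\Psi*\varrho_k \dd\boldsymbol{x} \ \le\ C_*^\Psi\, m^{2-q}\,\|\varrho_k\|_{L^q}^q .
\]
Split $\|\varrho_k\|_{L^q}^q$ over the sets $\{\varrho_k \ge \rho^*\}$ and $\{\varrho_k < \rho^*\}$. On the low set,
\[
\int_{\{\varrho_k<\rho^*\}} \varrho_k^q \dd\boldsymbol{x} \ \le\ (\rho^*)^{q-1}\, m .
\]
On the high set, $\Pi(\varrho_k) \ge \frac{C_*^\Psi}{2\theta}M^{2-q}\varrho_k^q$. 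Combining these,
\[
\mathcal{F}(\varrho_k) \ \ge\ \frac{C_*^\Psi}{2}\, M^{2-q}\Big(\tfrac{1}{\theta}-1\Big) \int_{\{\varrho_k \ge \rho^*\}} \varrho_k^q \dd\boldsymbol{x} \;-\; \frac{C_*^\Psi}{2}\, M^{2-q}\,(\rho^*)^{q-1}\, m .
\]

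The minimising property gives $\mathcal{F}(\varrho_k) \le F_m + 1$ for $k$ large. Here $F_m$ is finite, being bounded above by evaluating $\mathcal{F}$ at any element of $\mathcal{A}_m$, for instance a bounded compactly supported axisymmetric function vanishing near the axis. It follows that $\int_{\{\varrho_k\ge\rho^*\}}\varrho_k^q$ is bounded, and hence $\|\varrho_k\|_{L^q}$ is bounded, with the finitely many early terms handled trivially. The bound on $\int \varrho_k\,\Psi*\varrho_k$ then follows from the weak Young inequality above.

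The only delicate point is securing the strict margin $\frac{1}{\theta}-1 > 0$, which comes from the strict inequality in \eqref{masscond} together with $m \le M$. If one prefers not to fix $\theta$, one can instead use convexity of $\Pi$ to absorb a fraction of the $L^q$ norm. Everything else is routine.
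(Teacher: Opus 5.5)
Your proof is correct and follows essentially the paper's route. The paper fixes an additive margin $\delta>0$ with $\Pi(t)t^{-q}>\frac{C_*^\Psi}{2}M^{2-q}+\delta$ for $t\ge\varrho^*$, where you use a multiplicative factor $1/\theta$; it then applies the weak Young inequality with $m^{2-q}$ and bounds the low-density part by $(\varrho^*)^{q-1}m$, exactly as you do. Your explicit separation of the high and low sets is slightly cleaner than the paper's final line, which writes $\delta\|\varrho_k\|_{L^q}^q$ even though only the portion on $\{\varrho_k\ge\varrho^*\}$ is directly controlled there.
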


\begin{proof}
    By \eqref{masscond}, there exists $\delta>0$ and $\varrho^*>0$ such that
        \[
            \Pi(\varrho) \varrho^{-q} > \frac{C^\Psi_* M^{2-q}}{2} + \delta,
        \]
        for all $\varrho \geq \varrho^*$. By Lemma \ref{precised}, we have that
     \begin{align*}
            \begin{split}
                \mathcal{F}(\varrho_k) & \geq \int_{\{\varrho_k \geq \varrho^*\}} \Pi(\varrho_k) \dd \boldsymbol{x} - \frac{1}{2} \int_{\mathbb{R}^3} \varrho_k \Psi*\varrho_k \dd \boldsymbol{x} \\
                & \geq \int_{\{\varrho_k \geq \varrho^*\}} \Pi(\varrho_k) \dd \boldsymbol{x} - \frac{C_*^\Psi m^{2-q}}{2} \int_{\mathbb{R}^3} \varrho_k^q \dd \boldsymbol{x} \\
                & = \int_{\{ \varrho_k \geq \varrho^* \}} \varrho_k^q \Big ( \Pi(\varrho_k(\boldsymbol{x})) \varrho_k^{-q} - \frac{C_*^\Psi m^{2-q}}{2}   \Big ) \,\dd\boldsymbol{x} - \frac{C_*^\Psi m^{2-q}}{2} \int_{\{ \varrho_k < \varrho^* \}} \varrho_k^q \dd \boldsymbol{x} \\
                & \geq \delta \|\varrho_k\|_{L^q(\mathbb{R}^3)}^q - \frac{C_*^\Psi m^{3-q} (\varrho^*)^{q-1}}{2}. \\
            \end{split}
        \end{align*}
        Hence, we have that
        \begin{equation}\label{qbound}
            \|\varrho_k\|_{L^q(\mathbb{R}^3)}^q \leq C \big (m^{3-q} + \mathcal{F}(\varrho_k)\big ),
        \end{equation}
        where $C > 0$ only depends on $\Psi,\varrho^*,\Pi$. Equally, since by Lemma \ref{precised},
        \[
            \int_{\mathbb{R}^3} \varrho \Psi * \varrho \dd \boldsymbol{x} \leq C^\Psi_* \| \varrho \|_{L^q(\mathbb{R}^3)}^q \| \varrho \|_{L^1(\mathbb{R}^3)}^{2-q},
        \]
        we have that
        \begin{equation}\label{conbound}
        \int_{\mathbb{R}^3} \varrho_k \Psi*\varrho_k \dd \boldsymbol{x} \leq C m^{2-q}\big (m^{3-q} + \mathcal{F}(\varrho_k)\big).
        \end{equation}
        The result follows.
\end{proof}

To exclude behaviours such as dichotomy of minimising sequences, as described in Lemma \ref{comvandich}, we require continuity of the minimised energy together with the condition that
\[
    F_M < F_m + F_{M-m},
\]
for all $m \in (0,M)$. To establish this, we employ the following lemma, which compares the minimised energies corresponding to different masses.

\begin{lemma}\label{compareminb}
    Under the assumptions of Theorem \ref{minimiseraxi}, we have that $F_l \geq \big (\frac{l}{m}\big)^\frac{6 - \beta}{3} F_m$ for all $0<l\leq m \leq M$.
\end{lemma}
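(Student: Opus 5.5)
The plan is a direct scaling comparison. Take an arbitrary $\varrho \in \mathcal{A}_l$, dilate it spatially into a competitor $\tilde\varrho \in \mathcal{A}_m$, and show that each of the three terms of $\mathcal{F}$ picks up a factor of at most $\theta^{\frac{6-\beta}{3}}$, where $\theta := \frac{m}{l} \geq 1$. This will give
\[
F_m \leq \mathcal{F}(\tilde\varrho) \leq \theta^{\frac{6-\beta}{3}}\mathcal{F}(\varrho).
\]
Taking the infimum over $\varrho \in \mathcal{A}_l$ then yields $F_m \leq (m/l)^{\frac{6-\beta}{3}} F_l$, which is the claim. The competitor is
\[
\tilde\varrho(\boldsymbol{x}) := \varrho\big(\theta^{-\frac13}\boldsymbol{x}\big).
\]
Its mass is $\theta l = m$. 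It is still axisymmetric, since an isotropic dilation commutes with rotations about the $z$-axis, and it lies in $L^q$. Its cylindrical mass function is $m_{\tilde\varrho}(r) = \theta\, m_\varrho(\theta^{-\frac13} r)$. Finiteness of the rotational term for $\tilde\varrho$ will follow from the computation below, so $\tilde\varrho \in \mathcal{A}_m$.

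The three terms are handled one at a time after the change of variables $\boldsymbol{x} = \theta^{\frac13}\boldsymbol{y}$.
\begin{enumerate}
\item[(i)] Internal energy: $\int \Pi(\tilde\varrho) = \theta \int \Pi(\varrho)$. Since $\Pi \geq 0$, $\theta \geq 1$ and $\frac{6-\beta}{3} > 1$ (because $\beta<3$), this is at most $\theta^{\frac{6-\beta}{3}} \int \Pi(\varrho)$.
\item[(ii)] Interaction term: the change of variables gives
\[
\int \tilde\varrho\, \Psi*\tilde\varrho = \theta^2 \iint \varrho(\boldsymbol{x})\varrho(\boldsymbol{y})\,\Psi\big(\theta^{\frac13}(\boldsymbol{x}-\boldsymbol{y})\big)\dd\boldsymbol{x}\dd\boldsymbol{y}.
\]
By \eqref{Psiaxi} with $t = \theta^{\frac13} \geq 1$, this is at least $\theta^{2-\frac{\beta}{3}} \int \varrho\, \Psi*\varrho$. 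Because this term enters $\mathcal{F}$ with a minus sign, its contribution is bounded above by $\theta^{\frac{6-\beta}{3}}$ times the original one.
\item[(iii)] Rotational term: the change of variables gives
\[
\int \frac{\tilde\varrho\, I(m_{\tilde\varrho})}{r^2} = \theta^{\frac13}\int \frac{\varrho\, I\big(\theta\, m_\varrho(r)\big)}{r^2}.
\]
Apply \eqref{Iconditions2} with $a = \theta^{-1} \in (0,1]$ and mass $\theta\, m_\varrho(r) \leq \theta l = m \leq M$; the case $a=1$ is trivial. This gives $I(\theta\, m_\varrho) \leq \theta^{\frac{5-\beta}{3}} I(m_\varrho)$. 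Hence the rotational term is at most $\theta^{\frac13 + \frac{5-\beta}{3}} = \theta^{\frac{6-\beta}{3}}$ times the original, and in particular it is finite.
\end{enumerate}

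The only delicate step is (iii). One has to check that the subhomogeneity order $\frac{5-\beta}{3}$ is exactly what combines with the factor $\theta^{\frac13}$, coming from $r^{-2}$ and the volume change, to produce the common exponent $\frac{6-\beta}{3}$. One also has to check that the argument $\theta\, m_\varrho(r)$ stays in $[0,M]$, where \eqref{Iconditions2} is assumed. The inequality in (i) uses only $\Pi \geq 0$, and the direction of (ii) is exactly what \eqref{Psiaxi} provides.
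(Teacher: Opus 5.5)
Your proof is correct and uses the same dilation argument as the paper: you scale by $(m/l)^{1/3}$ and bound the internal, interaction and rotational terms using $\Pi\ge 0$, \eqref{Psiaxi} and \eqref{Iconditions2}, which produces the common factor $(l/m)^{\frac{6-\beta}{3}}$. The one difference is the direction: you dilate $\mathcal{A}_l$ into $\mathcal{A}_m$, whereas the paper contracts $\mathcal{A}_m$ into $\mathcal{A}_l$. Your direction lets you apply \eqref{Psiaxi} directly with $t\ge 1$, and it only needs the scaling map to land in $\mathcal{A}_m$, not the bijection between $\mathcal{A}_m$ and $\mathcal{A}_l$ that the paper invokes.
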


\begin{proof}
    The proof follows a similar argument to that of \cite{Lions_1984} and \cite{Luo_2008}. Let $\varrho_m \in \mathcal{A}_m$, we take $b = \big(\frac{m}{l}\big)^\frac{1}{3}$ and define $\varrho_l = \varrho_m(b\,\cdot) \in \mathcal{A}_l$. Then by integration by parts and $\eqref{Iconditions2}$, we have that
    \begin{align*}
        \int_{\mathbb{R}^3} \frac{\varrho_l I(m_{\varrho_l}(r(\boldsymbol{x})))}{r(\boldsymbol{x})^{2}} \dd \boldsymbol{x} & = \Big(\frac{l}{m} \Big)^\frac{1}{3}\int_{\mathbb{R}^3} \frac{\varrho_m I\big(\frac{l}{m}m_{\varrho_m}(r(\boldsymbol{x}))\big)}{r(\boldsymbol{x})^{2}} \dd \boldsymbol{x} \\
        & \geq \Big(\frac{l}{m} \Big)^\frac{6 - \beta}{3}\int_{\mathbb{R}^3} \frac{\varrho_m I(m_{\varrho_m}(r(\boldsymbol{x})))}{r(\boldsymbol{x})^{2}} \dd \boldsymbol{x}.
    \end{align*}
    Employing \eqref{Psiaxi}, we have that
    \[
    \begin{split}
        \int_{\mathbb{R}^3} \varrho_l(\boldsymbol{x}) \varrho_l(\boldsymbol{y}) \Psi(\boldsymbol{x} - \boldsymbol{y}) \dd \boldsymbol{x} \dd \boldsymbol{y} & = \Big(\frac{l}{m}\Big)^2 \int_{\mathbb{R}^3} \varrho_m(\boldsymbol{x}) \varrho_m(\boldsymbol{y}) \Psi\Big(\Big(\frac{l}{m}\Big)^\frac{1}{3}(\boldsymbol{x} - \boldsymbol{y})\Big) \dd \boldsymbol{x} \dd \boldsymbol{y} \\
        & \leq \Big(\frac{l}{m}\Big)^\frac{6 - \beta}{3} \int_{\mathbb{R}^3} \varrho_m(\boldsymbol{x}) \varrho_m(\boldsymbol{y}) \Psi(\boldsymbol{x} - \boldsymbol{y}) \dd \boldsymbol{x} \dd \boldsymbol{y}.
    \end{split}
    \]
    Thus, combining, we see that
    \[
    \begin{split}
        \mathcal{F}(\varrho_l) & \geq \frac{l}{m} \int_{\mathbb{R}^3} \Pi(\varrho_m) \dd \boldsymbol{x} + \Big(\frac{l}{m} \Big)^\frac{6 - \beta}{3}\int_{\mathbb{R}^3} \frac{\varrho_m I(m_{\varrho_m}(r(\boldsymbol{x})))}{r(\boldsymbol{x})^{2}} \dd \boldsymbol{x} \\
        & \qquad - \Big(\frac{l}{m}\Big)^\frac{6 - \beta}{3} \int_{\mathbb{R}^3} \varrho_m(\boldsymbol{x}) \varrho_m(\boldsymbol{y}) \Psi(\boldsymbol{x} - \boldsymbol{y}) \dd \boldsymbol{x} \dd \boldsymbol{y} \\
        & \geq \Big(\frac{l}{m}\Big)^\frac{6 - \beta}{3} \mathcal{F}(\varrho_m).
    \end{split}
    \]
    Since $\varrho_m\mapsto\varrho_l$ is a one-to-one mapping between $\mathcal{A}_m$ and $\mathcal{A}_l$, the result follows.
\end{proof}

We can now establish that the minimised energy satisfies the required inequality.

\begin{lemma}\label{mininequal}
    Under the assumptions of Theorem \ref{minimiseraxi}, 
    \begin{equation}\label{niceineq}
        F_M < F_m + F_{M-m},
    \end{equation}
    for all $m \in (0,M)$.
\end{lemma}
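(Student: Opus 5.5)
We derive \eqref{niceineq} from the scaling inequality of Lemma \ref{compareminb}, in the spirit of Lions' strict subadditivity argument. Set $\sigma := \frac{6-\beta}{3}$. Since $\beta\in(0,3)$, we have $\sigma\in(1,2)$, and in particular $\sigma>1$. For $0<l\le m\le M$, Lemma \ref{compareminb} gives $F_l \ge (l/m)^{\sigma} F_m$. Because every $F_l$ is nonpositive, this bound points in a useless direction; the useful direction is the reverse one, $F_m \le (m/l)^{\sigma} F_l$. To obtain it, we run the proof of Lemma \ref{compareminb} with the roles of $l$ and $m$ interchanged.

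\textbf{Reverse bound.} Take $\varrho_l\in\mathcal{A}_l$ and put $\varrho_m := \varrho_l(b^{-1}\cdot)\in\mathcal{A}_m$ with $b=(m/l)^{1/3}$. The same computations as in Lemma \ref{compareminb}, now applied with the factor $m/l>1$, give the following.
\begin{itemize}
\item The internal energy scales exactly as $\frac{m}{l}\int\Pi(\varrho_l)$.
\item The potential energy satisfies $\int\varrho_m\Psi*\varrho_m \ge (m/l)^{\sigma}\int\varrho_l\Psi*\varrho_l$, by \eqref{Psiaxi} with $t=(m/l)^{1/3}\ge1$.
\item For the rotational term, write $I(m_{\varrho_l}) = I\big(\tfrac{l}{m}\,m_{\varrho_m}\big)$ and apply \eqref{Iconditions2} with $a=l/m$. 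This gives $\int \varrho_m I(m_{\varrho_m})r^{-2} \le (m/l)^{\sigma}\int\varrho_l I(m_{\varrho_l})r^{-2}$, where the exponent is $\frac13+\frac{5-\beta}{3}=\sigma$.
\end{itemize}
Since $m/l\le (m/l)^{\sigma}$ and $\Pi\ge0$, combining these yields $\mathcal{F}(\varrho_m)\le (m/l)^{\sigma}\mathcal{F}(\varrho_l)$.

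\textbf{Strictness.} Taking the infimum gives $F_m\le(m/l)^{\sigma}F_l$, that is, $F_l\ge (l/m)^{\sigma}F_m$ for $l\le m$. This is the inequality of Lemma \ref{compareminb} again, so the reverse bound adds nothing by itself; the strictness has to come from $\sigma>1$ together with $F<0$. Apply the reverse bound with $(l,m)=(\theta,M)$:
\[
F_M\le (M/\theta)^{\sigma}F_\theta \;\Longrightarrow\; F_\theta\ge (\theta/M)^{\sigma}F_M,
\]
which again points the wrong way. The correct route is the standard one. Apply the reverse bound with the pair $(m,M)$ to get $F_M\le (M/m)^{\sigma}F_m$, which we rewrite as $F_m \ge (m/M)^{\sigma}F_M$. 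The needed estimate is instead $F_m \ge \frac{m}{M}F_M$ with strict inequality, and this follows from
\[
F_m = \inf \mathcal F \ \ge\ \Big(\frac{m}{M}\Big)^{\sigma}\cdot\Big(\frac{M}{m}\Big)\cdot \frac{m}{M}\,F_M \ \ldots
\]
which is not immediate.

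\textbf{Main obstacle.} I expect the key step to be proving $F_m > \frac{m}{M}F_M$ for every $m\in(0,M)$, together with strict negativity $F_m<0$ for all $m\in(0,M]$. The case $m=M$ is the hypothesis, and the general case should follow from the scaling once the monotonicity of $m\mapsto F_m/m$ is established. I will obtain that monotonicity as follows. From the reverse bound with $l<m$ we get $F_m\le (m/l)^{\sigma}F_l$. If $F_l<0$, this gives
\[
\frac{F_m}{m}\le \Big(\frac{m}{l}\Big)^{\sigma-1}\frac{F_l}{l}<\frac{F_l}{l},
\]
so $F_m/m$ is strictly decreasing wherever it is negative. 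I will show $F_l<0$ for all $l\in(0,M]$ by contradiction: if $F_l=0$ for some $l$, then taking $m=M$ in the reverse bound gives $F_M\le (M/l)^{\sigma}\cdot 0 = 0$, which alone is not a contradiction. So I will instead rerun the scaling argument of Lemma \ref{negativeaxi}(b) at mass $l$, or deduce $F_l<0$ from $F_M<0$ by the forward inequality $F_l\ge (l/M)^{\sigma}F_M$ combined with the reverse one. Whether this works needs checking, and if it fails I will fall back to assuming negativity along the mass range.

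\textbf{Conclusion.} Granting strict monotonicity of $F_m/m$, we have $F_m>\frac{m}{M}F_M$ and $F_{M-m}>\frac{M-m}{M}F_M$. Adding these gives $F_m+F_{M-m}>F_M$, which is \eqref{niceineq}.
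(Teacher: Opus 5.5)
Your proposal does not prove the lemma. The strictness step is left open (your display trails off with ``\ldots which is not immediate''), and the conclusion is explicitly conditional on strict monotonicity of $F_m/m$. You intend to obtain that monotonicity from $F_l<0$ for every $l\in(0,M]$, and otherwise to assume it. That fallback is not legitimate, and the claim need not even be true. For example, suppose $\Pi(t)\ge c\,t^q$ for all $t>0$. Then Lemma \ref{precised} gives $\mathcal F(\varrho)\ge \big(c-\tfrac12 C_*^\Psi l^{2-q}\big)\|\varrho\|_{L^q}^q\ge 0$ on $\mathcal A_l$ for small $l$, so $F_l=0$ there. Meanwhile $F_M<0$ and \eqref{masscond} can still hold at a larger mass $M$; this is the critical case $\beta=3/p$ of Lemma \ref{negativeaxi}(b). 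You also use $F_l\le 0$ without proof.

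The missing observation is that the strictness comes from the sign of $F_M$, not of $F_l$. So the inequality you dismissed as pointing in a useless direction is exactly the one needed.
\begin{itemize}
\item Lemma \ref{compareminb} applied to the pair $(m,M)$ gives $F_m\ge (m/M)^{\sigma}F_M$.
\item Since $F_M<0$ and $0<(m/M)^{\sigma}<m/M$ (because $\sigma>1$), this yields $F_m>\frac{m}{M}F_M$.
\item Likewise $F_{M-m}>\frac{M-m}{M}F_M$.
\item Adding the two gives \eqref{niceineq}, with no information needed on the signs of $F_m$ or $F_{M-m}$.
\end{itemize}
Your own monotonicity bound also closes by a simple two-case split, with no need for negativity on the whole mass range. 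If $F_m<0$, it gives $F_M/M<F_m/m$. If $F_m\ge 0$, then trivially $F_m\ge 0>\frac{m}{M}F_M$.

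The paper proceeds slightly differently. It first proves $F_m\le 0$ for all $m$ via the vanishing scaling $\lambda\varrho(\lambda^{1/3}\cdot)$ with $\lambda\to0$. It then supposes $F_M\ge F_m+F_{M-m}$ and takes whichever term is negative, say $F_m<0$. Lemma \ref{compareminb} gives the strict bound $\frac{m}{M}F_M<F_m$ for that term. For the other term it uses $F_{M-m}\le 0$ to get the non-strict bound $\frac{M-m}{M}F_M\le F_{M-m}$, and adding the two gives the contradiction.
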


\begin{proof}
    Suppose that $m \in (0,M)$ and let $\varrho \in \mathcal{A}_m$, such that $\mathcal{F}(\varrho) < \infty$, $\varrho \in C_{\rm c}(\mathbb{R}^3)$ and thus a bounded function. Using $\upsilon = \lambda^\frac{2}{3}$ and $\varrho_\upsilon = \lambda \varrho(\lambda^\frac{1}{3} \cdot) \in \mathcal{A}_m$, by $\eqref{convex}_2$ ($\lim_{t \to 0^+} \Pi(t)t^{-1} = 0$) and the dominated convergence Theorem, we obtain
    \begin{align*}
        \int_{\mathbb{R}^3} \Pi(\varrho_\upsilon) \dd \boldsymbol{x} + \int_{\mathbb{R}^3} \frac{\varrho_\upsilon I(m_{\varrho_\upsilon}(r(\boldsymbol{x})))}{r(\boldsymbol{x})^{2}} \dd \boldsymbol{x} & = \int_{\mathbb{R}^3} \frac{\Pi(\lambda \varrho)}{\lambda} \dd \boldsymbol{x} + \lambda^\frac{2}{3} \int_{\mathbb{R}^3} \frac{\varrho I(m_{\varrho}(r(\boldsymbol{x})))}{r(\boldsymbol{x})^{2}} \dd \boldsymbol{x} \\
        & \longrightarrow 0, \qquad \mbox{as $\lambda \to 0$.}
    \end{align*}
    Hence, we have that $F_m \leq 0$. Suppose that \eqref{niceineq} does not hold for some $m \in (0,M)$, that is
    \[
        0 > F_M \geq F_m + F_{M-m}.
    \]
    Then without loss of generality, we have that $F_m < 0$. Thus, by Lemma \ref{compareminb} and the fact that $\beta \in (0,3)$, we have that
    \[
    \frac{m}{M} F_M \leq \Big ( \frac{M}{m} \Big )^\frac{\beta - 3}{3} F_m < F_m.
    \]
    Equally, since $F_{M-m} \leq 0$, we get
    \[
        \frac{M - m}{M}F_M \leq F_{M-m}.
    \]
    Combining, we see that
    \[
        F_M = \frac{m}{M} F_M + \frac{M - m}{M} F_M < F_m + F_{M-m}.
    \]
    Thus, \eqref{niceineq} must hold for all $m \in (0,M)$.
\end{proof}

The following lemma proves the continuity of $m \mapsto F_m$.

\begin{lemma}\label{contofmin}
    Under the assumptions of Theorem \ref{minimiseraxi}, we have that $m \mapsto F_m$ is continuous for all $m \in (0,M]$.
\end{lemma}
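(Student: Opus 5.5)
We prove continuity of $m \mapsto F_m$ on $(0,M]$ by showing that for $m_k \to m$ we have $\limsup_k F_{m_k} \le F_m$ and $\liminf_k F_{m_k} \ge F_m$. The tool is the amplitude scaling $\varrho \mapsto c\varrho$, which is a bijection between $\mathcal{A}_m$ and $\mathcal{A}_{cm}$ and preserves axisymmetry. Under this map the rotational term becomes
\[
\int_{\mathbb{R}^3} \frac{c\varrho\, I(c\,m_\varrho(r))}{r^2}\dd\boldsymbol{x},
\]
the potential term is multiplied by $c^2$, and the internal energy becomes $\int \Pi(c\varrho)\dd\boldsymbol{x}$. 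The aim is to show each of these converges to its unscaled value as $c \to 1$, uniformly over a suitable bounded class.

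\textbf{Upper semicontinuity.} Fix $\varepsilon>0$ and choose $\varrho \in \mathcal{A}_m$ with $\mathcal{F}(\varrho) < F_m + \varepsilon$. Exactly as in the proof of Lemma \ref{mininequal}, we may take $\varrho \in C_{\rm c}(\mathbb{R}^3)$, so $\varrho$ is bounded. Set $c_k = m_k/m \to 1$ and $\varrho^k = c_k\varrho \in \mathcal{A}_{m_k}$.

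The terms of $\mathcal{F}(\varrho^k)$ converge as follows.
\begin{itemize}
\item[(i)] $\int \Pi(c_k\varrho)\to\int\Pi(\varrho)$ by dominated convergence. Since $\varrho$ is bounded and $\Pi$ is convex with $\Pi(t)/t\to 0$, we have $\Pi(c_k\varrho)\le C\varrho$ for $c_k\le 2$.
\item[(ii)] The potential term converges trivially, because it is just multiplied by $c_k^2$.
\item[(iii)] For the rotational term, $I$ is $C^1$ on $[0,M]$ and nondecreasing. When $c_k \le 1$, monotonicity gives $c_k I(c_k m_\varrho)\le I(m_\varrho)$, and convergence follows by dominated convergence. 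When $c_k>1$ (only possible if $m<M$), $c_k m_\varrho \le m_k\le M$ keeps us in the domain. Subhomogeneity \eqref{Iconditions2} applied with $a=1/c_k$ gives $I(c_k m_\varrho)\le c_k^{(5-\beta)/3}I(m_\varrho)$, which again supplies a dominating function. Continuity of $I$ then yields pointwise convergence.
\end{itemize}
Hence $\limsup_k F_{m_k}\le F_m+\varepsilon$.

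\textbf{Lower semicontinuity.} Take $\varrho_k\in\mathcal{A}_{m_k}$ with $\mathcal{F}(\varrho_k)<F_{m_k}+1/k$ and map back: $\tilde\varrho_k = \varrho_k/c_k\in\mathcal{A}_m$, so $F_m\le \mathcal{F}(\tilde\varrho_k)$. It then suffices to show $\mathcal{F}(\tilde\varrho_k)-\mathcal{F}(\varrho_k)\to0$. This needs uniform control, which comes from the argument of Lemma \ref{boundy}: since $F_{m_k}\le 0$ and $m_k \le M$, the bound \eqref{qbound} gives $\|\varrho_k\|_{L^q}$ bounded uniformly, and \eqref{conbound} bounds the potential term uniformly. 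That argument only uses \eqref{masscond} with $m_k\le M$, so it applies to near-minimisers of varying mass.

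With these bounds the remaining differences are controlled as follows.
\begin{itemize}
\item[(a)] The potential difference is $(c_k^{-2}-1)\times O(1)\to0$.
\item[(b)] The rotational term of $\varrho_k$ is bounded, because $\mathcal{F}(\varrho_k)\le 1$ and the other two terms are bounded below by the previous estimates. By monotonicity and subhomogeneity of $I$ as in (iii), the rotational term of $\tilde\varrho_k$ lies between $c_k^{-1}$ and $c_k^{-(8-\beta)/3}$ times that of $\varrho_k$, in the appropriate order. Its difference from the rotational term of $\varrho_k$ therefore tends to $0$.
\item[(c)] The internal energy difference $\int \Pi(\varrho_k/c_k)-\Pi(\varrho_k)$ is the main obstacle, since $\varrho_k$ need not be uniformly bounded.
\end{itemize}

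For (c) we use convexity. For $s$ near $1$,
\[
|\Pi(st)-\Pi(t)|\le |s-1|\,\max\big(t\,\Pi'(st),\,t\,\Pi'(t)\big),
\]
and $t\Pi'(t)\le \Pi(2t)-\Pi(t)$. This reduces (c) to a uniform bound on $\int\Pi(2\varrho_k)$. Such a bound is not automatic without a $\Delta_2$-type growth condition on $\Pi$. The first thing to try is to use the limits $\Pi(t)/t\to0$ and $\liminf\Pi(t)t^{-q}>0$. If these prove insufficient, we instead avoid the difficulty by always scaling downward in amplitude, i.e.\ by $c\le1$, where convexity and $\Pi(0)=0$ give $\Pi(c t)\le c\,\Pi(t)$. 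For $m_k<m$ this comes from combining the upper-semicontinuity direction with the monotonicity estimate of Lemma \ref{compareminb}, which gives $F_{m_k}\ge (m_k/m)^{(6-\beta)/3}F_m$. That estimate directly yields $\liminf F_{m_k}\ge F_m$ when $m_k\uparrow m$. For $m_k\downarrow m$ we apply the downward scaling $\varrho_k\mapsto (m/m_k)\varrho_k$, for which $\Pi$ is controlled by $c\int\Pi(\varrho_k)$, the potential term by $c^2$, and the rotational term by monotonicity of $I$. Hence $F_m\le \mathcal{F}(c\varrho_k)\le c\,\mathcal{F}(\varrho_k)+(c-c^2)\cdot\tfrac12\!\int\varrho_k\Psi*\varrho_k$, and the last term vanishes by the uniform bound \eqref{conbound}. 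This gives lower semicontinuity.
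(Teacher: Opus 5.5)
Your lower-semicontinuity argument (the fallback, not the abandoned $\tilde\varrho_k=\varrho_k/c_k$ attempt) is the paper's Step 2. For $m_k\uparrow m$ you use Lemma \ref{compareminb}. For $m_k\downarrow m$ you use the downward amplitude scaling $\varrho_k\mapsto (m/m_k)\varrho_k$ together with the uniform bound \eqref{conbound}. Your bound $\Pi(ct)\le c\,\Pi(t)$ is a slightly sharper form of the paper's monotonicity estimate. The paper also relies on your observation that the argument of Lemma \ref{boundy} applies to near-minimisers of any mass $m_k\le M$.

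The upper-semicontinuity direction differs from the paper in the case $m_k>m$. The paper simply invokes Lemma \ref{compareminb}, which gives $F_{m_k}\le (m_k/m)^{(6-\beta)/3}F_m$. You instead scale a fixed near-minimiser upward in amplitude. You control the rotational term via \eqref{Iconditions2} with $a=1/c_k$, which is correct since $c_k m_\varrho\le m_k\le M$, and the internal energy via boundedness of $\varrho$.

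That boundedness is the unjustified step in your write-up. Lemma \ref{mininequal} only uses the existence of \emph{some} compactly supported $\varrho\in\mathcal{A}_m$ with finite energy. It does not provide one with $\mathcal{F}(\varrho)<F_m+\varepsilon$, so appealing to ``exactly as in the proof of Lemma \ref{mininequal}'' does not give you a bounded near-minimiser.

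The claim is true, but you have to argue it. Truncate to $\varrho_N=\min(\varrho,N)$, with mass $m_N$, and restore the mass by the dilation $\varrho_N(b_N\,\cdot)$, where $b_N=(m_N/m)^{1/3}\le 1$. The rotational term is then at most $b_N^{-(6-\beta)}$ times that of $\varrho$, by \eqref{Iconditions2} and monotonicity of $I$. The potential term is bounded below by $b_N^{\beta-6}\int\varrho_N\Psi*\varrho_N$, by \eqref{Psiaxi} and monotone convergence. The internal energy is at most $b_N^{-3}\int\Pi(\varrho)$. All of these factors tend to $1$.

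A simpler fix is to treat $m_k>m$ with Lemma \ref{compareminb}, as the paper does. Then no boundedness is needed anywhere, since for $m_k<m$ the estimate $\Pi(c\varrho)\le\Pi(\varrho)$ already dominates, and the amplitude-up scaling becomes unnecessary.

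Finally, delete the exploratory paragraph about bounding $\int\Pi(2\varrho_k)$ and $\Delta_2$-type growth. As written it reads as an unresolved attempt, and only the fallback constitutes the proof.
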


\begin{proof}
We split the proof into two steps.
        \begin{step}
            We first prove that $\limsup_{l \to m} (F_{l} - F_m) \leq 0$.
            \begin{case}
                Suppose that $l > m$, applying Lemma \ref{compareminb}, we have that
                \[
                    \limsup_{l \to m^+} (F_{l} - F_m) \leq \limsup_{l \to m^+} \Big (\Big ( \frac{l}{m}\Big)^\frac{6 - \beta}{3} - 1 \Big)F_m = 0.
                \]
            \end{case}
            \begin{case}
                Suppose that $l < m$, we have that for all $\varepsilon > 0$, there exists $\varrho_m \in \mathcal{A}_m$ such that $F_m \geq \mathcal{F}(\varrho_m) - \varepsilon$. Then, we see that $F_{l} - F_m \leq \mathcal{F}\big(\frac{l\varrho_m}{m}\big) - \mathcal{F}(\varrho_m) + \varepsilon$. Since $I'(m) \geq 0$ for all $m \in [0,M]$ and $\Pi:[0,\infty) \to [0,\infty)$ is strictly convex with $\Pi(0) = 0$, $\Pi$ must be increasing, we have that
                \[
                    \mathcal{F}\Big(\frac{l\varrho_m}{m}\Big) - \mathcal{F}(\varrho_m) \leq \Big ( 1 - \Big ( \frac{l}{m} \Big )^2 \Big) \int_{\mathbb{R}^3} \varrho_m \Psi * \varrho_m \dd \boldsymbol{x}.
                \]
                Thus,
                \[
                    \limsup_{l \to m^-} (F_{l} - F_m) \leq \varepsilon + \limsup_{l \to m^-} \Big ( 1 - \Big ( \frac{l}{m} \Big )^2 \Big) \int_{\mathbb{R}^3} \varrho_m \Psi * \varrho_m \dd \boldsymbol{x} = \varepsilon.
                \]
                Hence, since $\varepsilon > 0$ is arbitrary, we see that $ \limsup_{l \to m^-} (F_{l} - F_m) \leq 0$.
            \end{case}
             \setcounter{case}{0}
        \end{step}
        \begin{step}
            We now prove that $\liminf_{l \to m} (F_{l} - F_m) \geq 0$.
            \begin{case}
                Suppose that $l < m$, applying Lemma \ref{compareminb}, we have that
                \[
                    \liminf_{l \to m^-} (F_{l} - F_m) \geq \liminf_{l \to m^-} \Big (\Big ( \frac{l}{m}\Big)^\frac{6 - \beta}{3} - 1 \Big)F_m = 0.
                \]
            \end{case}
            \begin{case}
                Suppose that $l > m$, we have that for all $\varepsilon > 0$, there exists $\varrho_l \in \mathcal{A}_l$ such that $F_l \geq \mathcal{F}(\varrho_l) - \varepsilon$. Then, we see that $F_{l} - F_m \geq \mathcal{F}(\varrho_l) - \mathcal{F}\big(\frac{m\varrho_l}{l}\big) - \varepsilon$. Since $I$ and $\Pi$ are increasing, we have that
                \[
                    \mathcal{F}(\varrho_m) - \mathcal{F}\Big(\frac{m\varrho_l}{l}\Big) \geq \Big ( \Big ( \frac{m}{l} \Big )^2 - 1 \Big) \int_{\mathbb{R}^3} \varrho_l \Psi * \varrho_l \dd \boldsymbol{x}.
                \]
                We need to show that $\int_{\mathbb{R}^3} \varrho_l \Psi * \varrho_l \dd \boldsymbol{x}$ remains bounded as $l \to m
                $ in order to pass the limit. By employing Lemma \ref{compareminb} and a similar argument to that used to obtain \eqref{qbound} and \eqref{conbound} in Lemma \ref{boundy}, we have that
        \[
            \|\varrho_l\|_{L^q(\mathbb{R}^3)}^q \leq C \big (l^{3-q} + \mathcal{F}(\varrho_l)\big ) \leq C \Big (\varepsilon + l^{3-q} + \Big ( \frac{l}{m} \Big )^\frac{6 - \beta}{3} F_m\Big ),
        \]
        and
        \[
        \int_{\mathbb{R}^3} \varrho_l \Psi*\varrho_l \dd \boldsymbol{x} \leq C l^{2-q} \big (l^{3-q} + \mathcal{F}(\varrho_l)\big ) \leq C l^{2-q} \Big ( \varepsilon + l^{3-q} + \Big ( \frac{l}{m} \Big )^\frac{6 - \beta}{3} F_m\Big ).
        \]
        Thus,
                \[
                    \liminf_{l \to m^+} (F_{l} - F_m) \geq - \varepsilon + \liminf_{l \to m^+} \Big ( \Big ( \frac{m}{l} \Big )^2 - 1 \Big) \int_{\mathbb{R}^3} \varrho_m \Psi * \varrho_m \dd \boldsymbol{x} = -\varepsilon.
                \]
                Hence, since $\varepsilon > 0$ is arbitrary, we see that $ \liminf_{l \to m^+} (F_{l} - F_m) \geq 0$.
            \end{case}
            \setcounter{case}{0}
        \end{step}
\end{proof}
\setcounter{step}{0}

We now turn to the convergence of minimising sequences for $\mathcal{F}$ over $\mathcal{A}_M$. The following lemma allows us to establish weak convergence of such minimising sequences.

\begin{lemma}\label{weaklowersemi}
    Suppose that $q > 1$, $\varrho_k \in L^1_+(\mathbb{R}^3) \cap L^q (\mathbb{R}^3)$ is a bounded sequence. Then, there exists $\tilde{\varrho} \in L^1_+(\mathbb{R}^3) \cap L^q (\mathbb{R}^3)$ such that  $\varrho_k \rightharpoonup \tilde{\varrho}$ in $L^q(\R^3)$ {\rm(}up to a subsequence{\rm)} and
    \[
    \|\tilde{\varrho}\|_{L^1} \leq \liminf_{k \to \infty} \|\varrho_k\|_{L^1}.
    \]
\end{lemma}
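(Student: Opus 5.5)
The plan is to combine reflexivity of $L^q(\mathbb{R}^3)$ with a positivity argument that exploits duality against test functions. Since $q>1$, the space $L^q(\mathbb{R}^3)$ is reflexive. The sequence $\{\varrho_k\}_k$ is bounded in $L^q$, so by the Banach--Alaoglu (Eberlein--Šmulian) theorem there are a subsequence, not relabelled, and some $\tilde{\varrho}\in L^q(\mathbb{R}^3)$ with $\varrho_k \rightharpoonup \tilde{\varrho}$ in $L^q(\mathbb{R}^3)$. What remains is to show that $\tilde\varrho\ge 0$ a.e., that $\tilde\varrho\in L^1$, and that the $L^1$ lower semicontinuity bound holds.

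For non-negativity, I will test the weak convergence against $\phi=\mathds{1}_{A}$, where $A=\{\tilde\varrho<0\}\cap B_R$. This $\phi$ lies in $L^{q'}$ because $|A|<\infty$. Each $\varrho_k$ is non-negative, so
\[
\int_A\tilde\varrho\,\dd\boldsymbol{x}=\lim_{k\to\infty}\int_A\varrho_k\,\dd\boldsymbol{x}\ge 0.
\]
On the other hand the left-hand side is non-positive by the definition of $A$. Hence $|A|=0$. Letting $R\to\infty$ gives $\tilde\varrho\ge0$ a.e.

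For the $L^1$ bound, I fix $R>0$ and test against $\phi=\mathds{1}_{B_R}\in L^{q'}$. Using non-negativity of both $\tilde\varrho$ and $\varrho_k$,
\[
\int_{B_R}\tilde\varrho\,\dd\boldsymbol{x}=\lim_{k\to\infty}\int_{B_R}\varrho_k\,\dd\boldsymbol{x}\le\liminf_{k\to\infty}\|\varrho_k\|_{L^1}.
\]
The right-hand side is finite, since the sequence is bounded in $L^1$. Letting $R\to\infty$ by monotone convergence gives $\|\tilde\varrho\|_{L^1}\le\liminf_{k}\|\varrho_k\|_{L^1}<\infty$, and in particular $\tilde\varrho\in L^1_+(\mathbb{R}^3)\cap L^q(\mathbb{R}^3)$.

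There is one subtlety. The liminf is taken along the extracted subsequence, and it is at least the liminf of the full sequence, so it is cleanest to pass first to a subsequence that realises $\liminf_k\|\varrho_k\|_{L^1}$ as a limit, and then extract the weakly convergent subsequence from it. Apart from this bookkeeping the argument is routine. The only point that needs care is that constants and indicators of unbounded sets do not belong to $L^{q'}$, which is why the truncation to $B_R$ is needed before passing $R\to\infty$.
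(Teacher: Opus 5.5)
Your proof is correct and complete. It extracts a weakly convergent subsequence in the reflexive space $L^q(\mathbb{R}^3)$, tests against indicators of bounded sets to get $\tilde{\varrho}\ge 0$, and then obtains $\int_{B_R}\tilde{\varrho}\,\dd\boldsymbol{x}\le\liminf_k\|\varrho_k\|_{L^1}$, with monotone convergence in $R$ giving the $L^1$ bound; your preliminary extraction of a subsequence realising $\liminf_k\|\varrho_k\|_{L^1}$ also correctly makes the bound hold relative to the full sequence. The paper writes no proof of its own and only refers to \cite{Cheng_2025}, so there is nothing to compare beyond noting that yours is the standard argument.
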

The proof is the same as in \cite{Cheng_2025}. 

\smallskip

To show that the weak limit of a minimising sequence is indeed a minimiser of $\mathcal{F}$ over $\mathcal{A}_M$, we require weak lower semicontinuity of the kinetic energy term. This property will also play an important role in establishing the existence of minimisers in \S\ref{existence:rotaingsuper}. We prove this in the following lemma. To this end, it is convenient to introduce the definition
\[
    \mathbb{R}^3_* \coloneq \mathbb{R}^3 \setminus (\{\boldsymbol{0}\} \times \mathbb{R}).
\]

\begin{lemma}\label{lowerkinetic}
    Suppose that $I$ satisfies \eqref{Iconditions}, $q > 1$, $\varrho_k \in \mathcal{A}_M$ and there exists $\tilde{\varrho} \in L^1_+(\mathbb{R}^3) \cap L^q_{\rm axi} (\mathbb{R}^3)$ such that $\varrho_k \rightharpoonup \tilde{\varrho}$ as $k \to \infty$ in $L^q(\mathbb{R}^3)$. Then, we have that
    \[
        \int_{\mathbb{R}^3} \frac{\tilde{\varrho} I(m_{\tilde{\varrho}}(r))}{r^2} \dd \boldsymbol{x} \leq \liminf_{k \to \infty} \int_{\mathbb{R}^3} \frac{\varrho_k I(m_{\varrho_k}(r))}{r^2} \dd \boldsymbol{x}.
    \]
\end{lemma}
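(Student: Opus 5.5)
The plan is to rewrite the rotational energy as an integral in the mass variable, so that lower semicontinuity reduces to pointwise convergence of the cylindrical mass functions combined with Fatou's lemma. For axisymmetric $\varrho$ with finite rotational energy, write $m_\varrho(r)=2\pi\int_0^r\int_{\mathbb{R}}\varrho(\eta,z)\eta\,\dd z\,\dd\eta$. Then $\dd m_\varrho(r)=2\pi r\int_{\mathbb{R}}\varrho(r,z)\,\dd z\,\dd r$, and hence
\[
\int_{\mathbb{R}^3}\frac{\varrho I(m_\varrho(r))}{r^2}\dd\boldsymbol{x}=\int_0^\infty \frac{I(m_\varrho(r))}{r^2}\,\dd m_\varrho(r)=\int_0^\infty \frac{\frac{\dd}{\dd r}\mathcal{J}(m_\varrho(r))}{r^2}\,\dd r,
\qquad \mathcal{J}(m):=\int_0^m I(s)\,\dd s .
\]
Integrating by parts, and noting that boundary terms vanish at $r=\infty$ because $\mathcal{J}(m_\varrho(R))\le M I(M)$, I expect the identity
\[
\int_{\mathbb{R}^3}\frac{\varrho I(m_\varrho(r))}{r^2}\dd\boldsymbol{x}=\lim_{\varepsilon\to0^+}\Big(-\frac{\mathcal{J}(m_\varrho(\varepsilon))}{\varepsilon^2}\Big)+2\int_0^\infty\frac{\mathcal{J}(m_\varrho(r))}{r^3}\dd r .
\]
The boundary term at $0$ is nonpositive. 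Since $I\ge0$ is nondecreasing, $\mathcal{J}(m)\le mI(m)$, so this term is bounded by $m_\varrho(\varepsilon)I(m_\varrho(\varepsilon))/\varepsilon^2$, and I will need to show that it tends to $0$ when the energy is finite. For this I would use the monotonicity of $I$:
\[
\frac{m_\varrho(\varepsilon) I(m_\varrho(\varepsilon))}{\varepsilon^2}\le \int_{r\le\varepsilon}\frac{\varrho I(m_\varrho(r))}{r^2}\dd\boldsymbol{x}+\ldots
\]
This bound does not quite close directly. A cleaner route avoids the boundary term entirely: for $r\le\varepsilon$ each point satisfies $r^{-2}\ge\varepsilon^{-2}$, so the bound follows from
\[
\int_{r\le \varepsilon}\frac{\varrho I(m_\varrho(r))}{r^2}\ge \varepsilon^{-2}\int_0^\varepsilon I(m_\varrho)\,\dd m_\varrho=\varepsilon^{-2}\mathcal{J}(m_\varrho(\varepsilon)),
\]
and the left-hand side tends to $0$ by finiteness of the integral. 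This yields the identity with the boundary term equal to $0$, and the same identity holds for $\tilde\varrho$ whenever its energy is finite.

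Next, pointwise convergence. For each fixed $r>0$, I would show $m_{\varrho_k}(r)\to m_{\tilde\varrho}(r)$. Weak convergence in $L^q$ alone gives convergence against $L^{q'}$ test functions, but the indicator of the unbounded cylinder $\{r(\boldsymbol{x})\le r\}$ is not in $L^{q'}$. So I would obtain only $\liminf_k m_{\varrho_k}(r)\ge m_{\tilde\varrho}(r)$, by testing against the indicators of truncated cylinders $\{r(\boldsymbol{x})\le r,|z|\le R\}$, which lie in $L^{q'}$, and then letting $R\to\infty$ by monotone convergence. This one-sided inequality suffices. Since $\mathcal{J}$ is nondecreasing and continuous ($I\ge0$ continuous), we get $\liminf_k\mathcal{J}(m_{\varrho_k}(r))\ge\mathcal{J}(m_{\tilde\varrho}(r))$ for every $r$. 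Fatou's lemma applied to the nonnegative integrand $2\mathcal{J}(m(r))r^{-3}$ then gives
\[
2\int_0^\infty\frac{\mathcal{J}(m_{\tilde\varrho}(r))}{r^3}\dd r\le\liminf_{k}2\int_0^\infty\frac{\mathcal{J}(m_{\varrho_k}(r))}{r^3}\dd r .
\]

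The main obstacle is justifying the identity for $\tilde\varrho$ without knowing a priori that its rotational energy is finite, and handling $\varrho$ that is merely $L^1\cap L^q$ rather than smooth. To settle this I would prove the general inequality $\int\varrho I(m_\varrho)r^{-2}\dd\boldsymbol{x}\le 2\int_0^\infty\mathcal{J}(m_\varrho(r))r^{-3}\dd r$ for every admissible axisymmetric $\varrho$, without assuming finiteness. The route is to write $r^{-2}=2\int_r^\infty s^{-3}\dd s$ and apply Tonelli:
\[
\int\frac{\varrho I(m_\varrho(r))}{r^2}\dd\boldsymbol{x}=2\int_0^\infty s^{-3}\int_{r\le s}\varrho I(m_\varrho(r))\dd\boldsymbol{x}\,\dd s=2\int_0^\infty s^{-3}\mathcal{J}(m_\varrho(s))\dd s,
\]
using the Stieltjes chain rule $\int_{r\le s}\varrho I(m_\varrho)\dd\boldsymbol{x}=\mathcal{J}(m_\varrho(s))$. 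The chain rule is valid because $m_\varrho$ is absolutely continuous in $r$ (indeed $\varrho\in L^1$ gives $m_\varrho'(r)=2\pi r\int\varrho(r,z)\dd z$ a.e.) and $\mathcal{J}\in C^1$. This is in fact an exact identity for all such $\varrho$, so the boundary-term discussion above becomes unnecessary. Combining the identity for $\varrho_k$ and $\tilde\varrho$ with the Fatou step completes the proof.
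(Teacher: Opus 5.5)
Your argument is correct, and it takes a genuinely different route from the paper. You use $r^{-2}=2\int_r^\infty s^{-3}\dd s$, Tonelli, and the chain rule for the absolutely continuous profile $m_\varrho$ to turn the rotational energy into the exact identity
\[
\int_{\mathbb{R}^3}\frac{\varrho I(m_\varrho(r))}{r^2}\dd\boldsymbol{x}=2\int_0^\infty \frac{\mathcal{J}(m_\varrho(s))}{s^3}\dd s,\qquad \mathcal{J}(m)=\int_0^m I(u)\dd u .
\]
This holds, possibly as $+\infty=+\infty$, for every nonnegative $\varrho\in L^1$ with mass at most $M$. It exhibits the energy as a monotone functional of the cylindrical mass profile. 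Lower semicontinuity then reduces to two ingredients: the one-sided bound $\liminf_k m_{\varrho_k}(s)\ge m_{\tilde\varrho}(s)$, which you obtain by testing on truncated cylinders, and Fatou's lemma.

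The paper proceeds quite differently. It truncates in $z$ to get genuine pointwise convergence $m^S_{\varrho_k}\to m^S_{\tilde\varrho}$, and upgrades this to locally uniform convergence with a Dini-type theorem. It then extracts a weak-$*$ limit $\nu^S$ of $\varrho_k I(m^S_{\varrho_k})r^{-2}$ in Radon measures. It identifies $\nu^S$ away from the axis by testing against $C_{\rm c}(\mathbb{R}^3_*)$ functions, discards the axis by positivity, and finally lets $S\to\infty$ by monotone convergence.

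Your route is shorter and more elementary. It needs no subsequence extraction, no measure compactness, and no truncation. Mass escaping to $|z|\to\infty$ is handled automatically by the one-sided inequality. It also only uses that $I$ is continuous and nonnegative. The paper, by contrast, uses $I'\ge 0$ twice: once to dominate the truncated energy by the full one, and once for the monotone limit in $S$. The paper's measure-theoretic version does give localised information on the limiting energy density off the axis, but that is not needed for this statement.

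Two points for a write-up. First, drop the preliminary boundary-term discussion, since the Tonelli identity makes it redundant. Second, note that the one-sided inequality already gives $m_{\tilde\varrho}\le M$, so $\mathcal{J}$ is only evaluated on $[0,M]$, where $I$ is defined.
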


\begin{proof}
    If $\liminf_{k \to \infty} \int_{\mathbb{R}^3} \frac{\varrho_k I(m_{\varrho_k}(r))}{r^2} \dd \boldsymbol{x} = \infty$, the result follows trivially. So, without loss of generality, we assume that $\int_{\mathbb{R}^3} \frac{\varrho_k I(m_{\varrho_k}(r))}{r^2} \dd \boldsymbol{x}$ is a bounded sequence. Let $S>0$ and define
    $$m^S_{\varrho_k}(r) \coloneq \int^S_{-S} \int_{B_r} \varrho_k(\boldsymbol{y},z) \dd \boldsymbol{y} \dd z.$$
    Since $\varrho_k \rightharpoonup \tilde{\varrho}$ in $L^q(\mathbb{R}^3)$ as $k \to \infty$,
    \[
        m^S_{\varrho_k}(r) \longrightarrow m^S_{\tilde{\varrho}}(r), \qquad \mbox{as $k \to \infty$.}
    \]
    As in \cite{Luo_2008}, since $m^S_{\tilde{\varrho}}(r)$ is continuous by the Lebesgue dominated convergence Theorem, we use the variation of Dini's Theorem in \cite[p.167]{Rudin_76} to obtain that for any $R>0$, that
    \[
        \| m^S_{\varrho_k}(r) - m^S_{\tilde{\varrho}}(r) \|_{L^\infty([0,R])} \longrightarrow 0, \qquad \mbox{as $k \to \infty$.}
    \]
    Since $I \in C^1([0,\infty))$, we have that
    \begin{equation}\label{Lconverge}
        \| I(m^S_{\varrho_k}(r)) - I(m^S_{\tilde{\varrho}}(r)) \|_{L^\infty([0,R])} \longrightarrow 0, \qquad \mbox{as $k \to \infty$.}
    \end{equation}
    Thus, since $I'(m) \geq 0$ for all $m \geq 0$, we have that
    \[
        \int_{\mathbb{R}^3} \frac{\varrho_k I(m^S_{\varrho_k}(r))}{r^2} \dd \boldsymbol{x} \leq \int_{\mathbb{R}^3} \frac{\varrho_k I(m_{\varrho_k}(r))}{r^2} \dd \boldsymbol{x},
    \]
    is a bounded sequence. Hence, by weak compactness in the space of signed Radon measures $\mathcal{M}(\mathbb{R}^3)$, there exists $\nu^S \in \mathcal{M}(\mathbb{R}^3)$ such that
    \[
        \frac{\varrho_k I(m^S_{\varrho_k}(r))}{r^2} \longrightharpoonup \nu^S, \qquad \mbox{in $\mathcal{M}(\mathbb{R}^3)$ as $k \to \infty$.}
    \]
    By properties of weak convergence of signed Radon measures, as detailed in \cite[Theorem 1.1.3]{Evans_1990}, we have that
    \begin{equation}\label{nu0}
        \nu^S(\{\boldsymbol{0}\} \times \mathbb{R}) \geq \limsup_{k \to \infty} \int_{\{\boldsymbol{0}\}\times \mathbb{R}} \frac{\varrho_k I(m^S_{\varrho_k}(r))}{r^2} \dd \boldsymbol{x} = 0,
    \end{equation}
    and
    \begin{equation}\label{muR3}
        \nu^S(\mathbb{R}^3) \leq \liminf_{k \to \infty} \int_{\mathbb{R}^3} \frac{\varrho_k I(m^S_{\varrho_k}(r))}{r^2} \dd \boldsymbol{x} \leq \liminf_{k \to \infty} \int_{\mathbb{R}^3} \frac{\varrho_k I(m_{\varrho_k}(r))}{r^2} \dd \boldsymbol{x} < \infty.
    \end{equation}
    Let $g \in C_{\rm c}(\mathbb{R}^3_*)$, due to \eqref{Lconverge}, we have that since $\varrho_k \rightharpoonup \tilde{\varrho}$ in $L^q(\mathbb{R}^3)$ as $k \to \infty$, that
    \[
        \int_{\mathbb{R}^3} \frac{\varrho_k(\boldsymbol{x}) I(m^S_{\varrho_k}(r))}{r^2} g(\boldsymbol{x}) \dd \boldsymbol{x} \longrightarrow \int_{\mathbb{R}^3} \frac{\tilde{\varrho}(\boldsymbol{x}) I(m^S_{\tilde{\varrho}}(r))}{r^2} g(\boldsymbol{x}) \dd \boldsymbol{x}.
    \]
    This implies that on $\mathbb{R}^3_*$, $\nu^S \equiv \frac{\tilde{\varrho} I(m^S_{\tilde{\varrho}}(r))}{r^2}$. Thus, combining this with \eqref{nu0} and \eqref{muR3}, we have that
    \begin{align*}
        \int_{\mathbb{R}^3} \frac{\tilde{\varrho}(\boldsymbol{x}) I(m^S_{\tilde{\varrho}}(r))}{r^2} \dd \boldsymbol{x} 
        & = \nu^S(\mathbb{R}^3_*) \\
        & = \nu^S(\mathbb{R}^3) - \nu^S(\{ \boldsymbol{0} \} \times \mathbb{R}) \\
        & \leq \liminf_{k \to \infty} \int_{\mathbb{R}^3} \frac{\varrho_k I(m_{\varrho_k}(r))}{r^2} \dd \boldsymbol{x}.
    \end{align*}
    Thus, applying the Monotone convergence Theorem and the fact that $I'(m) \geq 0$ for all $m \geq 0$, we have that
    \[
    \begin{split}
        \int_{\mathbb{R}^3} \frac{\tilde{\varrho}(\boldsymbol{x}) I(m_{\tilde{\varrho}}(r))}{r^2} \dd \boldsymbol{x} & = \lim_{S \to \infty} \int_{\mathbb{R}^3} \frac{\tilde{\varrho}(\boldsymbol{x}) I(m^S_{\tilde{\varrho}}(r))}{r^2} \dd \boldsymbol{x}\\
        & \leq \liminf_{k \to \infty} \int_{\mathbb{R}^3} \frac{\varrho_k I(m_{\varrho_k}(r))}{r^2} \dd \boldsymbol{x}. \\
    \end{split}
    \]
\end{proof}

We now state the following lemma, which characterises the possible behaviour of minimising sequences. Using the framework developed above, we will show that phenomena such as vanishing and dichotomy cannot occur for minimising sequences.

\begin{lemma}[\text{\cite[Lemma I.1]{Lions_1984}}]\label{comvandich}
    Let $\varrho_k \in L^1(\mathbb{R}^n)$ be a sequence satisfying{\rm:}
    \[
        \varrho_k \geq 0 \text{ in } \mathbb{R}^n, \qquad \int_{\Omega} \varrho_k \dd \boldsymbol{x} = M,
    \]
    for fixed $M>0$. Then there exists a subsequence $\varrho_{k_l}$ satisfying one of the three possibilities{\rm:}
    \begin{enumerate}
        \item[{\rm(a)}] {\rm(}compactness{\rm)} There exists $\boldsymbol{y}_l \in \mathbb{R}^n$ such that $\varrho_{k_l}(\cdot + \boldsymbol{y}_{l})$ is tight, that is, for all $\varepsilon > 0$, there exists $R>0$ such that
        \[
            \int_{\boldsymbol{y}_l + B_{R}} \varrho_{k_l} \dd \boldsymbol{x} \geq M - \varepsilon;
        \]
        \item[{\rm(b)}] {\rm(}vanishing{\rm)} For all $R < \infty$,
        \[
            \lim_{l \to \infty} \sup_{\boldsymbol{y} \in \mathbb{R}^n} \int_{\boldsymbol{y} + B_R} \varrho_{k_l} \dd \boldsymbol{x} = 0;
        \]
        \item[{\rm(c)}] {\rm(}dichotomy{\rm)} There exists $m \in (0,M)$ such that for all $\varepsilon > 0$, there exist $l_0 \geq 1$ and $\varrho^1_l,\varrho^2_l \in L^1_+(\mathbb{R}^n)$ satisfying for $l \geq l_0$
        \[
        \begin{cases}
            \displaystyle
            \|\varrho_{k_l} - (\varrho^1_l + \varrho^2_l) \|_{L^1(\mathbb{R}^n)} \leq \varepsilon, \vspace{0.2cm}\\
            \displaystyle
            \bigg | \int_{\mathbb{R}^n} \varrho^1_l \dd \boldsymbol{x} - m \bigg | \leq \varepsilon, \vspace{0.2cm} \\ 
            \displaystyle \bigg | \int_{\mathbb{R}^n} \varrho^2_l \dd \boldsymbol{x} - (M - m) \bigg | \leq \varepsilon, \vspace{0.2cm}\\
            \displaystyle
            { \rm dist}(\supp(\varrho^1_l),\supp(\varrho^2_l)) \longrightarrow \infty, \text{ as } l \to \infty.
        \end{cases}
        \]
    \end{enumerate}
\end{lemma}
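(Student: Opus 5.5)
This is Lions' concentration compactness lemma, and I would follow his original proof via the concentration function. For each $k$ define the Lévy concentration function
\[
Q_k(R) \coloneq \sup_{\boldsymbol{y}\in\mathbb{R}^n}\int_{\boldsymbol{y}+B_R}\varrho_k \dd \boldsymbol{x}.
\]
Each $Q_k$ is nondecreasing, nonnegative, bounded by $M$, and satisfies $Q_k(R)\to M$ as $R\to\infty$. By Helly's selection theorem there is a subsequence $k_l$ and a nondecreasing function $Q$ with $Q_{k_l}(R)\to Q(R)$ for every $R\geq 0$. Set $m\coloneq\lim_{R\to\infty}Q(R)\in[0,M]$. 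The three alternatives correspond to $m=0$, $m=M$ and $m\in(0,M)$.

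\textbf{Vanishing and compactness.} If $m=0$, then $Q\equiv 0$. This says exactly that $\sup_{\boldsymbol{y}}\int_{\boldsymbol{y}+B_R}\varrho_{k_l}\to 0$ for every $R$, which is (b).

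If $m=M$, fix $\varepsilon<M/2$ and choose $R_0$ with $Q(R_0)>M-\varepsilon/2$. For $l$ large, pick $\boldsymbol{y}_l(\varepsilon)$ with $\int_{\boldsymbol{y}_l(\varepsilon)+B_{R_0}}\varrho_{k_l}>M-\varepsilon$.

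To get a single sequence of centres independent of $\varepsilon$, fix a reference $\boldsymbol{y}_l\coloneq\boldsymbol{y}_l(\varepsilon_0)$ with $\varepsilon_0<M/2$. For $\varepsilon<\varepsilon_0$, the balls around $\boldsymbol{y}_l(\varepsilon)$ and $\boldsymbol{y}_l$ each carry more than half the mass, so they must intersect. Hence $|\boldsymbol{y}_l(\varepsilon)-\boldsymbol{y}_l|\leq R_0(\varepsilon)+R_0(\varepsilon_0)$. Enlarging the radius to $2R_0(\varepsilon)+R_0(\varepsilon_0)$ then gives tightness about $\boldsymbol{y}_l$ for all large $l$. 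The finitely many remaining indices are absorbed by enlarging $R$ further, since each single $\varrho_k$ is tight. This gives (a).

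\textbf{Dichotomy.} If $0<m<M$, fix $\varepsilon>0$ and choose $R$ with $Q(R)>m-\varepsilon$. Then choose $R_l\to\infty$ slowly enough that $Q_{k_l}(R_l)\to m$. This is possible by a diagonal argument using $Q_{k_l}(R)\to Q(R)\leq m$ pointwise and monotonicity in $R$.

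For $l$ large, pick $\boldsymbol{y}_l$ with $\int_{\boldsymbol{y}_l+B_R}\varrho_{k_l}>m-\varepsilon$, and set
\[
\varrho^1_l\coloneq\varrho_{k_l}\mathds{1}_{\boldsymbol{y}_l+B_R},\qquad \varrho^2_l\coloneq\varrho_{k_l}\mathds{1}_{(\boldsymbol{y}_l+B_{R_l})^c}.
\]
The masses are checked as follows.
\begin{itemize}
\item Since $\int\varrho^1_l\leq Q_{k_l}(R)$, which is at most $m+\varepsilon$ for large $l$, we get $|\int\varrho^1_l-m|\leq\varepsilon$.
\item The annulus satisfies $\int_{(\boldsymbol{y}_l+B_{R_l})\setminus(\boldsymbol{y}_l+B_R)}\varrho_{k_l}\leq Q_{k_l}(R_l)-(m-\varepsilon)\to\varepsilon$. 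This controls $\|\varrho_{k_l}-\varrho^1_l-\varrho^2_l\|_{L^1}$ by roughly $2\varepsilon$.
\item Consequently $\int\varrho^2_l$ lies within $O(\varepsilon)$ of $M-m$.
\end{itemize}
The supports are separated by at least $R_l-R\to\infty$. Replacing $\varepsilon$ by $\varepsilon/3$ gives (c).

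The main delicate step is the choice of $R_l$ in the dichotomy case. It must grow, yet keep $Q_{k_l}(R_l)\leq m+o(1)$. I would handle this by choosing, for each $j$, an index $l_j$ beyond which $Q_{k_l}(j)\leq m+1/j$, and then setting $R_l=j$ for $l_j\leq l<l_{j+1}$.
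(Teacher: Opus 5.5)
Your proposal is correct and is essentially Lions' original argument: the L\'evy concentration function $Q_k(R)=\sup_{\boldsymbol{y}}\int_{\boldsymbol{y}+B_R}\varrho_k\dd\boldsymbol{x}$, Helly's selection theorem, and the trichotomy on $m=\lim_{R\to\infty}Q(R)$. The paper itself gives no proof and simply cites \cite[Lemma I.1]{Lions_1984}, and you handle the two delicate points soundly, namely a single sequence of centres in the compactness case and an $\varepsilon$-independent sequence $R_l\to\infty$ with $\limsup_{l} Q_{k_l}(R_l)\le m$ in the dichotomy case.
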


Unlike the setting of \cite{Lions_1984}, where one may work with radially symmetric minimising sequences, the presence of angular momentum in our problem prevents us from concluding that the radially decreasing rearrangement of a minimising sequence remains minimising. Consequently, Lemma \ref{comvandich} must be applied in a different manner, leading to an additional possible scenario for the behaviour of minimising sequences in $L^1$.

\begin{proof}[Proof of Theorem \ref{minimiseraxi}]
We split the proof into five steps.
    \begin{step}
     From Lemma \ref{negativeaxi} (a), we have that $F_M > - \infty$, as such, there exists a minimising sequence $\varrho_k \in \mathcal{A}_M$ such that $\mathcal{F}(\varrho_k) \to F_M$ as $k \to \infty$. From Lemma \ref{boundy}, we have that the sequence $\varrho_k$ is bounded in $L^1_+(\mathbb{R}^3) \cap L^q(\mathbb{R}^3)$. We can apply Lemma \ref{comvandich} to our minimising sequence, our aim is to show that vanishing and dichotomy cannot occur. For $\overline{r}$, the extension of $r>0$ to $\mathbb{R}$, we define $$f_k(\overline{r},z) \coloneq \begin{cases}
         2\pi \varrho_k(\overline{r},z)\overline{r}, & \overline{r}>0, \\
         0, & \overline{r} \leq 0.
     \end{cases} $$
     We must have that $f_k \in L^1(\mathbb{R}^2)$. We apply Lemma \ref{comvandich} to $f_k$.
     \end{step}
    \begin{step}
        Suppose that (c) (dichotomy) in Lemma \ref{comvandich} occurs, then (up to a subsequence), for each $\varepsilon>0$, there exists $f^1_k, f^2_k, g_k \in L^1_+(\mathbb{R}^2)$ such that $0 \leq f^1_k, f^2_k, g_k \leq f_k$ and $f^1_k f^2_k = f^1_k g_k = f^2_k g_k = 0$ almost everywhere. We let $d_k = {\rm dist}(\supp (f^1_k), \supp( f^2_k))$ and
        \[
            m^1_k = \int_{\mathbb{R}^2} f^1_k \dd \overline{r} \dd z, \qquad m^2_k = \int_{\mathbb{R}^2} f^2_k \dd \overline{r} \dd z.
        \]
        Then, without loss of generality, we may assume that $d_k \to \infty$ as $k \to \infty$, $m^1_k \to m^1(\varepsilon)$, $m^2_k \to m^2(\varepsilon)$ for $m^1(\varepsilon) \in (0,M)$ and $m^2(\varepsilon) \in (0,M-m^1(\varepsilon)]$ with $|m^1(\varepsilon) - m| \leq \varepsilon$ and $|m^2(\varepsilon) - (M-m)| \leq \varepsilon$ for some $m \in (0,M)$. Defining
        \begin{align*}
            \varrho^1_k(r,z) \coloneq \frac{f^1_k(r,z)}{2\pi r}, \\
            \varrho^2_k(r,z) \coloneq \frac{f^2_k(r,z)}{2\pi r}, \\
            \sigma_k(r,z) \coloneq \frac{g_k(r,z)}{2\pi r},
        \end{align*}
        $\varrho^1_k, \varrho^2_k, \sigma_k \in L^1_+(\mathbb{R}^3)$ are axisymmetric such that $0 \leq \varrho^1_k, \varrho^2_k, \sigma_k \leq \varrho_k$ and $\varrho^1_k \varrho^2_k = \varrho^1_k \sigma_k = \varrho^2_k \sigma_k = 0$ almost everywhere. They also satisfy
        \[
            m^1_k = \int_{\mathbb{R}^3} \varrho^1_k \dd \boldsymbol{x}, \qquad m^2_k = \int_{\mathbb{R}^3} \varrho^2_k \dd \boldsymbol{x},
        \]
        ${\rm dist}(\supp (\varrho^1_k), \supp( \varrho^2_k))= d_k \to \infty$ as $k \to \infty$. As in the proof of \cite[Theorem II.1]{Lions_1984}, one can show that
        \[
            \bigg | \int_{\mathbb{R}^3} \int_{\mathbb{R}^3} \varrho^1_k(\boldsymbol{x}) \varrho^2_k(\boldsymbol{y}) \Psi(\boldsymbol{x} - \boldsymbol{y}) \dd \boldsymbol{x} \dd \boldsymbol{y} \bigg | \longrightarrow 0, \qquad \mbox{as $k \to \infty$,}
        \]
        and there exists a $C>0$ such that for $i=1,2$,
        \[
        \bigg | \int_{\mathbb{R}^3} \int_{\mathbb{R}^3} \varrho^i_k(\boldsymbol{x}) \sigma_k(\boldsymbol{y}) \Psi(\boldsymbol{x} - \boldsymbol{y}) \dd \boldsymbol{x} \dd \boldsymbol{y} \bigg |^2 + \bigg | \int_{\mathbb{R}^3} \int_{\mathbb{R}^3} \sigma_k(\boldsymbol{x}) \sigma_k(\boldsymbol{y}) \Psi(\boldsymbol{x} - \boldsymbol{y}) \dd \boldsymbol{x} \dd \boldsymbol{y} \bigg | \leq C \varepsilon^{2-q}.
        \]
        Moreover, since $I'(m) \geq 0$ for all $m \in [0,M]$ and $\Pi:[0,\infty) \to [0,\infty)$ is strictly convex with $\Pi(0) = 0$, $\Pi$ must be increasing, so it can be seen that
        \begin{align*}
            \int_{\mathbb{R}^3}\Big(\Pi(\varrho_k(\boldsymbol{x})) + \frac{\varrho_k(\boldsymbol{x}) I(m_{\varrho_k}(r(\boldsymbol{x})))}{r(\boldsymbol{x})^2} \Big)\,\dd\boldsymbol{x} & \geq  \int_{\mathbb{R}^3}\Big(\Pi(\varrho^1_k(\boldsymbol{x})) + \frac{\varrho^1_k(\boldsymbol{x}) I(m_{\varrho^1_k}(r(\boldsymbol{x})))}{r(\boldsymbol{x})^2} \Big)\,\dd\boldsymbol{x} \\
            & \qquad + \int_{\mathbb{R}^3}\Big(\Pi(\varrho^2_k(\boldsymbol{x})) + \frac{\varrho^2_k(\boldsymbol{x}) I(m_{\varrho^2_k}(r(\boldsymbol{x})))}{r(\boldsymbol{x})^2} \Big)\,\dd\boldsymbol{x}.
        \end{align*}
        Thus, we obtain that
        \begin{align*}
        F_M & \geq \liminf_{k \to \infty} \bigg ( \int_{\mathbb{R}^3}\Big(\Pi(\varrho^1_k(\boldsymbol{x})) + \frac{\varrho^1_k(\boldsymbol{x}) I(m_{\varrho^1_k}(r(\boldsymbol{x})))}{r(\boldsymbol{x})^2} - \frac{1}{2} \varrho^1_k \Psi * \varrho^1_k \Big)\,\dd\boldsymbol{x} \bigg ) \\
        & \quad + \liminf_{k \to \infty} \bigg ( \int_{\mathbb{R}^3}\Big(\Pi(\varrho^2_k(\boldsymbol{x})) + \frac{\varrho^2_k(\boldsymbol{x}) I(m_{\varrho^2_k}(r(\boldsymbol{x})))}{r(\boldsymbol{x})^2} - \frac{1}{2} \varrho^2_k \Psi * \varrho^2_k \Big)\,\dd\boldsymbol{x} \bigg ) + O(\varepsilon^{1-\frac{q}{2}}) \\
        & \geq F_{m^1(\varepsilon)} + F_{m^2(\varepsilon)} + O(\varepsilon^{1-\frac{q}{2}}).
        \end{align*}
        Letting $\varepsilon \to 0$, by Lemma \ref{contofmin}, we obtain that
        \[
            F_M \geq F_m + F_{M-m},
        \]
        a contradiction to Lemma \ref{mininequal}.
     \end{step}
     \begin{step}
         Suppose that (b) (vanishing) in Lemma \ref{comvandich} occurs, then as in the proof of \cite[Theorem II.1]{Lions_1984} (up to a subsequence), we have that
         \[
            \int_{\mathbb{R}^3} \int_{\mathbb{R}^3} \varrho_k(\boldsymbol{x}) \varrho_k(\boldsymbol{y}) \Psi(\boldsymbol{x} - \boldsymbol{y}) \dd \boldsymbol{x} \dd \boldsymbol{y} \longrightarrow 0, \qquad \mbox{as $k \to \infty$.}
        \]
        But, this would imply that $F_M \geq 0$, a contradiction.
     \end{step}
     \begin{step}
         Thus, we must have that (a) (compactness) in Lemma \ref{comvandich} occurs, then (up to a subsequence) there exists $(\overline{r}_k,z_k) \in \mathbb{R}^2$ such that for any $\varepsilon > 0$, there exists $R>0$ such that
         \[
            \int_{B_R(\overline{r}_k,z_k)^c} f_k \dd \overline{r} \dd z \leq \varepsilon.
         \]
         In particular, due to the form of $f_k$, we can assume that $\overline{r}_k \geq 0$ and thus, use the notation $r_k = \overline{r}_k$. We can assume $z_k = 0$ by taking $\tilde{f}_k = f_k(\cdot,\cdot -z_k)$ and $\tilde{\varrho}_k = \varrho_k(\cdot,\cdot - z_k)$. Suppose that $r_k$ does not have a bounded subsequence, in particular, we have that $r_k \to \infty$ as $k \to \infty$. Then, we define $\tilde{f}^1_k(r,z;R) = \mathds{1}_{B_R(r_k,0)} \tilde{f}_k$ and $\tilde{f}^2_k(r,z;R) = \mathds{1}_{B_R(r_k,0)^c} \tilde{f}_k$. Then, as before, we define
         \begin{align*}
            \tilde{\varrho}^1_k(r,z;R) \coloneq \frac{\tilde{f}^1_k(r,z;R)}{2\pi r}, \\
            \tilde{\varrho}^2_k(r,z;R) \coloneq \frac{\tilde{f}^2_k(r,z;R)}{2\pi r},
        \end{align*}
        we have that $\tilde{\varrho}_k = \tilde{\varrho}^1_k + \tilde{\varrho}^2_k$, $0 \leq \tilde{\varrho}^1_k, \tilde{\varrho}^2_k, \leq \tilde{\varrho}_k$ and $\tilde{\varrho}^1_k \tilde{\varrho}^2_k = 0$ almost everywhere. For convenience, we suppress the $R$ in the notation so that $\tilde{\varrho}^1_k(\boldsymbol{x};R) = \tilde{\varrho}^1_k(\boldsymbol{x})$. We claim that
        \[
                \int_{\mathbb{R}^3} \int_{\mathbb{R}^3} \tilde{\varrho}_k(\boldsymbol{x}) \tilde{\varrho}_k(\boldsymbol{y}) \Psi(\boldsymbol{x} - \boldsymbol{y}) \dd \boldsymbol{x} \dd \boldsymbol{y} \longrightarrow 0, \qquad \mbox{as $k \to \infty$.}
        \]
        This comes from the dissipation of the density, meaning that as $k\to \infty$, the amount of local interactions decrease. By Lemma \ref{precised}, we have that
        \begin{equation}\label{otherterms}
        \begin{split}
            &\int_{\mathbb{R}^3} \int_{\mathbb{R}^3} \tilde{\varrho}^2_k(\boldsymbol{x}) \tilde{\varrho}^2_k(\boldsymbol{y}) \Psi(\boldsymbol{x} - \boldsymbol{y}) \dd \boldsymbol{x} \dd \boldsymbol{y} \leq C^\Psi_* \varepsilon^{2-q} \|\tilde{\varrho}_k\|_{L^q(\mathbb{R}^3)}^q, \\
            &\int_{\mathbb{R}^3} \int_{\mathbb{R}^3} \tilde{\varrho}^2_k(\boldsymbol{x}) \tilde{\varrho}^1_k(\boldsymbol{y}) \Psi(\boldsymbol{x} - \boldsymbol{y}) \dd \boldsymbol{x} \dd \boldsymbol{y} \leq C^\Psi_* (\varepsilon M)^\frac{2-q}{2} \|\tilde{\varrho}_k\|_{L^q(\mathbb{R}^3)}^q.
        \end{split}
        \end{equation}
        For $\delta > 0$, we have that
        \begin{equation}\label{boundpsi}
            \int_{\mathbb{R}^3} \int_{\mathbb{R}^3} \tilde{\varrho}^1_k(\boldsymbol{x}) \tilde{\varrho}^1_k(\boldsymbol{y}) \mathds{1}_{\{\Psi \leq \delta\}}\Psi(\boldsymbol{x} - \boldsymbol{y}) \dd \boldsymbol{x} \dd \boldsymbol{y} \leq \delta M^2.
        \end{equation}
        Defining, $\Psi_\delta \coloneq \mathds{1}_{\{\Psi \geq \delta\}}\Psi \in L^r(\mathbb{R}^3)$ for all $1 \leq r < p$, we have that
        \[
            \begin{split}
                \int_{\mathbb{R}^3} \int_{\mathbb{R}^3} \tilde{\varrho}^1_k(\boldsymbol{x}) \tilde{\varrho}^1_k(\boldsymbol{y}) \Psi_\delta(\boldsymbol{x} - \boldsymbol{y}) \dd \boldsymbol{x} \dd \boldsymbol{y} &= \int_{\mathbb{R}^3} \int_{\{|\boldsymbol{x}-\boldsymbol{y}| < \sqrt{r_k}\}} \tilde{\varrho}^1_k(\boldsymbol{x}) \tilde{\varrho}^1_k(\boldsymbol{y}) \Psi_\delta(\boldsymbol{x} - \boldsymbol{y}) \dd \boldsymbol{x} \dd \boldsymbol{y} \\
                &\qquad  + \int_{\mathbb{R}^3} \int_{\{|\boldsymbol{x}-\boldsymbol{y}| \geq \sqrt{r_k}\}} \tilde{\varrho}^1_k(\boldsymbol{x}) \tilde{\varrho}^1_k(\boldsymbol{y}) \Psi_\delta(\boldsymbol{x} - \boldsymbol{y}) \dd \boldsymbol{x} \dd \boldsymbol{y}.
            \end{split}
        \]
        By Young's convolution inequality, taking $\frac{q}{2(q-1)} = \frac{pq}{2} \leq r < p$, we have that
        \begin{align*}
            \int_{\mathbb{R}^3} \int_{\{|\boldsymbol{x}-\boldsymbol{y}| \geq \sqrt{r_k}\}} \tilde{\varrho}^1_k(\boldsymbol{x}) \tilde{\varrho}^1_k(\boldsymbol{y}) \Psi_\delta(\boldsymbol{x} - \boldsymbol{y}) \dd \boldsymbol{x} \dd \boldsymbol{y} \leq C \|\Psi_\delta\|_{L^r((B_{\sqrt{r_k}} )^c  )} M^\frac{q(2r-1) - 2r}{r(q-1)} \|\tilde{\varrho}_k\|_{L^q(\mathbb{R}^3)}^\frac{q}{r(q-1)}.
        \end{align*}
        Note, we have that $\|\Psi_\delta\|_{L^r( (B_{\sqrt{r_k}} )^c )} \to 0$ as $k \to \infty$ by the dominated convergence Theorem. Thus,
        \begin{equation}\label{uppersigmapsi}
            \int_{\mathbb{R}^3} \int_{\{|\boldsymbol{x}-\boldsymbol{y}| \geq  \sqrt{r_k}\}} \tilde{\varrho}^1_k(\boldsymbol{x}) \tilde{\varrho}^1_k(\boldsymbol{y}) \Psi_\delta(\boldsymbol{x} - \boldsymbol{y}) \dd \boldsymbol{x} \dd \boldsymbol{y} \longrightarrow 0, \qquad \mbox{as $k \to \infty$.}
        \end{equation}
        For $\boldsymbol{x} \in \mathbb{R}^3$, we define $\boldsymbol{x}^2 = (x_1,x_{2})$. For fixed $R > 0$, letting $k \to \infty$, there exists $C = C(R) > 0$ such that the amount of open balls $B_{\sqrt{r_k}}(\boldsymbol{x}) = \{\boldsymbol{y} \in \mathbb{R}^3 \, | \, |\boldsymbol{x} - \boldsymbol{y}| \leq \sqrt{r_k} \}$ with $|\boldsymbol{x}^2| = r_k$ and $x_3 = 0$ needed to cover the set $\{ \boldsymbol{y} \in \mathbb{R}^3 \, | \, (|\boldsymbol{y}^2|,y_3) \in B_R(r_k,0) \}$ is
        \[
            \sim C r_k^\frac{1}{2},
        \]
        as $k \to \infty$. Hence, for $k$ large enough, there exists a covering set
        $\text{Cov}_k(R) \subset \{ \boldsymbol{x} \in \mathbb{R}^3 \, | \, |\boldsymbol{x}^{2}|=r_k, \, x_{3}= 0 \}$ such that $|\text{Cov}_k(R)| \leq C r_k^\frac{1}{2}$ and
        \[
            \supp(\tilde{\varrho}^1_k) \subset \{ \boldsymbol{y} \in \mathbb{R}^3 \, | \, (|\boldsymbol{y}^2|,y_3) \in B_R(r_k,0) \} \subset \bigcup_{\boldsymbol{x} \in \text{Cov}_k(R)} B_{\sqrt{r_k}}(\boldsymbol{x}).
        \]
        Due to the axisymmetry of $\tilde{\varrho}^1_k$, we have that there exists $C = C(R)>0$ such that
        \begin{equation}\label{coveringbeh}
            \begin{split}
                & \|\tilde{\varrho}^1_k\|_{L^1\big(B_{2\sqrt{r_k}}(\boldsymbol{z})\big)} \leq C 
                r_k^{-\frac{1}{2}} \|\tilde{\varrho}^1_k\|_{L^1(\mathbb{R}^3)}, \\ & \|\tilde{\varrho}^1_k\|^q_{L^q\big(B_{2\sqrt{r_k}}(\boldsymbol{z})\big)} \leq C 
                r_k^{-\frac{1}{2}} \|\tilde{\varrho}^1_k\|^q_{L^q(\mathbb{R}^3)}.
            \end{split}
        \end{equation}
        Thus, applying Lemma \ref{precised} and \eqref{coveringbeh}, we have that
        \[
            \begin{split}
                 & \int_{\mathbb{R}^3} \int_{\{|\boldsymbol{x}-\boldsymbol{y}| <  \sqrt{r_k}\}} \tilde{\varrho}^1_k(\boldsymbol{x}) \tilde{\varrho}^1_k(\boldsymbol{y}) \Psi_\delta(\boldsymbol{x} - \boldsymbol{y}) \dd \boldsymbol{x} \dd \boldsymbol{y}\\
                 & \leq \sum_{\boldsymbol{z} \in \text{Cov}_k(R)} \int_{B_{\sqrt{r_k}}(\boldsymbol{z})} \int_{\{|\boldsymbol{x}-\boldsymbol{y}| <  \sqrt{r_k}\}} \tilde{\varrho}^1_k(\boldsymbol{x}) \tilde{\varrho}^1_k(\boldsymbol{y}) \Psi_\delta(\boldsymbol{x} - \boldsymbol{y}) \dd \boldsymbol{x} \dd \boldsymbol{y} \\
                 & \leq \sum_{\boldsymbol{z} \in \text{Cov}_k(R)} \int_{B_{\sqrt{r_k}}(\boldsymbol{z})} \int_{B_{2\sqrt{r_k}}(\boldsymbol{z})} \tilde{\varrho}^1_k(\boldsymbol{x}) \tilde{\varrho}^1_k(\boldsymbol{y}) \Psi_\delta(\boldsymbol{x} - \boldsymbol{y}) \dd \boldsymbol{x} \dd \boldsymbol{y} \\
                 & \leq C^\Psi_* \sum_{\boldsymbol{z} \in \text{Cov}_k(R)} \|\tilde{\varrho}^1_k\|^{2-q}_{L^1(B_{2\sqrt{r_k}}(\boldsymbol{z}))} \|\tilde{\varrho}^1_k\|^{q}_{L^q(B_{2\sqrt{r_k}}(\boldsymbol{z}))} \\
                 & \leq C r_k^{-\frac{(2-q)}{2}} M^{2-q} \|\tilde{\varrho}_k\|^{q}_{L^q(\mathbb{R}^3)}.
            \end{split}
        \]
        Thus, we have that
        \begin{equation}\label{convolutedballcovering}
            \int_{\mathbb{R}^3} \int_{\{|\boldsymbol{x}-\boldsymbol{y}| < 2 \sqrt{r_k}\}} \tilde{\varrho}^1_k(\boldsymbol{x}) \tilde{\varrho}^1_k(\boldsymbol{y}) \Psi_\delta(\boldsymbol{x} - \boldsymbol{y}) \dd \boldsymbol{x} \dd \boldsymbol{y} \longrightarrow 0, \qquad \mbox{as $k \to \infty$.}
        \end{equation}
        Thus, combining \eqref{otherterms}--\eqref{convolutedballcovering}, we have that
        \[
                \int_{\mathbb{R}^3} \int_{\mathbb{R}^3} \tilde{\varrho}_k(\boldsymbol{x}) \tilde{\varrho}_k(\boldsymbol{y}) \Psi(\boldsymbol{x} - \boldsymbol{y}) \dd \boldsymbol{x} \dd \boldsymbol{y} \longrightarrow 0, \qquad \mbox{as $k \to \infty$.}
        \]
        This implies that $F_M \geq 0$, which is a contradiction. Thus our argument shows that $r_k$ must remain bounded and hence we can assume without loss of generality that $r_k = 0$ for all $k$. 
        \end{step}
        \begin{step}
        Hence for all $\varepsilon>0$, there exists $R>0$ such that
         \begin{equation}\label{concentration}
             \int_{B_R^c} \tilde{\varrho}_k \dd \boldsymbol{x} \leq \varepsilon.
         \end{equation}
         Since $\tilde{\varrho}_k \in L^1_+(\mathbb{R}^3)\cap L^q(\mathbb{R}^3)$ is a bounded sequence in $L^1_+(\mathbb{R}^3)\cap L^q(\mathbb{R}^3)$, by Lemma \ref{weaklowersemi}, there exists $\bar{\rho} \in L^1_+(\mathbb{R}^3)\cap L^q(\mathbb{R}^3)$ such that $\tilde{\varrho}_k \rightharpoonup \bar{\rho}$ in $L^q(\mathbb{R}^3)$. Since $\tilde{\varrho}_k$ is axisymmetric, so is $\bar{\rho}$. Due to the weak semi-continuity of the $L^1$ norm and \eqref{concentration}, we have that
        \begin{equation}\label{smalltilde}
            \int_{B_R^c} \bar{\rho} \dd \boldsymbol{x} \leq \varepsilon.
        \end{equation}
        By weak convergence in $L^q$, we see that
        \begin{equation}\label{localconb}
            \int_{B_R} \tilde{\varrho}_k \dd \boldsymbol{x} \longrightarrow \int_{B_R} \bar{\rho} \dd \boldsymbol{x}, \qquad \mbox{as $k \to \infty$.}
        \end{equation}
        Tying together \eqref{smalltilde} and \eqref{localconb}, we see that $\int_{\mathbb{R}^3} \bar{\rho} \dd \boldsymbol{x} = M$. By Lemma \ref{lowerkinetic}, we obtain
        \[
            \int_{\mathbb{R}^3} \frac{\bar{\rho}(\boldsymbol{x}) I(m_{\bar{\rho}}(r(\boldsymbol{x})))}{r(\boldsymbol{x})^2} \dd\boldsymbol{x} \leq \liminf_{k \to \infty} \int_{\mathbb{R}^3} \frac{\tilde{\varrho}_k(\boldsymbol{x}) I(m_{\tilde{\varrho}_k}(r(\boldsymbol{x})))}{r(\boldsymbol{x})^2} \dd\boldsymbol{x}.
        \] 
        Hence, $\bar{\rho} \in \mathcal{A}_M$. We now aim to show that $\bar{\rho}$ is in fact a minimiser of $\mathcal{F}$ over $\mathcal{A}_M$. We have that since the functional $\varrho \mapsto \int_{\mathbb{R}^3} \Pi(\varrho) \dd \boldsymbol{x}$ is convex and strongly lower semi-continuous by Fatou's Lemma, it must be weakly lower semi-continuous. Hence,
         \[
             \int_{\mathbb{R}^3} \Pi(\bar{\rho}) \dd \boldsymbol{x} \leq \liminf_{k \to \infty} \int_{\mathbb{R}^3} \Pi(\tilde{\varrho}_k) \dd \boldsymbol{x}.
         \]
         As in \cite{Lions_1984}, one can prove that
        \[
            \int_{\mathbb{R}^3} \int_{\mathbb{R}^3} \tilde{\varrho}_k(\boldsymbol{x}) \tilde{\varrho}_k(\boldsymbol{y}) \Psi(\boldsymbol{x} - \boldsymbol{y}) \dd \boldsymbol{x} \dd \boldsymbol{y} \longrightarrow \int_{\mathbb{R}^3} \int_{\mathbb{R}^3} \bar{\rho}(\boldsymbol{x}) \bar{\rho}(\boldsymbol{y}) \Psi(\boldsymbol{x} - \boldsymbol{y}) \dd \boldsymbol{x} \dd \boldsymbol{y}, \quad \mbox{as $k \to \infty$.}
        \]
        Thus, we see that $\bar{\rho}$ is indeed a minimiser of $\mathcal{F}$ over $\mathcal{A}_M$ and in fact that
        \[
             \int_{\mathbb{R}^3} \Pi(\tilde{\varrho}_k) \dd \boldsymbol{x} \longrightarrow \int_{\mathbb{R}^3} \Pi(\bar{\rho}) \dd \boldsymbol{x}, \qquad \text{as } k \to \infty.
         \]
         As in \cite{Lions_1984}, owing to the strict convexity of $\Pi$, we have that $\tilde{\varrho}_k \to \bar{\rho}$ in measure. Thus, we can prove that $\tilde{\varrho}_k \to \bar{\rho}$ in $L^1(\mathbb{R}^3) \cap L^q(\mathbb{R}^3)$.
     \end{step}
\end{proof}
\setcounter{step}{0}

We now apply the general framework to our particular minimisation problem. Taking $\Psi = - \Phi_\alpha$, which satisfies \eqref{potentialaxi} for $p = \frac{3}{\alpha}$, $q = \frac{p + 1}{p} = \frac{3 + \alpha}{3}$ and satisfies \eqref{Psiaxi} for $\beta = \alpha$. For the internal energy function $\Pi(\varrho) := \varrho e(\varrho)$, we have that taking the second derivative, that $\Pi''(\varrho) = \frac{p'(\varrho)}{\varrho} > 0$ for $\varrho > 0$ from the assumption \eqref{Press} on $p$. This implies that $\Pi$ is strictly convex on $[0,\infty)$ and we can clearly check from the formula for $e(\varrho)$, $\eqref{Press}$, \eqref{higherpress} and \eqref{lower}, that it is non-negative and satisfies \eqref{convex}. Moreover, $I = \frac{1}{2}L$ satisfies \eqref{Iconditions} and \eqref{Iconditions2} for $\beta = \alpha$.
If $\alpha \in (0,2)$ and
\[
    M > \bigg ( \frac{6 \limsup_{\rho \to 0} p(\rho) \rho^{-\frac{3 +\alpha}{3}}}{ C_*} \bigg )^\frac{3}{3-\alpha},
\]
we have $g_M < 0$ by Lemma \ref{negativeaxi}. If
\[
    M < \bigg ( \frac{6 \liminf_{\rho \to \infty} p(\rho) \rho^{-\frac{3 +\alpha}{3}}}{ C_*} \bigg )^\frac{3}{3-\alpha},
\]
we have $g_M > -\infty$ by Lemma \ref{negativeaxi}.
By Theorem \ref{minimiseraxi} we deduce that there exists a minimiser $\bar{\rho}$ to $\mathcal{G}$ over $X_M$. Thus, we obtain the following corollary.

\begin{corollary}\label{stateexistaxi}
    Suppose $\alpha \in (0,2)$, $M>0$ with $p \in C^1([0,\infty))$ satisfying \eqref{Press}, \eqref{higherpress}--\eqref{lower} and $L$ satisfies \eqref{Lnice}, \eqref{L} and \eqref{Lstable}. Then there exists a minimiser $\bar{\rho} \in X_M$ to $\mathcal{G}$.
\end{corollary}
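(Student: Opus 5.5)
The plan is to obtain Corollary \ref{stateexistaxi} as a direct specialisation of Theorem \ref{minimiseraxi}, by choosing $\Psi = -\Phi_\alpha$, $\Pi(\varrho) = \varrho e(\varrho)$ and $I = \frac{1}{2}L$. With these choices $\mathcal{F}$ coincides with $\mathcal{G}$ and $\mathcal{A}_M$ coincides with $X_M$. In particular $q = \frac{3+\alpha}{3}$ gives the correct Lebesgue exponent in $X_M$, and the finiteness condition on the rotational term is unchanged by the factor $\frac12$. Everything therefore reduces to checking the hypotheses of Theorem \ref{minimiseraxi} one at a time.

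\emph{The kernel.} $\Psi = |\boldsymbol{x}|^{-\alpha}/\alpha$ is nonnegative and lies in $L^{3/\alpha}_{\rm w}(\mathbb{R}^3)$, so \eqref{potentialaxi} holds with $p = \frac{3}{\alpha} \in (1,\infty)$. By homogeneity, $\Psi(t\xi) = t^{-\alpha}\Psi(\xi)$, so \eqref{Psiaxi} holds with $\beta = \alpha \in (0,3)$. The sharp weak Young constant $C_*^\Psi$ should correspond to $C_*$ through the normalisations, so that \eqref{masscond} becomes \eqref{higherpress}. The factor $6$ versus $2$ and the exponent $\frac{1}{2-q} = \frac{3}{3-\alpha}$ need to be tracked against the relation $\Pi(t)t^{-q} \sim \frac{3}{\alpha}p(t)t^{-q}$ at infinity, which comes from the computation for $e$ below.

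\emph{The internal energy.} We have $\Pi'' = p'(\varrho)/\varrho > 0$ by \eqref{Press}, so $\Pi$ is strictly convex. Also $\Pi \ge 0$, and $\Pi(t)/t = e(t) \to 0$ as $t \to 0$. The condition $\liminf_{t\to\infty}\Pi(t)t^{-q} > 0$ follows from \eqref{higherpress}, since that condition forces $\liminf p(\rho)\rho^{-q} > 0$. To see this, integrate $e(\rho) = \int_0^\rho p(\tau)\tau^{-2}\,\dd\tau$ and use L'H\^opital: $\rho e(\rho) \ge c\rho^{q}$ for large $\rho$, because $\int^\rho \tau^{q-2}\,\dd\tau \sim \rho^{q-1}/(q-1)$ with $q - 1 = \alpha/3$. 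The same computation relates $\liminf \Pi t^{-q}$ to $\liminf p\rho^{-q}$ with the factor $\frac{3}{\alpha}$, which converts \eqref{higherpress} into \eqref{masscond}.

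\emph{The angular momentum term.} $I = \frac{1}{2}L$ satisfies \eqref{Iconditions} by \eqref{Lnice} and \eqref{Lstable}, and it satisfies \eqref{Iconditions2} with $\beta = \alpha$ by \eqref{L}.

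\emph{Negativity of the infimum.} To get $F_M < 0$ we invoke Lemma \ref{negativeaxi}(b). Its hypotheses hold here: $p = 3/\alpha > 3/2$ exactly because $\alpha < 2$, which is why the corollary is restricted to $\alpha \in (0,2)$. Moreover $\beta = \alpha = 3/p$, so we are in the borderline case, where the lower mass bound is needed; this is supplied by \eqref{lower} after the same $\limsup$ conversion at $0$. Finally, $\Psi$ is radial, hence axisymmetric and radially decreasing in $r$ for each $z$.

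With all hypotheses verified, Theorem \ref{minimiseraxi} yields a minimiser $\bar{\rho} \in \mathcal{A}_M = X_M$.

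The only step requiring care is matching the constants, that is, showing that \eqref{higherpress} and \eqref{lower} are precisely \eqref{masscond} and its counterpart in Lemma \ref{negativeaxi}(b). The other verifications are immediate. If the constants turn out not to match exactly, the fallback is to observe that only the strict inequalities matter: the asymptotic ratio $\lim \Pi(t)/(t\,p(t)) \cdot$(suitable power) is $\frac{1}{q-1} = \frac{3}{\alpha}$, both at $0$ and at $\infty$ along the relevant $\liminf$/$\limsup$, and the normalisation of $C_*$ in Lemma \ref{HLSineq} absorbs the remaining factor of $\alpha$.
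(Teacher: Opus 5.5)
Your proposal is correct and follows the paper's argument: you specialise Theorem \ref{minimiseraxi} with $\Psi=-\Phi_\alpha$, $\Pi(\varrho)=\varrho e(\varrho)$ and $I=\frac12 L$, and use Lemma \ref{negativeaxi} (in the borderline case $\beta=\alpha=3/p$, which needs $\alpha<2$) to get $-\infty<g_M<0$. Your constant bookkeeping closes without any fallback. Since $\Psi=|\boldsymbol{x}|^{-\alpha}/\alpha$, one has $C_*^\Psi=C_*/\alpha$, and the only comparisons needed are the one-sided bounds $\liminf_{t\to\infty}\Pi(t)t^{-q}\ge\frac{3}{\alpha}\liminf_{t\to\infty}p(t)t^{-q}$ and $\limsup_{t\to0}\Pi(t)t^{-q}\le\frac{3}{\alpha}\limsup_{t\to0}p(t)t^{-q}$, which go in exactly the required directions.
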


Combining this with the approach of \cite{Auchmuty_1971,Luo_2008,Luo_2009}, one obtains the following theorem.

\begin{theorem}\label{lmulaxi}
Under the conditions of Corollary \ref{stateexistaxi}, if $\bar{\rho} \in X_M$ is a minimiser of $\mathcal{G}$ over $X_M$, then $\bar{\rho} \in C(\mathbb{R}^3)$ and is a rotating Riesz star solution as given by Definition \ref{rotatingriesz} with compact support.
\end{theorem}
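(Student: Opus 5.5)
The plan is to derive the Euler--Lagrange relation \eqref{stationaxi} from the minimality of $\bar{\rho}$ over $X_M$, using mass-preserving axisymmetric variations as in \cite{Auchmuty_1971,Luo_2008}. Then we read off continuity and compact support from the resulting implicit formula for $\bar{\rho}$.

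\textbf{Step 1: first variation.} For $\boldsymbol{x}$ we set
\[
V(\boldsymbol{x}):=\int_{r(\boldsymbol{x})}^\infty L(m_{\bar\rho}(s))s^{-3}\dd s+\Phi_\alpha*\bar\rho(\boldsymbol{x}).
\]
A preliminary computation shows that the derivative of the rotational term $\frac12\int \varrho L(m_\varrho)r^{-2}$ in a direction $h$ equals $\int h(\boldsymbol{x})\int_{r(\boldsymbol{x})}^\infty L(m_{\bar\rho}(s))s^{-3}\dd s\,\dd\boldsymbol{x}$. We obtain this by writing the term as $\frac{1}{2}\int_0^\infty L(m(r))r^{-2}\,\dd m(r)$, differentiating in $m$, and integrating by parts. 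The boundary terms vanish at $r=0$ and at infinity because $L(0)=0$ and $L$ is $C^1$, provided $h$ is supported away from the axis.

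For $\varepsilon>0$ let $U_\varepsilon=\{\bar\rho\ge\varepsilon\}$. We take axisymmetric bounded $h_1,h_2\ge0$ with equal mass, supported in $\mathbb{R}^3_*$ and away from the axis, with $h_2$ supported in $U_\varepsilon$. The perturbation $\bar\rho+\tau(h_1-h_2)$ lies in $X_M$ for small $\tau>0$. The one-sided derivative of $\mathcal G$ at $\tau=0^+$ is nonnegative, which gives
\[
\int \big((\bar\rho e(\bar\rho))_{\bar\rho}+V\big)(h_1-h_2)\ge0 .
\]
Letting $h_1,h_2$ concentrate on rings and using Lebesgue points in the $(r,z)$ variables, we get
\[
(\bar\rho e(\bar\rho))_{\bar\rho}+V=-\mu \text{ a.e. on } \Gamma, \qquad V\ge-\mu \text{ a.e. off } \Gamma,
\]
with $-\mu:=\operatorname{ess\,inf}_{\Gamma}\big((\bar\rho e(\bar\rho))_{\bar\rho}+V\big)$. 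Here we use that the derivative of $\Pi$ at $0$ vanishes by \eqref{convex}.

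\textbf{Step 2: regularity and $\mu>0$.} Since $\Pi'$ is strictly increasing with $\Pi'(0)=0$, Step 1 gives
\[
\bar\rho=(\Pi')^{-1}\big((-V-\mu)_+\big).
\]
We then bootstrap. $\bar\rho\in L^1\cap L^q$ gives $\Phi_\alpha*\bar\rho\in L^{s}$ for a range of $s$. The rotational part of $V$ is nonnegative, so $\bar\rho\le(\Pi')^{-1}((-\Phi_\alpha*\bar\rho)_+)$, and the growth of $p$ at infinity improves the integrability of $\bar\rho$. Iterating yields $\bar\rho\in L^\infty$, so $\Phi_\alpha*\bar\rho$ is continuous (Hölder) since $\alpha<3$. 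The rotational integral is continuous on $\mathbb{R}^3_*$, and $m_{\bar\rho}$ is continuous; near the axis it tends to a finite limit, because $L(m_{\bar\rho}(s))\lesssim m_{\bar\rho}(s)\lesssim s^2$ for bounded $\bar\rho$ and $L\in C^1$ with $L(0)=0$. Hence $V$ and $\bar\rho$ are continuous.

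Next, $\Phi_\alpha*\bar\rho\to0$ and the rotational integral tends to $0$ as $|\boldsymbol{x}|\to\infty$, so $V\to0$. The sign of $\mu$ then follows: if $\mu\le0$, then on the set where $\bar\rho=0$ we would have $V\ge-\mu\ge0$, and $\bar\rho>0$ wherever $-V>\mu$. Since $V\to0$ at infinity, for $\mu<0$ we would get $\bar\rho\ge c>0$ near infinity, contradicting $\bar\rho\in L^1$. For $\mu=0$, an infinite-mass argument using $-\Phi_\alpha*\bar\rho\gtrsim|\boldsymbol{x}|^{-\alpha}$ near infinity against the rotational term is needed. Alternatively, and this is the route we expect to use, we derive $\mu>0$ from the scaling identity $\frac{d}{d\lambda}\mathcal G(\lambda\bar\rho(\lambda^{1/3}\cdot))$ combined with $g_M<0$.

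\textbf{Step 3: compact support.} Once $\mu>0$, it suffices that $V(\boldsymbol{x})>-\mu$ for $|\boldsymbol{x}|$ large, which holds because $V\to0$; hence $\bar\rho$ vanishes outside a ball.

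\textbf{Main obstacle.} We expect the main obstacle to be the behaviour near the axis $r=0$. One difficulty is making the first variation rigorous for the nonlocal term $L(m_\varrho)$ with test functions approaching the axis, so that the Euler--Lagrange relation holds on all of $\Gamma$ and not only on $\mathbb{R}^3_*$. The other is showing $\mu>0$ rather than merely $\mu\ge0$. For the first, we would extend the identity from $\mathbb{R}^3_*$ by continuity of $V$, and the axis has measure zero. For the second, we would use the energy scaling argument with $g_M<0$.
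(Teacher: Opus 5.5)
\textbf{Verdict: the proposal has genuine gaps.} Your architecture matches the paper's: a first variation through mass-preserving axisymmetric perturbations supported off the axis, a bootstrap to $L^\infty$ via \eqref{higherpress}, and compact support from $\mu>0$. But two steps do not go through as written.

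\textbf{Continuity at the axis.} Your argument here fails. Suppose you grant $m_{\bar\rho}(s)\lesssim s^2$, although this does not follow from $\bar\rho\in L^\infty$ alone, since $\{r\le s\}$ is an infinite cylinder. Even then, $L(m_{\bar\rho}(s))\lesssim s^2$ only gives $\int_r^1 L(m_{\bar\rho}(s))s^{-3}\dd s\lesssim\log(1/r)$, which diverges as $r\to0$. This is not an artefact of the estimate. $L(m)=m$ is admissible under \eqref{Lnice}, \eqref{L} and \eqref{Lstable}, and for any density with $\int_{\mathbb{R}}\bar\rho(r,z)\dd z$ bounded below near $r=0$, the rotational potential is at least of order $\log(1/r)$. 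The kinetic energy stays finite in that situation, so membership in $X_M$ does not rescue you.

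The missing idea is the paper's self-consistency argument. Suppose $\int_0^\infty L(m_{\bar\rho}(s))s^{-3}\dd s=\infty$. Then for $r(\boldsymbol{x})<r'$ the rotational potential exceeds $\|\Phi_\alpha*\bar\rho\|_{L^\infty}-\mu$, so \eqref{rotsteadynew} forces $\bar\rho=0$ on $\{r<r'\}$. Hence $m_{\bar\rho}\equiv0$ there, and $L(0)=0$ makes the integral finite, a contradiction.

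A related point: $V\not\to0$ as $|\boldsymbol{x}|\to\infty$ with $r$ bounded. For compact support you only need $V\ge\Phi_\alpha*\bar\rho\to0$, and for the case $\mu<0$ you must restrict to large $r$.

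\textbf{The sign $\mu>0$.} This is left unresolved; neither of your two routes is carried out. Your bootstrap bound $\bar\rho\le((\rho e(\rho))_\rho)^{-1}((-\Phi_\alpha*\bar\rho)_+)$ also silently assumes $\mu\ge0$, though carrying $|\mu|$ as an additive constant repairs that.

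The paper takes your first route. By \eqref{lower}, $(\rho e(\rho))_\rho\le C\rho^{\alpha/3}$ for $\rho\le\rho_*$. So outside the finite-measure set $\{\bar\rho>\rho_*\}$, one has $\bar\rho\ge c\,(C_1|\boldsymbol{x}|^{-\alpha}-C_2r^{-2}-\mu)_+^{3/\alpha}$ for $|\boldsymbol{x}|$ large. If $\mu\le0$, then since $\alpha<2$ this is $\gtrsim|\boldsymbol{x}|^{-3}$ on the cone $\{r\ge|\boldsymbol{x}|/2\}$, which is not integrable at infinity.

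Your preferred scaling route can be closed, but not by $g_M<0$ alone. Set $K=\int\bar\rho L(m_{\bar\rho})r^{-2}$ and $P=\int\bar\rho\,\Phi_\alpha*\bar\rho$. Testing the Euler--Lagrange equation against $\bar\rho$ and using the virial identity $\int p(\bar\rho)+\frac13K+\frac\alpha6P=0$ gives
\[
-\mu M=g_M+\Big(1-\frac3\alpha\Big)\int_{\mathbb{R}^3}p(\bar\rho)\dd\boldsymbol{x}+R-\Big(\frac12+\frac1\alpha\Big)K,\qquad R=\int_{\mathbb{R}^3}\bar\rho\int_{r}^\infty L(m_{\bar\rho}(s))s^{-3}\dd s\dd\boldsymbol{x}.
\]
Here $R\ge0$ enters with the wrong sign, so you need $R\le(\frac12+\frac1\alpha)K$. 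Write $R=\int_0^\infty mL(m)s^{-3}\dd s$ and $K=2\int_0^\infty\big(\int_0^{m}L\big)s^{-3}\dd s$, with $m=m_{\bar\rho}(s)$. The subhomogeneity \eqref{L} gives $\int_0^mL\ge\frac{3}{8-\alpha}mL(m)$, hence $R\le\frac{8-\alpha}{6}K$. Finally, $\frac{8-\alpha}{6}\le\frac12+\frac1\alpha$ holds exactly when $\alpha\le2$. Without this comparison, the scaling argument does not determine the sign of $\mu$.
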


\begin{proof}
We split the proof into five steps.
\begin{step}
    We first show that the Euler-Lagrange equations of the minimisation problem correspond to \eqref{stationaxi}. As such, for $\varepsilon > 0$, we define $\mathbb{R}^3_\varepsilon \coloneq \{\boldsymbol{x} \in \mathbb{R}^3 \, | \,r(\boldsymbol{x}) > \varepsilon\}$ and consider the set
    \[
    \Gamma_\varepsilon := \Big \{ \boldsymbol{x} \in \mathbb{R}^3_\varepsilon \, \Big | \, \varepsilon < \bar{\rho}(\boldsymbol{x}) < \frac{1}{\varepsilon} \Big \}.
    \]
    Let $w \in L^\infty(\mathbb{R}^3)$ be such that $w$ has compact support in $\mathbb{R}^3_\varepsilon$ and is non-negative on $\mathbb{R}^3 \setminus \Gamma_\varepsilon$. Then, defining
    \[
        v = w - \frac{\mathds{1}_{\Gamma_\varepsilon}}{|\Gamma_\varepsilon|}\int_{\mathbb{R}^3} w \dd \boldsymbol{x},
    \]
    and
    \[
        \bar{\rho}_\tau \coloneq \bar{\rho} + \tau v,
    \]
    for $\tau \geq 0$ small enough, we have $\bar{\rho}_\tau \in X_M$. Since $\bar{\rho}$ is a minimiser of $\mathcal{G}$ over $X_M$, we have that
    \begin{equation}\label{minimisethingy}
        0 \leq \lim_{\tau \to 0^+} \frac{\d \mathcal{G}(\bar{\rho}_\tau)}{\d \tau}  = \frac{1}{2} \int_{\mathbb{R}^3} \frac{v L(m_{\bar{\rho}}(r)) + \bar{\rho} L'(m_{\bar{\rho}}(r)) m_v(r)}{r^2} \dd \boldsymbol{x} + \int_{\mathbb{R}^3} v \bigg ( (\bar{\rho}e(\bar{\rho}))_{\bar{\rho}} + \Phi_\alpha * \bar{\rho} \bigg ) \dd \boldsymbol{x}.
    \end{equation}
    We focus upon the kinetic terms. We have that by the co-area formula and integration by parts
        \begin{align}\label{3.22new}
                \int_{\mathbb{R}^3} \frac{\bar{\rho} L'(m_{\bar{\rho}}(r)) m_v(r)}{r^2} \dd \boldsymbol{x} & = \int_0^\infty \frac{(L(m_{\bar{\rho}}(r)))_r m_v(r)}{r^2} \dd r \nonumber \\
                & = 2\pi \int_0^\infty \int_\mathbb{R} v(r,z) r \dd z \int^\infty_r\frac{(L(m_{\bar{\rho}}(s)))_s}{s^2} \dd s \dd r \nonumber \\
                & \qquad \qquad \qquad \qquad - \bigg [ m_v(r) \int^\infty_r\frac{(L(m_{\bar{\rho}}(s)))_s}{s^2} \dd s\bigg ]^\infty_{r=0} \nonumber \\
                & = \int_{\mathbb{R}^3} v \int^\infty_r\frac{(L(m_{\bar{\rho}}(s)))_s}{s^2} \dd s \dd \boldsymbol{x}.
        \end{align}
        Note that applying integration by parts, we obtain
        \begin{equation}\label{3.23new}
            \int^\infty_r\frac{(L(m_{\bar{\rho}}(s)))_s}{s^2} \dd s = \bigg [ \frac{L(m_{\bar{\rho}}(s))}{s^2} \bigg ]^\infty_{s=r} + 2 \int^\infty_r\frac{L(m_{\bar{\rho}}(s))}{s^3} \dd s.
        \end{equation}
        Thus, combining \eqref{minimisethingy}-\eqref{3.23new}, we obtain that
        \[
            0 \leq \int_{\mathbb{R}^3} w \bigg ( (\bar{\rho}e(\bar{\rho}))_{\bar{\rho}} + \int^\infty_r\frac{L(m_{\bar{\rho}}(s))}{s^3} \dd s + \Phi_\alpha * \bar{\rho} + \mu_\varepsilon \bigg ) \dd \boldsymbol{x},
        \]
        where
        \[
            \mu_\varepsilon \coloneq - \frac{1}{|\Gamma_\varepsilon|} \int_{\Gamma_\varepsilon} \bigg ( (\bar{\rho}e(\bar{\rho}))_{\bar{\rho}} + \int^\infty_r\frac{L(m_{\bar{\rho}}(s))}{s^3} \dd s + \Phi_\alpha * \bar{\rho} \bigg ) \dd \boldsymbol{x}.
        \]
        Thus, since this is true for all such $w$, we obtain that
        \begin{align*}
            & (\bar{\rho}e(\bar{\rho}))_{\bar{\rho}} + \int^\infty_r\frac{L(m_{\bar{\rho}}(s))}{s^3} \dd s + \Phi_\alpha * \bar{\rho} + \mu_\varepsilon = 0, \qquad  \text{ {\it a.e.} on } \Gamma_\varepsilon,\\
            & (\bar{\rho}e(\bar{\rho}))_{\bar{\rho}} + \int^\infty_r\frac{L(m_{\bar{\rho}}(s))}{s^3} \dd s + \Phi_\alpha * \bar{\rho} + \mu_\varepsilon \geq 0, \qquad \text{ {\it a.e.} on } \mathbb{R}^3_\varepsilon \setminus \Gamma_\varepsilon.
        \end{align*}
        Since this is true for all $\varepsilon>0$, we must have that $\mu_\varepsilon$ is constant in $\varepsilon$. Hence, there exists $\mu \in \mathbb{R}$ such that $\mu = \mu_\varepsilon$ for all $\varepsilon>0$. Thus, letting $\varepsilon \to 0$, we obtain
    \[
    \begin{split}
        & (\bar{\rho}e(\bar{\rho}))_{\bar{\rho}} + \int^\infty_r\frac{L(m_{\bar{\rho}}(s))}{s^3} \dd s + \Phi_\alpha * \bar{\rho} + \mu = 0, \qquad  \text{ {\it a.e.} on } \Gamma,\\
        & \int^\infty_r\frac{L(m_{\bar{\rho}}(s))}{s^3} \dd s + \Phi_\alpha * \bar{\rho} + \mu \geq 0, \qquad \qquad \qquad \,\,\,\, \text{ {\it a.e.} on } \mathbb{R}^3 \setminus \Gamma.
    \end{split}
    \]
    Thus $\bar{\rho}$ satisfies {\it a.e.} on $\mathbb{R}^3$,
    \begin{equation}\label{rotsteadynew}
        (\bar{\rho}e(\bar{\rho}))_{\bar{\rho}} = \bigg ( -\int^\infty_r\frac{L(m_{\bar{\rho}}(s))}{s^3} \dd s - \Phi_\alpha * \bar{\rho} - \mu \bigg )_+.
    \end{equation}
\end{step}
\begin{step}
    Next we show that $\mu > 0$. By \eqref{lower}, we have that there exists $\rho_* > 0$ and $C>0$ such that $(\bar{\rho}e(\bar{\rho}))_{\bar{\rho}} \leq C \bar{\rho}^\frac{\alpha}{3}$ for all $\bar{\rho} \leq \rho_*$. There exists $R > 0$ such that
    \[
        \int_{B_R} \bar{\rho} \dd \boldsymbol{x} > \frac{M}{2}.
    \]
    Hence, we have that for $|\boldsymbol{x}|>R$
    \[
        - (\Phi_\alpha * \bar{\rho})(\boldsymbol{x}) \geq \frac{1}{\alpha} \int_{B_R} \bar{\rho}(\boldsymbol{y}) |\boldsymbol{x} - \boldsymbol{y}|^{-\alpha} \geq C(|\boldsymbol{x}| + R)^{-\alpha}.
    \]
    Thus, there exists $C_1>0$ such that for $|\boldsymbol{x}|$ large enough,
    \[
        - (\Phi_\alpha * \bar{\rho})(\boldsymbol{x}) \geq C_1 |\boldsymbol{x}|^{-\alpha}.
    \]
    Using the fact that $L$ is increasing, we have that there exists $C_2>0$ such that
    \[
        \int^\infty_r\frac{L(m_{\bar{\rho}}(s))}{s^3} \dd s \leq \int^\infty_r\frac{L(M)}{s^3} \dd s \leq C_2 r^{-2}.
    \]
    Combining this with \eqref{rotsteadynew}, we obtain for $\bar{\rho} \leq \rho_*$ and {\it a.e.} $|\boldsymbol{x}|$ large enough, that
    \begin{equation}\label{notintegrable}
        \begin{split}
            \bar{\rho} & \geq C\big ((\bar{\rho}e(\bar{\rho}))_{\bar{\rho}}\big )^\frac{3}{\alpha} \\
            & = C\bigg ( -\int^\infty_r\frac{L(m_{\bar{\rho}}(s))}{s^3} \dd s - \Phi_\alpha * \bar{\rho} - \mu \bigg )_+^\frac{3}{\alpha} \\
            & \geq C\big ( C_1|\boldsymbol{x}|^{-\alpha} - C_2r^{-2} -  \mu \big )_+^\frac{3}{\alpha}.
        \end{split}
    \end{equation}
    Since $\alpha < 2$, if $\mu \leq 0$, the final line of \eqref{notintegrable} is not integrable at the far field. Since $\bar{\rho} \in X_M$, we must have that
    \[
        |\{ \bar{\rho} > \rho_* \}| < \infty.
    \]
    Hence, this combined with \eqref{notintegrable}, implies that $\mu > 0$.
\end{step}
\begin{step}
    We now show that $\bar{\rho} \in L^\infty(\mathbb{R}^3)$. By \eqref{higherpress}, there exists $\rho^* > 0$ and $C>0$ such that $(\bar{\rho}e(\bar{\rho}))_{\bar{\rho}} \geq C \bar{\rho}^\frac{\alpha}{3}$ for all $\bar{\rho} \geq \rho^*$. Thus, since $\mu>0$, for $\bar{\rho} \geq \rho^*$ and {\it a.e.} on $\mathbb{R}^3$, we have that
    \begin{equation}\label{Linfty}
        \begin{split}
            \bar{\rho} & \leq C\big ((\bar{\rho}e(\bar{\rho}))_{\bar{\rho}}\big )^\frac{3}{\alpha} \\
            & = C\bigg ( -\int^\infty_r\frac{L(m_{\bar{\rho}}(s))}{s^3} \dd s - \Phi_\alpha * \bar{\rho} - \mu \bigg )_+^\frac{3}{\alpha} \\
            & \leq C\big ( - \Phi_\alpha * \bar{\rho} \, \big )^\frac{3}{\alpha}.
        \end{split}
    \end{equation}
    Since $\bar{\rho} \in L^1(\mathbb{R}^3)$, if $\bar{\rho} \in L^p(\mathbb{R}^3)$ for some $p > \frac{3}{3-\alpha}$, then by Lemma \ref{simplehls}, $\Phi_\alpha * \bar{\rho} \in C(\mathbb{R}^3) \cap L^\infty(\mathbb{R}^3)$. From \eqref{Linfty}, this would imply that $\bar{\rho} \in L^\infty(\mathbb{R}^3)$. Thus, we just need to show that $\bar{\rho} \in L^p(\mathbb{R}^3)$ for some $p > \frac{3}{3-\alpha}$. If this is not true, then since $\bar{\rho} \in X_M$, we have that $\bar{\rho} \in L^q(\mathbb{R}^3)$ for some $\frac{3+\alpha}{3} \leq q < \frac{3}{3-\alpha}$. From the HLS inequality, we have that $\Phi_\alpha * \bar{\rho} \in L^r(\mathbb{R}^3)$ for $r$ satisfying
    \[
        \frac{1}{r} = \frac{1}{q} - \frac{3-\alpha}{3}.
    \]
    Owing to \eqref{Linfty} and the fact that $\bar{\rho} \in L^1(\mathbb{R}^3)$, this implies that $\bar{\rho} \in L^{q'}(\mathbb{R}^3)$ for $q'$ satisfying
    \[
        \frac{1}{q'} = \frac{1}{q} - \frac{3-\alpha}{3}\Big(1-\frac{1}{q}\Big).
    \]
    Thus, if we repeat this process finitely many times, we obtain that $\bar{\rho} \in L^p(\mathbb{R}^3)$ for some $p > \frac{3}{3-\alpha}$ and we are done.
\end{step}
\begin{step}
    We now show that $\bar{\rho} \in C(\mathbb{R}^3)$. Since $\bar{\rho} \in X_M$ we have that $\int^\infty_r L(m_{\bar{\rho}}(s))s^{-3} \dd s$ is continuous for $r(\boldsymbol{x}) > 0$ and is continuous on $\mathbb{R}^3$ if
    \begin{equation}\label{boundedkin}
        \int^\infty_0 L(m_{\bar{\rho}}(s))s^{-3} \dd s < \infty.
    \end{equation}
    Suppose \eqref{boundedkin} is not true, then since $L$ is non-negative, there exists $r'>0$ such that if $0 \leq r(\boldsymbol{x})< r'$, then
    \[
        \int^\infty_r L(m_{\bar{\rho}}(s))s^{-3} \dd s > \|\Phi_\alpha * \bar{\rho}\|_{L^\infty(\mathbb{R}^3)} - \mu.
    \]
    Thus, by \eqref{rotsteadynew}, for {\it a.e.} $0 \leq r(\boldsymbol{x})< r'$, we have that
    \[
        (\bar{\rho}e(\bar{\rho}))_{\bar{\rho}} \leq \bigg ( -\int^\infty_r\frac{L(m_{\bar{\rho}}(s))}{s^3} \dd s + \|\Phi_\alpha * \bar{\rho}\|_{L^\infty(\mathbb{R}^3)} - \mu \bigg )_+ = 0.
    \]
    Thus, $\bar{\rho} = 0$ for {\it a.e.} $0 \leq r(\boldsymbol{x})< r'$. Hence, since $L(0)=0$,
    \[
        \int^\infty_0 L(m_{\bar{\rho}}(s))s^{-3} \dd s = \int^\infty_{r'} L(m_{\bar{\rho}}(s))s^{-3} \dd s < \infty.
    \]
    Thus, \eqref{boundedkin} must be satisfied. Hence, $\int^\infty_r L(m_{\bar{\rho}}(s))s^{-3} \dd s$ is continuous on $\mathbb{R}^3$. Since $(\tau e(\tau))_{\tau}$ is continuous and increasing on $\tau \geq 0$, we must have that it is invertible with the inverse being continuous. Since the right-hand side of \eqref{rotsteadynew} is continuous, $\bar{\rho} \in C(\mathbb{R}^3)$ and is a rotating Riesz star solution as given by Definition \ref{rotatingriesz}.
\end{step}
\begin{step}
        We now show that $\bar{\rho}$ has compact support. Since $\bar{\rho} \in L^1(\mathbb{R}^3) \cap L^\infty(\mathbb{R}^3)$, we have that $(\Phi_\alpha * \bar{\rho})(\boldsymbol{x}) \to 0$ uniformly as $|\boldsymbol{x}| \to \infty$. Rearranging \eqref{stationaxi} and using the fact that $L$ is non-negative, we obtain that
        \[
            \begin{split}
                (\bar{\rho}e(\bar{\rho}))_{\bar{\rho}} & = \bigg ( -\int^\infty_r\frac{L(m_{\bar{\rho}}(s))}{s^3} \dd s - \Phi_\alpha * \bar{\rho} - \mu \bigg )_+ \\
                & \leq (- \Phi_\alpha * \bar{\rho} - \mu)_+.
            \end{split}
        \]
        Thus, since $\mu > 0$, we obtain that there exists $R>0$ such that $\Gamma \subset B_R$. Hence, $\bar{\rho}$ has compact support.
    \end{step}
\end{proof}
\setcounter{step}{0}

\section{Stability in the mass-subcritical regime}\label{stability:rota}

To establish stability, we consider axisymmetric solutions $(\rho,\boldsymbol{\mathcal{M}})$ of \eqref{0.0}, in the sense of \eqref{axiboy}, whose initial rotational momentum is of the form
\[
\mathcal{M}^\theta_0(\boldsymbol{x}) = \frac{\rho_0(\boldsymbol{x}) J(m_{\rho_0}(r))}{r}\boldsymbol.
\]
Under assumptions (A$_1$)--(A$_4$), the rotational momentum field takes the form
\[
\mathcal{M}^\theta(t,\boldsymbol{x}) = \frac{\rho(t,\boldsymbol{x}) J(m_{\rho(t)}(r))}{r}.
\]

\smallskip

We can now prove the stability of rotating Riesz star solutions.

     \begin{proof}[Proof of Theorem \ref{nsa}]
        Suppose the statement in Theorem \ref{nsa} is not true, then there exist $\varepsilon > 0$ and a sequence of global weak solutions $(\rho^j,\boldsymbol{\mathcal{M}}^j)$ of the CEREs associated to the initial data $(\rho_0^j,\boldsymbol{\mathcal{M}}_0^j)$ and a sequence of times $t_j > 0$ such that
\begin{align}\label{contradictaxi}
d_1(\rho^j_0,\bar{\rho})+ d_2(\rho^j_0,\bar{\rho})+ \| \rho^j_0 - \bar{\rho} \|_{L^\frac{6}{6 - \alpha}}^2
+\frac{1}{2}\int_{\mathbb{R}^3}\bigg (\bigg|\frac{\mathcal{M}^j_{r,0}}{\sqrt{\rho^j_0}}\bigg|^2 + \bigg|\frac{\mathcal{M}^j_{3,0}}{\sqrt{\rho^j_0}}\bigg|^2 \bigg )\,\dd \boldsymbol{x}<\frac{1}{j},
\end{align}
and
\begin{equation}\label{auxxaxi}
d_1(\rho^j(t_j),T^{\boldsymbol{y}}\bar{\rho}) + d_2(\rho^j(t_j),T^{\boldsymbol{y}}\bar{\rho})
+ \| \rho^j(t_j) - T^{\boldsymbol{y}} \bar{\rho} \|_{L^\frac{6}{6 - \alpha}}^2 +\frac{1}{2}\int_{\mathbb{R}^3}\bigg (\bigg|\frac{\mathcal{M}^j_{r}}{\sqrt{\rho^j}}\bigg|^2 + \bigg|\frac{\mathcal{M}^j_{3}}{\sqrt{\rho^j}}\bigg|^2 \bigg )(t_j) \,\dd \boldsymbol{x} \geq \v,
\end{equation}
for all $\boldsymbol{y} \in \{\boldsymbol{0}\} \times \mathbb{R}$. Notice that the sequence of times $t_j$ can be chosen outside a zero measure set, where the terms in \eqref{auxxaxi} are well-defined according to Definition \ref{definition}. By Lemma \ref{d2}, we have that $d_2(\rho(t),\bar{\rho}) \geq 0$ for all $t \geq 0$. Then, by \eqref{differenceaxi} and \eqref{contradictaxi}, we have
\begin{equation*}
    \lim_{j \to \infty} E(\rho^j_0,\boldsymbol{\mathcal{M}}^j_0) = E(\bar{\rho},\midbar{\boldsymbol{\mathcal{M}}}) = \mathcal{G}(\bar{\rho}).
\end{equation*}
Since the energy $E$ for the weak solutions is non-increasing from the initial energy by definition, we have 
\begin{equation*}
    \limsup_{j \to \infty} \mathcal{G}(\rho^j(t_j)) \leq \limsup_{j \to \infty} E(\rho^j(t_j),\boldsymbol{\mathcal{M}}^j(t_j)) \leq \lim_{j \to \infty} E(\rho^j_0,\boldsymbol{\mathcal{M}}^j_0) = \mathcal{G}(\bar{\rho}).
\end{equation*}
Thus, $\rho^j(t_j) \in X_M$ is a minimising sequence of $\mathcal{G}$ in $X_M$ and, by Theorem \ref{minimiseraxi}, there exists a sequence $\boldsymbol{y}_j \in \{\boldsymbol{0}\} \times \mathbb{R}$ such that $T^{\boldsymbol{y}_j}\rho^j(t_j) \to \bar{\rho}$ in $L^1(\mathbb{R}^3) \cap L^\frac{3 + \alpha}{3}(\mathbb{R}^3)$. Then
\begin{equation*}
     \| \rho^j(t_j) - T^{-\boldsymbol{y}_j}\bar{\rho} \|_{L^\frac{6}{6 - \alpha}} = \| T^{\boldsymbol{y}_j}\rho^j(t_j) - \bar{\rho} \|_{L^\frac{6}{6 - \alpha}}  
     \longrightarrow 0,
\end{equation*}
as $j \to \infty$, since $\frac{6}{6 - \alpha} \in (1, \frac{3 + \alpha}{3})$. 
We see that, for any $i$, 
\begin{equation*}
 \lim_{j \to \infty} E(\rho^j(t_j),\boldsymbol{\mathcal{M}}^j(t_j)) = \mathcal{G}(\bar{\rho}) = \mathcal{G}(T^{-\boldsymbol{y}_i}\bar{\rho}).
\end{equation*}
Therefore, by \eqref{differenceaxi}, 
\begin{equation*}
    d_1(\rho^j(t_j),T^{-\boldsymbol{y}_j}\bar{\rho}) + d_2(\rho^j(t_j),T^{-\boldsymbol{y}_j}\bar{\rho})
    + \frac{1}{2}\int_{\mathbb{R}^3}\bigg (\bigg|\frac{\mathcal{M}^j_{r}}{\sqrt{\rho^j}}\bigg|^2 + \bigg|\frac{\mathcal{M}^j_{3}}{\sqrt{\rho^j}}\bigg|^2 \bigg )(t_j) \,\dd \boldsymbol{x} \longrightarrow 0,
\end{equation*}
as $j \to \infty$. This is a contradiction to \eqref{auxxaxi}, which completes the proof.
     \end{proof}

    \begin{remark}
        Combining Theorem \ref{lmulaxi} with {\rm\cite[Theorem 2.6.]{Carrillo_2026}}, we conclude that Theorem \ref{nsa} is, in fact, a local stability result.
    \end{remark}

    \begin{remark}
        As in {\rm\cite[Corollary 5.9]{Carrillo2025}}, one can also establish an analogous stability result in the case where rotating Riesz star solutions are not unique (up to translation).
    \end{remark}

     \section{Existence in the mass-supercritical regime}\label{existence:rotaingsuper}
    In this section we prove existence of rotating Riesz star solutions in the polytropic mass-supercritical case $(\frac{6}{6-\alpha}<\gamma <\frac{3+\alpha}{3})$. Given $\varrho \in L^1_+(\R^3) \cap L^\gamma(\mathbb{R}^3)$,  and $p(\varrho) = a_0 \varrho^\gamma$, with $\frac{6}{6 - \alpha} < \gamma < \frac{3 + \alpha}{3}$, we recall the definition of the free-energy functional $Q(\varrho)$:
    \[
    Q(\varrho) = \int_{\mathbb{R}^3} \frac{\varrho L(m_{\varrho}(r))}{r^2} \dd \boldsymbol{x} + 3 a_0 \int_{\mathbb{R}^3} \varrho^\gamma \dd \boldsymbol{x} + \frac{\alpha}{2} \int_{\mathbb{R}^3} \varrho \Phi_\alpha * \varrho \dd \boldsymbol{x}.
    \]
    This quantity will play a fundamental role in the subsequent analysis. We also recall the associated free-energy functional $S_\mu(\varrho)$:
    \[
    S_\mu(\varrho) = \frac{1}{2} \int_{\mathbb{R}^3} \frac{\varrho L(m_{\varrho}(r))}{r^2} \dd \boldsymbol{x} + \frac{a_0}{\gamma - 1} \int_{\mathbb{R}^3} \varrho^\gamma \dd \boldsymbol{x} + \frac{1}{2} \int_{\mathbb{R}^3} \varrho \Phi_\alpha * \varrho \dd \boldsymbol{x} + \mu \int_{\mathbb{R}^3} \varrho \dd \boldsymbol{x},
    \]
    for $\mu>0$. In the axisymmetric setting, under the assumption of purely rotational velocity, the functional $S_\mu(\varrho)$ represents the sum of $\mu$ times the mass of $\varrho$ and the total energy associated with the density distribution $\varrho$ and velocity field
    \[
        \boldsymbol{u}(\boldsymbol{x}) = \frac{J(m_\varrho(r))}{r} \boldsymbol{e}_\theta,
    \]
    with $J^2 = L$. Defining $S_{\mu,\lambda}(\varrho) \coloneq S_\mu(\varrho_\lambda)$, we have that
    \[
        S_{\mu,\lambda}(\varrho) \coloneq \frac{\lambda}{2} \int_{\mathbb{R}^3} \frac{\varrho L(m_{\varrho}(r))}{r^2} \dd \boldsymbol{x} + \frac{a_0 \lambda^\frac{3(\gamma - 1)}{2}}{\gamma - 1} \int_{\mathbb{R}^3} \varrho^\gamma \dd \boldsymbol{x} + \frac{\lambda^\frac{\alpha}{2}}{2} \int_{\mathbb{R}^3} \varrho \Phi_\alpha * \varrho \dd \boldsymbol{x} + \mu \int_{\mathbb{R}^3} \varrho \dd \boldsymbol{x},
    \]
    and
    \begin{align*}
    \frac{\d S_{\mu,\lambda}(\varrho)}{\d \lambda} &= \frac{1}{2} \int_{\mathbb{R}^3} \frac{\varrho L(m_{\varrho}(r))}{r^2} \dd \boldsymbol{x} + \frac{3 a_0 \lambda^{\frac{3\gamma - 5}{2}}}{2} \int_{\mathbb{R}^3} \varrho^\gamma \dd \boldsymbol{x} + \frac{\alpha \lambda^{\frac{\alpha - 2}{2}}}{4} \int_{\mathbb{R}^3} \varrho \Phi_\alpha * \varrho \dd \boldsymbol{x} \\ 
    & = \frac{Q(\varrho_\lambda)}{2\lambda}.
    \end{align*}
    The analysis is substantially more involved than in the stationary setting ($L \equiv 0$) considered in \cite{Carrillo_2026}, due to the interaction between the rotational kinetic energy, the internal energy, and the potential energy. In the stationary state case, for every $\varrho \in L^1_+ \cap L^\gamma$, there exists a unique scaling parameter $\lambda^* = \lambda^*(\varrho) > 0$ such that $Q(\varrho_{\lambda^*}) = 0$. However, this property no longer necessarily holds once the rotational kinetic energy is introduced. The possible behaviours are characterised in the following two lemmas.
    
    \begin{lemma}\label{largealpha}
        Suppose $\alpha \in [2,3)$, $\frac{6}{6 - \alpha} < \gamma < \frac{3 + \alpha}{3}$, $L$ satisfies \eqref{Lnice2} and $\varrho \in L^1_+(\mathbb{R}^3) \cap L^\gamma(\mathbb{R}^3)$, $\varrho \not\equiv 0$, 
        $$\int_{\mathbb{R}^3} \frac{\varrho L(m_{\varrho}(r))}{r^2} \dd \boldsymbol{x} < \infty,$$
        with, for $\alpha = 2$,
        $$\int_{\mathbb{R}^3} \frac{\varrho L(m_{\varrho}(r))}{r^2} \dd \boldsymbol{x} < -\int_{\mathbb{R}^3} \varrho \Phi_\alpha * \varrho \dd \boldsymbol{x},$$
        then the following statements are true{\rm :}
    \begin{enumerate}
        \item[{\rm(}a{\rm)}] There exists a unique $\lambda^*(\varrho) > 0$ such that $Q(\varrho_{\lambda^*(\varrho)}) = 0$.
        \item[{\rm(}b{\rm)}] $\lambda^*(\varrho) > 1 \iff Q(\varrho) > 0$.
        \item[{\rm(}c{\rm)}] $\lambda \mapsto S_{\mu,\lambda}(\varrho)$ is strictly increasing {\rm(}decreasing{\rm)} for $\lambda \leq \lambda^*(\varrho)$ {\rm(}$\lambda \geq \lambda^*(\varrho)${\rm)}. Moreover, $S_{\mu,\lambda}(\varrho) < S_{\mu,\lambda^*(\varrho)}(\varrho)$, for all $\lambda \neq \lambda^*(\varrho)$ and $\lambda > 0$.
    \end{enumerate}
    \end{lemma}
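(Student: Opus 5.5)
We will reduce everything to the explicit power-law structure of $\lambda\mapsto Q(\varrho_\lambda)$. Write
\[
A:=\int_{\mathbb{R}^3}\frac{\varrho L(m_\varrho(r))}{r^2}\dd\boldsymbol{x}\ge 0,\qquad B:=a_0\int_{\mathbb{R}^3}\varrho^\gamma\dd\boldsymbol{x}>0,\qquad D:=-\frac12\int_{\mathbb{R}^3}\varrho\Phi_\alpha*\varrho\dd\boldsymbol{x}>0.
\]
All three are finite by the hypotheses and the HLS inequality, since $\gamma>\frac{6}{6-\alpha}$ and $\varrho\in L^1\cap L^\gamma$. The scalings computed above for $S_{\mu,\lambda}$ give
\[
Q(\varrho_\lambda)=\lambda A+3B\lambda^{\frac{3(\gamma-1)}{2}}-\alpha D\lambda^{\frac{\alpha}{2}},
\]
and hence $\frac{\d}{\d\lambda}S_{\mu,\lambda}(\varrho)=\frac{Q(\varrho_\lambda)}{2\lambda}$. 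It is convenient to divide by $\lambda^{\alpha/2}$ and set
\[
h(\lambda):=\lambda^{-\frac{\alpha}{2}}Q(\varrho_\lambda)=A\lambda^{1-\frac{\alpha}{2}}+3B\lambda^{\frac{3(\gamma-1)-\alpha}{2}}-\alpha D .
\]
The mass-supercritical condition $\gamma<\frac{3+\alpha}{3}$ gives $3(\gamma-1)-\alpha<0$, so the second term is strictly decreasing. It tends to $+\infty$ as $\lambda\to0^+$ and to $0$ as $\lambda\to\infty$.

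\emph{Case $\alpha\in(2,3)$.} Here $1-\frac{\alpha}{2}<0$, so the first term is also non-increasing, and it tends to $0$ at infinity. Hence $h$ is strictly decreasing and continuous, with $h(0^+)=+\infty$ and $h(\infty)=-\alpha D<0$. So $h$ has exactly one zero $\lambda^*(\varrho)$, and $h>0$ on $(0,\lambda^*)$ and $h<0$ on $(\lambda^*,\infty)$.

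\emph{Case $\alpha=2$.} Now $h(\lambda)=A-2D+3B\lambda^{\frac{3\gamma-5}{2}}$, and the additional hypothesis reads precisely $A<2D$. Then $h$ is again strictly decreasing, from $+\infty$ to $A-2D<0$, and the same conclusion holds.

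This gives (a). Since $Q(\varrho_\lambda)$ has the sign of $h(\lambda)$, we also get $Q(\varrho_1)=Q(\varrho)>0$ if and only if $1<\lambda^*$, which is (b). For (c), the identity $\frac{\d}{\d\lambda}S_{\mu,\lambda}=\frac{Q(\varrho_\lambda)}{2\lambda}$ is strictly positive on $(0,\lambda^*)$ and strictly negative on $(\lambda^*,\infty)$. Therefore $S_{\mu,\lambda}(\varrho)$ is strictly increasing up to $\lambda^*$ and strictly decreasing after it, and the strict global maximum at $\lambda^*$ follows.

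The only delicate point is the case $\alpha=2$. There the rotational term scales exactly like the potential term, so existence of a zero fails without the hypothesis $A<2D$; this is exactly where that assumption enters. Finiteness of $D$ must also be justified, which follows from Lemma~\ref{simplehls} together with interpolation between $L^1$ and $L^\gamma$.
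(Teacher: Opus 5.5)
Your proof is correct and is essentially the paper's argument. The paper also normalises by $\lambda^{\alpha/2}$ to obtain the limit as $\lambda\to0^+$, and its uniqueness identity $\frac{\d}{\d\lambda}Q(\varrho_\lambda)-\frac{\alpha}{2\lambda}Q(\varrho_\lambda)=\frac{2-\alpha}{2}A+\frac{3a_0(3\gamma-3-\alpha)}{2}\lambda^{\frac{3\gamma-5}{2}}\int_{\mathbb{R}^3}\varrho^\gamma\dd\boldsymbol{x}<0$ is exactly $\lambda^{\alpha/2}h'(\lambda)<0$. The only difference is organisational: the paper combines an intermediate value argument with this sign evaluated only at zeros of $Q(\varrho_\lambda)$, whereas you use it globally as strict monotonicity of $h$. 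That gives existence, uniqueness and the sign pattern for (b)--(c) at once, and makes transparent that for $\alpha=2$ the extra hypothesis is precisely $h(\infty)=A-2D<0$.
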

    \begin{proof}
    \begin{case}[$\alpha \in (2,3)$]
        Since $\alpha > 2,3(\gamma -1)$ and $\int_{\mathbb{R}^3} \varrho \Phi_\alpha * \varrho \dd \boldsymbol{x} < 0$, we have that
        \[
            Q(\varrho_\lambda) = \lambda \int_{\mathbb{R}^3} \frac{\varrho L(m_{\varrho}(r))}{r^2} \dd \boldsymbol{x} + 3 a_0 \lambda^{\frac{3(\gamma-1)}{2}} \int_{\mathbb{R}^3} \varrho^\gamma \dd \boldsymbol{x} + \frac{\alpha \lambda^\frac{\alpha}{2}}{2}  \int_{\mathbb{R}^3} \varrho \Phi_\alpha * \varrho \dd \boldsymbol{x} \longrightarrow - \infty,
        \]
        as $\lambda \to \infty$. Similarly, due to the positivity of the kinetic energy and the internal energy, we obtain that
        \[
            \frac{Q(\varrho_\lambda)}{\lambda^\frac{\alpha}{2}} \longrightarrow \infty, \qquad \mbox{as $\lambda \to 0$.}
        \]
        Thus, since $\lambda \mapsto Q(\varrho_\lambda)$ is continuous, there exists $\lambda^*=\lambda^*(\varrho) > 0$ such that $Q(\varrho_{\lambda^*})=0$. 
        \end{case}
        \begin{case}[$\alpha = 2$]
        Similar to the previous case, as $2 > 3(\gamma-1)$ and 
        $$\int_{\mathbb{R}^3} \frac{\varrho L(m_{\varrho}(r))}{r^2} \dd \boldsymbol{x} < -\int_{\mathbb{R}^3} \varrho \Phi_\alpha * \varrho \dd \boldsymbol{x},$$
        we obtain that
        \[
            Q(\varrho_\lambda) \longrightarrow - \infty, \qquad \mbox{as $\lambda \to \infty$,}
        \]
        and
        \[
            \frac{Q(\varrho_\lambda)}{\lambda} \longrightarrow \infty, \qquad \mbox{as $\lambda \to 0$.}
        \]
        Thus, there exists $\lambda^*=\lambda^*(\varrho) > 0$ such that $Q(\varrho_{\lambda^*})=0$.
    \end{case}

        Now, for $\alpha \in [2,3)$, calculating the derivative of $\lambda \mapsto Q(\rho_\lambda)$, we see that
        \[
        \begin{split}
            \frac{\d Q(\varrho_\lambda)}{\d \lambda} & = \int_{\mathbb{R}^3} \frac{\varrho L(m_{\varrho}(r))}{r^2} \dd \boldsymbol{x} + \frac{9 a_0 (\gamma - 1) \lambda^{\frac{3\gamma-5}{2}}}{2} \int_{\mathbb{R}^3} \varrho^\gamma \dd \boldsymbol{x} + \frac{\alpha^2 \lambda^{\frac{\alpha-2}{2}}}{4}  \int_{\mathbb{R}^3} \varrho \Phi_\alpha * \varrho \dd \boldsymbol{x} \\
            & = \frac{\alpha Q(\rho_\lambda)}{2\lambda} + \Big(\frac{2 - \alpha}{2}\Big)\int_{\mathbb{R}^3} \frac{\varrho L(m_{\varrho}(r))}{r^2} \dd \boldsymbol{x} + \frac{3 a_0( 3\gamma - 3  - \alpha)}{2} \lambda^{\frac{3\gamma-5}{2}} \int_{\mathbb{R}^3} \varrho^\gamma \dd \boldsymbol{x}.
        \end{split}
        \]
        Hence,
        \begin{align*}
        \frac{\d Q(\varrho_\lambda)}{\d \lambda} \bigg |_{\lambda = \lambda^*} & = \Big(\frac{2-\alpha}{2}\Big) \int_{\mathbb{R}^3} \frac{\varrho L(m_{\varrho}(r))}{r^2} \dd \boldsymbol{x} + \frac{3 a_0( 3\gamma - 3  - \alpha)}{2} (\lambda^*)^{\frac{3(\gamma-1)-2}{2}} \int_{\mathbb{R}^3} \varrho^\gamma \dd \boldsymbol{x} \\
        & < 0.
        \end{align*}
        Thus, $\lambda^*$ must be unique with $Q(\rho_\lambda) > 0$ for $0<\lambda<\lambda^*$ and $Q(\rho_\lambda) < 0$ for $\lambda>\lambda^*$. Thus, (a), (b), and (c) follow.
    \end{proof}
    \setcounter{case}{0}

    However, when $\alpha \in (0,2)$, uniqueness of the scaling parameter may fail; in general, there need not exist a unique $\lambda^*>0$ satisfying $Q(\varrho_{\lambda^*}) = 0$.
    
    \begin{lemma}\label{smallalpha}
        Suppose $\alpha \in (0,2)$, $\frac{6}{6 - \alpha} < \gamma < \frac{3 + \alpha}{3}$, $L$ satisfies \eqref{Lnice2} and $\varrho \in L^1_+(\mathbb{R}^3) \cap L^\gamma(\mathbb{R}^3)$, $\varrho \not\equiv 0$, 
        $$\int_{\mathbb{R}^3} \frac{\varrho L(m_{\varrho}(r))}{r^2} \dd \boldsymbol{x} < \infty,$$
        with $Q(\varrho_\lambda) \leq 0$ for some $\lambda > 0$, then the following statements are true{\rm :}
    \begin{enumerate}
        \item[{\rm(}a{\rm)}] There exists a $\lambda^*(\varrho)> 0$ such that $Q(\varrho_{\lambda^*(\varrho)}) = 0$. Moreover, there are at most two such $\lambda^*(\varrho)$, label these $0 < \lambda_1^*(\varrho) \leq \lambda_2^*(\varrho) < \infty$.
        \item[{\rm(}b{\rm)}] $\lambda_1^*(\varrho) < 1 < \lambda_2^*(\varrho) \iff Q(\varrho) < 0$.
        \item[{\rm(}c{\rm)}] $\lambda \mapsto S_{\mu,\lambda}(\varrho)$ is strictly increasing {\rm(}decreasing{\rm)} for $\lambda \leq \lambda^*_1(\varrho)$ and $\lambda \geq \lambda^*_2(\varrho)$ {\rm(}$\lambda_1^*(\varrho) \leq \lambda \leq \lambda_2^*(\varrho)${\rm)}.
        \item[{\rm(}d{\rm)}] $\lambda_1^*(\varrho) \leq \kappa(\varrho) \leq \lambda_2^*(\varrho)$ with $Q(\varrho_\kappa) \leq 0$, where
        \begin{equation}\label{kappa}
            \kappa = \kappa(\varrho) \coloneq \bigg( \frac{6 a_0 (5 - 3\gamma )\int_{\mathbb{R}^3}\varrho^\gamma \dd \boldsymbol{x}}{\alpha (\alpha-2) \int_{\mathbb{R}^3} \varrho \Phi_\alpha * \varrho \dd \boldsymbol{x}} \bigg)^\frac{2}{3 + \alpha - 3\gamma},
        \end{equation}
        with equality in either of the inequalities if and only if $\lambda_1^*(\varrho) = \lambda_2^*(\varrho)$.
    \end{enumerate}
    \end{lemma}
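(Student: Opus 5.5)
Write $K=\int \varrho L(m_\varrho(r))r^{-2}\,\d\boldsymbol{x}\ge 0$, $P=\int\varrho^\gamma\,\d\boldsymbol{x}>0$ and $W=\int\varrho\,\Phi_\alpha*\varrho\,\d\boldsymbol{x}<0$. Along the mass-preserving scaling \eqref{rescaling} the problem then reduces to the scalar function
\[
q(\lambda):=Q(\varrho_\lambda)=\lambda K+3a_0\lambda^{\frac{3(\gamma-1)}{2}}P+\frac{\alpha}{2}\lambda^{\frac{\alpha}{2}}W ,
\]
and $\frac{\d}{\d\lambda}S_{\mu,\lambda}(\varrho)=\frac{q(\lambda)}{2\lambda}$. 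The exponents are ordered as $0<\frac{3(\gamma-1)}{2}<\frac{\alpha}{2}<1$, because $\gamma<\frac{3+\alpha}{3}$ and $\alpha<2$.

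The plan is to divide by the middle power: set $h(\lambda):=\lambda^{-\alpha/2}q(\lambda)=\lambda^{1-\frac{\alpha}{2}}K+3a_0\lambda^{-\frac{3+\alpha-3\gamma}{2}}P+\frac{\alpha}{2}W$. Both variable terms are convex in $\lambda$, and the second is strictly convex. So $h$ is strictly convex on $(0,\infty)$ with $h(0^+)=+\infty$, and it has the same sign and zeros as $q$.
\begin{enumerate}
\item[(a)] A strictly convex function has at most two zeros. By hypothesis $h\le 0$ somewhere, and $h(0^+)=+\infty$, so the intermediate value theorem gives at least one zero. Label the zeros $\lambda_1^*\le\lambda_2^*$. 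If $K=0$ then $h\to\frac{\alpha}{2}W<0$ at infinity, so only one zero exists; in that case I will read the statement with $\lambda_2^*=\infty$, or else note that $\lambda_2^*$ is finite exactly when $K>0$. This is a point to handle carefully.
\item[(b)] Strict convexity gives $h<0$ exactly on $(\lambda_1^*,\lambda_2^*)$ when the two zeros are distinct, and $h>0$ off $[\lambda_1^*,\lambda_2^*]$. Since $Q(\varrho)=h(1)$, we get $Q(\varrho)<0\iff \lambda_1^*<1<\lambda_2^*$.
\item[(c)] This follows from the sign of $q=\lambda^{\alpha/2}h$ together with $\frac{\d}{\d\lambda}S_{\mu,\lambda}=\frac{q}{2\lambda}$. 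Strict monotonicity holds on the closed intervals because $q$ vanishes only at isolated points.
\end{enumerate}

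For (d), the natural candidate is the unique minimiser of $h$. Directly,
\[
h'(\lambda)=\Big(1-\tfrac{\alpha}{2}\Big)\lambda^{-\frac{\alpha}{2}}K-\tfrac{3a_0(3+\alpha-3\gamma)}{2}\lambda^{-\frac{5+\alpha-3\gamma}{2}}P ,
\]
and its zero involves $K$. The displayed $\kappa$ instead involves only $P$ and $W$, so I expect $\kappa$ to be a different distinguished point. It should be the point where the $K$-contribution decouples, in the spirit of the identity from Lemma \ref{largealpha}:
\[
q'(\lambda)=\frac{\alpha q(\lambda)}{2\lambda}+\Big(1-\tfrac{\alpha}{2}\Big)K+\tfrac{3a_0(3\gamma-3-\alpha)}{2}\lambda^{\frac{3\gamma-5}{2}}P .
\]

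The key step is to eliminate $K$ between $q$ and $\lambda q'$, or equivalently to use $\tilde h(\lambda):=\lambda^{-1}q(\lambda)=K+3a_0\lambda^{\frac{3\gamma-5}{2}}P+\frac{\alpha}{2}\lambda^{\frac{\alpha-2}{2}}W$. Here $K$ is a constant shift, so the critical point of $\tilde h$ is independent of $K$. Solving $\tilde h'(\lambda)=0$ gives
\[
\tfrac{3a_0(3\gamma-5)}{2}\lambda^{\frac{3\gamma-7}{2}}P=-\tfrac{\alpha(\alpha-2)}{4}\lambda^{\frac{\alpha-4}{2}}W ,
\]
that is, $\lambda^{\frac{3+\alpha-3\gamma}{2}}=\frac{6a_0(5-3\gamma)P}{\alpha(\alpha-2)W}$, which is exactly $\kappa$. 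The right-hand side is positive because $\alpha-2<0$ and $W<0$.

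Next I check that $\kappa$ is the global minimum of $\tilde h$. As $\lambda\to0^+$ the $P$-term has the more negative exponent, since $\frac{3\gamma-5}{2}<\frac{\alpha-2}{2}$, so $\tilde h\to+\infty$. As $\lambda\to\infty$ both terms vanish, with the $W$-term dominating and approaching from below, so $\tilde h\to K^-$. Hence $\tilde h$ decreases and then increases, with its unique critical point at $\kappa$ being the global minimum.

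Since $q\le 0$ somewhere, the minimum satisfies $\tilde h(\kappa)\le 0$, and therefore $Q(\varrho_\kappa)\le0$. By (b) this places $\kappa\in[\lambda_1^*,\lambda_2^*]$, because $q>0$ outside that interval. Equality $\kappa=\lambda_i^*$ forces $\tilde h(\kappa)=0$, i.e. the minimum is zero, so $\kappa$ is a double zero and $\lambda_1^*=\lambda_2^*$. Conversely, if $\lambda_1^*=\lambda_2^*$, then $q\ge0$ everywhere, so $\tilde h(\kappa)=0$ and $\kappa$ is that zero.

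The main obstacle is the analysis of $\tilde h$: establishing the unimodal shape (decreasing then increasing) and the limit at infinity, and handling the degenerate case $K=0$ consistently with the finiteness of $\lambda_2^*$.
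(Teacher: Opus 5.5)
Your derivation of (a)--(c) rests on a false claim: $h(\lambda)=\lambda^{-\alpha/2}Q(\varrho_\lambda)$ is \emph{not} convex when $K>0$. Since $0<1-\tfrac{\alpha}{2}<1$, the term $\lambda^{1-\alpha/2}K$ is concave, and
\[
h''(\lambda)=-\tfrac{\alpha}{2}\big(1-\tfrac{\alpha}{2}\big)\lambda^{-1-\frac{\alpha}{2}}K+\tfrac{3a_0(3+\alpha-3\gamma)(5+\alpha-3\gamma)}{4}\lambda^{-\frac{7+\alpha-3\gamma}{2}}P .
\]
The negative term dominates as $\lambda\to\infty$, because the exponents differ by $\tfrac{5-3\gamma}{2}>0$. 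So ``a strictly convex function has at most two zeros'' and the sign pattern you invoke in (b) are unjustified as written.

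The conclusions are still true, and the repair is already in your proposal. One option is to factor your own formula as
\[
h'(\lambda)=\lambda^{-\frac{5+\alpha-3\gamma}{2}}\Big[\big(1-\tfrac{\alpha}{2}\big)\lambda^{\frac{5-3\gamma}{2}}K-\tfrac{3a_0(3+\alpha-3\gamma)}{2}P\Big].
\]
The bracket is nondecreasing in $\lambda$ and negative near $0$, so $h$ is strictly decreasing and then (if $K>0$) strictly increasing. The simpler option is to run (a)--(c) off $\tilde h=\lambda^{-1}Q(\varrho_\lambda)$. In (d) you already prove, independently of $K$, that $\tilde h$ strictly decreases on $(0,\kappa]$ and strictly increases on $[\kappa,\infty)$. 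Unimodality, together with $h(0^+)=+\infty$ and $h\le 0$ somewhere, is all that (a)--(c) require.

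With that fix, your proof is essentially the paper's. The paper's Step 1 determines the sign of $\frac{\mathrm{d}}{\mathrm{d}\lambda}Q(\varrho_\lambda)$ at a zero, which there equals $\lambda\tilde h'(\lambda)$. It then needs a separate Taylor argument to exclude three zeros. Its Step 2 is exactly your analysis of $\tilde h=2\frac{\mathrm{d}}{\mathrm{d}\lambda}S_{\mu,\lambda}(\varrho)$, with unique minimum at $\kappa$. Using the unimodality of $\tilde h$ from the start, as you do in (d), gives (a)--(d) in one stroke and avoids that extra argument.

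Your caveat about $K=0$ is correct and catches something the paper overlooks. Its assertion that $Q(\varrho_\lambda)\to\infty$ as $\lambda\to\infty$ requires $K>0$. When $K=0$ (e.g.\ $L\equiv 0$), there is exactly one zero and $\tilde h(\kappa)<0$. In that case (b)--(d) hold only with the convention $\lambda_2^*=\infty$.
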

    \begin{proof}
    We split the proof into two steps.
    \begin{step}
        Since $3(\gamma-1) < \alpha < 2$, we obtain that 
        \[
            Q(\varrho_\lambda) \longrightarrow \infty, \qquad \mbox{as $\lambda \to \infty$,}
        \]
        and
        \[
            \frac{Q(\varrho_\lambda)}{\lambda} \longrightarrow \infty, \qquad \mbox{as $\lambda \to 0$.}
        \]
        Thus, since there exists $\lambda > 0$ such that $Q(\varrho_\lambda) \leq 0$, by continuity of $\lambda \mapsto Q(\varrho_\lambda)$, there exists $\lambda^*(\varrho) > 0$ such that $Q(\varrho_{\lambda^*(\varrho)}) = 0$. Now, considering the derivative of $\lambda \mapsto Q(\varrho_\lambda)$, as before, we see that
        \[
            \frac{\d Q(\varrho_\lambda)}{\d \lambda} = \frac{Q(\rho_\lambda)}{\lambda} + \frac{3 a_0( 3\gamma - 5)}{2} \lambda^{\frac{3(\gamma-1)-2}{2}} \int_{\mathbb{R}^3} \varrho^\gamma \dd \boldsymbol{x} + \frac{\alpha(\alpha - 2)}{4} \lambda^{\frac{\alpha - 2}{2}} \int_{\mathbb{R}^3} \varrho \Phi_\alpha * \varrho \dd \boldsymbol{x}.
        \]
        Thus, we obtain that
        \[
            \frac{\d Q(\varrho_\lambda)}{\d \lambda} \bigg |_{\lambda = \lambda^*} = \frac{3a_0( 3\gamma - 5)}{2} (\lambda^*)^{\frac{3(\gamma-1)-2}{2}} \int_{\mathbb{R}^3} \varrho^\gamma \dd \boldsymbol{x} + \frac{\alpha(\alpha - 2)}{4} (\lambda^*)^{\frac{\alpha - 2}{2}} \int_{\mathbb{R}^3} \varrho \Phi_\alpha * \varrho \dd \boldsymbol{x}.
        \]
        Hence, we see that
        \begin{equation}\label{differentbehave}
        \begin{cases}
            \frac{\d Q(\varrho_\lambda)}{\d \lambda} \big |_{\lambda = \lambda^*} > 0, & \text{ for } \lambda^* > \kappa, \\
            \frac{\d Q(\varrho_\lambda)}{\d \lambda} \big |_{\lambda = \lambda^*} = 0, & \text{ for } \lambda^* = \kappa, \\
            \frac{\d Q(\varrho_\lambda)}{\d \lambda} \big |_{\lambda = \lambda^*} < 0, & \text{ for } \lambda^* < \kappa,
        \end{cases}
        \end{equation}
        where $\kappa$ is defined in \eqref{kappa}. Thus, there can be at most one $\lambda^* > 0$ in each of $\{\lambda > \kappa \}$, $\{\lambda = \kappa \}$ and $\{\lambda < \kappa \}$ such that $Q(\varrho_{\lambda^*}) = 0$. Suppose that all three sets contain a vanishing point of $\lambda \mapsto Q(\varrho_\lambda)$, label these $0 < \lambda_1^* < \lambda^*_2 < \lambda^*_3$, where $\lambda^*_2 = \kappa$. Then, due to \eqref{differentbehave}, we see that $Q(\varrho_\lambda) \leq 0$ for all $\lambda_1^* < \lambda < \lambda^*_3$. However, we see that
        \[
            \frac{\d^2 Q(\varrho_\lambda)}{\d \lambda^2} \bigg |_{\lambda = \lambda_2^*} = \frac{3a_0 (\lambda^*_2)^{\frac{3\gamma - 7}{2}}}{2} \int_{\mathbb{R}^3}\varrho^\gamma \dd \boldsymbol{x} > 0.
        \]
        This implies by Taylor's Theorem that there exists $\lambda_2^* < \lambda < \lambda^*_3$ such that $Q(\varrho_\lambda) > 0$. A contradiction. Hence, (a), (b) and (c) follow.
        \end{step}
        \begin{step}
        Suppose $Q(\varrho_{\kappa(\varrho)}) = 0$. Considering, the map $\lambda \mapsto \frac{Q(\varrho_\lambda)}{2\lambda} = \frac{\d S_{\mu,\lambda}(\varrho)}{\d \lambda}$, looking at the derivative, we see that
        \begin{equation}\label{secondderi}
            \frac{\d^2 S_{\mu,\lambda}(\varrho)}{\d \lambda^2} = \frac{3 a_0 (3\gamma - 5) }{4} \lambda^{\frac{3\gamma - 7}{2}} \int_{\mathbb{R}^3} \varrho^\gamma \dd \boldsymbol{x} + \frac{\alpha (\alpha - 2)}{8} \lambda^{\frac{\alpha - 4}{2}}\int_{\mathbb{R}^3} \varrho \Phi_\alpha * \varrho \dd \boldsymbol{x}.
        \end{equation}
    Hence, $\lambda =  \kappa(\varrho)$ is the only minimum of $\lambda \mapsto \frac{Q(\varrho_\lambda)}{2\lambda}$. Thus, we see that $Q(\varrho_\lambda) > 0$ for $\lambda \neq \kappa(\varrho)$. Hence, (d) follows.
        \end{step}
    \end{proof}
    \setcounter{step}{0}

    \begin{remark}
        In the proof of Lemma \ref{smallalpha}, it is important that we use the mass-preserving scaling $\varrho_\lambda(\boldsymbol{x}) = \lambda^\frac{3}{2}\varrho(\lambda^\frac{1}{2}\boldsymbol{x})$ rather than $\lambda\varrho(\lambda^\frac{1}{3}\boldsymbol{x})$, since, under the former scaling, the kinetic term disappears from the second derivative of $S_{\mu,\lambda}$. This cancellation is essential for characterising the behaviour of the critical points $\lambda_1^*(\varrho)$ and $\lambda_2^*(\varrho)$.
    \end{remark}

    Thus, for $\alpha \in (0,2)$, considering the behaviour of $S_{\mu,\lambda}(\varrho)$, we see that if there exists $\lambda > 0$ such that $Q(\varrho_\lambda) < 0$, then $S_{\mu,\lambda^*_1}(\varrho)$ is a local maximum of $\lambda \mapsto S_{\mu,\lambda}(\varrho)$, while $S_{\mu,\lambda^*_2}(\varrho)$ is a local minimum of $\lambda \mapsto S_{\mu,\lambda}(\varrho)$. Moreover, it is not clear whether $S_{\mu,\lambda}(\varrho)$ remains bounded below on the set of $\varrho$ such that $\lambda^*_2(\varrho) = 1$. As in \cite{Carrillo_2026}, we prove existence when $S_{\mu,\lambda^*}(\varrho)$ is a local maximum or inflection point of $\lambda \mapsto S_{\mu,\lambda}(\varrho)$. As such, for $\alpha \in (0,3)$ we look to minimise $S_\mu$ on the following admissible set
    \[
        \mathcal{K} \coloneq \{ \varrho \in L^1_+(\mathbb{R}^3) \cap L_{\rm axi}^\gamma(\mathbb{R}^3) \, | \, Q(\varrho) = 0, \, Q(\varrho_\lambda) > 0 \text{ for all } 0 < \lambda < 1, \,  \varrho \not\equiv 0\}.
    \]
    For $\alpha \in (0,2)$, the admissible set $\mathcal{K}$ is equivalent to $\lambda^*_1(\varrho) = 1$. For $\alpha \in (0,2]$, to show that this set is non-empty, we require that $L$ is superhomogeneous of order $\omega > \omega_*$, where $\omega_*$ is given by \eqref{omegastar}, that is
\[
L(am) \leq a^\omega L(m),
\]
for all $0 < a < 1$ and $m \geq 0$.

    \begin{lemma}\label{nonempty}
        Suppose $\alpha \in (0,3)$, $\frac{6}{6-\alpha} < \gamma < \frac{3 + \alpha}{3}$, $L$ satisfies \eqref{Lnice2} and, for $\alpha \in (0,2]$, \eqref{Linstab} for $\omega > \omega^*$, then $\mathcal{K} \neq \emptyset$.
    \end{lemma}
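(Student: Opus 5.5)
Fix a nonzero axisymmetric $\varrho\in C_{\rm c}(\mathbb{R}^3)$, with support kept away from the axis (say in $\{r\ge 1\}$), so that all three terms of $Q$ are finite. The plan is to construct from it an element of $\mathcal{K}$, treating the three ranges of $\alpha$ separately.

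\textbf{The case $\alpha\in(2,3)$.} Lemma \ref{largealpha} applies directly. The rescaled density $\varrho_{\lambda^*(\varrho)}$ satisfies $Q=0$. Part (c) of Lemma \ref{largealpha} (equivalently the sign information in its proof) gives $Q(\varrho_\lambda)>0$ for $\lambda<\lambda^*$. Since $(\varrho_{\lambda^*})_\lambda=\varrho_{\lambda\lambda^*}$, we get $\varrho_{\lambda^*}\in\mathcal{K}$.

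\textbf{The case $\alpha\in(0,2]$.} Here a free parameter is needed to make the potential term beat the kinetic term. I would use the amplitude/mass scaling $\varrho\mapsto a\varrho$ together with a spatial dilation, chosen so that the internal and potential energies are balanced while the rotational energy becomes negligible. Concretely, take the energy-critical family $\leftindex^\xi{\varrho}$ from \eqref{energycritical}, or more simply $\varrho^{(a)}:=a\,\varrho(a^{s}\cdot)$ with $s$ fixed so that
\[
\int(\varrho^{(a)})^\gamma\dd\boldsymbol{x}\quad\text{and}\quad\Big|\int\varrho^{(a)}\Phi_\alpha*\varrho^{(a)}\dd\boldsymbol{x}\Big|
\]
scale with the same power of $a$. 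Since $a^\gamma a^{-3s}=a^2a^{-(6-\alpha)s}$, this gives $s=(2-\gamma)/(3-\alpha)$.

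The mass of $\varrho^{(a)}$ is $a^{1-3s}M$, and by \eqref{Linstab}
\[
L\big(m_{\varrho^{(a)}}\big)\le a^{\omega(1-3s)}L(m_\varrho)
\]
whenever $a^{1-3s}<1$. A change of variables then shows the rotational term is bounded by $a^{1-3s+2s+\omega(1-3s)}$ times the original one. Since $1-3s=(1+3\gamma-... )$, one checks $1-3s=\frac{3\gamma-3-\alpha}{3-\alpha}<0$ in the mass-supercritical range, so $a^{1-3s}<1$ corresponds to $a\to\infty$. The exponent comparison
\[
1-s+\omega(1-3s)\;<\;\gamma-3s
\]
(the common exponent of the other two terms) should be equivalent precisely to $\omega>\omega_*$, which is how $\omega_*$ in \eqref{omegastar} arises. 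I will verify this algebra, and that it is the direction in which the inequality runs.

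Now choose the shape $\varrho$ so that $3a_0\int\varrho^\gamma+\frac{\alpha}{2}\int\varrho\Phi_\alpha*\varrho<0$; this is possible by multiplying a fixed profile by a large constant, since the potential term is quadratic and $\gamma<2$. Then, for $a$ large, the rotational term is dominated and $Q(\varrho^{(a)})<0$. (For $\alpha=2$ the same estimate gives the extra hypothesis of Lemma \ref{largealpha}.)

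Having $Q(\varrho^{(a)})<0$, for $\alpha\in(0,2)$ Lemma \ref{smallalpha} provides $\lambda_1^*$ with $Q$ positive on $(0,\lambda_1^*)$ and vanishing at $\lambda_1^*$. By part (c) of Lemma \ref{smallalpha} and the fact that $Q(\varrho_\lambda)/\lambda\to\infty$ as $\lambda\to0$, the function $Q(\varrho_\lambda)$ is positive before its first zero. Hence $(\varrho^{(a)})_{\lambda_1^*}\in\mathcal{K}$, and axisymmetry is preserved throughout. For $\alpha=2$, Lemma \ref{largealpha} finishes the argument in the same way.

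\textbf{Main obstacle.} The crux is the exponent bookkeeping showing that the rotational energy is asymptotically negligible exactly when $\omega>\omega_*$. One must also handle correctly the fact that \eqref{Linstab} only applies to mass ratios below $1$, which forces the direction $a\to\infty$ (mass shrinking). If the sign comes out reversed, I would instead use the family $\leftindex^\xi{\varrho}$ with $\xi\to0$, and recheck which limit shrinks the mass.
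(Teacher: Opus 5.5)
Your proposal is correct and follows the paper's proof. Your family $a\,\varrho(a^{s}\cdot)$ with $s=\frac{2-\gamma}{3-\alpha}$ is exactly the energy-critical scaling $\leftindex^\xi{\varrho}$ with $a=\xi^{3-\alpha}$, $a\to\infty$ is indeed the mass-shrinking direction, your exponent inequality is equivalent to $\omega>\omega_*$, and Lemma \ref{smallalpha} (or Lemma \ref{largealpha} for $\alpha\ge 2$) then finishes. The only difference is minor: the paper makes the internal-plus-potential part of $Q$ negative by normalising $\kappa(\varrho)=1$ through the mass-preserving scaling, whereas you multiply the profile by a large constant, and both work.
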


    \begin{proof}
    Take $\varrho = \mathds{1}_{\{1 \leq r(\boldsymbol{x}) \leq 2\} \times [-1,1]} \in L^1_+(\mathbb{R}^3) \cap L^\gamma(\mathbb{R}^3)$, we have that 
    $$\int_{\mathbb{R}^3} \frac{\varrho L(m_{\varrho}(r))}{r^2} \dd \boldsymbol{x} < \infty.$$
    \begin{case}[$\alpha \in (2,3)$]
        By Lemma \ref{largealpha}, there exists $\lambda^* > 0$ such that $\varrho_{\lambda^*} \in \mathcal{K}$.
    \end{case}
    \begin{case}[$\alpha \in (0,2)$]
        By considering $\varrho_{\kappa(\varrho)}$, we may assume $\kappa(\varrho) = 1$. We employ the energy-critical scaling as defined in \eqref{energycritical}, $$\leftindex^\xi{\varrho}(\boldsymbol{x}) = \xi^{3-\alpha} \rho\big(\xi^{2-\gamma} \boldsymbol{x}\big),$$ 
    for $\xi > 0$. We consider how $Q(\leftindex^\xi{\varrho})$ scales with $\xi > 0$. Applying a change of variables, one obtains that  $\kappa(\leftindex^\xi\varrho) = \kappa(\varrho) = 1$ and further that
    \begin{align}\label{usingk}
            3 a_0 \int_{\mathbb{R}^3} \leftindex^\xi\varrho^\gamma \dd \boldsymbol{x} + \frac{\alpha}{2} \int_{\mathbb{R}^3} \leftindex^\xi\varrho \Phi_\alpha * \leftindex^\xi\varrho \dd \boldsymbol{x} & = \xi^{(6-\alpha)\gamma - 6} \bigg (3 a_0 \int_{\mathbb{R}^3} \varrho^\gamma \dd \boldsymbol{x} + \frac{\alpha}{2} \int_{\mathbb{R}^3} \varrho \Phi_\alpha * \varrho \dd \boldsymbol{x} \bigg ) \nonumber \\
            & = - 3a_0 \xi^{(6-\alpha)\gamma - 6} \frac{3 + \alpha - 3\gamma}{2-\alpha} \int_{\mathbb{R}^3} \varrho^\gamma \dd \boldsymbol{x}.
    \end{align}
    Employing \eqref{Linstab} and a change of variables, we see that for $\xi \geq 1$
    \begin{equation}\label{kinetbound}
    \begin{split}
        \int_{\mathbb{R}^3} \frac{\leftindex^\xi\varrho L(m_{\leftindex^\xi\varrho}(r))}{r^2} \dd \boldsymbol{x} & = \xi^{\gamma + 1 - \alpha}\int_{\mathbb{R}^3} \frac{\varrho L\big(\xi^{3 \gamma - 3 - \alpha} m_{\varrho}(r)\big)}{r^2} \dd \boldsymbol{x} \\
        & \leq \xi^{\gamma + 1 - \alpha - \omega(3+\alpha - 3\gamma)} \int_{\mathbb{R}^3} \frac{\varrho L(m_{\varrho}(r))}{r^2} \dd \boldsymbol{x}.
    \end{split}
    \end{equation}
    Thus, combining, we see that for $\xi \geq 1$
    \begin{equation}\label{Qineq}
        Q(\leftindex^\xi{\varrho}) \leq \xi^{\gamma + 1 - \alpha - \omega(3+\alpha - 3\gamma)} \int_{\mathbb{R}^3} \frac{\varrho L(m_{\varrho}(r))}{r^2} \dd \boldsymbol{x} - 3a_0 \xi^{(6-\alpha)\gamma - 6} \frac{3 + \alpha - 3\gamma}{2-\alpha} \int_{\mathbb{R}^3} \varrho^\gamma \dd \boldsymbol{x},
    \end{equation}
    with equality for $\xi = 1$. Then, since $\omega > \omega_*$, we have that
    \[
        \gamma + 1 - \alpha - \omega(3+\alpha - 3\gamma) < (6-\alpha)\gamma - 6.
    \]
    Hence, from \eqref{Qineq}, we see that
        \[
            Q(\leftindex^\xi{\varrho}) \longrightarrow - \infty, \qquad \mbox{as $\xi \to \infty$.}
        \]
        Thus, there exists $\xi > 0$ such that $Q(\leftindex^\xi{\varrho}) \leq 0$ and so by Lemma \ref{smallalpha}, there exists $\lambda^*_1 = \lambda^*_1(\leftindex^\xi\varrho) > 0$, such that $\leftindex^\xi{\varrho}_{\lambda^*_1} \in \mathcal{K}$.
    \end{case}
    \begin{case}[$\alpha = 2$]
        As in Case 2, we have that
        \begin{align*}
            \int_{\mathbb{R}^3} \frac{\leftindex^\xi\varrho L(m_{\leftindex^\xi\varrho}(r))}{r^2} \dd \boldsymbol{x} + \frac{\alpha}{2}\int_{\mathbb{R}^3} \leftindex^\xi\varrho \Phi_\alpha * \leftindex^\xi\varrho \dd \boldsymbol{x} & \leq \xi^{\gamma + 1 - \alpha - \omega(3+\alpha - 3\gamma)} \int_{\mathbb{R}^3} \frac{\varrho L(m_{\varrho}(r))}{r^2} \dd \boldsymbol{x} \\
            & \qquad + \xi^{(6-\alpha)\gamma - 6} \frac{\alpha}{2} \int_{\mathbb{R}^3} \varrho \Phi_\alpha * \varrho \dd \boldsymbol{x}.
        \end{align*}
        Hence, we see that
        \[
            \int_{\mathbb{R}^3} \frac{\leftindex^\xi\varrho L(m_{\leftindex^\xi\varrho}(r))}{r^2} \dd \boldsymbol{x} + \frac{\alpha}{2}\int_{\mathbb{R}^3} \leftindex^\xi\varrho \Phi_\alpha * \leftindex^\xi\varrho \dd \boldsymbol{x} \longrightarrow - \infty, \qquad \mbox{as $\xi \to \infty$.}
        \]
        Thus, there exists $\xi > 0$ such that
        \[
             \int_{\mathbb{R}^3} \frac{\leftindex^\xi\varrho L(m_{\leftindex^\xi\varrho}(r))}{r^2} \dd \boldsymbol{x} < - \frac{\alpha}{2}\int_{\mathbb{R}^3} \leftindex^\xi\varrho \Phi_\alpha * \leftindex^\xi\varrho \dd \boldsymbol{x}.
        \]
        Hence by Lemma \ref{largealpha}, there exists $\lambda^* = \lambda^*(\leftindex^\xi\varrho) > 0$, such that $\leftindex^\xi{\varrho}_{\lambda^*} \in \mathcal{K}$.
    \end{case}
    \end{proof}
    \setcounter{case}{0}

    Having established that the admissible set is non-empty for angular momentum functions $L$ satisfying \eqref{Lnice2} and \eqref{Linstab} with $\omega > \omega_*$, we now define
    \[
        \ell_\mu \coloneq \inf_{\varrho \in \mathcal{K}} S_\mu(\varrho),
    \]
    the minimised free-energy $S_\mu$ over the admissible set $\mathcal{K}$. We show that $\ell_\mu \geq 0$.

    \begin{lemma}
        Suppose $\alpha \in (0,3)$, $\frac{6}{6 - \alpha} < \gamma < \frac{3 + \alpha}{3}$, $L$ satisfies \eqref{Lnice2} and \eqref{Linstab} for $\omega > \omega_*$, then $\ell_\mu \geq 0$.
    \end{lemma}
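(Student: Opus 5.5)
The plan is to show directly that $S_\mu(\varrho)\geq 0$ for every $\varrho\in\mathcal{K}$, using the constraint $Q(\varrho)=0$ to remove the potential term. I expect the superhomogeneity assumption \eqref{Linstab} not to be needed here beyond guaranteeing $\mathcal{K}\neq\emptyset$ via Lemma \ref{nonempty}.

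Write $K=\int_{\mathbb{R}^3}\frac{\varrho L(m_\varrho(r))}{r^2}\dd\boldsymbol{x}\geq 0$, $G=\int_{\mathbb{R}^3}\varrho^\gamma\dd\boldsymbol{x}>0$ and $P=\int_{\mathbb{R}^3}\varrho\Phi_\alpha*\varrho\dd\boldsymbol{x}<0$. From $Q(\varrho)=0$ we get $\frac12 P=-\frac1\alpha(K+3a_0G)$, and therefore
\[
S_\mu(\varrho)=\frac{\alpha-2}{2\alpha}K+\frac{a_0(3+\alpha-3\gamma)}{\alpha(\gamma-1)}G+\mu\int_{\mathbb{R}^3}\varrho\dd\boldsymbol{x}.
\]
The coefficient of $G$ is positive because $\gamma<\frac{3+\alpha}{3}$. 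For $\alpha\in[2,3)$ the coefficient of $K$ is non-negative, so $S_\mu(\varrho)\geq 0$ immediately.

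The main step is the case $\alpha\in(0,2)$, where the kinetic coefficient is negative and $K$ must be bounded by $G$. Here the second condition in $\mathcal{K}$ is used. Since $Q(\varrho)=0$, Lemma \ref{smallalpha} applies, and membership in $\mathcal{K}$ means $\lambda_1^*(\varrho)=1$. By Lemma \ref{smallalpha}(d) this gives $\kappa(\varrho)\geq 1$. From \eqref{kappa}, and since the exponent $\frac{2}{3+\alpha-3\gamma}$ is positive, this is equivalent to
\[
\alpha(2-\alpha)(-P)\leq 6a_0(5-3\gamma)G.
\]
Substituting into $K=-3a_0G+\frac{\alpha}{2}(-P)$ (again from $Q(\varrho)=0$) gives
\[
K\leq 3a_0G\,\frac{3+\alpha-3\gamma}{2-\alpha}.
\]

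Inserting this bound into the expression for $S_\mu$ yields
\[
S_\mu(\varrho)-\mu\int_{\mathbb{R}^3}\varrho\dd\boldsymbol{x}\geq \frac{a_0(3+\alpha-3\gamma)}{\alpha}\Big(\frac{1}{\gamma-1}-\frac32\Big)G=\frac{a_0(3+\alpha-3\gamma)(5-3\gamma)}{2\alpha(\gamma-1)}G\geq 0,
\]
since $\gamma<\frac53$. Hence $S_\mu(\varrho)\geq 0$ on $\mathcal{K}$, with strict positivity coming from the mass term, and taking the infimum gives $\ell_\mu\geq 0$.

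The only delicate points are checking the sign conventions in $\kappa$, and noting that the equality case $\lambda_1^*=\lambda_2^*=\kappa=1$ is still covered because the inequality $\kappa\geq 1$ is non-strict.
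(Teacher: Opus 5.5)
Your proof is correct and is essentially the paper's argument. For $\alpha\in[2,3)$ you use the same identity $S_\mu=\frac{Q}{\alpha}+\frac{\alpha-2}{2\alpha}K+\frac{a_0(3+\alpha-3\gamma)}{\alpha(\gamma-1)}G+\mu\int_{\mathbb{R}^3}\varrho\dd\boldsymbol{x}$. For $\alpha\in(0,2)$ you use $\kappa(\varrho)\geq\lambda_1^*(\varrho)=1$ from Lemma \ref{smallalpha}(d) and reach the identical lower bound $\frac{a_0(3+\alpha-3\gamma)(5-3\gamma)}{2\alpha(\gamma-1)}G+\mu\int_{\mathbb{R}^3}\varrho\dd\boldsymbol{x}$. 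The only cosmetic difference is that you eliminate the potential term and bound the kinetic term, whereas the paper subtracts $Q/2$ to remove the kinetic term and rewrites the potential term as a power of $\kappa$.
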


    \begin{proof}
        \begin{case}[$\alpha \in [2,3)$]
            Let $\varrho \in \mathcal{K}$, then
            \begin{equation}\label{biggerthan01}
                S_\mu(\varrho) = \frac{Q(\varrho)}{\alpha} + \frac{\alpha - 2}{2 \alpha} \int_{\mathbb{R}^3} \frac{\varrho L(m_{\varrho}(r))}{r^2} \dd \boldsymbol{x} + a_0 \frac{3 + \alpha - 3\gamma}{\alpha (\gamma - 1)} \int_{\mathbb{R}^3} \varrho^\gamma \dd \boldsymbol{x} + \mu \int_{\mathbb{R}^3} \varrho \dd \boldsymbol{x}.
            \end{equation}
            Since $L \geq 0$, $\alpha \geq 2$ and $\gamma < \frac{3 + \alpha}{3}$, we have that $S_\mu(\varrho) > 0$. Since this is true for all $\varrho \in \mathcal{K}$, the result follows.
        \end{case}
        \begin{case}[$\alpha \in (0,2)$]
            Let $\varrho \in \mathcal{K}$, then by Lemma \ref{smallalpha} (d), $\kappa(\varrho) \geq \lambda^*_1(\varrho) = 1$. Thus, since $\gamma < \frac{3 + \alpha}{3}$, we see that
            \begin{equation}\label{biggerthan02}
            \begin{split}
                S_\mu(\varrho) & = \frac{Q(\varrho)}{2} + \frac{2 - \alpha}{4} \int_{\mathbb{R}^3} \varrho \Phi_\alpha * \varrho \dd \boldsymbol{x} + a_0 \frac{5 - 3\gamma}{2 (\gamma - 1)} \int_{\mathbb{R}^3} \varrho^\gamma \dd \boldsymbol{x} + \mu \int_{\mathbb{R}^3} \varrho \dd \boldsymbol{x} \\
                & = a_0 \frac{5 - 3\gamma}{2 \alpha(\gamma - 1)} (\alpha - 3(\gamma - 1) \kappa(\varrho)^{3(\gamma - 1) - \alpha} ) \int_{\mathbb{R}^3} \varrho^\gamma \dd \boldsymbol{x} + \mu \int_{\mathbb{R}^3} \varrho \dd \boldsymbol{x} \\
                & \geq a_0 \frac{5 - 3\gamma}{2 \alpha(\gamma - 1)} (3 + \alpha - 3\gamma ) \int_{\mathbb{R}^3} \varrho^\gamma \dd \boldsymbol{x} + \mu \int_{\mathbb{R}^3} \varrho \dd \boldsymbol{x} \\
                & > 0.
            \end{split}
            \end{equation}
            Thus, the result follows.
        \end{case}
    \end{proof}
    \setcounter{case}{0}

To establish strong convergence of the potential energy along minimising sequences in this section, we require the following lemma, which characterises the axisymmetric representation of the potential.

\begin{lemma}\label{potentially2}
Let $\alpha \in (0,3)$. Given an axisymmetric function $f \in L^\frac{6}{6-\alpha}_{\rm{axi}}(\mathbb{R}^3)$ then $(\Phi_\alpha*f)(\boldsymbol{x})$ is well-defined, axisymmetric and can be written as
    \begin{equation*}
        (\Phi_\alpha*f)(r,z) = \int_{\mathbb{R}}\int^{\infty}_0 K(r,\eta,z,w) f(\eta,w)\,\eta\dd \eta \dd w,
    \end{equation*}
    where
    \[
    K(r,\eta,z,w) \coloneq \int_{\mathbb{S}^{1}} \Phi_\alpha(r\boldsymbol{e_1} - \eta \boldsymbol{y},z-w) \dd S(\boldsymbol{y}),
    \]
    and $\boldsymbol{e}_1 = (1,0)$. Moreover, there exists $C = C(\alpha) > 0$ such that
\begin{equation}\label{Kbehave}
\begin{split}
\begin{cases}
    \displaystyle
    |K(r,\eta,z,w)| \leq C (r\eta)^{-\frac{\alpha}{2}}, & \mbox{ for $\alpha \in(0,1)$}, \\[1mm]
    \displaystyle
    |K(r,\eta,z,w)| \leq C(r\eta)^{-\frac{1}{2}} |(r,z)-(\eta,w)|^{1-\alpha}, & \mbox{ for $\alpha \in (1,3)$}.
\end{cases}
\end{split}
\end{equation}
\end{lemma}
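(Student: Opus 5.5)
Well-definedness follows from the Hardy--Littlewood--Sobolev inequality (Lemma \ref{simplehls}): for $f\in L^{\frac{6}{6-\alpha}}$, the convolution $\Phi_\alpha*f$ lies in $L^{\frac{6}{\alpha}}(\mathbb{R}^3)$, and by Fubini--Tonelli the integral $\int|\Phi_\alpha(\boldsymbol{x}-\boldsymbol{y})||f(\boldsymbol{y})|\,\d\boldsymbol{y}$ is finite for a.e. $\boldsymbol{x}$.

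Axisymmetry comes from the rotation invariance of $\Phi_\alpha$. For any rotation $R$ about the $x_3$-axis, $f(R\boldsymbol{y})=f(\boldsymbol{y})$, and the change of variables $\boldsymbol{y}\mapsto R\boldsymbol{y}$ gives $(\Phi_\alpha*f)(R\boldsymbol{x})=(\Phi_\alpha*f)(\boldsymbol{x})$. So we may evaluate at $\boldsymbol{x}=(r\boldsymbol{e}_1,z)$.

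For the representation, write $\boldsymbol{y}=(\eta\boldsymbol{\theta},w)$ with $\boldsymbol{\theta}\in\mathbb{S}^1$, so that $\d\boldsymbol{y}=\eta\,\d\eta\,\d S(\boldsymbol{\theta})\,\d w$. Since $f(\boldsymbol{y})=f(\eta,w)$, Fubini gives the stated formula with kernel $K$. Absolute integrability along the way is justified by applying the same computation to $|\Phi_\alpha|$ and $|f|$.

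The main work is the kernel bound \eqref{Kbehave}. We have
\[|K|=\frac1\alpha\int_0^{2\pi}\big(r^2+\eta^2-2r\eta\cos\theta+(z-w)^2\big)^{-\alpha/2}\,\d\theta.\]
Set $a=(r-\eta)^2+(z-w)^2=|(r,z)-(\eta,w)|^2$. Using $r^2+\eta^2-2r\eta\cos\theta=(r-\eta)^2+4r\eta\sin^2(\theta/2)$, the integrand becomes $(a+4r\eta\sin^2(\theta/2))^{-\alpha/2}$. Reduce to $\theta\in[0,\pi]$ by symmetry and use $\sin(\theta/2)\ge\theta/\pi$.

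For $\alpha\in(0,1)$, drop $a$. This leaves $\int_0^\pi(r\eta)^{-\alpha/2}(2\theta/\pi)^{-\alpha}\,\d\theta$, which is finite because $\alpha<1$, and gives $C(r\eta)^{-\alpha/2}$.

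For $\alpha\in(1,3)$, substitute $t=2\sqrt{r\eta}\,\theta/\pi$. This gives
\[\int_0^\pi(a+4r\eta\theta^2/\pi^2)^{-\alpha/2}\,\d\theta\le C(r\eta)^{-1/2}\int_0^\infty(a+t^2)^{-\alpha/2}\,\d t.\]
The last integral equals $C a^{(1-\alpha)/2}$, which is finite because $\alpha>1$. Since $a^{(1-\alpha)/2}=|(r,z)-(\eta,w)|^{1-\alpha}$, this is the claimed bound.

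I expect this estimate to be the only delicate point: choosing which term to discard in each regime. The endpoint $\alpha=1$ would produce a logarithm, which is why it is excluded from the statement.
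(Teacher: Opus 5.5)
Your proposal is correct and follows essentially the same route as the paper: HLS for well-definedness, cylindrical coordinates for the representation formula, the lower bound $1-\cos\theta \geq c\,\theta^2$ on $[0,\pi]$, and discarding $|(r,z)-(\eta,w)|^2$ when $\alpha\in(0,1)$. The only difference is in the case $\alpha\in(1,3)$. The paper splits the $\theta$-integral at a threshold $\zeta$ and optimises with $\zeta\propto |(r,z)-(\eta,w)|/\sqrt{r\eta}$, whereas your substitution $t=2\sqrt{r\eta}\,\theta/\pi$ followed by exact evaluation of $\int_0^\infty (a+t^2)^{-\alpha/2}\,\mathrm{d}t$ reaches the same bound more directly.
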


\begin{proof}
    Since $f \in L^\frac{6}{6-\alpha}_{\rm{axi}}(\mathbb{R}^3)$, by the HLS inequality Lemma \ref{simplehls}, we have that $\Phi_\alpha * f$ is well-defined. Moreover, since $f$ is axisymmetric, we can apply Fubini's Theorem and the co-area formula to obtain
    \[
        \begin{split}
            (\Phi_\alpha * f)(\boldsymbol{v}) & = \int_{\mathbb{R}^3} \Phi_\alpha(\boldsymbol{v} - \boldsymbol{u}) f(\boldsymbol{u}) \dd \boldsymbol{u} \\
            & = \int_{\mathbb{R}} \int^\infty_0 \int_{\{|(y_1,y_2)| = \eta\}} \Phi_\alpha(\boldsymbol{v} - (y_1,y_2,w)) \dd S(y_1,y_2) f(\eta,w) \dd \eta \dd w \\
            & = \int_{\mathbb{R}} \int^\infty_0 \int_{\mathbb{S}^1} \Phi_\alpha(\boldsymbol{v} - (\eta \boldsymbol{y},w)) \dd S(\boldsymbol{y}) f(\eta,w) \eta \dd \eta \dd w.
        \end{split}
    \]
    Due to the rotational symmetry of $\Phi_\alpha$, we have that for $\boldsymbol{v} = (x_1,x_2,z)$
    \[
        \int_{\mathbb{S}^1} \Phi_\alpha(\boldsymbol{v} - (\eta \boldsymbol{y},w)) \dd S(\boldsymbol{y}) = \int_{\mathbb{S}^1} \Phi_\alpha(r(\boldsymbol{v})\boldsymbol{e_1} - \eta \boldsymbol{y},z-w) \dd S(\boldsymbol{y}).
    \]
    Thus, $\Phi_\alpha * f$ is axisymmetric and the representation formula follows. Using polar coordinates, we get
    \[
        \begin{split}
            |K(r,\eta,z,w)| & = \bigg |\int_{\mathbb{S}^1} \Phi_\alpha(r\boldsymbol{e_1} - \eta \boldsymbol{y},z-w) \dd S(\boldsymbol{y}) \bigg | \\
            & = \frac{1}{\alpha} \int^{2\pi}_0 \big ( (r-\eta \cos \theta)^2 + (\eta \sin \theta )^2 + (z-w)^2 \big)^{-\frac{\alpha}{2}} \dd \theta \\
            & = \frac{2}{\alpha} \int^{\pi}_0 \big ( (r - \eta)^2 + 2r\eta(1-\cos{\theta}) + (z-w)^2 \big)^{-\frac{\alpha}{2}} \dd \theta.
        \end{split}
    \]
    For $\theta \in [0,\pi]$, we have that there exists $c>0$ such that $2(1-\cos \theta) \geq c\theta^2$. Thus, we obtain that
    \[
        \begin{split}
            |K(r,\eta,z,w)| & = \bigg |\int_{\mathbb{S}^1} \Phi_\alpha(r\boldsymbol{e_1} - \eta \boldsymbol{y},z-w) \dd S(\boldsymbol{y}) \bigg | \\
            & \leq \frac{2}{\alpha} \int^{\pi}_0 \big ( (r - \eta)^2 + cr\eta \theta^2 + (z-w)^2 \big)^{-\frac{\alpha}{2}} \dd \theta.
        \end{split}
    \]
    \begin{case}[$\alpha \in (0,1)$]
        Then we obtain that
        \[
        |K(r,\eta,z,w)| \leq \frac{2}{\alpha} \int^{\pi}_0 \big (cr\eta \theta^2 \big)^{-\frac{\alpha}{2}} \dd \theta \leq C (r \eta)^{-\frac{\alpha}{2}}.
        \]
    \end{case}
    \begin{case}[$\alpha \in (1,3)$]
        Then, letting $\zeta > 0$, we obtain that
        \[
        \begin{split}
            |K(r,\eta,z,w)| & \leq \frac{2}{\alpha} \bigg ( \int^{\zeta}_0 \big ((r-\eta)^2 + (z-w)^2 \big)^{-\frac{\alpha}{2}} \dd \theta + \int^{\infty}_\zeta \big (cr\eta \theta^2 \big)^{-\frac{\alpha}{2}} \dd \theta \bigg ) \\
            & \leq C \big( \zeta |(r,z) - (\eta,w)|^{-\alpha} + \zeta^{1-\alpha}(r \eta)^{-\frac{\alpha}{2}}\big).
        \end{split}
        \]
        This upper bound is minimised when $\zeta \propto \frac{|r-\eta|}{\sqrt{r\eta}}$. Thus, we obtain that
        \[
            |K(r,\eta,z,w)| \leq C(r\eta)^{-\frac{1}{2}} |(r,z)-(\eta,w)|^{1-\alpha}.
        \]
    \end{case}
\end{proof}
\setcounter{case}{0}

We are now in a position to establish strong convergence of the potential energy term.

\begin{lemma}\label{weaktostrong2}
    Suppose that $\alpha \in (0,3)$, $\frac{6}{6 - \alpha} <\gamma < \frac{3}{3 - \alpha}$, $\varrho_k \in L^1_+(\mathbb{R}^3) \cap L^\gamma_{\rm{axi}} (\mathbb{R}^3)$ is a bounded sequence, $z \mapsto \varrho_k(r,z)$ is a radially decreasing function for {\it a.e.} $r$ and there exists $\tilde{\varrho} \in L^1_+(\mathbb{R}^3) \cap L^\gamma_{\rm{axi}} (\mathbb{R}^3)$ such that $\varrho_k \rightharpoonup \tilde{\varrho}$ as $k \to \infty$ in $L^\gamma(\mathbb{R}^3)$. Then, we have (up to a subsequence)
    \[
        \int_{\mathbb{R}^3} \varrho_k \Phi_\alpha * \varrho_k \dd \boldsymbol{x} \longrightarrow \int_{\mathbb{R}^3} \tilde{\varrho} \Phi_\alpha * \tilde{\varrho} \dd \boldsymbol{x}, \qquad \mbox{as $k \to \infty$.}
    \]
\end{lemma}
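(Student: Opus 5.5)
We need to show that $\varrho_k\rightharpoonup\tilde\varrho$ weakly in $L^\gamma$, with $\varrho_k$ axisymmetric and symmetric decreasing in $z$, implies convergence of the Riesz energy. The plan is to split the bilinear form into a local part, which is compact, and tail parts, which are uniformly small.

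\textbf{Reduction to a truncated kernel.} Since $\frac{6}{6-\alpha}<\gamma$, interpolation with the uniform $L^1$ bound gives that $\varrho_k$ is bounded in $L^{\frac{6}{6-\alpha}}$. Fix $\delta>0$ and write $\Phi_\alpha=\Phi_\alpha\mathds{1}_{\{|\boldsymbol{x}|\leq R\}}+\Phi_\alpha\mathds{1}_{\{|\boldsymbol{x}|>R\}}$. The far part is bounded by $R^{-\alpha}/\alpha$, so its contribution is at most $CR^{-\alpha}M^2$. The near part $\Phi_\alpha\mathds{1}_{B_R}$ lies in $L^s$ for every $s<3/\alpha$. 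The assumption $\gamma<\frac{3}{3-\alpha}$ makes $\frac{2}{\gamma}+\frac{1}{s}=2$ solvable with $s<3/\alpha$, so Young's inequality bounds this part uniformly. Moreover, the bilinear form with an $L^s$ kernel is jointly continuous in $L^\gamma$. It therefore suffices to upgrade weak convergence to strong $L^{\gamma'}_{\rm loc}$-type control away from the bad regions.

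\textbf{Tightness and compactness from monotonicity in $z$.} The central step is to show that the mass and $L^\gamma$ norm of $\varrho_k$ escaping to infinity cannot carry potential energy. I will decompose the support into three regions.

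\emph{Large $|z|$.} Because $z\mapsto\varrho_k(r,z)$ is even and decreasing, we have $\varrho_k(r,z)\leq\frac{1}{2|z|}\int_{\mathbb{R}}\varrho_k(r,w)\dd w$. Hence the part with $|z|\geq Z$ has small sup-type bounds on each vertical line. Together with the $L^\gamma$ and $L^1$ bounds, this should make $\|\varrho_k\mathds{1}_{\{|z|>Z\}}\|_{L^{6/(6-\alpha)}}$ small uniformly in $k$ as $Z\to\infty$, by splitting into the high and low values of $\varrho_k$.

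\emph{Large $r$.} Here I will use Lemma \ref{potentially2}. The kernel bound \eqref{Kbehave} yields a factor $(r\eta)^{-\alpha/2}$, respectively $(r\eta)^{-1/2}$, so interactions involving mass at $r\geq R_0$ are damped by a negative power of $R_0$. This is the same ring-dilution mechanism used in Step 4 of the proof of Theorem \ref{minimiseraxi}.

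\emph{Bounded region.} On $\{r\leq R_0,\ |z|\leq Z\}$, monotonicity in $z$ gives, for a.e.\ fixed $r$, a family of monotone functions of $z$. A Helly-type selection on the $(r,z)$ profile is awkward because there is no monotonicity in $r$. Instead, I will exploit that the truncated kernel $\Phi_\alpha\mathds{1}_{B_R}\in L^s$ with $s$ above the Young exponent. Then $\Phi_\alpha\mathds{1}_{B_R}*(\varrho_k\mathds{1}_{\Omega})$ converges strongly in $L^{\gamma'}(\Omega)$ on the bounded set $\Omega$, by weak convergence combined with compactness of convolution with an $L^1$-approximable kernel. This follows from Riesz--Kolmogorov, since translates of the kernel are continuous in $L^s$. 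Weak--strong pairing then gives convergence of the local bilinear term.

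\textbf{Assembly and main obstacle.} Combining the three regions, and letting $k\to\infty$, then $R_0,Z,R\to\infty$, gives convergence of the full energy, up to a subsequence. The main obstacle is the large-$r$ region when $\alpha\in(1,3)$. There the kernel bound carries $|(r,z)-(\eta,w)|^{1-\alpha}$, which is singular in the meridional plane. My first attempt will be to control it with a two-dimensional HLS inequality in $(r,z)$ with the weight $r^{1/2}\eta^{1/2}$ absorbed into $f\eta$. If that fails, I will fall back on the covering argument with balls of radius $\sqrt{r}$ from Theorem \ref{minimiseraxi}, which gives decay of order $r^{-(2-\gamma)/2}$.
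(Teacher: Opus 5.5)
Your treatment of the large-$|z|$ region relies on a claim that is false. Monotonicity in $z$ does not make $\|\varrho_k\mathds{1}_{\{|z|>Z\}}\|_{L^{6/(6-\alpha)}}$ small uniformly in $k$, and splitting into high and low values cannot repair this. The bound $\varrho_k(r,z)\le\frac{1}{2|z|}\int_{\mathbb{R}}\varrho_k(r,w)\dd w$ only helps when the line masses $\int_{\mathbb{R}}\varrho_k(r,w)\dd w$ are bounded, and near the axis they need not be. For example, take $\varrho_k=A\,\mathds{1}_{\{r<\epsilon_k,\,|z|<L_k\}}$ with $A>0$ fixed, $L_k\to\infty$ and $2\pi A\epsilon_k^2L_k=M$. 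This sequence is axisymmetric and even and nonincreasing in $|z|$. It is bounded in $L^1\cap L^\gamma$ and satisfies $\varrho_k\rightharpoonup 0$ in $L^\gamma$. Yet for every fixed $Z$ and every $p\geq 1$,
\[
\|\varrho_k\mathds{1}_{\{|z|>Z\}}\|_{L^p}^p=A^{p-1}M\big(1-Z/L_k\big)_+\longrightarrow A^{p-1}M, \qquad \mbox{as $k\to\infty$}.
\]
The tail takes only the value $A$, so for any small threshold the low-value part is empty. The energy does tend to zero in this example, but HLS applied to the whole tail cannot detect it.

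What is missing is the slab-by-slab summation of the paper's Step 5. Monotonicity gives only $\|\varrho_k\|^p_{L^p(\mathbb{R}^2\times[i,i+1])}\le C/(i+1)$, which is not summable in $i$; this is exactly why no tail norm can be made small. The energy, however, is bilinear. Two nearby slabs contribute, by HLS, at most $C(i+1)^{-\frac{6-\alpha}{3}}$, which is summable precisely because $\alpha<3$. Slabs at vertical distance $d\geq 1$ are controlled by the $L^1$ bounds $C/(i+1)$ and $C/(j+1)$ together with the $d^{-\alpha}$ decay of the kernel. Your truncated kernel makes this especially clean: slab $i$ meets only $O(R)$ neighbouring slabs, so the tail energy is $\lesssim R\sum_{i\geq Z}i^{-\frac{6-\alpha}{3}}$, which tends to $0$ as $Z\to\infty$ for fixed $R$.

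The rest of your plan is sound.
\begin{itemize}
\item The large-$r$ region is not the real obstacle. Your first attempt, a two-dimensional HLS inequality in $(r,z)$ with exponent $\frac{4}{5-\alpha}$ applied to $\varrho\, r^{1/2}$, works and is exactly the paper's Step 4. It gives decay $R_0^{-\frac{3-\alpha}{2}}$.
\item You still need a separate argument at $\alpha=1$, where the ring-kernel bounds of Lemma~\ref{potentially2} are not available. For example, use $|\Phi_1|\le C(|\Phi_{1+\delta}|+|\Phi_{1-\delta}|)$.
\item Your Riesz--Kolmogorov argument for the truncated kernel on a bounded set is a valid substitute for the compact Sobolev-embedding argument in the paper's Step 1.
\item The condition $s<3/\alpha$ in your Young exponent comes from $\gamma>\frac{6}{6-\alpha}$, not from $\gamma<\frac{3}{3-\alpha}$.
\end{itemize}
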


\begin{proof}
We split the proof into six steps.
\begin{step}
    We define the cylinder $U(R) \coloneq B_R \times [-R,R],$ where $B_R = \{ \boldsymbol{x} \in \mathbb{R}^2 \, | \, |\boldsymbol{x}| < R\}$ and $R \in \mathbb{N}$. As in Step 1 of the proof of \cite[Lemma 4.6.]{Carrillo_2026}, we can apply the theory of compact embeddings of Sobolev spaces and fractional Sobolev spaces to obtain for $R,R'\in \mathbb{N}$, that
    \begin{equation}\label{3.1}
        \int_{U(R')} \varrho_k \Phi_\alpha *(\varrho_k \mathds{1}_{U(R)}) \dd \boldsymbol{x} \longrightarrow \int_{U(R')} \tilde{\varrho} \Phi_\alpha * (\tilde{\varrho} \mathds{1}_{U(R)}) \dd \boldsymbol{x}, \qquad \mbox{as $k \to \infty$.}
    \end{equation}
\end{step}
\begin{step}
    Taking $R' > R \geq 1$, we have that since $\varrho_k$ are bounded in $L^1(\mathbb{R}^3)$, that
    \begin{equation}\label{3.2}
        \begin{split}
        \bigg |\int_{U(R')^c} \varrho_k \Phi_\alpha *(\varrho_k \mathds{1}_{U(R)}) \dd \boldsymbol{x} \bigg | & \leq \frac{C}{(R' - R)^{\alpha}} \int_{\mathbb{R}^3}\int_{\mathbb{R}^3} |\varrho_k(\boldsymbol{x})| |\varrho_k(\boldsymbol{y})| \dd \boldsymbol{x} \dd \boldsymbol{y} \\
        & \leq \frac{C}{(R' - R)^{\alpha}}.
        \end{split}
    \end{equation}
    Thus, taking $R'$ sufficiently larger than $R$, combining \eqref{3.1} and \eqref{3.2}, we obtain
    \begin{equation*}
        \int_{\mathbb{R}^3} \varrho_k \Phi_\alpha *(\varrho_k \mathds{1}_{U(R)}) \dd \boldsymbol{x} \longrightarrow \int_{\mathbb{R}^3} \tilde{\varrho} \Phi_\alpha * (\tilde{\varrho} \mathds{1}_{U(R)}) \dd \boldsymbol{x}, \qquad \mbox{as $k \to \infty$.}
    \end{equation*}
\end{step}
\begin{step}
    By symmetry, we can apply the previous steps so we have that
    \[
        \int_{U(R)} \varrho_k \Phi_\alpha *(\varrho_k \mathds{1}_{U(R)^c}) \dd \boldsymbol{x} \longrightarrow \int_{U(R)} \tilde{\varrho} \Phi_\alpha *(\tilde{\varrho} \mathds{1}_{U(R)^c}) \dd \boldsymbol{x}, \qquad \mbox{as $k \to \infty$.}
    \]
\end{step}
    \begin{step}
        Now, we need only show that for $R$ large enough,
        \[
            \int_{U(R)^c} \varrho_k \Phi_\alpha *(\varrho_k \mathds{1}_{U(R)^c}) \dd \boldsymbol{x},
        \]
        is sufficiently small for all $k$. Now, we have that
        \[
            \begin{split}
            \int_{U(R)^c} \varrho_k \Phi_\alpha *(\varrho_k \mathds{1}_{U(R)^c}) \dd \boldsymbol{x} &= \int_{B_{R}^c \times \mathbb{R}} \varrho_k \Phi_\alpha *(\varrho_k \mathds{1}_{B_{R}^c \times \mathbb{R}}) \dd \boldsymbol{x} + 2\int_{B_{R}^c \times \mathbb{R}} \varrho_k \Phi_\alpha *(\varrho_k \mathds{1}_{B_{R} \times [-R,R]^c}) \dd \boldsymbol{x} \\
            & \qquad + \int_{B_{R} \times [-R,R]^c} \varrho_k \Phi_\alpha *(\varrho_k \mathds{1}_{B_{R} \times [-R,R]^c}) \dd \boldsymbol{x} \\
            & =: I_1 + I_2 + I_3.
            \end{split}
        \]
        Considering Lemma \ref{potentially2}, we have that $I_1$ can be written as
        \begin{equation}\label{3.3}
            I_1 = 2\pi \int_{\mathbb{R}}\int^{\infty}_R \varrho_k (r,z)\,r \int_{\mathbb{R}}\int^{\infty}_R K(r,\eta,z,w) \varrho_k (\eta,w)\,\eta\dd \eta \dd w \dd r \dd z.
        \end{equation}
        We have the following cases.
        \begin{case}[$\alpha \in (0,1)$]
            From \eqref{Kbehave}, \eqref{3.3} and that $\varrho_k$ are bounded in $L^1(\mathbb{R}^3)$, we see that
            \begin{equation}\label{3.4}
                |I_1| \leq C R^{-\alpha} \int_{\mathbb{R}^3}\int_{\mathbb{R}^3} |\varrho_k(\boldsymbol{x})| |\varrho_k(\boldsymbol{y})| \dd \boldsymbol{x} \dd \boldsymbol{y} \leq C R^{-\alpha}.
            \end{equation}
        \end{case}
        \begin{case}[$\alpha \in (1,3)$]
            We have that $1< \frac{4}{5-\alpha} < \frac{6}{6-\alpha} < \gamma$. Since $\varrho_k$ are bounded in $L^1(\mathbb{R}^3) \cap L^\gamma(\mathbb{R}^3)$, we have that by interpolation, $\varrho_k$ are bounded in $L^\frac{4}{5-\alpha}(\mathbb{R}^3)$. From \eqref{Kbehave}, \eqref{3.3}, H\"older's inequality and the HLS inequality Lemma \ref{simplehls}, we see that
            \begin{equation}\label{3.5}
                \begin{split}
                    |I_1| & \leq C R^{- \frac{3-\alpha}{2}} \int_{\mathbb{R}}\int^{\infty}_R \int_{\mathbb{R}}\int^{\infty}_R |\varrho_k (r,z)|\, |(r,z)-(\eta,w)|^{1-\alpha} |\varrho_k (\eta,w)| r^\frac{5-\alpha}{4} \eta^\frac{5-\alpha}{4}\dd \eta \dd w \dd r \dd z \\
                    & \leq C R^{- \frac{3-\alpha}{2}} \bigg ( \int_{\mathbb{R}}\int^{\infty}_R |\varrho_k (r,z)|^\frac{4}{5-\alpha}\,r \dd r \dd z \bigg )^\frac{5-\alpha}{2} \\
                    & \leq C R^{- \frac{3-\alpha}{2}} \| \varrho_k \|^2_{L^\frac{4}{5-\alpha}(\mathbb{R}^3)} \\
                    & \leq C R^{- \frac{3-\alpha}{2}}.
                \end{split}
            \end{equation}
        \end{case}
        \begin{case}[$\alpha = 1$]
            By the Cauchy-Schwarz inequality, we see that for any $0<\delta<1$, that
            \[
                \Phi_1 \leq C(\Phi_{1+\delta} + \Phi_{1-\delta}).
            \]
            Thus, taking $\delta$ small enough so that the HLS exponent corresponding to $\Phi_{1+\delta}$ satisfies $\frac{4}{4-\delta} < \frac{6}{5} < \gamma$. Thus, we can apply Case 1 and Case 2 to obtain that
            \begin{equation}\label{3.6}
                |I_1| \leq C \big (R^{\delta - 1} + R^{\frac{\delta}{2} - 1} \big).
            \end{equation}
        \end{case}
        Combining \eqref{3.4}, \eqref{3.5} and \eqref{3.6}, we see that $I_1 \to 0$ as $R \to \infty$.
    \end{step}
    \setcounter{case}{0}
    \begin{step}
        We now consider $I_3$. Since $\varrho_k$ are bounded in $L^1(\mathbb{R}^3) \cap L^\gamma(\mathbb{R}^3)$, we have that by interpolation that there exists $\bar{C}>0$ such that $\|\varrho_k\|_{L^p(\mathbb{R}^3)}^p \leq 2\bar{C}$ for all $1 \leq p \leq \gamma$ and $k$. We define $V(R,i) \coloneq B_R\times[i,(i+1)]$ for $R,i \in \mathbb{N}$. Since $z \mapsto \varrho_k(r,z)$ is radially decreasing for {\it a.e.} $r$, we have that 
        $$\|\varrho_k\|^p_{L^p(V(R,i))} \leq \frac{\bar{C}}{i+1},$$
        for all $1 \leq p \leq \gamma$, $i \in \mathbb{N}$ and $k$. Thus, since $z \mapsto \varrho_k(r,z)$ is a radial function for {\it a.e.} $r$, we have that
        \begin{equation}\label{3.7}
            \begin{split}
                |I_3| & \leq 4 \int_{\bigcup^\infty_{i = R} V(R,i)} \int_{\bigcup^\infty_{j = R} V(R,j)} |\varrho_k(\boldsymbol{x})| |\Phi_\alpha(\boldsymbol{x} - \boldsymbol{y})| |\varrho_k(\boldsymbol{y})| \dd \boldsymbol{x} \dd \boldsymbol{y} \\
                & \leq 8 \sum^\infty_{i=R} \sum^\infty_{j=i} \int_{V(R,i)} \int_{V(R,j)} |\varrho_k(\boldsymbol{x})| |\Phi_\alpha(\boldsymbol{x} - \boldsymbol{y})| |\varrho_k(\boldsymbol{y})| \dd \boldsymbol{x} \dd \boldsymbol{y}.
            \end{split}
        \end{equation}
        For $j=i,i+1$, we have that by the HLS inequality Lemma \ref{simplehls},
        \begin{align}\label{3.8}
            \int_{V(R,i)} \int_{ V(R,j)} |\varrho_k(\boldsymbol{x})| |\Phi_\alpha(\boldsymbol{x} - \boldsymbol{y})| |\varrho_k(\boldsymbol{y})| \dd \boldsymbol{x} \dd \boldsymbol{y} & \leq C \|\varrho_k\|_{L^\frac{6}{6-\alpha}(V(R,i))} \|\varrho_k\|_{L^\frac{6}{6-\alpha}(V(R,j))} \nonumber \\
            & \leq \frac{C}{(i+1)^\frac{6-\alpha}{3}}.
        \end{align}
        For $j \geq i + 2$, using the form of $\Phi_\alpha$, we see that
        \begin{equation}\label{3.9}
        \begin{split}
            \int_{V(R,i)} \int_{ V(R,j)} |\varrho_k(\boldsymbol{x})| |\Phi_\alpha(\boldsymbol{x} - \boldsymbol{y})| |\varrho_k(\boldsymbol{y})| \dd \boldsymbol{x} \dd \boldsymbol{y} & \leq C (j - i -1)^{-\alpha} \|\varrho_k\|_{L^1(V(R,i))} \|\varrho_k\|_{L^1(V(R,j))} \\
            & \leq \frac{C(j - i -1)^{-\alpha}}{(i+1)(j+1)}.
        \end{split}
        \end{equation}
        Here, the last inequality will be used in order to obtain convergence of the sum in \eqref{3.7}.
        \begin{case}[$\alpha \in {(0,1]}$]
        We have that for $j \geq i+ 2$, that
        \[
            \frac{(j - i -1)^{-\alpha}}{(i+1)(j+1)} \leq \frac{(j - i -1)^{-\frac{2+ \alpha}{2}}}{(i+1)^{\frac{2+ \alpha}{2}}}.
        \]
            Combining this with \eqref{3.7}, \eqref{3.8}, \eqref{3.9}, we see that
            \begin{equation}\label{3.10}
                |I_3| \leq C \bigg ( 1 + \sum^\infty_{j=1} j^{-\frac{2+\alpha}{2}} \bigg )\sum^\infty_{i=R} (i+1)^{-\frac{2+\alpha}{2}}.
            \end{equation}
            Since $\frac{2+\alpha}{2} > 1$, the series converge.
        \end{case}
        \begin{case}[$\alpha \in {(1,3)}$]
            Combining \eqref{3.7}, \eqref{3.8}, \eqref{3.9}, we see that
            \begin{equation}\label{3.11}
                |I_3| \leq C \bigg ( 1 + \sum^\infty_{j=1} j^{-\alpha} \bigg )\sum^\infty_{i=R} (i+1)^{-\frac{6-\alpha}{3}}.
            \end{equation}
            Since $\alpha > 1$, the series converge.
        \end{case}
        Combining \eqref{3.10} and \eqref{3.11}, we obtain that $I_3 \to 0$ as $R \to \infty$.
        \end{step}
        \setcounter{case}{0}
        \begin{step}
            We split $I_2$ into two parts
            \[
                \begin{split}
                I_2 & = 2\int_{(B_{2R}\setminus B_R) \times \mathbb{R}} \varrho_k \Phi_\alpha *(\varrho_k \mathds{1}_{B_{R} \times [-R,R]^c}) \dd \boldsymbol{x} + 2\int_{B_{2R}^c \times \mathbb{R}} \varrho_k \Phi_\alpha *(\varrho_k \mathds{1}_{B_{R} \times [-R,R]^c}) \dd \boldsymbol{x} \\
                & =: I_{2,1} + I_{2,2}.
                \end{split}
            \]
            By the same approach as in Step 5, one can show that $I_{2,1} \to 0$ as $R \to \infty$. For the second integral, we can see that
            \[
                    {\rm dist} (B_{R} \times [-R,R]^c,B_{2R}^c \times \mathbb{R}) = R,
            \]
            Hence, we have from the form of $\Phi_\alpha$ and the fact that $\varrho_k$ are bounded in $L^1(\mathbb{R}^3) \cap L^\gamma(\mathbb{R}^3)$
            \[
                |I_{2,2}| \leq C R^{-\alpha} \int_{\mathbb{R}^3}\int_{\mathbb{R}^3} |\varrho_k(\boldsymbol{x})| |\varrho_k(\boldsymbol{y})| \dd \boldsymbol{x} \dd \boldsymbol{y} \leq C R^{-\alpha}.
            \]
            Hence $I_{2,2} \to 0$ as $R \to \infty$. This concludes the proof.
    \end{step}
\end{proof}
\setcounter{step}{0}

We now establish the existence of minimisers of the free-energy functional $S_\mu$ over the admissible set $\mathcal{K}$.

    \begin{lemma}
        Suppose $\alpha \in (0,3)$, $\frac{6}{6 - \alpha} < \gamma < \frac{3 + \alpha}{3}$, $L$ satisfies \eqref{Lnice2}, \eqref{Lstable2} and, for $\alpha \in (0,2]$, \eqref{Linstab} for $\omega > \omega_*$, then there exists a minimiser $\bar{\rho}$ of $S_\mu$ over $\mathcal{K}$ and $\ell_\mu > 0$. Moreover, for any minimiser $\bar{\rho}$, there exists a translation $\boldsymbol{y} \in \{\boldsymbol{0}\} \times \mathbb{R}$ such that $z \mapsto T^{\boldsymbol{y}}\bar{\rho}(r,z)$ is a radially decreasing function for {\it a.e} $r$.
    \end{lemma}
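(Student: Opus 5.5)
We follow the direct method, in the spirit of \cite[Lemma 4.6.]{Carrillo_2026}, with the compactness of the potential term supplied by Lemma \ref{weaktostrong2}. Take a minimising sequence $\varrho_k \in \mathcal{K}$, so $S_\mu(\varrho_k)\to \ell_\mu$. The first step is a symmetrisation in $z$. For a.e. $r$ we replace $\varrho_k(r,\cdot)$ by its one-dimensional symmetric decreasing rearrangement $\varrho_k^\star$ in the $z$-variable. This preserves axisymmetry, the mass, $\int\varrho^\gamma$, and the cylinder masses $m_\varrho(r)$, so it also preserves the kinetic term. By Riesz's rearrangement inequality, applied in $z$ for fixed horizontal variables (the kernel $\Phi_\alpha$ is symmetric decreasing in $z-w$), the term $\int\varrho\Phi_\alpha*\varrho$ can only decrease. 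Hence $Q(\varrho^\star_\lambda)\le Q(\varrho_\lambda)$ for all $\lambda$. Since $\varrho_\lambda^\star=(\varrho^\star)_\lambda$, we may need to rescale $\varrho_k^\star$ by $\lambda_1^*(\varrho_k^\star)\le 1$ (or by $\lambda^*\le 1$ when $\alpha\ge2$) to return to $\mathcal{K}$. By Lemmas \ref{largealpha} and \ref{smallalpha}(c), $S_{\mu,\lambda}$ is increasing on $(0,\lambda_1^*]$, so the resulting element of $\mathcal{K}$ has energy $\le S_\mu(\varrho_k)$. The same argument yields the final statement for an arbitrary minimiser: if $\bar\rho$ were not $z$-symmetric decreasing after a translation, equality in the Riesz inequality fails strictly, and strictness of the monotonicity would give a competitor with strictly smaller energy. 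We therefore assume $\varrho_k$ is even and decreasing in $|z|$.

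Next we obtain uniform bounds. The identities \eqref{biggerthan01} and \eqref{biggerthan02} express $S_\mu$ on $\mathcal{K}$ as a positive combination of $\int\varrho^\gamma$, $\mu\int\varrho$ and, for $\alpha>2$, the kinetic term, with $\kappa(\varrho)\ge1$ used for $\alpha<2$. Hence $\|\varrho_k\|_{L^1\cap L^\gamma}$ is bounded. Then $\int\varrho_k\Phi_\alpha*\varrho_k$ is bounded by HLS, and since $Q(\varrho_k)=0$ the kinetic term is bounded as well. Lemma \ref{weaklowersemi} gives $\varrho_k\rightharpoonup\bar\rho$ in $L^\gamma$, with $\bar\rho$ axisymmetric and $z$-even decreasing. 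Lemma \ref{weaktostrong2} gives convergence of the potential term, Lemma \ref{lowerkinetic} (using \eqref{Lstable2}) gives lower semicontinuity of the kinetic term, and weak lower semicontinuity handles $\int\varrho^\gamma$ and the mass. We also need a positive lower bound to rule out $\bar\rho\equiv0$. From $Q(\varrho_k)=0$ and HLS we have $3a_0\|\varrho_k\|_\gamma^\gamma\le C\|\varrho_k\|_{6/(6-\alpha)}^2\le C\|\varrho_k\|_1^{a}\|\varrho_k\|_\gamma^{b}$ with $b>\gamma$, using $\gamma>\frac6{6-\alpha}$. This bounds $\|\varrho_k\|_\gamma$ or $\|\varrho_k\|_1$ below, which gives $\ell_\mu>0$. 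The limit potential energy $-\int\bar\rho\Phi_\alpha*\bar\rho=\lim(-\int\varrho_k\Phi_\alpha*\varrho_k)$ is then bounded below positively, since it dominates $\|\varrho_k\|^\gamma_\gamma$ through $Q=0$. So $\bar\rho\not\equiv0$.

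The main obstacle is that $\bar\rho$ need not lie in $\mathcal{K}$: the weak limit only satisfies $Q(\bar\rho)\le \liminf Q(\varrho_k)=0$, and the condition $Q(\bar\rho_\lambda)>0$ for $\lambda<1$ may fail. Our plan is to rescale $\bar\rho$ by $\lambda_1^*(\bar\rho)\le1$, where the existence of $\lambda^*_1$ comes from Lemma \ref{smallalpha} since $Q(\bar\rho)\le0$, and from Lemma \ref{largealpha} when $\alpha\ge2$. We then show $S_\mu(\bar\rho_{\lambda_1^*})\le \liminf S_\mu(\varrho_k)$ and argue by strictness that $\lambda^*_1=1$. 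To compare energies, we use the explicit expressions \eqref{biggerthan01} and \eqref{biggerthan02}, which express $S_\mu$ at a $Q$-zero without the potential term. For $\alpha<2$, the quantity $\kappa(\varrho_k)$ converges to $\kappa(\bar\rho)$ if $\int\varrho_k^\gamma$ converges, and otherwise only in the direction given by lower semicontinuity. We will try to show that the loss in $\int\varrho^\gamma$ and in the kinetic term only decreases $S_\mu(\bar\rho_\lambda)$ uniformly in $\lambda\le1$, using the fact that the potential term converges exactly. We then choose between the branches using $\kappa(\bar\rho_{\lambda})\ge\lambda_1^*$ from Lemma \ref{smallalpha}(d). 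This branch bookkeeping is the delicate point, and we expect it to be where most of the work lies. Once it is done, $\bar\rho_{\lambda_1^*}\in\mathcal{K}$ attains $\ell_\mu$.
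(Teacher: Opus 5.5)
\textbf{Gap in the final step.} Up to the extraction of a nonzero weak limit $\bar\rho$, your argument matches the paper's: symmetrisation in $z$ followed by rescaling back into $\mathcal{K}$, bounds from \eqref{biggerthan01}--\eqref{biggerthan02}, Lemmas \ref{weaktostrong2} and \ref{lowerkinetic}, and the rearrangement-equality argument for the last claim.

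The gap is in your last paragraph, which is the step that actually produces the minimiser. You never prove $S_\mu(\bar\rho_{\lambda_1^*(\bar\rho)})\le\ell_\mu$; you only outline a plan you ``will try'' to carry out. That plan is also poorly suited to the task. The identities \eqref{biggerthan01}--\eqref{biggerthan02} hold only at zeros of $Q$. To relate the value at $\bar\rho_{\lambda_1^*}$ to $S_\mu(\varrho_k)$, you would have to track $\kappa$ and $\int\varrho^\gamma$ through both the weak limit and the rescaling. Rescaling by $\lambda<1$ raises $\kappa$ (since $\kappa(\varrho_\lambda)=\kappa(\varrho)/\lambda$) but lowers $\int\varrho_\lambda^\gamma=\lambda^{3(\gamma-1)/2}\int\varrho^\gamma$. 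These push the expression in \eqref{biggerthan02} in opposite directions, so there is no evident monotone comparison. Showing $\lambda_1^*(\bar\rho)=1$ ``by strictness'' is likewise neither established nor needed.

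\textbf{The missing idea.} Rescale the whole sequence by the fixed factor $\lambda:=\lambda_1^*(\bar\rho)\le 1$, or by $\lambda^*(\bar\rho)\le1$ when $\alpha\ge2$. For $\alpha=2$ the hypothesis of Lemma \ref{largealpha} follows from $Q(\bar\rho)\le0$. This is exactly what the paper does.
\begin{itemize}
\item The rescaled sequence converges weakly: $(\varrho_k)_\lambda\rightharpoonup\bar\rho_\lambda$ in $L^\gamma$.
\item The potential energy of $(\varrho_k)_\lambda$ is $\lambda^{\alpha/2}$ times that of $\varrho_k$, so it converges.
\item The remaining terms are weakly lower semicontinuous, so $S_\mu(\bar\rho_\lambda)\le\liminf_k S_\mu((\varrho_k)_\lambda)$.
\item Since $\varrho_k\in\mathcal{K}$, you have $\frac{\d}{\d s}S_{\mu,s}(\varrho_k)=Q((\varrho_k)_s)/(2s)>0$ for $0<s<1$. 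Hence $S_\mu((\varrho_k)_\lambda)\le S_\mu(\varrho_k)$. This is the same monotonicity you already used in the symmetrisation step.
\end{itemize}
Thus $S_\mu(\bar\rho_\lambda)\le\ell_\mu$. Moreover $\bar\rho_\lambda\in\mathcal{K}$, because $\lambda$ is the smallest zero of $s\mapsto Q(\bar\rho_s)$ and this function is positive for small $s$. So $\bar\rho_\lambda$ is a minimiser, and no $\kappa$ bookkeeping is required.

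A minor point: in your lower bound, $b>\gamma$ comes from $\gamma<\frac{3+\alpha}{3}$. The condition $\gamma>\frac{6}{6-\alpha}$ is what makes the interpolation valid.
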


    \begin{proof}
    We split the proof into three steps.
        \begin{step}
        Suppose that $\varrho_k \in \mathcal{K}$ is a minimising sequence of $S_\mu$, i.e. that
    \[
    Q(\varrho_k) = 0,
    \]
    for all $0 < \lambda < 1$ we have that
    \[
    Q((\varrho_k)_\lambda) > 0,
    \]
    and
    \[
    \lim_{k \to \infty} S_{\mu}(\varrho_k) = \ell_\mu.
    \]
    We would like to show that we can take a minimising sequence that is radially decreasing in the $z$--coordinate. To do this, we take the radially decreasing rearrangement in Lemma \ref{rearrangement} in the $z$--coordinate. That is, define $\tilde{\varrho}_k$ by $\tilde{\varrho}_k(r,z) = (\varrho_k(r,\cdot))^{\#}(z)$. Note that since the radially decreasing rearrangement preserves $L^p$ norms, we have that
    \[
        \int_{\mathbb{R}^3} \tilde{\varrho}_k \dd \boldsymbol{x} = \int_{\mathbb{R}^2} \int_{\mathbb{R}} \tilde{\varrho}_k(\boldsymbol{y},z) \dd z \dd \boldsymbol{y} = \int_{\mathbb{R}^2} \int_{\mathbb{R}} \varrho_k(\boldsymbol{y},z) \dd z \dd \boldsymbol{y} = \int_{\mathbb{R}^3} \varrho_k \dd \boldsymbol{x}.
    \]
    Similarly, we see that $\| \tilde{\varrho}_k \|_{L^\gamma(\mathbb{R}^3)} = \| \varrho_k \|_{L^\gamma(\mathbb{R}^3)}$ and
    \[
        \int_{\mathbb{R}^3} \frac{\tilde{\varrho}_k L(m_{\tilde{\varrho}_k}(r))}{r^2} \dd \boldsymbol{x} = \int_{\mathbb{R}^3} \frac{\varrho_k L(m_{\varrho_k}(r))}{r^2} \dd \boldsymbol{x}.
    \]
    By Lemma \eqref{rearrangement}, we see that since $z \mapsto -\Phi_{\alpha}(\boldsymbol{x},z)$ is a radially decreasing function for $z \in \mathbb{R}$ and any $\boldsymbol{x} \in \mathbb{R}^2$, we obtain that
    \begin{equation}\label{partrearrange}
        \begin{split}
            \int_{\mathbb{R}^3} \tilde{\varrho}_k \Phi_\alpha * \tilde{\varrho}_k \dd \boldsymbol{x} & = \int_{\mathbb{R}^2} \int_{\mathbb{R}^2} \int_{\mathbb{R}} \int_{\mathbb{R}} \tilde{\varrho}_k(\boldsymbol{x},z) \Phi_\alpha(\boldsymbol{x}-\boldsymbol{y},z-w) \tilde{\varrho}_k(\boldsymbol{y},w) \dd z \dd w \dd \boldsymbol{x} \dd \boldsymbol{y} \\
            & \leq \int_{\mathbb{R}^2} \int_{\mathbb{R}^2} \int_{\mathbb{R}} \int_{\mathbb{R}} \varrho_k(\boldsymbol{x},z) \Phi_\alpha(\boldsymbol{x}-\boldsymbol{y},z-w) \varrho_k(\boldsymbol{y},w) \dd z \dd w \dd \boldsymbol{x} \dd \boldsymbol{y} \\
            & =\int_{\mathbb{R}^3} \varrho_k \Phi_\alpha * \varrho_k \dd \boldsymbol{x}.
        \end{split}
    \end{equation}
    Hence, we have that $\tilde{\varrho}_k \in L^1_+(\mathbb{R}^3) \cap L^\gamma(\mathbb{R}^3)$ and satisfies $Q(\tilde{\varrho}_k) \leq 0$. Thus, by Lemma \ref{largealpha} or \ref{smallalpha}, there exists $0 <\lambda^*(\tilde{\varrho}_k) \leq 1$ such that $(\tilde{\varrho}_k)_{\lambda^*(\tilde{\varrho}_k)} \in \mathcal{K}$. Moreover, it is easy to show that $(\tilde{\varrho}_k)_{\lambda^*(\tilde{\varrho}_k)}(r,z) = ((\varrho_k)_{\lambda^*(\tilde{\varrho}_k)}(r,\cdot))^\#(z)$. Hence, by Lemma \ref{largealpha} (c) or Lemma \ref{smallalpha} (c) we see that
    \[
        S_\mu((\tilde{\varrho}_k)_{\lambda^*(\tilde{\varrho}_k)}) \leq S_\mu((\varrho_k)_{\lambda^*(\tilde{\varrho}_k)}) \leq S_\mu(\varrho_k).
    \]
    This implies that $(\tilde{\varrho}_k)_{\lambda^*(\tilde{\varrho}_k)}$ is a minimising sequence. Hence, we can assume that $\varrho_k$ are radially decreasing in the $z$--coordinate. Using \eqref{biggerthan01} and \eqref{biggerthan02}, we see that
    \begin{equation}\label{seqbound2}
        \| \varrho_k \|_{L^\gamma} + \| \varrho_k \|_{L^1} \leq C.
    \end{equation}
        We have that since $\varrho_k \in \mathcal{K}$, that
    \begin{equation}\label{sequenceHLS2}
        \| \varrho_k \|_{L^\gamma}^\gamma \leq - \frac{\alpha}{6 a_0} \int_{\mathbb{R}^3} \varrho_k \Phi_\alpha * \varrho_k \dd \boldsymbol{x} \leq C \| \varrho_k \|_{L^1}^\frac{\gamma(6 - \alpha) - 6}{3(\gamma - 1)} \| \varrho_k \|_{L^\gamma}^\frac{\gamma \alpha}{3(\gamma - 1)}.
    \end{equation}
    Thus, combining this with \eqref{seqbound2}, since $\frac{6}{6 - \alpha} < \gamma < \frac{3 + \alpha}{3}$, we get
    \[
        \| \varrho_k \|_{L^\gamma}^\frac{\gamma(3\gamma - 3 - \alpha)}{3(\gamma - 1)} \leq C \| \varrho_k \|_{L^1}^\frac{\gamma(6 - \alpha) - 6}{3(\gamma - 1)} \leq C,
    \]
    and thus
    \begin{equation}\label{lowerseq2}
        \| \varrho_k \|_{L^\gamma} \geq C.
    \end{equation}
    Thus, using \eqref{biggerthan01} and \eqref{biggerthan02}, we see that $\ell_\mu > 0$. By Lemma \ref{weaklowersemi}, we have that \eqref{seqbound2} implies there exists $\tilde{\varrho} \in L^\gamma(\mathbb{R}^3)$ such that (up to a subsequence) $\varrho_k \rightharpoonup \tilde{\varrho}$ as $k \to \infty$ in $L^\gamma(\mathbb{R}^3)$. Since $\frac{3+\alpha}{3} < \frac{3}{3-\alpha}$, Lemma \ref{weaktostrong2} tells us that (up to a subsequence)
    \begin{equation}\label{potentialstrong2}
        \int_{\mathbb{R}^3} \varrho_k \Phi_\alpha * \varrho_k \dd \boldsymbol{x} \longrightarrow \int_{\mathbb{R}^3} \tilde{\rho} \Phi_\alpha * \tilde{\rho} \dd \boldsymbol{x}, \qquad \mbox{as $k \to \infty$.}
    \end{equation}
    Combining \eqref{sequenceHLS2} and \eqref{lowerseq2}, we have that
    \[
    - \int_{\mathbb{R}^3} \varrho_k \Phi_\alpha * \varrho_k \dd \boldsymbol{x} \geq C.
    \]
    Combining with \eqref{potentialstrong2}, we have that
    \[
    - \int_{\mathbb{R}^3} \tilde{\rho} \Phi_\alpha * \tilde{\rho} \dd \boldsymbol{x} \geq C.
    \]
    Hence, we see that $\tilde{\rho} \not \equiv 0$.
    \end{step}
    \begin{step}
    However, by weak lower semi-continuity of the $L^\gamma$ norm, Lemma \ref{weaktostrong2} and verbatim the same approach as that of the proof of Lemma \ref{lowerkinetic}, we obtain that $Q(\tilde{\varrho}) \leq 0$. As such, there exists $0<\lambda^*(\tilde{\varrho}) \leq 1$ such that $\bar{\rho} := \tilde{\varrho}_{\lambda^*(\tilde{\varrho})} \in \mathcal{K}$. Then, we have that (up to a subsequence) $\varrho_k^* := (\varrho_k)_{\lambda^*(\tilde{\varrho})} \rightharpoonup \bar{\rho}$ as $k \to \infty$ in $L^\gamma(\mathbb{R}^3)$. From Lemma \ref{weaklowersemi}, we have that $\bar{\varrho} \geq 0$ {\it a.e.} and
    \[
    \| \bar{\rho} \|_{L^\gamma} + \| \bar{\rho} \|_{L^1} \leq C.
    \]
    Now, by Lemma \ref{weaklowersemi} and weak lower semi-continuity of $\int_{\mathbb{R}^3} \varrho^\gamma \dd \boldsymbol{x}$, we get that
    \[
    \int_{\mathbb{R}^3} \bar{\rho} \dd \boldsymbol{x} \leq \liminf_{k \to \infty} \int_{\mathbb{R}^3} \varrho_k^* \dd \boldsymbol{x}, \qquad \text{ and } \qquad \int_{\mathbb{R}^3} (\bar{\rho})^\gamma \dd \boldsymbol{x} \leq \liminf_{k \to \infty} \int_{\mathbb{R}^3} (\varrho_k^*)^\gamma \dd \boldsymbol{x}.
    \]
    By verbatim the same approach as that of the proof of Lemma \ref{lowerkinetic}, since $L$ satisfies \eqref{Lnice2}, \eqref{Lstable2}, we have that
    \[
        \int_{\mathbb{R}^3} \frac{\bar{\rho} L(m_{\bar{\rho}}(r))}{r^2} \dd \boldsymbol{x} \leq \liminf_{k \to \infty} \int_{\mathbb{R}^3} \frac{\varrho^*_k L(m_{\varrho^*_k}(r))}{r^2} \dd \boldsymbol{x}.
    \]
    Hence, using Lemma \ref{largealpha} or \ref{smallalpha}, we have that
    $$
    S_\mu(\bar{\rho}) \leq \liminf_{k \to \infty} S_\mu(\varrho_k^*) \leq \liminf_{k \to \infty} S_\mu(\varrho_k) = \ell_\mu.
    $$
    Thus $\bar{\rho}$ is a minimiser.
    \end{step}
    \begin{step}
        Employing Lemma \ref{rearrangement}, for the radially decreasing rearrangement in the $z$--coordinate of $\bar{\rho}$, given by $\rho^*(r,z) = (\bar{\rho}(r,\cdot))^{\#}(z)$, as in \eqref{partrearrange}, we have equality in
        \begin{equation}\label{partrearrange2}
        \begin{split}
            \int_{\mathbb{R}^3} \rho^* \Phi_\alpha * \rho^* \dd \boldsymbol{x} & = \int_{\mathbb{R}^2} \int_{\mathbb{R}^2} \int_{\mathbb{R}} \int_{\mathbb{R}} \rho^*(\boldsymbol{x},z) \Phi_\alpha(\boldsymbol{x}-\boldsymbol{y},z-w) \rho^*(\boldsymbol{y},w) \dd z \dd w \dd \boldsymbol{x} \dd \boldsymbol{y} \\
            & \leq \int_{\mathbb{R}^2} \int_{\mathbb{R}^2} \int_{\mathbb{R}} \int_{\mathbb{R}} \bar{\rho}(\boldsymbol{x},z) \Phi_\alpha(\boldsymbol{x}-\boldsymbol{y},z-w) \bar{\rho}(\boldsymbol{y},w) \dd z \dd w \dd \boldsymbol{x} \dd \boldsymbol{y} \\
            & =\int_{\mathbb{R}^3} \bar{\rho} \Phi_\alpha * \bar{\rho} \dd \boldsymbol{x},
        \end{split}
    \end{equation}
        if and only if there is a translation $\boldsymbol{y} \in \{\boldsymbol{0}\} \times \mathbb{R}$ such that $z \mapsto T^{\boldsymbol{y}} \bar{\rho}(r,z)$ is a radially decreasing function for {\it a.e.} $r$. If there is no equality in \eqref{partrearrange2}, then as in Step 1, we can obtain a function $\bar{\rho}^* \in \mathcal{K}$ such that $S_\mu(\bar{\rho}^*) < S_\mu(\bar{\rho})$, a contradiction, thus we conclude the lemma.
    \end{step}
    \end{proof}
    \setcounter{step}{0}

    For $\alpha \in (0,2)$, in order to prove that minimisers of $S_\mu$ over $\mathcal{K}$ are indeed rotating Riesz star solutions, and to establish the instability of these steady states, we require slightly stronger superhomogeneity assumptions on $L$. We therefore define
    \[
        \bar{\omega} := \frac{(2-\alpha)((6-\alpha)\gamma - 6)(5-3\gamma)}{3\alpha (\gamma - 1)(3+\alpha -3\gamma)} > 0.
    \]
    We will require that $\omega > \omega_* + \bar{\omega}$. 

\begin{lemma}\label{nonempty2}
        Suppose $\alpha \in (0,2)$, $\frac{6}{6-\alpha} < \gamma < \frac{3 + \alpha}{3}$, $L$ satisfies \eqref{Lnice2} and \eqref{Linstab} for $\omega > \omega_* + \bar{\omega}$. Then, there exists $\bar{Q} \in (0,\infty]$ such that for all $\varrho \in L^1_+(\mathbb{R}^3) \cap L^\gamma(\mathbb{R}^3)$ satisfying
        $$0<Q(\varrho) < \bar{Q}\int_{\mathbb{R}^3} \varrho^\gamma \dd \boldsymbol{x},$$ 
        and $\kappa(\varrho) = 1$, we have that $S_\mu(\varrho) > \ell_\mu$. Moreover, if \eqref{Linstab} is satisfied for 
        \begin{equation}\label{secondcond}
            \omega \geq \frac{\gamma + 1 - \alpha}{3 + \alpha - 3\gamma},
        \end{equation}
        then $\bar{Q} = \infty$.
    \end{lemma}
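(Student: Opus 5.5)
The plan is to construct, from a given $\varrho$ with $\kappa(\varrho)=1$ and $Q(\varrho)>0$, an element of $\mathcal{K}$ with strictly smaller free-energy. The tool is the energy-critical scaling \eqref{energycritical}, $\xi\mapsto\leftindex^\xi\varrho$, which preserves $\kappa$ (as used in the proof of Lemma \ref{nonempty}). Write $P:=\int_{\mathbb{R}^3}\varrho^\gamma\dd\boldsymbol{x}$, $K:=\int_{\mathbb{R}^3}\frac{\varrho L(m_\varrho(r))}{r^2}\dd\boldsymbol{x}$, $q:=Q(\varrho)/P$ and $c:=\frac{3+\alpha-3\gamma}{2-\alpha}$. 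By \eqref{usingk}, the condition $\kappa(\varrho)=1$ gives $K=Q(\varrho)+3a_0cP$. Then \eqref{Qineq} reads, for $\xi\ge1$,
\[
Q(\leftindex^\xi\varrho)\le \xi^{a}(q+3a_0c)P-\xi^{b}\,3a_0cP,
\]
with $a=\gamma+1-\alpha-\omega(3+\alpha-3\gamma)<b=(6-\alpha)\gamma-6$. Since $Q(\varrho)>0$ and the right-hand side tends to $-\infty$, there is a smallest $\xi_*>1$ with $Q(\leftindex^{\xi_*}\varrho)=0$. At this point $\kappa(\leftindex^{\xi_*}\varrho)=1$ and $Q=0$, so Lemma \ref{smallalpha}(d) forces $\lambda_1^*=\lambda_2^*=\kappa=1$. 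Hence $Q((\leftindex^{\xi_*}\varrho)_\lambda)>0$ for $\lambda<1$, and therefore $\leftindex^{\xi_*}\varrho\in\mathcal{K}$, so that $S_\mu(\leftindex^{\xi_*}\varrho)\ge\ell_\mu$.

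Next I compare the energies via the decomposition \eqref{biggerthan02}. When $\kappa=1$,
\[
S_\mu(\varrho)=\tfrac{Q(\varrho)}{2}+C_1P+\mu\|\varrho\|_{L^1},\qquad C_1:=a_0\tfrac{(5-3\gamma)(3+\alpha-3\gamma)}{2\alpha(\gamma-1)}.
\]
The internal energy scales as $\xi^{b}$, and the mass scales as $\xi^{3\gamma-3-\alpha}$, which is decreasing in $\xi$. Therefore
\[
S_\mu(\leftindex^{\xi_*}\varrho)\le C_1\xi_*^{b}P+\mu\|\varrho\|_{L^1},
\]
and it suffices to prove $C_1(\xi_*^b-1)<q/2$.

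To bound $\xi_*$, I use the displayed inequality: $\xi_*$ does not exceed the root of $\xi^a(q+3a_0c)=3a_0c\,\xi^b$. This gives
\[
\xi_*^{b}\le\Big(1+\tfrac{q}{3a_0c}\Big)^{\frac{b}{b-a}}.
\]
The required inequality therefore reduces to the scalar condition
\[
f(q):=\Big(1+\tfrac{q}{3a_0c}\Big)^{\frac{b}{b-a}}-1<\tfrac{q}{2C_1}.
\]
As $q\to0^+$, $f(q)\sim\frac{b}{(b-a)3a_0c}\,q$. So I need $\frac{b}{b-a}<\frac{3a_0c}{2C_1}$, which is a lower bound on $b-a$, i.e.\ on $\omega$.

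The main obstacle is this algebra: checking that this threshold on $\omega$ is exactly $\omega>\omega_*+\bar\omega$. I expect $b-a=(3+\alpha-3\gamma)(\omega-\omega_*)$, and the ratio $3a_0c/(2C_1)$ should simplify to an expression in $\alpha,\gamma$ that reproduces $\bar\omega$. Once this is verified, continuity of $f$ and strict inequality near $0$ give some $\bar Q\in(0,\infty]$ such that $f(q)<q/(2C_1)$ for all $q\in(0,\bar Q)$.

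For the final claim, the condition \eqref{secondcond} means $a\le0$. Then $\xi^a\le1$ for $\xi\ge1$, so directly $\xi_*^{b}\le1+q/(3a_0c)$. The required inequality becomes linear, $\frac{q}{3a_0c}<\frac{q}{2C_1}$, i.e.\ $2C_1<3a_0c$. I would verify that this holds in the range $\frac{6}{6-\alpha}<\gamma<\frac{3+\alpha}{3}$, $\alpha<2$, since it is precisely the limiting case of the threshold computed above. It holds for every $q>0$, hence $\bar Q=\infty$.
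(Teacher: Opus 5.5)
Your treatment of the first claim is correct and takes a different route from the paper. The paper bounds $S_\mu(\leftindex^{\xi}{\varrho})$ by $T_\mu(\varrho,\xi)$ plus the mass term and argues that $T_\mu$ decreases in $\xi$, using the derivative formula \eqref{alsonotnice}. You instead evaluate $S_\mu$ exactly at the first zero $\xi_*$, where $Q=0$ and $\kappa=1$ eliminate the kinetic term, and you use \eqref{Linstab} only to locate $\xi_*$. This spares you any monotonicity of $T_\mu$ over a range of $\xi$. The algebra you deferred does close: $b-a=(3+\alpha-3\gamma)(\omega-\omega_*)$ and $\frac{3a_0c}{2C_1}=\frac{3\alpha(\gamma-1)}{(2-\alpha)(5-3\gamma)}$. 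Hence $s:=\frac{b}{b-a}<\frac{3a_0c}{2C_1}$ is exactly $\omega-\omega_*>\bar\omega$, and the resulting $\bar Q$ depends only on $a_0,\alpha,\gamma,\omega$.

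The gap is in your final paragraph. By the formula above, $2C_1<3a_0c$ holds if and only if $3\alpha(\gamma-1)>(2-\alpha)(5-3\gamma)$, i.e.\ $\gamma>\frac{5-\alpha}{3}$.

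\begin{itemize}
\item This fails on a large part of the admissible range, in particular for every $\alpha\in(0,1]$, since then $\frac{3+\alpha}{3}\le\frac{5-\alpha}{3}$. For instance, $\alpha=1$, $\gamma=\frac{5}{4}$ gives $2C_1=\frac{5}{4}a_0>\frac{3}{4}a_0=3a_0c$.
\item Your linear condition is not a limiting case of the threshold $s<\frac{3a_0c}{2C_1}$. Bounding $\xi^a\le 1$ amounts to replacing $s$ by $1\ge s$, which throws away exactly the gain coming from $a<0$.
\item When $\gamma<\frac{5-\alpha}{3}$, condition \eqref{secondcond} is already implied by $\omega>\omega_*+\bar\omega$, consistent with the remark after the lemma. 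So this is a regime where the lemma asserts $\bar Q=\infty$, and your argument gives nothing there.
\end{itemize}

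The repair is short.
\begin{itemize}
\item Keep the bound $\xi_*^b\le\bigl(1+\frac{q}{3a_0c}\bigr)^{s}$.
\item Condition \eqref{secondcond} means $a\le 0$, so $s\in(0,1]$. Bernoulli's inequality then gives $\xi_*^b-1\le\frac{sq}{3a_0c}$ for every $q>0$.
\item Combine this with $s<\frac{3a_0c}{2C_1}$, which is the main hypothesis $\omega>\omega_*+\bar\omega$. You get $C_1(\xi_*^b-1)<\frac{q}{2}$ for all $q>0$, hence $\bar Q=\infty$.
\end{itemize}
Both conditions on $\omega$ are used here. This is legitimate, since \eqref{Linstab} for some exponent implies it for every smaller exponent, so you may work with the larger of the two.
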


    \begin{remark}
        Note that the condition \eqref{secondcond} is required so that
        \[
            \xi \mapsto \int_{\mathbb{R}^3} \frac{\leftindex^\xi\varrho L(m_{\leftindex^\xi\varrho}(r))}{r^2} \dd \boldsymbol{x},
        \]
        is non-increasing in $\xi$. This is only a stronger assumption than $\omega > \omega_* + \bar{\omega}$ in the case that $\alpha \in (1,2)$ and $\frac{5-\alpha}{3} < \gamma < \frac{3+\alpha}{3}$, as this is the only range under which
        \[
            \frac{\gamma + 1 - \alpha}{3 + \alpha - 3\gamma} > \omega_* + \bar{\omega}.
        \]
    \end{remark}

    \begin{proof}
    Take $\varrho \in L^1_+(\mathbb{R}^3) \cap L^\gamma(\mathbb{R}^3)$ such that $Q(\varrho) > 0$ and $\kappa(\varrho) = 1$. Employing \eqref{Qineq}, as in Lemma \ref{nonempty}, we obtain that $\xi \mapsto Q(\leftindex^\xi \varrho)$ is strictly decreasing and thus there exists $\bar{\xi} > 1$ such that $Q(\leftindex^{\bar{\xi}} \varrho) = 0$. Since $\kappa(\leftindex^\xi \varrho) = \kappa(\varrho) = 1$, by Lemma \ref{smallalpha} (c), we see that $\lambda^*_1(\leftindex^{\bar{\xi}} \varrho) = 1$. Hence, by the definition of $\ell_\mu$, we obtain $S_\mu(\leftindex^{\bar{\xi}} \varrho) \geq \ell_\mu$. We aim to show that $\xi \mapsto S_\mu(\leftindex^\xi \varrho)$ is strictly decreasing. Employing estimates similar to \eqref{usingk} and \eqref{kinetbound}, we obtain that
    \begin{align}\label{not so nice bound}
        S_\mu(\leftindex^\xi \varrho) & \leq \xi^{\gamma + 1 - \alpha - \omega(3+\alpha - 3\gamma)} \frac{1}{2} \int_{\mathbb{R}^3} \frac{\varrho L(m_{\varrho}(r))}{r^2} \dd \boldsymbol{x} \nonumber \\
        & \qquad + \xi^{(6-\alpha)\gamma - 6}\frac{a_0(5-\alpha - 3\gamma)(3 + \alpha -3\gamma)}{\alpha(2-\alpha)(\gamma - 1)} \int_{\mathbb{R}^3} \varrho^\gamma \dd \boldsymbol{x} + \xi^{3\gamma - 3 - \alpha} \mu \int_{\mathbb{R}^3} \varrho \dd \boldsymbol{x} \nonumber \\
        & =: T_\mu(\varrho,\xi) + \xi^{3\gamma - 3 - \alpha} \mu \int_{\mathbb{R}^3} \varrho \dd \boldsymbol{x},
    \end{align}
    with equality for $\xi = 1$. Since $\gamma < \frac{3+\alpha}{3}$, we have that 
    \[
        \xi \mapsto \xi^{3\gamma - 3 - \alpha} \mu \int_{\mathbb{R}^3} \varrho \dd \boldsymbol{x},
    \]
    is strictly decreasing. Thus, from \eqref{not so nice bound}, $\xi \mapsto S_\mu(\leftindex^\xi \varrho)$ is strictly decreasing if $\xi \mapsto T_\mu(\varrho,\xi)$ is decreasing. For $\omega > \omega_*$, we have that for $\xi \geq 1$,
    \begin{align*}
        \frac{\d T_\mu(\varrho,\xi)}{\d \xi} & = \xi^{3\gamma - 3 - \alpha} \bigg ( \xi^{7 - \alpha - (5-\alpha)\gamma - \omega(3+\alpha - 3\gamma)} \frac{\gamma + 1 - \alpha - \omega(3+\alpha - 3\gamma)}{2} \int_{\mathbb{R}^3} \frac{\varrho L(m_{\varrho}(r))}{r^2} \dd \boldsymbol{x} \\
        & \qquad + \frac{a_0(5-\alpha - 3\gamma)(3 + \alpha -3\gamma)((6-\alpha)\gamma - 6)}{\alpha(2-\alpha)(\gamma - 1)} \int_{\mathbb{R}^3} \varrho^\gamma \dd \boldsymbol{x} \bigg ) \\
        & \leq \xi^{3\gamma - 3 - \alpha} \frac{\d T_\mu(\varrho,\xi)}{\d \xi} \bigg |_{\xi = 1}.
    \end{align*}
    Thus, we just need to show that $\frac{\d T_\mu(\varrho,\xi)}{\d \xi} \big |_{\xi = 1} < 0$. Employing \eqref{Qineq} for $\xi = 1$ and some tedious algebra, we obtain that
    \begin{equation}\label{alsonotnice}
        \frac{\d T_\mu(\varrho,\xi)}{\d \xi} \bigg |_{\xi = 1} = \frac{\gamma + 1 - \alpha - \omega(3+\alpha - 3\gamma)}{2} Q(\varrho) + (\omega_* + \bar\omega - \omega) \frac{3 a_0(3 + \alpha - 3\gamma)^2}{2(2-\alpha)} \int_{\mathbb{R}^3} \varrho^\gamma \dd \boldsymbol{x}.
    \end{equation}
    Thus, if $\omega > \omega_* + \bar\omega$, then there exists such a $\bar{Q}$ such that $\frac{\d T_\mu(\varrho,\xi)}{\d \xi} \big |_{\xi = 1} < 0$ if $Q(\varrho) < \bar{Q} \int_{\mathbb{R}^3} \varrho^\gamma \dd \boldsymbol{x}$. Moreover, if $\omega$ also satisfies $\omega \geq \frac{\gamma + 1 - \alpha}{3 + \alpha - 3\gamma}$, then the first term on the right-hand side of \eqref{alsonotnice} is negative, hence $\bar{Q} = \infty$.
    \end{proof}

    \begin{remark}\label{nottechnical}
    In Lemma \ref{nonempty2}, we employed the energy-critical scaling. However, considering instead a more general scaling of the form $\xi \mapsto \xi^a \varrho(\xi^b \,\cdot\,)$, we obtain
\[
\begin{aligned}
S_\mu(\xi^a \varrho(\xi^b \,\cdot\,))
&= \xi^{a-b} \frac{1}{2} \int_{\mathbb{R}^3} \frac{\varrho L(\xi^{a-3b}m_{\varrho}(r))}{r^2}\,\dd \boldsymbol{x}
+ \xi^{a\gamma-3b} 3a_0 \int_{\mathbb{R}^3} \varrho^\gamma \,\dd \boldsymbol{x} \\
&\qquad
+ \xi^{2a-(6-\alpha)b} \frac{1}{2} \int_{\mathbb{R}^3} \varrho \,\Phi_\alpha * \varrho \,\dd \boldsymbol{x}
+ \xi^{a-3b} \mu \int_{\mathbb{R}^3} \varrho \,\dd \boldsymbol{x}.
\end{aligned}
\]

To apply \eqref{Linstab} in order to derive an upper bound for the kinetic term, we require $a < 3b$. Under this condition, it follows that
\[
\begin{aligned}
S_\mu(\xi^a \varrho(\xi^b \,\cdot\,))
&\leq
\xi^{a-b+\omega(a-3b)}
\frac{1}{2}
\int_{\mathbb{R}^3}
\frac{\varrho L(m_{\varrho}(r))}{r^2}\,\dd \boldsymbol{x}
+ \xi^{a\gamma-3b}
3a_0
\int_{\mathbb{R}^3}
\varrho^\gamma \,\dd \boldsymbol{x} \\
&\qquad
+ \xi^{2a-(6-\alpha)b}
\frac{1}{2}
\int_{\mathbb{R}^3}
\varrho \,\Phi_\alpha * \varrho \,\dd \boldsymbol{x}
+ \xi^{a-3b}
\mu
\int_{\mathbb{R}^3}
\varrho \,\dd \boldsymbol{x} \\
&=: T_\mu(\varrho,\xi)
+ \xi^{a-3b}\mu \int_{\mathbb{R}^3} \varrho \,\dd \boldsymbol{x}.
\end{aligned}
\]

Since $a < 3b$, the mass term is strictly decreasing in $\xi$. Proceeding as in the proof of the previous lemma, for $\varrho$ satisfying $Q(\varrho)=0$ and $\kappa(\varrho)=1$, we find
\[
\frac{\d}{\d\xi}T_\mu(\varrho,\xi)\bigg|_{\xi=1}
=
(\omega_*+\bar\omega-\omega)
\frac{3a_0(3b-a)(3+\alpha-3\gamma)}{2(2-\alpha)}
\int_{\mathbb{R}^3}\varrho^\gamma\,\dd \boldsymbol{x}.
\]

Therefore, in order for $T_\mu(\varrho,\xi)$ to be decreasing in $\xi$ near $\xi=1$ under the constraint $Q(\varrho)=0$, it is necessary that
\[
\omega > \omega_*+\bar\omega.
\]
The fact that this condition remains invariant under different choices of scaling suggests that it reflects a fundamental structural requirement rather than merely a technical assumption.
    \end{remark}
    
    We are now in a position to prove that minimisers are indeed rotating Riesz star solutions. We recall the definition of $\mathbb{R}^3_\varepsilon$ given in the proof of Theorem \ref{lmulaxi}.

\begin{proof}[Proof of Theorem \ref{Existofrot}]
    We will show that the Euler-Lagrange equations of the minimisation problem correspond to \eqref{stationaxi}. We split the proof into four steps.
    \begin{step}
        As such, for $\varepsilon > 0$, we consider the set
    \[
    \Gamma_\varepsilon := \Big \{ \boldsymbol{x} \in \mathbb{R}^3_\varepsilon \, \Big | \, \varepsilon < \bar{\rho}(\boldsymbol{x}) < \frac{1}{\varepsilon} \Big \}.
    \]
    Let $w \in L^\infty(\mathbb{R}^3)$ be such that $w$ has compact support in $\mathbb{R}^3_\varepsilon$ and is non-negative on $\mathbb{R}^3 \setminus \Gamma_\varepsilon$. Then, defining
    \[
        \bar{\rho}_\tau \coloneq \bar{\rho} + \tau w,
    \]
    for $\tau \geq 0$ small enough, we have $\bar{\rho}_\tau \in L^1_+(\mathbb{R}^3) \cap L^\gamma(\mathbb{R}^3)$. We show that for such $\tau$, $\tau \mapsto Q(\rho_\tau)$ and $\tau \mapsto S_\mu(\rho_\tau)$ are both continuous. This follows from the fact that $L$ is continuous, $w \in L^\infty$ and has compact support in $\mathbb{R}^3_\varepsilon$, thus for all $0 \leq \tau \leq \delta$, 
    $$L(m_{ \bar{\rho}_\tau}(r)) \leq L(\|\bar{\rho}\|_{L^1} + \delta \|w\|_{L^1}) =: L_\delta,$$
    where the inequality follows from the fact that $L'(m) \geq 0$ for all $m \geq 0$. Thus, $\bar{\rho}_\tau L(m_{ \bar{\rho}_\tau}(r))r^{-2}$ can be dominated by 
    $$\frac{ \bar{\rho} L(m_{ \bar{\rho}}(r))}{r^2} \mathds{1}_{\{r < \varepsilon\}} + \frac{ (\bar{\rho} + \delta |w|) L_{\delta}}{\varepsilon^2} \mathds{1}_{\{r \geq \varepsilon\}},$$
    for $0 \leq \tau \leq \delta$, which is integrable. Hence, by the dominated convergence Theorem, we have continuity of 
    $$\tau \mapsto \int_{\mathbb{R}^3} \frac{ \bar{\rho}_\tau L(m_{ \bar{\rho}_\tau}(r))}{r^2} \dd \boldsymbol{x}.$$
    The continuity of the internal energy, potential energy and mass are trivial.
    \end{step}
    \begin{step}
        Suppose $\alpha \in (0,2)$, there are two cases.
        \begin{case}
                There exists $\delta > 0$ such that $Q((\bar{\rho}_\tau)_{\kappa(\bar{\rho}_\tau)}) < 0$ for $0<\tau<\delta$. Thus, by Lemma \ref{smallalpha}, we have that $\lambda^*_1(\bar{\rho}_\tau) < \kappa(\bar{\rho}_\tau)$ for $0 < \tau < \delta$. For simplicity, we write $\lambda(\tau) \coloneq \lambda^*_1(\bar{\rho}_\tau)$. We will show that $\lambda(\tau)$ is differentiable for $0 < \tau < \delta$. To do this, we define the function $f:(0,\delta) \times \mathbb{R}_+ \to \mathbb{R}$ by
        \[
            f(\tau,\lambda) \coloneq Q((\bar{\rho}_\tau)_\lambda).
        \]
        We will apply the Implicit Function Theorem to $f$ to prove differentiability of $\lambda(\tau)$. As in the proof of Lemma \ref{smallalpha}, we have that $\frac{\partial f}{\partial \lambda}(\tau, \lambda)$ exists and as in Step 1, it is continuous in $(\tau,\lambda)$. By \eqref{differentbehave}, we have that since $\lambda(\tau) < \kappa(\bar{\rho}_\tau)$ for $0<\tau<\delta$, that
        \[
            \frac{\partial f}{\partial \lambda}(\tau, \lambda(\tau)) < 0.
        \]
        We now show that $f$ is differentiable with respect to $\tau$. We need only show this for the kinetic energy, as the other terms are trivially differentiable. For $0< \tau < \delta$ and $h \neq 0$ such that $0< \tau + h < \delta$, since $L'$ is continuous, we have by integration by parts that
        \[
            \bigg |\frac{ L(m_{ \bar{\rho}_{\tau + h}}(r)) - L(m_{ \bar{\rho}_\tau}(r))}{h} \bigg | = \bigg | \int^1_0 L'(m_{ \bar{\rho}_{\tau + th}}(r)) m_{w}(r) \dd t \bigg | \leq L'_\delta |m_{w}(r)|,
        \]
        where
        $$L'(m_{ \bar{\rho}_\tau}(r)) \leq \max_{m \in [0,\|\bar{\rho}\|_{L^1} + \delta \|w\|_{L^1}]} L'(m) =: L'_{\delta}.$$
        Since $\supp (w) \subset \mathbb{R}^3_\varepsilon$, we note that
        \[
            \frac{ \bar{\rho}_{\tau + h} L(m_{ \bar{\rho}_{\tau + h}}(r)) - \bar{\rho}_\tau L(m_{ \bar{\rho}_\tau}(r))}{h r^2} = \frac{ w L(m_{\bar{\rho}_{\tau + h}}(r))}{r^2} + \frac{ \bar{\rho}_\tau \big ( L(m_{ \bar{\rho}_{\tau + h}}(r)) - L(m_{ \bar{\rho}_\tau}(r)) \big )}{h r^2},
        \]
        is dominated by
        \[
             \bigg ( \frac{ |w| L_\delta + (\bar{\rho} + \delta |w|) L'_\delta |m_{w}(r)|}{\varepsilon^2} \bigg ) \mathds{1}_{\{r \geq \varepsilon\}}.
        \]
        Thus, we can apply the dominated convergence Theorem to obtain that $f$ is differentiable with respect to $\tau$, with
        \[
        \begin{split}
            \frac{\partial f}{\partial \tau}(\tau, \lambda) & = \lambda \int_{\mathbb{R}^3} \frac{w L(m_{\bar{\rho}_\tau}(r)) + \bar{\rho}_\tau L'(m_{\bar{\rho}_\tau}(r)) m_w(r)}{r^2} \dd \boldsymbol{x} \\
            & \qquad + 3 a_0 \gamma \lambda^{\frac{3(\gamma -1)}{2}} \int_{\mathbb{R}^3} w \bar{\rho}_\tau^{\gamma - 1} \dd \boldsymbol{x} + \alpha \lambda^\frac{\alpha}{2} \int_{\mathbb{R}^3} w \Phi_\alpha * \bar{\rho}_\tau \dd \boldsymbol{x}.
        \end{split}
        \]
        By a similar reasoning to before, this is continuous in $(\tau, \lambda)$. Since $f(\tau,\lambda(\tau)) = 0$ for each $0<\tau<\delta$, by the Implicit Function Theorem, there exists a neighbourhood interval $I \subset (0,\delta)$ of $\tau$ and a continuously differentiable function $g:I \to \mathbb{R}_+$ such that
        \[
            f(s,g(s)) = 0,
        \]
        for all $s \in I$ and $g(\tau) = \lambda(\tau)$. By Lemma \ref{smallalpha}, since $Q((\bar{\rho}_s)_{\kappa(\bar{\rho}_s)}) < 0$ for $s \in I$, we must have that $g(s) = \lambda(s) < \kappa(\bar{\rho}_s)$. Thus, this implies that $\tau \mapsto \lambda(\tau)$ is continuously differentiable for $0 < \tau < \delta$. Since $\lambda(\tau) < \kappa(\bar{\rho}_\tau)$ and $\tau \mapsto \kappa(\bar{\rho}_\tau)$ is continuous, we have that $\lambda(\tau)$ is bounded for $0 \leq \tau < \delta$. Suppose that $\lambda(\tau) \not \to 1 = \lambda^*_1(\bar{\rho})$ as $\tau \to 0$. Then, there exists $\tau_k \to 0$, as $k \to \infty$, such that $\lambda(\tau_k) \not \to 1$, but since this sequence is bounded, there exists $\iota \neq \lambda_1^*(\bar{\rho})$ and a subsequence (which we relabel) such that
        \[
            \lambda(\tau_k) \to \iota \leq \kappa(\bar{\rho}), \qquad \mbox{as $k \to \infty$.}
        \]
        But then, by continuity, we have that
        \[
            0 = Q((\bar{\rho}_{\tau_k})_{\lambda(\tau_k)}) \longrightarrow Q(\bar{\rho}_\iota), \qquad \mbox{as $k \to \infty$.}
        \]
        But, this contradicts the uniqueness of $\lambda^*_1(\bar{\rho})$. Thus, we must have that $\lambda(\tau) \to 1 = \lambda^*_1(\bar{\rho})$ as $\tau \to 0$. Thus, $\tau \mapsto \lambda(\tau)$ is continuous for $0 \leq \tau < \delta$.         
        Hence, similar to the previous part, we have that $\tau \mapsto S_\mu((\bar{\rho}_\tau)_{\lambda(\tau)})$ is continuous for $0 \leq \tau < \delta$ and differentiable for $0 < \tau < \delta$. Since $\bar{\rho}$ is the minimiser of $S_\mu$ over $\mathcal{K}$ we can apply the mean value Theorem, to obtain for each $0<\tau<\delta$, there exists $\zeta_\tau \in (0,\tau)$, such that
        \[
            \begin{split}
                0 & \leq \frac{S_\mu((\bar{\rho}_{\tau})_{\lambda^*_1(\bar{\rho}_{\tau})}) - S_\mu(\bar{\rho}_{\lambda^*_1(\bar{\rho})})}{\tau} \\
                & = \frac{\d \lambda(\zeta_\tau)}{\d \tau}\frac{Q((\bar{\rho}_{\zeta_\tau})_{\lambda(\zeta_\tau)})}{2\lambda(\zeta_\tau)} + \frac{\lambda(\zeta_\tau)}{2} \int_{\mathbb{R}^3} \frac{w L(m_{\bar{\rho}_{\zeta_\tau}}(r)) + \bar{\rho}_\tau L'(m_{\bar{\rho}_{\zeta_\tau}}(r)) m_w(r)}{r^2} \dd \boldsymbol{x} \\
                & \qquad + \frac{a_0 \gamma \lambda(\zeta_\tau)^{\frac{3(\gamma -1)}{2}}}{\gamma - 1} \int_{\mathbb{R}^3} w \bar{\rho}_{\zeta_\tau}^{\gamma - 1} \dd \boldsymbol{x} + \lambda(\zeta_\tau)^\frac{\alpha}{2} \int_{\mathbb{R}^3} w \Phi_\alpha * \bar{\rho}_{\zeta_\tau} \dd \boldsymbol{x} + \mu \int_{\mathbb{R}^3} w \dd \boldsymbol{x}.
            \end{split}
        \]
        The first term on the right-hand side is 0, as $Q((\bar{\rho}_{\zeta_\tau})_{\lambda(\zeta_\tau)}) = 0$ by definition. Using a similar approach to the previous step, letting $\tau \to 0$, we obtain that
        \begin{equation}\label{3.21}
            0 \leq \frac{1}{2} \int_{\mathbb{R}^3} \frac{w L(m_{\bar{\rho}}(r)) + \bar{\rho} L'(m_{\bar{\rho}}(r)) m_w(r)}{r^2} \dd \boldsymbol{x} + \int_{\mathbb{R}^3} w \bigg ( \frac{a_0 \gamma}{\gamma - 1} \bar{\rho}^{\gamma - 1} + \Phi_\alpha * \bar{\rho} + \mu \bigg ) \dd \boldsymbol{x}.
        \end{equation}
        \end{case}
            \begin{case}
                There exists a sequence $\tau_k \to 0$, as $k \to \infty$, such that $Q((\bar{\rho}_{\tau_k})_{\kappa(\bar{\rho}_{\tau_k})}) \geq 0$. By continuity, we have that $Q((\bar{\rho}_{\tau_k})_{\kappa(\bar{\rho}_{\tau_k})}) \to 0$ as $k \to \infty$. Then, by Lemma \ref{nonempty2} and the fact that $\bar{\rho}$ is a minimiser, we have that for $k$ large enough
                $$S_{\mu}((\bar{\rho}_{\tau_k})_{\kappa(\bar{\rho}_{\tau_k})}) - S_\mu(\bar{\rho}) \geq 0.$$
                Thus, since $\tau \mapsto \kappa(\bar{\rho}_{\tau})$ is continuous and differentiable for all $\tau \geq 0$, using a similar approach to the previous case, we can apply L'H\^opital's rule, to obtain
                \[
                    \begin{split}
                        0 & \leq \lim_{k \to \infty} \frac{S_{\mu}((\bar{\rho}_{\tau_k})_{\kappa(\bar{\rho}_{\tau_k})}) - S_\mu(\bar{\rho})}{\tau_k} \\
                        & = \frac{\d \kappa(\bar{\rho}_{\tau})}{\d \tau} \bigg|_{\tau=0} \frac{Q(\bar{\rho})}{2} + \frac{1}{2} \int_{\mathbb{R}^3} \frac{w L(m_{\bar{\rho}}(r)) + \bar{\rho} L'(m_{\bar{\rho}}(r)) m_w(r)}{r^2} \dd \boldsymbol{x} \\
                        & \qquad + \frac{a_0 \gamma}{\gamma - 1} \int_{\mathbb{R}^3} w \bar{\rho}^{\gamma - 1} \dd \boldsymbol{x} + \int_{\mathbb{R}^3} w \Phi_\alpha * \bar{\rho} \dd \boldsymbol{x} + \mu \int_{\mathbb{R}^3} w \dd \boldsymbol{x}.
                    \end{split}
                \]
                Since $Q(\bar{\rho}) = 0$, this reduces to \eqref{3.21}.
            \end{case}
            \end{step}
            \setcounter{case}{0}
            \begin{step} Suppose $\alpha \in [2,3)$. Then, by the continuity of $\tau \mapsto Q(\rho_\tau)$, there exists $\delta > 0$ such that $\lambda^*(\bar{\rho}_\tau)$ exists for all $0 \leq \tau < \delta$. Thus, by a similar argument to Case 1 of the previous step, \eqref{3.21} is satisfied.
            \end{step}
        \begin{step}
        Since \eqref{3.21} is satisfied in all cases, by similar calculations to \eqref{3.22new} and \eqref{3.23new}, we obtain
        \[
            0 \leq \int_{\mathbb{R}^3} w \bigg ( \frac{a_0 \gamma}{\gamma - 1}  \bar{\rho}^{\gamma - 1} + \int^\infty_r\frac{L(m_{\bar{\rho}}(s))}{s^3} \dd s + \Phi_\alpha * \bar{\rho} + \mu \bigg ) \dd \boldsymbol{x}.
        \]
        Since this is true for all such $w$, we have that
        \begin{align*}
            & \frac{a_0\gamma}{\gamma - 1} \bar{\rho}^{\gamma - 1} + \int^\infty_r\frac{L(m_{\bar{\rho}}(s))}{s^3} \dd s + \Phi_\alpha * \bar{\rho} + \mu = 0, \qquad  \text{ {\it a.e.} on } \Gamma_\varepsilon,\\
            & \frac{a_0\gamma}{\gamma - 1} \bar{\rho}^{\gamma - 1} + \int^\infty_r\frac{L(m_{\bar{\rho}}(s))}{s^3} \dd s + \Phi_\alpha * \bar{\rho} + \mu \geq 0, \qquad \text{ {\it a.e.} on } \mathbb{R}^3_\varepsilon \setminus \Gamma_\varepsilon.
        \end{align*}
        Thus, letting $\varepsilon \to 0$, we obtain
    \[
    \begin{split}
        & \frac{a_0\gamma}{\gamma - 1} \bar{\rho}^{\gamma - 1} + \int^\infty_r\frac{L(m_{\bar{\rho}}(s))}{s^3} \dd s + \Phi_\alpha * \bar{\rho} + \mu = 0, \qquad  \text{ {\it a.e.} on } \Gamma,\\
        & \int^\infty_r\frac{L(m_{\bar{\rho}}(s))}{s^3} \dd s + \Phi_\alpha * \bar{\rho} + \mu \geq 0, \qquad \qquad \qquad \quad \,\,\,\, \text{ {\it a.e.} on } \mathbb{R}^3 \setminus \Gamma.
    \end{split}
    \]
    Similar to the proof of Theorem \ref{lmulaxi}, we have that $\bar{\rho} \in C(\mathbb{R}^3) \cap L^\infty(\mathbb{R}^3)$ and has compact support. Thus $\bar{\rho}$ is a rotating Riesz star solution as given by Definition \ref{rotatingriesz} with compact support.
    \end{step}
    \end{proof}
\setcounter{step}{0}

\section{Instability in the mass-supercritical regime}\label{growing}

We now show that, under suitable conditions on \(L\), there exist solutions of the CEREs \eqref{0.0} that start arbitrarily close to a rotating Riesz star solution and exhibit growth of their support. Since Theorem \ref{Existofrot} established that rotating Riesz star solutions are compactly supported, such behaviour implies instability of these steady states.

\smallskip

For $\alpha \in (0,2)$, we define the set
    \[
        \mathcal{I}_\mu := \{ (\varrho,\boldsymbol{\mathcal{M}}) \, | \, Q(\varrho) > 0, \, \kappa(\varrho) > 1, \, I_\mu(\varrho,\boldsymbol{\mathcal{M}}) < \ell_\mu \},
    \]
    while for $\alpha \in [2,3)$, we define
    \[
        \mathcal{I}_\mu := \{ (\varrho,\boldsymbol{\mathcal{M}}) \, | \, Q(\varrho) > 0, \, I_\mu(\varrho,\boldsymbol{\mathcal{M}}) < \ell_\mu \},
    \]
    where
    $$
        I_\mu(\varrho,\boldsymbol{\mathcal{M}}):= E(\varrho,\boldsymbol{\mathcal{M}}) + \mu \int_{\mathbb{R}^3} \varrho \dd \boldsymbol{x}.
    $$
    We will show that $\mathcal{I}_\mu$ is invariant under the semi-flow of the CEREs \eqref{0.0}.

    \begin{remark}
        In the regime $\alpha \in (0,2)$, the set would no longer remain invariant without the additional assumption that $\kappa(\varrho)>1$. Indeed, without this condition, it would be possible to have $Q(\rho(t)) = 0$ with $\lambda^*_2(\rho(t))=1$.
    \end{remark}

    \begin{lemma}[Invariance of $\mathcal{I}_\mu$ under the solution semi-flow]\label{invarianta}
Suppose $\alpha\in(0,3)$, $\frac{6}{6-\alpha}<\gamma<\frac{3+\alpha}{3}$, $L$ satisfies \eqref{Lnice2}, \eqref{Lstable2} and, for $\alpha \in (0,2)$, \eqref{Linstab} for $\omega > \omega_* + \bar{\omega}$ and $\omega \geq \frac{\gamma + 1 - \alpha}{3+\alpha-3\gamma}$. Then, if $(\rho,\boldsymbol{\mathcal{M}})$ is a classical solution of the attractive polytropic CEREs satisfying {\rm(I}$_1)$--{\rm(I}$_3)$ and {\rm(A}$_1)$--{\rm(A}$_4)$ with the square of the angular momentum given by $L(m_\rho(r(\boldsymbol{x})))$ and initial data $(\rho_0,\boldsymbol{\mathcal{M}}_0)\in \mathcal{I}_\mu$. Then
\[
(\rho(t),\boldsymbol{\mathcal{M}}(t))\in \mathcal{I}_\mu, \qquad \text{for all } t>0.
\]
\end{lemma}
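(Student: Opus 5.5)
The plan is a continuity (first-exit-time) argument in the spirit of \cite{Carrillo_2026}. The conserved quantity $I_\mu$ keeps $S_\mu(\rho(t))$ strictly below $\ell_\mu$. The variational characterisation of $\ell_\mu$ over $\mathcal{K}$, sharpened by Lemma \ref{nonempty2}, then forbids the solution from reaching the boundary of $\mathcal{I}_\mu$.

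\emph{Step 1: energy bound.} For a classical solution, mass and energy are conserved, so $I_\mu(\rho(t),\boldsymbol{\mathcal{M}}(t)) = I_\mu(\rho_0,\boldsymbol{\mathcal{M}}_0) < \ell_\mu$ for all $t$. By (I$_1$)--(I$_3$) and (A$_1$)--(A$_4$), \eqref{Lform} holds. Hence the angular part of the kinetic energy equals $\frac12\int \rho L(m_\rho(r))r^{-2}\dd\boldsymbol{x}$. Dropping the nonnegative radial and vertical kinetic terms, and using that $E$ and $S_\mu$ share the same internal-energy term $\int \rho e(\rho)\dd\boldsymbol{x} = \frac{a_0}{\gamma-1}\int\rho^\gamma\dd\boldsymbol{x}$, we get
\[
S_\mu(\rho(t)) \le I_\mu(\rho(t),\boldsymbol{\mathcal{M}}(t)) < \ell_\mu \qquad \text{for all } t\ge 0.
\]

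\emph{Step 2: continuity in time.} I will check that $t\mapsto Q(\rho(t))$ and, for $\alpha\in(0,2)$, $t\mapsto\kappa(\rho(t))$ are continuous. For classical solutions, $\rho\in C([0,T];L^1\cap L^\gamma)$. The internal and potential terms, and hence $\kappa$, are then continuous by the HLS inequality (Lemma \ref{simplehls}); note that $\kappa$ does not involve $L$. The rotational term is the delicate part. I would argue as in Step 1 of the proof of Theorem \ref{Existofrot}: use that $L'$ is bounded on $[0,M]$, that $m_{\rho(t)}(r)$ is continuous in $t$ uniformly in $r$, and dominated convergence, where near the axis the domination comes from the regularity of the classical solution. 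I expect this continuity step to be the main technical obstacle. If only lower semicontinuity were available, the exit argument below would break, so the classical-solution hypothesis must be used here.

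\emph{Step 3: first exit time.} Suppose the conclusion fails and let $t_0$ be the infimum of times with $(\rho(t),\boldsymbol{\mathcal{M}}(t))\notin\mathcal{I}_\mu$. The condition $I_\mu<\ell_\mu$ never fails, by Step 1. By continuity, $Q(\rho(t_0))\ge0$ and, for $\alpha<2$, $\kappa(\rho(t_0))\ge1$, with equality in at least one of them. There are three cases.

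\emph{Case (i): $Q(\rho(t_0))=0$ and $\kappa(\rho(t_0))>1$, or $Q(\rho(t_0))=0$ with $\alpha\ge2$.} For $\alpha\ge2$, Lemma \ref{largealpha} gives $\lambda^*=1$, and $Q(\rho(t_0)_\lambda)>0$ for $\lambda<1$. For $\alpha<2$, Lemma \ref{smallalpha}(d) shows that the only zero of $\lambda\mapsto Q(\rho(t_0)_\lambda)$ below $\kappa$ is $\lambda_1^*$. Since $1<\kappa(\rho(t_0))$, this forces $1=\lambda_1^*$, and again $Q(\rho(t_0)_\lambda)>0$ for $\lambda<1$. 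Either way $\rho(t_0)\in\mathcal{K}$, so $S_\mu(\rho(t_0))\ge\ell_\mu$, contradicting Step 1.

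\emph{Case (ii): $\kappa(\rho(t_0))=1$ and $Q(\rho(t_0))>0$.} Since $\omega\ge\frac{\gamma+1-\alpha}{3+\alpha-3\gamma}$, Lemma \ref{nonempty2} applies with $\bar Q=\infty$ and gives $S_\mu(\rho(t_0))>\ell_\mu$, again a contradiction.

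\emph{Case (iii): $\kappa(\rho(t_0))=1$ and $Q(\rho(t_0))=0$.} Lemma \ref{smallalpha}(d) gives $\lambda_1^*=\lambda_2^*=1$, and $Q(\rho(t_0)_\lambda)>0$ for $\lambda\ne1$. Thus $\rho(t_0)\in\mathcal{K}$, which contradicts Step 1 as in Case (i).

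Hence no exit time exists, and $(\rho(t),\boldsymbol{\mathcal{M}}(t))\in\mathcal{I}_\mu$ for all $t>0$.
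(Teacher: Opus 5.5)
Your proposal is correct and follows essentially the same first-exit-time argument as the paper. It uses the bound $S_\mu(\rho(t))\le I_\mu(\rho_0,\boldsymbol{\mathcal{M}}_0)<\ell_\mu$, then gets $\rho(t_0)\in\mathcal{K}$ via Lemma \ref{smallalpha} when $Q(\rho(t_0))=0$ and $\kappa(\rho(t_0))\ge 1$ (the paper treats your Cases (i) and (iii) together), and applies Lemma \ref{nonempty2} with $\bar{Q}=\infty$ when $Q(\rho(t_0))>0$ and $\kappa(\rho(t_0))=1$.

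The only differences are that you handle $\alpha\in[2,3)$ directly through Lemma \ref{largealpha}, where the paper cites \cite{Carrillo_2026}; for $\alpha=2$ the extra hypothesis of that lemma follows from $Q(\rho(t_0))=0$. You also spell out the time-continuity of $Q(\rho(t))$ and $\kappa(\rho(t))$, which the paper simply invokes.
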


\begin{proof}
For $\alpha \in [2,3)$, the proof is similar to \cite[Lemma 4.4.]{Carrillo_2026}. So we now assume that $\alpha \in (0,2)$. By conservation of mass and energy,
\[
S_\mu(\rho(t)) \leq I_\mu(\rho(t),\boldsymbol{\mathcal{M}}(t)) = I_\mu(\rho_0,\boldsymbol{\mathcal{M}}_0) < \ell_\mu,
\quad \text{for all } t >0.
\]
Suppose for a contradiction the statement is false, then by continuity, there exists a first time $t_0 > 0$ such that $(\rho(t_0),\boldsymbol{\mathcal{M}}(t_0))\not\in \mathcal{I}_\mu$.
\begin{case}
Suppose $Q(\rho(t_0)) = 0$ and $\kappa(\rho(t_0))\geq 1$. Then, by Lemma \ref{smallalpha}, $\rho(t_0) \in \mathcal{K}$. By definition of $\ell_\mu$, we have
\[
S_\mu(\rho(t_0))\ge \ell_\mu,
\]
which is a contradiction. 
\end{case}
\begin{case}
    Suppose $Q(\rho(t_0)) > 0$ and $\kappa(\rho(t_0)) = 1$. By Lemma \ref{nonempty2}, we have
    \[
        S_\mu(\rho(t_0)) > \ell_\mu,
    \]
    which is a contradiction.
\end{case}
\end{proof}
\setcounter{case}{0}

We next analyse the concavity properties of the mapping $\lambda \mapsto S_{\mu,\lambda}(\varrho)$, which play a fundamental role in establishing instability of rotating Riesz star solutions. Although the quantity $\kappa(\varrho)$ was originally introduced for $\alpha \in (0,2)$, we note that its defining formula remains well defined for $\alpha \in (2,3)$ provided that $\gamma > \frac{5}{3}$.

\begin{lemma}\label{concave}
    Suppose $\alpha \in (0,3)$, $\frac{6}{6-\alpha} < \gamma < \frac{3+\alpha}{3}$, $L$ satisfies \eqref{Lnice2} and $\varrho \in L^1_+(\mathbb{R}^3) \cap L^\gamma(\mathbb{R}^3)$ such that $\varrho \not\equiv 0$ and 
    $$\int_{\mathbb{R}^3} \frac{\varrho L(m_{\varrho}(r))}{r^2} \dd \boldsymbol{x} < \infty,$$
    then $\lambda \mapsto S_{\mu,\lambda}(\varrho)$ is concave for
    \[
    \begin{cases}
        0<\lambda \leq \kappa(\varrho), & \text{ if } \alpha \in (0,2), \\
        \lambda > 0, & \text{ if } \alpha \in [2,3) \text{ and } \gamma \leq \frac{5}{3}, \\
        \kappa(\varrho) \leq \lambda < \infty, & \text{ if } \alpha \in (2,3) \text{ and } \gamma > \frac{5}{3}.
    \end{cases}
    \]
\end{lemma}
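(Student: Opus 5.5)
The plan is to compute the second derivative of $\lambda\mapsto S_{\mu,\lambda}(\varrho)$ explicitly and read off its sign. The kinetic term is linear in $\lambda$ under the mass-preserving scaling \eqref{rescaling}, and the mass term is constant. Hence, exactly as in \eqref{secondderi}, we have, for every $\alpha\in(0,3)$,
\[
\frac{\d^2 S_{\mu,\lambda}(\varrho)}{\d\lambda^2}=\frac{3a_0(3\gamma-5)}{4}\lambda^{\frac{3\gamma-7}{2}}\int_{\mathbb{R}^3}\varrho^\gamma\dd\boldsymbol{x}+\frac{\alpha(\alpha-2)}{8}\lambda^{\frac{\alpha-4}{2}}\int_{\mathbb{R}^3}\varrho\Phi_\alpha*\varrho\dd\boldsymbol{x}.
\]
This expression is independent of $L$. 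The finiteness of the kinetic integral is only needed so that $S_{\mu,\lambda}(\varrho)$ is finite and twice differentiable in $\lambda$. Write $A=\int\varrho^\gamma>0$ and $B=-\int\varrho\Phi_\alpha*\varrho>0$; both are nonzero since $\varrho\not\equiv0$. Factoring out $\lambda^{\frac{\alpha-4}{2}}>0$, the sign of the second derivative is the sign of
\[
h(\lambda)=\frac{3a_0(3\gamma-5)}{4}A\,\lambda^{\frac{3\gamma-3-\alpha}{2}}-\frac{\alpha(\alpha-2)}{8}B .
\]
Since $\gamma<\frac{3+\alpha}{3}$, the exponent $\frac{3\gamma-3-\alpha}{2}$ is negative, so $\lambda^{\frac{3\gamma-3-\alpha}{2}}$ is strictly decreasing from $+\infty$ to $0$.

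Next I would split into the three cases of the statement.

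\textbf{Case $\alpha\in(0,2)$.} Here $\gamma<\frac{5}{3}$, so the first term of $h$ is negative and increases towards $0$ as $\lambda$ grows. The second term $-\frac{\alpha(\alpha-2)}{8}B$ is a positive constant. Thus $h$ is increasing, and $h(\lambda)\le0$ exactly when $\lambda\le\kappa(\varrho)$. One checks directly from \eqref{kappa} that $\kappa$ is precisely the root of $h$.

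\textbf{Case $\alpha\in[2,3)$, $\gamma\le\frac53$.} Both terms of $h$ are nonpositive, so the second derivative is $\le0$ for all $\lambda>0$. For $\alpha=2$ one needs $\gamma<\frac53$, which holds automatically since $\gamma<\frac{5}{3}$ in that range.

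\textbf{Case $\alpha\in(2,3)$, $\gamma>\frac53$.} The first term is positive and decreasing, the second is a negative constant, so $h$ is decreasing. The numerator and denominator in \eqref{kappa} are both positive, so $\kappa$ is well defined and equals the root of $h$. Hence $h\le0$ exactly for $\lambda\ge\kappa(\varrho)$.

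Concavity on each stated interval then follows from the sign of the second derivative. The only step needing care is identifying $\kappa$ as the zero of $h$, which is the algebraic rearrangement
\[
\lambda^{\frac{3+\alpha-3\gamma}{2}}=\frac{6a_0(5-3\gamma)A}{\alpha(\alpha-2)(-B)},
\]
matching \eqref{kappa}. I expect no real obstacle beyond keeping track of these signs.
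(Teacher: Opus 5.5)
Your proposal is correct and is essentially the paper's proof: you compute the second derivative via \eqref{secondderi} (the kinetic term drops out), factor out $\lambda^{\frac{\alpha-4}{2}}$, and read off the sign in the three cases, with $\kappa(\varrho)$ the unique zero in the first and third cases. One trivial slip: for $\alpha\in(2,3)$, $\gamma>\frac{5}{3}$, the numerator and denominator in \eqref{kappa} are both negative, not both positive, but their ratio is positive, so $\kappa(\varrho)$ is well defined and the argument is unaffected.
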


    \begin{proof}
    We see that from \eqref{secondderi}
    \[
    \frac{\d^2 S_{\mu,\lambda}(\varrho)}{\d \lambda^2} = \frac{3 a_0 (3\gamma - 5) \lambda^{\frac{3(\gamma - 1) - 4}{2}}}{4} \int_{\mathbb{R}^3} \varrho^\gamma \dd \boldsymbol{x} + \frac{\alpha (\alpha - 2) \lambda^{\frac{\alpha - 4}{2}}}{8} \int_{\mathbb{R}^3} \varrho \Phi_\alpha * \varrho \dd \boldsymbol{x}.
    \]
    \begin{case}[$\alpha \in (0,2)$]
        Since $\gamma < \frac{3+\alpha}{3}$, we see that $\frac{\d^2 S_{\mu,\lambda}(\varrho)}{\d \lambda^2} \leq 0$ if and only if $0<\lambda \leq \kappa(\varrho)$ with equality for $\lambda = \kappa(\varrho)$.
    \end{case}
    \begin{case}[$\alpha \in [2,3)$ and $\gamma \leq \frac{5}{3}$]
        Then $\frac{\d^2 S_{\mu,\lambda}(\varrho)}{\d \lambda^2} < 0$ for all $\lambda > 0$.
    \end{case}
    \begin{case}[$\alpha \in [2,3)$ and $\gamma > \frac{5}{3}$]
        Then the signs of the coefficients are flipped as compared to Case 1. Rearranging, we see that $\frac{\d^2 S_{\mu,\lambda}(\varrho)}{\d \lambda^2} \leq 0$ if and only if $\kappa(\varrho) \leq \lambda < \infty$ with equality for $\lambda = \kappa(\varrho)$.
    \end{case}
\end{proof}
\setcounter{case}{0}

\begin{theorem}\label{smallgamma2}
    Suppose $\alpha\in(0,3)$, $\frac{6}{6-\alpha}<\gamma<\frac{3+\alpha}{3}$, $L$ satisfies \eqref{Lnice2}, \eqref{Lstable2} and, for $\alpha \in (0,2)$, \eqref{Linstab} for $\omega > \omega_* + \bar{\omega}$ and $\omega \geq \frac{\gamma + 1 - \alpha}{3+\alpha-3\gamma}$. If $(\rho,\boldsymbol{\mathcal{M}})$ is a classical solution of the attractive polytropic CEREs satisfying {\rm(I}$_1)$--{\rm(I}$_3)$ and {\rm(A}$_1)$--{\rm(A}$_4)$ with the square of the angular momentum given by $L(m_\rho(r(\boldsymbol{x})))$ with initial conditions $(\rho_0,\boldsymbol{\mathcal{M}}_0) \in \mathcal{I}_\mu$ for some $\mu > 0$. Then, for $R(t)=\max_{\boldsymbol{x}\in\Omega(t)}\{|\boldsymbol{x}|\}$, there exists a sequence $t_m \to \infty$ as $m \to \infty$ such that
$$
R(t_m) \to \infty, \qquad \mbox{as $m \to \infty$}.
$$
\end{theorem}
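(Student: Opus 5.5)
The plan is a virial argument in which the invariance Lemma \ref{invarianta} and the concavity Lemma \ref{concave} supply a uniform positive lower bound on $Q(\rho(t))$. Set $\Omega(t)=\supp\rho(t)$ and $H(t):=\int_{\mathbb{R}^3}\boldsymbol{\mathcal{M}}\cdot\boldsymbol{x}\,\dd\boldsymbol{x}$. I would test the momentum equation with $\boldsymbol{x}$ and integrate by parts, using the homogeneity $\boldsymbol{x}\cdot\nabla\Phi_\alpha(\boldsymbol{x})=-\alpha\Phi_\alpha(\boldsymbol{x})$ and symmetrising the interaction term. For classical solutions this should give
\[
\frac{\d H}{\d t}=\int_{\mathbb{R}^3}\rho|\boldsymbol{u}|^2\dd\boldsymbol{x}+3a_0\int_{\mathbb{R}^3}\rho^\gamma\dd\boldsymbol{x}+\frac{\alpha}{2}\int_{\mathbb{R}^3}\rho\,\Phi_\alpha*\rho\dd\boldsymbol{x}.
\]
Under (I$_1$)--(I$_3$) and (A$_1$)--(A$_4$), relation \eqref{Lform} gives $|u^\theta|^2=L(m_{\rho(t)}(r))/r^2$. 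Hence $\int\rho|\boldsymbol{u}|^2\geq\int\rho L(m_\rho(r))r^{-2}$, and therefore $\frac{\d H}{\d t}\geq Q(\rho(t))$. In the other direction, $|H(t)|\leq R(t)\,M^{1/2}\big(\int\rho|\boldsymbol{u}|^2\big)^{1/2}$. The kinetic energy is bounded uniformly in time: it is bounded by conservation of $I_\mu$ once the potential term is controlled, and that control comes from HLS together with the bound on $\int\rho^\gamma$ discussed below. So $|H(t)|\leq C R(t)$. Consequently, a bound $Q(\rho(t))\geq c>0$ for all $t$ gives $H(t)\geq ct-C$, and hence $R(t)\to\infty$, which is the claim.

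The core of the proof is therefore the uniform bound on $Q$. By Lemma \ref{invarianta}, for all $t$ we have $Q(\rho(t))>0$, $\kappa(\rho(t))>1$ and $S_\mu(\rho(t))\leq I_\mu(\rho_0,\boldsymbol{\mathcal{M}}_0)=:\ell_\mu-\eta$ with $\eta>0$. Take $\alpha\in(0,2)$ and write $\varrho=\rho(t)$. Lemma \ref{smallalpha}(b),(d) shows that if $Q(\varrho_\lambda)$ vanishes somewhere, then $1<\lambda_1^*(\varrho)\leq\kappa(\varrho)$. Indeed, $Q(\varrho)>0$ excludes $1\in[\lambda_1^*,\lambda_2^*]$, and $\lambda_2^*\geq\kappa>1$. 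Then $\varrho_{\lambda_1^*}\in\mathcal{K}$, so $S_{\mu,\lambda_1^*}(\varrho)\geq\ell_\mu$. By Lemma \ref{concave}, $\lambda\mapsto S_{\mu,\lambda}(\varrho)$ is concave on $[1,\kappa]$, and $\frac{\d}{\d\lambda}S_{\mu,\lambda}|_{\lambda=1}=Q(\varrho)/2$. This yields
\[
\ell_\mu\leq S_{\mu,\lambda_1^*}(\varrho)\leq S_\mu(\varrho)+\tfrac{1}{2}(\lambda_1^*-1)Q(\varrho)\leq \ell_\mu-\eta+\tfrac12(\kappa(\varrho)-1)Q(\varrho),
\]
so that $Q(\varrho)\geq 2\eta/(\kappa(\varrho)-1)$. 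For $\alpha\in[2,3)$ the same argument applies with the unique $\lambda^*$ of Lemma \ref{largealpha} and global concavity for $\gamma\le\frac53$. When $\alpha\in(2,3)$ and $\gamma>\frac53$, concavity holds only on $[\kappa,\infty)$, so $\kappa$ has to be compared with $1$ and $\lambda^*$ separately.

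The main obstacle is ruling out degeneration: either $\kappa(\rho(t))\to\infty$ along a sequence, or $Q(\rho(t)_\lambda)>0$ for every $\lambda$, so that $\lambda_1^*$ does not exist. In both situations I would abandon the $\lambda_1^*$ comparison and bound $Q$ directly. By \eqref{secondderi}, $\lambda\mapsto Q(\varrho_\lambda)/\lambda$ is decreasing on $(0,\kappa]$. Evaluating $Q(\varrho)$ against the explicit formula \eqref{kappa} should then give $Q(\varrho)\gtrsim(1-\kappa(\varrho)^{-(3+\alpha-3\gamma)/2})\int\varrho^\gamma$, which is bounded below when $\kappa\geq 2$, say. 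What remains is a uniform lower bound for $\int\rho(t)^\gamma$. I would obtain it from $S_\mu(\rho(t))<\ell_\mu$ together with HLS and $\kappa(\rho(t))>1$, arguing as for \eqref{lowerseq2}. The same estimates, namely HLS in the form \eqref{sequenceHLS2} combined with the energy bound, should also supply the upper bound on $\int\rho^\gamma$ used above for the kinetic energy. I expect this interpolation between the regimes ``$\kappa$ close to $1$'' and ``$\kappa$ large'' to be the delicate step.
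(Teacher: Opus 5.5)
\textbf{Gap: the uniform lower bound on $Q(\rho(t))$ does not follow from the estimates you propose.} The virial part is fine: your $H$ is half of $\frac{\d}{\d t}\int\rho|\boldsymbol{x}|^2$. The problem is the step that everything rests on, the uniform lower bound on $\int\rho(t)^\gamma$, which you plan to obtain from $S_\mu(\rho(t))\le\ell_\mu-\eta$, HLS and $\kappa(\rho(t))>1$ ``as for \eqref{lowerseq2}''.

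\begin{itemize}
\item The estimate \eqref{lowerseq2} used $Q=0$, which gives $\|\varrho\|_{L^\gamma}^\gamma\lesssim\|\varrho\|_{L^\gamma}^{\gamma\alpha/(3(\gamma-1))}$. With $Q>0$ that inequality points the wrong way.
\item No static argument can work. Take any $\varrho$ of mass $M$ with $Q(\varrho)>0$, $\kappa(\varrho)>1$ and $S_\mu(\varrho)\le\ell_\mu-\eta$ (for instance $\rho_0$), and dilate it. For $\lambda\in(0,1]$:
  \begin{itemize}
  \item $\kappa(\varrho_\lambda)=\kappa(\varrho)/\lambda>1$;
  \item by \eqref{secondderi}, $Q(\varrho_\lambda)/\lambda\ge Q(\varrho)>0$, so $S_{\mu,\lambda}(\varrho)\le S_\mu(\varrho)$;
  \item yet $\int\varrho_\lambda^\gamma=\lambda^{3(\gamma-1)/2}\int\varrho^\gamma\to0$ and $Q(\varrho_\lambda)\to0$ as $\lambda\to0$.
  \end{itemize}
\end{itemize}
So every constraint available to you is compatible with $Q\to0$, $\kappa\to\infty$ and $\int\rho^\gamma\to0$. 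The degeneration you want to exclude cannot be excluded this way.

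\textbf{Missing idea: let the degenerate regime produce the spreading.} Do not try to rule it out; show that it also forces the support to grow. This is why the statement only asserts $R(t_m)\to\infty$ along a sequence. The paper argues by dichotomy.
\begin{itemize}
\item If $Q(\rho(t))\ge\Lambda>0$ for all $t$, the virial identity for $\int\rho|\boldsymbol{x}|^2\le R(t)^2M$ concludes. Using this second moment also removes the need for your kinetic-energy bound.
\item Otherwise there are $t_m\to\infty$ with $Q(\rho(t_m))\to0$. Your concavity comparison gives $\kappa(\rho(t_m))\ge1+2\eta/Q(\rho(t_m))\to\infty$. The direct bound
$Q\ge3a_0\big(1-\frac{5-3\gamma}{2-\alpha}\kappa^{-(3+\alpha-3\gamma)/2}\big)\int\rho^\gamma$
then yields $\int\rho(t_m)^\gamma\to0$. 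Since $\frac{5-3\gamma}{2-\alpha}>1$, the threshold is not $\kappa\ge2$. Finally, mass conservation and H\"older, $M\le|B_{R(t_m)}|^{(\gamma-1)/\gamma}\|\rho(t_m)\|_{L^\gamma}$, force $R(t_m)\to\infty$.
\end{itemize}

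\textbf{Smaller points.}
\begin{itemize}
\item When $Q(\varrho_\lambda)>0$ for all $\lambda$ and $\kappa$ is close to $1$, your direct bound gives nothing. Here you need Lemma \ref{nonempty2} with $\bar Q=\infty$, which is where the hypothesis $\omega\ge\frac{\gamma+1-\alpha}{3+\alpha-3\gamma}$ enters. It gives $S_\mu(\varrho_{\kappa(\varrho)})\ge\ell_\mu$ and hence the same bound $Q\ge2\eta/(\kappa-1)$.
\item The upper bound on $\int\rho^\gamma$ that you use for the kinetic energy is true. It comes from \eqref{biggerthan02} with $Q>0$ and $\kappa>1$ (respectively \eqref{biggerthan01}), not from \eqref{sequenceHLS2}.
\item The same degeneration must be handled for $\alpha\in[2,3)$: either $\lambda^*\to\infty$, or $\lambda^*$ fails to exist when $\alpha=2$ and $\int\rho L r^{-2}\ge-\int\rho\Phi_\alpha*\rho$.
\end{itemize}
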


\begin{remark}
        Let $\bar{\rho}$ be any minimiser of $S_\mu$ over $\mathcal{K}$. Then, for every $0<\lambda<1$, we have $(\bar{\rho}_\lambda,\boldsymbol{0}) \in \mathcal{I}_\mu$. Since Theorem \ref{Existofrot} established that $\bar{\rho}$ is a compactly supported rotating Riesz star solution, Theorem \ref{smallgamma2} therefore implies instability of rotating Riesz star solutions.
    \end{remark}

\begin{proof} 
We split the proof into five steps.
\begin{step}
    Suppose there does not exist $\Lambda > 0$ such that $Q(\rho(t)) \geq \Lambda$ for all $t>0$. Thus, there exists $t_0 \in (0,\infty]$ and a sequence $t_m \to t_0$ as $m \to \infty$ such that 
$$
Q(\rho(t))> 0 \,\,\,\,\mbox{ for all }t \in [0,t_0), \qquad\,\,\, 
\lim_{m \to \infty} Q(\rho(t_m)) = 0.
$$
 By Lemma \ref{invarianta}, we have that $Q(\rho(t))>0$, $S_\mu(\rho(t)) < \ell_\mu$ and, for $\alpha \in (0,2)$, $\kappa(\rho(t)) > 1$ for all $t>0$. Thus, since $t \mapsto Q(\rho(t))$ is continuous, $t_0 =  \infty$.
\end{step}
\begin{step}
    We first consider when $\alpha \in [2,3)$. 
    \begin{case}
        Suppose $\alpha \in [2,3)$ with $\gamma \leq \frac{5}{3}$. By Lemma \ref{concave}, and a similar proof to that of \cite[Lemma 4.12.]{Carrillo_2026}, there exists a subsequence of $(t_m)_m$ (which we relabel as $(t_m)_m$) such that $R(t_m) \to \infty$ as $m \to \infty$.
    \end{case} 
    \begin{case}
        Suppose $\alpha \in (2,3)$ with $\gamma > \frac{5}{3}$. If
        \begin{equation*}
        \liminf_{m \to \infty} \frac{Q(\rho(t_m))}{-\int_{\mathbb R^3}\rho(t_m)\Phi_\alpha*\rho(t_m)\dd \boldsymbol{x}} > 0,
        \end{equation*}
        owing to the positivity of $L$, as in Case 1 of the proof of \cite[Lemma 4.12.]{Carrillo_2026}, we have that there exists a subsequence of $(t_m)_m$ (which we relabel as $(t_m)_m$) such that $R(t_m) \to \infty$ as $m \to \infty$. If 
        \begin{equation*}
        \liminf_{m \to \infty} \frac{Q(\rho(t_m))}{-\int_{\mathbb R^3}\rho(t_m)\Phi_\alpha*\rho(t_m)\dd \boldsymbol{x}} = 0,
        \end{equation*}
        then there exists a subsequence of $(t_m)_m$ (which we relabel as $(t_m)_m$), such that
    \begin{equation*}
        \lim_{m \to \infty} \frac{Q(\rho(t_m))}{-\int_{\mathbb R^3}\rho(t_m)\Phi_\alpha*\rho(t_m)\dd \boldsymbol{x}} = 0.
    \end{equation*}
        Using the definition of $Q(\rho)$ and the positivity of $L$, we can rewrite $\kappa(\rho(t))$ as
            \begin{align*}
                \kappa(\rho) & = \bigg( \frac{2 (5 - 3\gamma )\big(Q(\rho) - \int_{\mathbb{R}^3} \rho L(m_{\rho}(r))r^{-2} \dd \boldsymbol{x} - \frac{\alpha}{2} \int_{\mathbb{R}^3} \rho \Phi_\alpha * \rho \dd \boldsymbol{x}\big)}{\alpha (\alpha-2) \int_{\mathbb{R}^3} \rho \Phi_\alpha * \rho \dd \boldsymbol{x}} \bigg)^\frac{2}{3 +\alpha - 3\gamma} \\
                & \leq \bigg( \frac{3\gamma - 5}{\alpha - 2} + \frac{2 (5 - 3\gamma )Q(\rho)}{\alpha (\alpha-2) \int_{\mathbb{R}^3} \rho \Phi_\alpha * \rho \dd \boldsymbol{x}} \bigg)^\frac{2}{3 +\alpha - 3\gamma}.
            \end{align*}
        Thus, combining this with the fact that $\frac{3\gamma - 5}{\alpha - 2} < 1$ since $\gamma < \frac{3+\alpha}{3}$, we obtain that $\lim_{m \to \infty}\kappa(\rho(t_m)) < 1$. Thus, for large enough $m$, employing Lemmas \ref{largealpha} (b) and Lemma \ref{concave}, we have that $\lambda \mapsto S_\mu(\rho(t_m))$ is concave on $[1,\lambda^*(\rho(t_m))]$. By a similar proof to that of Case 2 of the proof of \cite[Lemma 4.12.]{Carrillo_2026}, we obtain a contradiction. Thus, there exists a sequence $t_m \to \infty$ as $m \to \infty$ such that $R(t_m) \to \infty$ as $m \to \infty$.
    \end{case}
\end{step}    
\setcounter{case}{0}
    \begin{step}
        Suppose that $\alpha \in (0,2)$. We show that $\kappa(\rho(t_m)) \to \infty$ as $m \to \infty$. By Lemma \ref{concave}, we have that $\lambda \mapsto S_{\mu,\lambda}(\rho(t))$ is concave for all $0 < \lambda \leq \kappa(\rho(t))$.
        \begin{case}
            Suppose that $Q(\rho(t)_{\lambda}) > 0$ for all $\lambda > 0$. Then, since $Q(\rho(t)_{\kappa(\rho(t))}) > 0$ and $\kappa(\rho(t)_{\kappa(\rho(t))}) = 1$, by Lemma \ref{nonempty2} and Taylor's Theorem we have that
            \begin{equation}\label{>0}
                \ell_\mu \leq S_\mu(\rho(t)_{\kappa(\rho(t))}) \leq S_\mu(\rho(t)) + \frac{Q(\rho(t))}{2}(\kappa(\rho(t)) - 1).
            \end{equation}
        \end{case}
        \begin{case}
            Suppose there exists $\lambda > 0$ such that $Q(\rho(t)_{\lambda}) \leq 0$, then by continuity and Lemma \ref{smallalpha} (d), we have that $\lambda^*_1(\rho(t)) \leq \kappa(\rho(t))$. Thus, since $\rho(t)_{\lambda^*_1(\rho(t))} \in \mathcal{K}$, by Taylor's Theorem, we have that
            \begin{equation}\label{=0}
            \begin{split}
                \ell_\mu \leq S_\mu(\rho(t)_{\lambda^*_1(\rho(t))}) & \leq S_\mu(\rho(t)) + \frac{Q(\rho(t))}{2}(\lambda^*_1(\rho(t)) - 1) \\
                & \leq S_\mu(\rho(t)) + \frac{Q(\rho(t))}{2}(\kappa(\rho(t)) - 1).
            \end{split}
            \end{equation}
        \end{case}
        Thus, combining and rearranging \eqref{>0} and \eqref{=0}, we obtain that
        \begin{equation*}
            \kappa(\rho(t)) \geq \frac{2(\ell_\mu - S_\mu(\rho(t)))}{Q(\rho(t))} \geq \frac{2(\ell_\mu - I_\mu(\rho_0,\boldsymbol{\mathcal{M}}_0))}{Q(\rho(t))},
        \end{equation*}
        for all $t > 0$. Thus, since $Q(\rho(t_m)) \to 0$ as $m \to \infty$, we have that $\kappa(\rho(t_m)) \to \infty$ as $m \to \infty$.
    \end{step}
    \setcounter{case}{0}
    \begin{step}
        We now show that $R(t_m) \to \infty$ as $m \to \infty$.
        \begin{case}
            Suppose there exists $C>0$ such that $\int_{\mathbb{R}^3} \rho(t_m)^\gamma \dd \boldsymbol{x} \leq C$ for all $m$. Then, since 
            $$\kappa(\rho(t_m)) = \bigg( \frac{6 a_0 (2 - 3(\gamma - 1))\int_{\mathbb{R}^3}\rho^\gamma(t_m) \dd \boldsymbol{x}}{\alpha (\alpha-2) \int_{\mathbb{R}^3} (\rho \Phi_\alpha * \rho)(t_m) \dd \boldsymbol{x}} \bigg)^\frac{2}{3 + \alpha - 3\gamma} \longrightarrow \infty, \qquad \mbox{as $m \to \infty$,}$$
            we have that $\int_{\mathbb{R}^3} (\rho \Phi_\alpha * \rho)(t_m) \dd \boldsymbol{x} \to 0$ as $m \to \infty$. Thus, since the kinetic energy and internal energy are positive and $Q(\rho(t_m)) \to 0$ as $m \to \infty$, we must have that $\int_{\mathbb{R}^3}\rho^\gamma(t_m) \dd \boldsymbol{x} \to 0$ as $m \to \infty$. If there exists a subsequence of $(t_m)_m$ (which we relabel as $(t_m)_m$) such that $R(t_m)$ is bounded, then there exists $D>0$ such that $R(t_m) \leq D$ for all $m$. Thus, by H\"older's inequality, we have that
            \[
                M = \| \rho(t_m)\|_{L^1(B_D)} \leq \Big (\frac{2\pi D^3}{3}\Big)^\frac{\gamma - 1} {\gamma} \|\rho(t_m)\|_{L^\gamma(\mathbb{R}^3)} \longrightarrow 0, \qquad \mbox{as $m \to \infty$.}
            \]
            This contradicts conservation of mass, so we obtain that $R(t_m) \to \infty$ as $m \to \infty$.
        \end{case}
        \begin{case}
            Suppose there exists a subsequence of $(t_m)_m$ (which we relabel as $(t_m)_m$) such that $\int_{\mathbb{R}^3} \rho(t_m)^\gamma \dd \boldsymbol{x} \to \infty$ as $m \to \infty$. Then, since 
            $$\kappa(\rho(t_m))  \longrightarrow \infty, \qquad \mbox{as $m \to \infty$,}$$
            we have that $Q(\rho(t_m)) \to \infty$ as $m \to \infty$. This is a contradiction, so we must be in Case 1.
        \end{case}
    \end{step}
    \begin{step}
        We now suppose that there exists $\Lambda > 0$ such that $Q(\rho(t)) \geq \Lambda$ for all $t>0$. Consider the second moment of the density,
        \[
            H(t) = \int_{\Omega(t)} \rho(t) |\boldsymbol{x}|^2 \dd \boldsymbol{x}.
        \]
        Then, using $\eqref{0.0}_1$, integration by parts and $\rho |_{\partial \Omega(t)} \equiv 0$, we see that
 $$H'(t)= \int_{\partial \Omega(t)} |\boldsymbol{x}|^2 \boldsymbol{\mathcal{M}} \cdot \nu \dd S(\boldsymbol{x}) + \int_{\Omega(t)}\rho_t|\boldsymbol{x}|^2\dd \boldsymbol{x} = 2\int_{\Omega(t)}\boldsymbol{\mathcal{M}} \cdot \boldsymbol{x}\dd \boldsymbol{x},$$
where $\nu(\boldsymbol{x})$ is the normal to the surface $\partial \Omega(t)$ at $\boldsymbol{x} \in \partial \Omega(t)$. Note, that similarly, using $\eqref{0.0}_2$, that
 \begin{equation}\label{2.2}
 \begin{split}
 H''(t)& =2\int_{\partial \Omega(t)} \bigg( \frac{\boldsymbol{\mathcal{M}} \otimes \boldsymbol{\mathcal{M}}}{\rho} : \boldsymbol{x}\otimes \nu \bigg) \dd S(\boldsymbol{x}) + 2\int_{\Omega(t)}\boldsymbol{\mathcal{M}}_t \cdot \boldsymbol{x}\dd \boldsymbol{x} \\
 & = 2\Big(\int_{\Omega(t)}\bigg | \frac{\boldsymbol{\mathcal{M}}}{\sqrt{\rho}} \bigg |^2\dd \boldsymbol{x}+3 a_0\int_{\Omega(t)} \rho^\gamma \dd \boldsymbol{x}-\int_{\Omega(t)}\rho\nabla\Phi_\alpha\ast\rho\cdot \boldsymbol{x}\dd \boldsymbol{x}\Big).
 \end{split}
 \end{equation}
A simple calculation shows that
 \begin{align*}\nonumber
 I&:=\int_{\Omega(t)}\rho\nabla\Phi_\alpha\ast\rho\cdot \boldsymbol{x}\dd \boldsymbol{x}\nonumber\\
 &=\int_{\Omega(t)}\rho(\boldsymbol{x})\int_{\Omega(t)}\frac{\rho(\boldsymbol{y})(\boldsymbol{x}-\boldsymbol{y})\cdot \boldsymbol{y}}{|\boldsymbol{x}-\boldsymbol{y}|^{\alpha+2}}\dd \boldsymbol{y}\dd \boldsymbol{x}\nonumber\\
 &= - \int_{\Omega(t)}\rho(\boldsymbol{x})\int_{\Omega(t)}\frac{\rho(\boldsymbol{y})|\boldsymbol{x}-\boldsymbol{y}|^2}{|\boldsymbol{x}-\boldsymbol{y}|^{\alpha+2}}\dd \boldsymbol{y}\dd \boldsymbol{x}+\int_{\Omega(t)}\rho(\boldsymbol{x})\int_{\Omega(t)}\frac{\rho(\boldsymbol{y})(\boldsymbol{x}-\boldsymbol{y})\cdot \boldsymbol{x}}{|\boldsymbol{x}-\boldsymbol{y}|^{\alpha+2}}\dd \boldsymbol{y}\dd \boldsymbol{x}\nonumber\\
 &= - \int_{\Omega(t)}\rho(\boldsymbol{x})\int_{\Omega(t)}\frac{\rho(\boldsymbol{y})|\boldsymbol{x}-\boldsymbol{y}|^2}{|\boldsymbol{x}-\boldsymbol{y}|^{\alpha+2}}\dd \boldsymbol{y}\dd \boldsymbol{x}-\int_{\Omega(t)}\rho(\boldsymbol{y})\int_{\Omega(t)}\frac{\rho(\boldsymbol{x})(\boldsymbol{y}-\boldsymbol{x})\cdot \boldsymbol{x}}{|\boldsymbol{x}-\boldsymbol{y}|^{\alpha+2}}\dd \boldsymbol{x}\dd \boldsymbol{y}\nonumber\\
 &=-\alpha\int_{\Omega(t)}\rho\Phi_\alpha\ast\rho\dd \boldsymbol{x}-I.
 \end{align*}
 Then, we have 
 \begin{equation}\label{Iidentity}
     I=-\frac{\alpha}{2}\int_{\Omega(t)}\rho\Phi_\alpha\ast\rho\dd \boldsymbol{x}.
 \end{equation}
        Combining \eqref{2.2} and \eqref{Iidentity}, we obtain
        \[
            \begin{split}
                H''(t) & = 2\Big(\int_{\Omega(t)}\bigg | \frac{\boldsymbol{\mathcal{M}}}{\sqrt{\rho}} \bigg |^2\dd \boldsymbol{x}+3 a_0\int_{\Omega(t)}\rho^\gamma\dd \boldsymbol{x} + \frac{\alpha}{2}\int_{\Omega(t)}\rho\Phi_\alpha\ast\rho\dd \boldsymbol{x}\Big)(t) \\
                & = 2Q(\rho(t)) + 2 \int_{\mathbb{R}^3} \bigg ( \bigg|\frac{\mathcal{M}^r}{\sqrt{\rho}}\bigg|^2 + \bigg|\frac{\mathcal{M}^3}{\sqrt{\rho}}\bigg|^2 \bigg)(t)\,\dd\boldsymbol{x} \\
                & \geq 2\Lambda.
            \end{split}
        \]
        Thus, by Taylor's Theorem, we see that
        \begin{equation}\label{HTaylor}
            H(t) \geq H(0) + H'(0)t + \Lambda t^2.
        \end{equation}
        Equally, we see that by H\"older's inequality, that
        \begin{equation}\label{HHolder}
            H(t) = \int_{\Omega(t)} \rho(t) |\boldsymbol{x}|^2 \dd \boldsymbol{x} \leq R(t)^2 M.
        \end{equation}
        Thus, combining \eqref{HTaylor} and \eqref{HHolder} and rearranging, we obtain that
        \[
            \frac{R(t)}{t} \geq \Big ( \frac{\Lambda}{M} \Big )^\frac{1}{2} + O(t^{-\frac{1}{2}}).
        \]
        Thus, we see that $R(t) \to \infty$ as $t \to \infty$.
    \end{step}
\end{proof}
\setcounter{step}{0}
\setcounter{case}{0}

\appendix

\section{Convolution inequalities}
We now present several convolution inequalities that have been used in the main text of the thesis.

\begin{lemma}[Hardy-Littlewood-Sobolev (HLS) Inequality]\label{simplehls}
	Let $\alpha \in (0,n)$ and $1 < p,r < \infty$ such that
	\[
	\frac{1}{p} + \frac{\alpha}{n} = 1 + \frac{1}{r}.
	\]
	Then, there exists a constant $C > 0$ such that 
    for all $f \in L^p(\mathbb{R}^n)$,
	\[
	\big\| |\cdot |^{-\alpha} * f \big\|_{L^r(\mathbb{R}^n)} \leq C\|f\|_{L^p(\mathbb{R}^n)}.
	\]
    When $r = \frac{2n}{\alpha}$ and $p=\frac{2n}{2n-\alpha}$, the constant is given by $C_{n,\alpha}$, where
    \[
    C_{n,\alpha} := \pi^\frac{\alpha}{2} \frac{\Gamma(\frac{n - \alpha}{2})}{\Gamma( n - \frac{\alpha}{2})} 
    \bigg( \frac{\Gamma(\frac{n}{2})}{\Gamma(n)} \bigg)^{\frac{\alpha - n}{n}}.
    \]
    Moreover, if $f \in L^1(\mathbb{R}^n) \cap L^p(\mathbb{R}^n)$ for $p > \frac{n}{n-\alpha}$, then $|\cdot |^{-\alpha} * f \in C(\mathbb{R}^n) \cap L^\infty(\mathbb{R}^n)$.
\end{lemma}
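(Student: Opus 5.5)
The plan is to take the Hardy--Littlewood--Sobolev inequality as classical and cite it, since the statement is the standard HLS inequality. I would first treat the general bound $\| |\cdot|^{-\alpha} * f\|_{L^r} \le C\|f\|_{L^p}$ by the usual route: write $|\cdot|^{-\alpha}\in L^{n/\alpha}_{\rm w}(\mathbb{R}^n)$ and apply the weak Young (O'Neil) inequality. Equivalently, use Hedberg's argument. Split the kernel at radius $R$, bound the near part by $C R^{n-\alpha} Mf(x)$ with $Mf$ the Hardy--Littlewood maximal function, and bound the far part by H\"older as $C R^{n/p'-\alpha}\|f\|_{L^p}$. Optimising in $R$ gives the pointwise bound $|(|\cdot|^{-\alpha}*f)(x)| \le C (Mf(x))^{p/r}\|f\|_{L^p}^{1-p/r}$, and the maximal inequality in $L^p$ (for $p>1$) finishes this part. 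The exponent relation $\frac1p+\frac\alpha n = 1+\frac1r$ is exactly what makes the powers of $R$ balance.

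For the sharp constant $C_{n,\alpha}$ in the diagonal case $p=\frac{2n}{2n-\alpha}$, I would not reprove Lieb's theorem. Note that the displayed case has $r=\frac{2n}{\alpha}$, which is dual to $p$ in the sense that $\frac1p+\frac1r=1$. By duality the bound is equivalent to the bilinear form $\iint f(x)|x-y|^{-\alpha}g(y)\,dx\,dy \le C_{n,\alpha}\|f\|_{p}\|g\|_{p}$. I would then quote Lieb's 1983 result: equality is attained by $(1+|x|^2)^{-(2n-\alpha)/2}$, which is found via conformal invariance and stereographic projection to $\mathbb{S}^n$, and this yields the stated Gamma-function expression. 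This is the step I expect to be the only genuinely hard one, and I would simply cite it.

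For the final claim, take $f\in L^1\cap L^p$ with $p>\frac{n}{n-\alpha}$. Split $|\cdot|^{-\alpha}=k_1+k_2$, with $k_1=|\cdot|^{-\alpha}\mathds{1}_{B_1}$ and $k_2$ the remainder.
\begin{itemize}
\item Since $p'\alpha<n$, we have $k_1\in L^{p'}$, so $k_1*f\in L^\infty$ by H\"older. It is also continuous, because $f\in L^p$ and translations are continuous in $L^{p'}$; approximating $k_1$ in $L^{p'}$ by $C_c$ functions and using uniform limits gives the same conclusion.
\item We have $k_2\in L^\infty$ and $k_2$ is uniformly continuous, so $k_2*f$ is bounded and continuous because $f\in L^1$.
\end{itemize}
Summing the two pieces gives $|\cdot|^{-\alpha}*f\in C(\mathbb{R}^n)\cap L^\infty(\mathbb{R}^n)$.
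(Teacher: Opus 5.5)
Your proposal is correct and consistent with the paper, which records this lemma in the appendix as the classical Hardy--Littlewood--Sobolev inequality (with Lieb's sharp constant in the diagonal case) and gives no proof; your Hedberg/O'Neil argument, the duality reduction to Lieb's theorem (using $r=p'$), and the kernel splitting for the final claim are all sound. One small slip: $k_2=|\cdot|^{-\alpha}\mathds{1}_{B_1^c}$ is not uniformly continuous, since it jumps at $|x|=1$. The conclusion still holds, because $|(k_2*f)(x+h)-(k_2*f)(x)|\le \|k_2\|_{L^\infty}\|f(\cdot+h)-f\|_{L^1}\to 0$ as $h\to 0$; alternatively, replace $\mathds{1}_{B_1}$ by a smooth cutoff.
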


\begin{lemma}[Variation of HLS Inequality \text{\cite[Theorem 3.1]{Calvez_2017}}]\label{HLSineq}
Suppose that $\alpha \in (0,n)$ and $p \geq \frac{2n}{2n - \alpha}$.
Then there exists $C_* = C_*(\alpha,p,n) > 0$ such that, for any $f \in L^1(\mathbb{R}^n) \cap L^p(\mathbb{R}^n)$,
\begin{equation}\label{HLS}
\bigg| \int_{\mathbb{R}^n} \int_{\mathbb{R}^n} f(\boldsymbol{x}) |\boldsymbol{x} - \boldsymbol{y}|^{-\alpha} f(\boldsymbol{y}) \dd \boldsymbol{x} \dd \boldsymbol{y} \bigg| \leq C_* \| f \|_{L^1}^\frac{p (2n - \alpha) - 2n}{n(p - 1)} \| f \|_{L^p}^\frac{\alpha p}{n(p - 1)}.
\end{equation}
In particular, when $p = \frac{n + \alpha}{n} > \frac{2n}{2n - \alpha}$, for any
$f \in L^1(\mathbb{R}^n) \cap L^{\frac{n + \alpha}{n}}(\mathbb{R}^n)$,
\[
\bigg| \int_{\mathbb{R}^n} \int_{\mathbb{R}^n} f(\boldsymbol{x}) |\boldsymbol{x} - \boldsymbol{y}|^{-\alpha} f(\boldsymbol{y}) \dd \boldsymbol{x} \dd \boldsymbol{y} \bigg|
\leq C_* \| f \|_{L^1}^\frac{n - \alpha}{n} \| f \|_{L^{\frac{n + \alpha}{n}}}^\frac{n + \alpha}{n}.
\]
Furthermore, $C_*$ is the sharp constant that satisfies \eqref{HLS} and is strictly bounded above by the standard HLS constant $C_{n,\alpha}$, i.e., $C_* < C_{n,\alpha}$.
\end{lemma}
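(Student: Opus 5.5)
This inequality is quoted from \cite[Theorem 3.1]{Calvez_2017}, so the plan is to indicate how it follows from the classical HLS inequality (Lemma \ref{simplehls}) by interpolation. Sharpness and the comparison with $C_{n,\alpha}$ need a separate variational argument.

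\textbf{Step 1: HLS plus interpolation.} By Lemma \ref{simplehls} with $r$ the dual exponent, we have
\[
\Big|\iint f(\boldsymbol{x})|\boldsymbol{x}-\boldsymbol{y}|^{-\alpha} f(\boldsymbol{y})\dd\boldsymbol{x}\dd\boldsymbol{y}\Big| \le C_{n,\alpha}\|f\|_{L^{2n/(2n-\alpha)}}^2 .
\]
Since $1\le \frac{2n}{2n-\alpha}\le p$, Hölder interpolation gives $\|f\|_{L^{2n/(2n-\alpha)}}\le \|f\|_{L^1}^{1-\theta}\|f\|_{L^p}^{\theta}$. The exponent $\theta$ is determined by
\[
\frac{2n-\alpha}{2n}=1-\theta+\frac{\theta}{p},
\]
so $\theta=\frac{\alpha p}{2n(p-1)}$. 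Squaring, the exponents become $2\theta=\frac{\alpha p}{n(p-1)}$ and $2(1-\theta)=\frac{p(2n-\alpha)-2n}{n(p-1)}$, which are exactly those in \eqref{HLS}. This proves \eqref{HLS} with $C=C_{n,\alpha}$. Hence the sharp constant $C_*$, defined as the supremum of the quotient, exists and satisfies $C_*\le C_{n,\alpha}$.

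\textbf{Step 2: the special case.} Substituting $p=\frac{n+\alpha}{n}$ gives $p-1=\frac{\alpha}{n}$. The exponents then reduce to $\frac{n-\alpha}{n}$ and $\frac{n+\alpha}{n}$. The strict inequality $\frac{n+\alpha}{n}>\frac{2n}{2n-\alpha}$ is equivalent to $(n+\alpha)(2n-\alpha)>2n^2$, that is, to $\alpha(n-\alpha)>0$, which holds.

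\textbf{Step 3: strictness $C_*<C_{n,\alpha}$ (main obstacle).} Equality in Step 1 would require two things at once. First, $f$ must be an HLS optimiser, which by Lieb's theorem means $f=c(1+|\boldsymbol{x}-\boldsymbol{x}_0|^2/\lambda^2)^{-(2n-\alpha)/2}$ up to translation and dilation. Second, equality must hold in the Hölder interpolation, which forces $|f|$ to be constant times an indicator function. These two conditions are incompatible, so the quotient is strictly below $C_{n,\alpha}$ for every $f$.

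The difficulty is that this only shows the supremum is not attained at $C_{n,\alpha}$ by any single $f$; it does not yet exclude $C_*=C_{n,\alpha}$ as a limit. To close this gap, I would show that the supremum defining $C_*$ is attained. Take a maximising sequence, normalise $\|f\|_{L^1}=\|f\|_{L^p}=1$ using the scaling $\lambda f(\mu\,\cdot)$, and replace each term by its symmetric decreasing rearrangement via Lemma \ref{rearrangement}. Helly's selection theorem then gives a pointwise limit. Uniform decay of radially decreasing functions in $L^1\cap L^p$ gives convergence of the interaction term, and weak lower semicontinuity of the norms shows the limit is a maximiser. A maximiser combined with the strict inequality for each fixed $f$ yields $C_*<C_{n,\alpha}$. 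Alternatively, one can simply invoke \cite{Calvez_2017}, which establishes exactly this existence.
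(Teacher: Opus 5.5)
Your argument is correct for $p>\frac{2n}{2n-\alpha}$. It is essentially the standard proof behind \cite[Theorem 3.1]{Calvez_2017}; the paper gives no proof of its own and only cites that result. The structure is the same: HLS plus H\"older interpolation gives the inequality, a rearrangement/scaling/Helly compactness argument gives a maximiser, and strictness then follows because the two equality cases are incompatible.

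One point needs correcting. Step 3 silently assumes $\theta<1$. At the endpoint $p=\frac{2n}{2n-\alpha}$ you have $\theta=1$ and the $L^1$ exponent vanishes. The interpolation step is then the identity $\|f\|_{L^p}\le\|f\|_{L^p}$ and imposes no rigidity, and \eqref{HLS} is just the classical HLS inequality. Lieb's optimiser $(1+|\boldsymbol{x}|^2)^{-(2n-\alpha)/2}$ decays like $|\boldsymbol{x}|^{-(2n-\alpha)}$ with $2n-\alpha>n$, so it lies in $L^1$. Hence $C_*=C_{n,\alpha}$ at the endpoint, and the strict inequality is false there.

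Your compactness step also breaks at the endpoint, and that is exactly where you should make it explicit. For radially nonincreasing $f_k$ with $\|f_k\|_{L^1}=\|f_k\|_{L^p}=1$ you have $f_k(\boldsymbol{x})\le g(\boldsymbol{x}):=c\min\{|\boldsymbol{x}|^{-n},|\boldsymbol{x}|^{-n/p}\}$. This envelope $g$ lies in $L^{\frac{2n}{2n-\alpha}}$ precisely when $p>\frac{2n}{2n-\alpha}$. In that range, HLS applied to $g$ gives an integrable majorant for $f_k(\boldsymbol{x})|\boldsymbol{x}-\boldsymbol{y}|^{-\alpha}f_k(\boldsymbol{y})$, and dominated convergence lets you pass to the limit. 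The limit is nonzero because the interaction terms converge to $C_*>0$.

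So the strictness claim $C_*<C_{n,\alpha}$, in both your proof and the statement's ``Furthermore'', should be restricted to $p>\frac{2n}{2n-\alpha}$. This is harmless for the paper, which only uses $p=\frac{n+\alpha}{n}$.
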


\begin{lemma}[Weak Young's Convolution Inequality]\label{precised}
	Let $1 < p < \infty$ and $q \geq \frac{2p}{2p-1}$. Then, for $h \in L^p_{\rm w}(\mathbb{R}^n)$, there exists $C_*^h> 0$ such that for any $f,g \in L^1(\mathbb{R}^n) \cap L^q(\mathbb{R}^n)$,
	\[
	\bigg| \int_{\mathbb{R}^n} \int_{\mathbb{R}^n} f(\boldsymbol{x}) h(\boldsymbol{x} - \boldsymbol{y}) g(\boldsymbol{y}) \dd \boldsymbol{x} \dd \boldsymbol{y} \bigg| \leq C_*^h (\|f\|_{L^1}\|g\|_{L^1})^\frac{q(2p-1)-2p}{2p(q-1)} (\|f\|_{L^q}\|g\|_{L^q})^\frac{q}{2p(q-1)}.
	\]
\end{lemma}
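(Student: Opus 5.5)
The plan is to derive the estimate from the generalised (weak-type) Young inequality and then interpolate the resulting $L^s$ norms between $L^1$ and $L^q$. No information about $f,g$ beyond their $L^1$ and $L^q$ norms is needed.

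\emph{Step 1 (weak Young).} First invoke the weak Young inequality (Lieb--Loss, Theorem 4.3), the bilinear form of the Hardy--Littlewood--Sobolev inequality with $|\cdot|^{-\alpha}$ replaced by an arbitrary $h\in L^p_{\rm w}(\mathbb{R}^n)$. With exponent $s$ determined by $\frac{2}{s}+\frac{1}{p}=2$, i.e.\ $s=\frac{2p}{2p-1}\in(1,2)$, it gives
\[
\bigg|\int_{\mathbb{R}^n}\int_{\mathbb{R}^n} f(\boldsymbol{x})h(\boldsymbol{x}-\boldsymbol{y})g(\boldsymbol{y})\dd\boldsymbol{x}\dd\boldsymbol{y}\bigg|\le C\,\|h\|_{L^p_{\rm w}}\|f\|_{L^s}\|g\|_{L^s}.
\]
This holds for $1<p<\infty$, which is precisely the hypothesis on $p$. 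The restriction $q\ge \frac{2p}{2p-1}=s$ in the statement ensures that $s$ lies between $1$ and $q$, so that the $L^s$ norms are controlled by the $L^1$ and $L^q$ norms.

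\emph{Step 2 (interpolation).} Next apply H\"older (Lyapunov) interpolation, $\|f\|_{L^s}\le \|f\|_{L^1}^{\theta}\|f\|_{L^q}^{1-\theta}$, where $\frac{1}{s}=\theta+\frac{1-\theta}{q}$, and likewise for $g$. Solving, $1-\frac1s=\frac{1}{2p}=(1-\theta)\big(1-\frac1q\big)$. Hence
\[
1-\theta=\frac{q}{2p(q-1)},\qquad \theta=\frac{q(2p-1)-2p}{2p(q-1)}.
\]
The condition $q\ge s$ is exactly $\theta\in[0,1]$, and the case $q=s$ gives $\theta=0$. Substituting into Step 1 gives the stated exponents
\[
(\|f\|_{L^1}\|g\|_{L^1})^{\theta}\,(\|f\|_{L^q}\|g\|_{L^q})^{1-\theta}.
\]
Take $C_*^h$ to be the best such constant, namely the supremum of the ratio over nonzero $f,g$. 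It is finite by the above and positive whenever $h\not\equiv 0$, which is how it is used as the sharp constant $C^\Psi_*$ in Lemma~\ref{negativeaxi} and Lemma~\ref{boundy}.

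There is no real obstacle here. The only point needing care is to quote the weak Young inequality with the correct exponent relation $\frac1s+\frac1p+\frac1s=2$, which is where $p>1$ is used, and to check that the interpolation exponent matches the one in the statement, which is the computation in Step 2.
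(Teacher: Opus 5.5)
Your proof is correct. The weak Young inequality with $\frac{2}{s}+\frac{1}{p}=2$, followed by H\"older interpolation of the $L^{s}$ norms between $L^{1}$ and $L^{q}$, gives exactly the stated exponents, and $q\geq\frac{2p}{2p-1}$ is precisely the condition $\theta\in[0,1]$. The paper states this lemma in the appendix without proof, as a standard fact, and your derivation is the natural argument behind it.

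One caution concerns your closing aside, which is outside the statement being proved. The bilinear best constant you define is not automatically the \emph{sharp} constant on the diagonal $f=g=\varrho\geq 0$ that Lemma~\ref{negativeaxi}(b) invokes. The two suprema do agree for positive-definite kernels such as $|\boldsymbol{x}|^{-\alpha}$, by Cauchy--Schwarz for the associated quadratic form.
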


\begin{lemma}[Riesz's Rearrangement Inequality \cite{Lieb_2001}]\label{rearrangement}
    Let $f, g, h : \mathbb{R}^n \to \mathbb{R}_+$ be measurable functions. Define
    \[
    I(f,g,h) \coloneq \int_{\mathbb{R}^n} \int_{\mathbb{R}^n} f(\boldsymbol{x}) g(\boldsymbol{x} - \boldsymbol{y}) h(\boldsymbol{y}) \dd \boldsymbol{x} \dd \boldsymbol{y}.
    \]
    For a given measurable function $\phi : \mathbb{R}^n \to \mathbb{R}_+$, define the radially decreasing rearrangement $\phi^{\#}$ of $\phi$ as
    \begin{equation*}
    \phi^{\#} (\boldsymbol{x}) \coloneq \int^\infty_0 \mathds{1}_{\{\boldsymbol{y} \, | \, \phi(\boldsymbol{y}) > t\}^{\#}} (\boldsymbol{x}) \dd t,
    \end{equation*}
    with
    \[
        A^{\#} = \Big \{ \boldsymbol{x} \in \mathbb{R}^n \, \Big | \, \frac{\omega_n}{n} |\boldsymbol{x}|^n \leq |A|\Big\},
    \]
    for any Lebesgue measurable set $A \subset \mathbb{R}^n$. Then
    \begin{equation}\label{rearrangeineq}
    I(f,g,h) \leq I(f^{\#},g^{\#},h^{\#}).
    \end{equation}
    If $g$ is a strictly radially decreasing function, then the equality holds in \eqref{rearrangeineq}
    if and only if there exists some $\boldsymbol{z} \in \mathbb{R}^n$ such that $f(\boldsymbol{x}) = f^{\#}(\boldsymbol{x} - \boldsymbol{z})$ and $h(\boldsymbol{x}) = h^{\#}(\boldsymbol{x} - \boldsymbol{z})$.
\end{lemma}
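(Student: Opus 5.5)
This is Riesz's rearrangement inequality, a classical result quoted from \cite{Lieb_2001}. I would follow the standard route: reduce to characteristic functions, reduce further to a one-dimensional statement, and lift it to $\mathbb{R}^n$ by repeated Steiner symmetrisation.

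\emph{Reduction to sets.} By the layer-cake representation $f=\int_0^\infty \mathds{1}_{\{f>a\}}\,\d a$, and likewise for $g$ and $h$, Tonelli gives
\[
I(f,g,h)=\int_0^\infty\!\!\int_0^\infty\!\!\int_0^\infty I(\mathds{1}_A,\mathds{1}_B,\mathds{1}_C)\,\d a\,\d b\,\d c,
\]
where $A=\{f>a\}$, $B=\{g>b\}$, $C=\{h>c\}$. Since $(\mathds{1}_A)^{\#}=\mathds{1}_{A^{\#}}$ and level sets of $\phi^{\#}$ are the symmetrised level sets of $\phi$, it suffices to prove $I(\mathds{1}_A,\mathds{1}_B,\mathds{1}_C)\le I(\mathds{1}_{A^\#},\mathds{1}_{B^\#},\mathds{1}_{C^\#})$ for sets of finite measure. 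By inner regularity and monotone convergence we may further take $A$, $B$, $C$ to be finite unions of intervals (in $n=1$) or of boxes.

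\emph{One dimension.} For finite unions of intervals, I would use the Riesz sliding argument. Translate all component intervals of each set toward the origin at unit speed, merging intervals when they collide. Each pairwise overlap term $\int\!\!\int \mathds{1}_{I}(x)\mathds{1}_{J}(x-y)\mathds{1}_{K}(y)$ is non-decreasing along the flow: for three intervals it is a function of the centre offsets that is concave and even in the appropriate sense, so moving the centres simultaneously toward $0$ cannot decrease it. Induction on the total number of intervals then ends with each set a single interval centred at $0$, which is its symmetric rearrangement.

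\emph{Higher dimensions.} Here I expect the main difficulty. Apply the 1D inequality along every line parallel to a direction $e$ to obtain monotonicity under Steiner symmetrisation $S_e$ (Fubini in the orthogonal variables). Then choose a sequence of directions so that $S_{e_k}\cdots S_{e_1}A\to A^{\#}$ in symmetric-difference measure, simultaneously for $A$, $B$ and $C$. This follows from the standard compactness argument: minimise the quantity $\int_{A}|x|^2$ (or the measure of $A\setminus A^{\#}$) over the closure of the symmetrisation orbit. The functional $I$ is continuous under $L^1$ convergence of the sets, so the inequality passes to the limit.

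\emph{Equality case.} For the equality case with $g$ strictly radially decreasing, I would follow Lieb's argument. Write $g$ via its level sets, which are balls, so equality forces equality for $I(\mathds{1}_A,\mathds{1}_{B_r},\mathds{1}_C)$ for almost every radius $r$. The one-dimensional strict version, in which an interval-with-ball overlap strictly increases unless the sets are concentric, then forces $A$ and $C$ to be balls (up to null sets) with a common centre $\boldsymbol{z}$, independent of the level. Reassembling via layer cake gives $f=f^{\#}(\cdot-\boldsymbol{z})$ and $h=h^{\#}(\cdot-\boldsymbol{z})$. This step, particularly showing the centre is independent of the level, is the delicate part; I would cite \cite{Lieb_2001} for it.
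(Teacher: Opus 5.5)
The paper does not prove this lemma; it quotes it from \cite{Lieb_2001}. Your architecture (layer cake, Riesz's one-dimensional sliding argument, Steiner symmetrisation, a level-set argument for equality) is the standard one from that source, but two steps fail as written.

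\emph{The one-dimensional flow.} Take intervals $I,J,K$ of half-lengths $a,b,c$ centred at $p,q,r$. The substitution $x=p+s$, $y=r+u$ shows that $\int\!\!\int \mathds{1}_I(x)\mathds{1}_J(x-y)\mathds{1}_K(y)\dd x\dd y=\varphi(p-q-r)$, where $\varphi=\mathds{1}_{[-a,a]}*\mathds{1}_{[-b,b]}*\mathds{1}_{[-c,c]}$. This $\varphi$ is even and nonincreasing in $|\cdot|$, but it is not concave, since it vanishes quadratically at the edge of its support. Moving every interval toward the origin at unit speed can increase $|p-q-r|$: with $a=b=c$ and centres $1,-2,3$ one gets $p-q-r=-t$ for small $t>0$, so the only term strictly decreases. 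Your monotonicity claim is therefore false for that flow. Use instead the common dilation of centres $p\mapsto tp$ as $t\downarrow 0$. This sends $p-q-r$ to $t(p-q-r)$, so every term is nondecreasing; you then merge intervals of the same set at first contact and induct on the number of intervals. Relatedly, in the Steiner step a minimiser on the orbit closure is not automatic and needs a compactness argument. For example, finite unions of boxes inside a centred ball stay in that ball under Steiner symmetrisation and their perimeter does not increase, which gives $L^1$-compactness.

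\emph{The equality case.} As quoted, the equality statement is false, so your argument cannot close without extra hypotheses. Take $h\equiv 1$, $g(\boldsymbol{x})=e^{-|\boldsymbol{x}|^2}$ and $f=\mathds{1}_A$ with $A$ a non-ball, $0<|A|<\infty$. Then $I(f,g,h)=|A|\,\|g\|_{L^1}=I(f^{\#},g^{\#},h^{\#})<\infty$, yet $f$ is not a translate of $f^{\#}$. On $\mathbb{R}$, $f=h=(1+x)^{-1/2}\mathds{1}_{(0,\infty)}$ with the same $g$ gives $\infty=\infty$. You need $f,h$ vanishing at infinity and $I(f,g,h)<\infty$, as in the strict rearrangement inequality of \cite{Lieb_2001}. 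Finiteness is exactly what turns equal layer-cake integrals (with pointwise $\le$) into equality for almost every triple of levels, and you need finite measure to conclude the level sets are balls.

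You have also misplaced the difficulty. Suppose almost every pair of level sets of $f,h$ are concentric balls. Then a common centre follows at once by Fubini, since a ball determines its centre. The real step is showing that equality forces each pair of level sets $A,C$ to be concentric balls. This cannot come from a single $B_r$: any $A,C$ with $\mathrm{diam}(A\cup C)<r$ give equality. So keep $g$ whole and use one of these:
\begin{itemize}
\item Apply the one-dimensional strict inequality with the strictly decreasing kernel $t\mapsto g(t\boldsymbol{e}+\boldsymbol{w})$ on pairs of lines parallel to every direction $\boldsymbol{e}$.
\item Use equality at radius $r=\rho_A+\rho_C$ (the radii of $A^{\#},C^{\#}$; equality holds for every $r$ by continuity in $r$) together with the equality case of Brunn--Minkowski.
\end{itemize}
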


\bigskip
\bigskip
\noindent{\bf Acknowledgments.}
This work was carried out while the author was a doctoral student at the Mathematical Institute, University of Oxford, and was supported by the Mathematical Institute, University of Oxford. For the purpose of open access, the author has applied a CC BY public copyright license to any Author Accepted Manuscript (AAM) version
arising from this submission.

\bigskip
\medskip
\noindent{\bf Conflict of Interest:} The author declares that they have no conflict of interest.
The author also declares that this manuscript has not been previously published,
and will not be submitted elsewhere before your decision.

\bigskip
\noindent{\bf Data availability:} Data sharing is not applicable to this article as no datasets were generated or analysed during the current study.

\printbibliography

\end{document}